\newtheorem{theorem}{Theorem}[section]
\newtheorem{lemma}[theorem]{Lemma}
\newtheorem{corollary}[theorem]{Corollary}
\newtheorem{prop}[theorem]{Proposition}
\newtheorem*{theorem*}{Theorem}
\newtheorem*{theoremA*}{Theorem A}
\newtheorem*{theoremB*}{Theorem B}
\newtheorem*{theoremC*}{Theorem C}
\newtheorem*{theoremD*}{Theorem D}
\newtheorem*{theoremE*}{Theorem E}
\newtheorem*{corollaryF*}{Corollary F}
\newtheorem*{theoremG*}{Theorem G}
\newtheorem*{theoremH*}{Theorem H}
\theoremstyle{definition}
\newtheorem{definition}[theorem]{Definition}
\newtheorem*{definition*}{Definition}
\theoremstyle{remark}
\newtheorem{remark}[theorem]{Remark}
\numberwithin{equation}{section}
\let\oldvarepsilon\varepsilon
\renewcommand{\epsilon}{\oldvarepsilon}
\renewcommand{\varepsilon}{\oldvarepsilon}
\let\oldphi\phi
\let\oldvarphi\varphi
\renewcommand{\phi}{\oldvarphi}
\renewcommand{\varphi}{\oldphi}
\newcommand{\Eta}{\mathrm{H}}
\newcommand{\Zeta}{\mathrm{Z}}
\newcommand{\Mu}{\mathrm{M}}
\newcommand{\Tau}{\mathrm{T}}
\newcommand{\C}{\mathbb{C}}
\newcommand{\N}{\mathbb{N}}
\newcommand{\R}{\mathbb{R}}
\newcommand{\Z}{\mathbb{Z}}
\newcommand{\F}{\mathbb{F}}
\renewcommand{\Im}{\operatorname{Im}}
\newcommand{\supp}{\operatorname{supp}}
\newcommand{\Dom}{\operatorname{Dom}}
\newcommand{\wid}{\operatorname{w}}
\newcommand{\heit}{\operatorname{h}}
\newcommand{\cent}{\operatorname{c}}
\newcommand{\Hn}{\mathbb{H}^{\cdim}}
\newcommand{\Cn}{\mathbb{C}^{\cdim}}
\newcommand{\cdim}{\nu}
\newcommand{\hdim}{\delta}
\newcommand{\dilat}{D}
\newcommand{\one}{^{(1)}}
\newcommand{\two}{^{(2)}}
\newcommand{\rivf}{\accentset{\leftharpoonup}}
\newcommand{\HLap}{\Lap_{(1)}}    
\newcommand{\riHLap}{\rivf{\Lap}_{(1)}}
\newcommand{\Hnabla}{\nabla_{(1)}}  
\newcommand{\Vnabla}{\nabla_{(2)}}  
\newcommand{\riHnabla}{\rivf{\nabla}_{(1)}}
\newcommand{\riVnabla}{\rivf{\nabla}_{(2)}}  
\newcommand{\VLap}{\Lap_{(2)}}    
\newcommand{\fullHLap}{\Lap\one }    
\newcommand{\fullVLap}{\Lap\two }    
\newcommand{\snabla}{\slashed{\nabla}}  
\newcommand{\fullHnabla}{\nabla\one }  
\newcommand{\fullVnabla}{\nabla\two }
\newcommand{\Hprod}{\cdot}
\newcommand{\conv}{\ast}
\newcommand{\Hconv}{\ast_{(1)}}
\newcommand{\Vconv}{\ast_{(2)}}
\newcommand{\lset}{\left\lbrace}
\newcommand{\rset}{\right\rbrace}
\newcommand{\biglset}{\bigl\lbrace}
\newcommand{\bigrset}{\bigr\rbrace}
\newcommand{\lpar}{\left(}
\newcommand{\rpar}{\right)}
\newcommand{\Biglpar}{\Bigl(}
\newcommand{\Bigrpar}{\Bigr)}
\newcommand{\bigglpar}{\biggl(}
\newcommand{\biggrpar}{\biggr)}
\newcommand{\lip}{\left<}
\newcommand{\rip}{\right>}
\newcommand{\labs}{\left|}
\newcommand{\rabs}{\right|}
\newcommand{\biglabs}{\bigl\vert}
\newcommand{\bigrabs}{\bigr\vert}
\def\abs|#1|{\left\vert #1 \right\vert}
\newcommand{\norm}[1]{\left\Vert#1\right\Vert}
\newcommand{\plainnorm}[1]{\Vert#1\Vert}
\newcommand{\bignorm}[1]{\bigl\Vert#1\bigr\Vert}
\newcommand{\biggnorm}[1]{\biggl\Vert#1\biggr\Vert}
\def\bigabs|#1|{\bigl\vert #1 \bigr\vert}
\def\biggabs|#1|{\biggl\vert #1 \biggr\vert}
\def\Bigabs|#1|{\Bigl\vert #1 \Bigr\vert}
\newcommand{\indifn}{\mathbbm{1}}
\newcommand{\fn}{\,}
\newcommand{\sfrac}[2]{#1 / #2}
\newcommand{\term}[1]{\mathrm{#1}}
\newcommand{\region}[1]{{#1}}
\newcommand{\probspace}{\Omega}
\newcommand{\wrt}[1]{\,\mathrm{d}#1}
\newcommand{\bsym}{\boldsymbol}
\newcommand{\fnspace}[1]{\mathsf{#1}}
\newcommand{\Leb}{\fnspace{L}}
\newcommand{\Schwartz}{\fnspace{S}}
\newcommand{\family}{\fnspace{F}}
\newcommand{\familyz}{\fnspace{F}_0}
\newcommand{\familycz}{\fnspace{F}_{00}}
\newcommand{\familyc}{\fnspace{F}_0}
\newcommand{\familyo}{\fnspace{F}_{1}}
\newcommand{\familyt}{\fnspace{F}_{2}}
\newcommand{\ancestor}[1]{\breve{#1}}
\newcommand{\Hil}{\mathcal{H}}
\newcommand{\oper}[1]{\mathcal{#1}}
\newcommand{\Lap}{\oper{L}}
\newcommand{\winverse}[1]{\mathring{#1}}
\newcommand{\set}[1]{\mathfrak{#1}}
\newcommand{\tile}{\set{T}}
\newcommand{\maxrect}{\set{M}}
\newcommand{\rect}{\set{R}}
\newcommand{\tube}{\set{T}}
\newcommand{\finiteset}{\set{F}}
\newcommand{\flag}{F}
\newcommand{\Hardy}{\fnspace{H}}
\newcommand{\HLSHardy}{\fnspace{H}^1_{HLS}}               
\newcommand{\FSCGHardy}{\fnspace{H}^1_{FSCG}}             
\newcommand{\atomHardy}{\fnspace{H}^1_{\flag,\mathrm{atom}}}
\newcommand{\matomHardy}{\fnspace{H}^1_{\flag,\mathrm{mom at}}}
\newcommand{\areaop}[1][]{\oper{S}_{\flag,\mathrm{area} #1}}
\newcommand{\areaopphi}{\areaop[,\bsym\phi]}
\newcommand{\areaopphiprime}{\areaop[,\bsym\phi]'}
\newcommand{\areaoppsi}{\areaop[,\bsym\psi]}
\newcommand{\areaPoisson}{\oper{S}_{\flag,\mathrm{area},\bsym{p}}}
\newcommand{\areaHardy}{\fnspace{H}^1_{\flag,\mathrm{area}}}
\newcommand{\areaHardyphi}[1][]{\fnspace{H}^1_{\flag,\mathrm{area},\bsym\phi #1}}
\newcommand{\ctssqfnopphi}{\oper{S}_{\flag,\mathrm{cts},\bsym\phi}}
\newcommand{\ctssqfnoppsi}{\oper{S}_{\flag,\mathrm{cts},\bsym\psi}}
\newcommand{\ctssqfnHardyphi}{\fnspace{H}^1_{\flag,\mathrm{cts},\bsym\phi}}
\newcommand{\ctssqfnHardy}{\fnspace{H}^1_{\flag,\mathrm{cts}}}
\newcommand{\dissqfnopphi}{\oper{S}_{\flag,\mathrm{dis},\bsym\phi}}
\newcommand{\dissqfnoppsi}{\oper{S}_{\flag,\mathrm{dis},\bsym\psi}}
\newcommand{\dissqfnHardyphi}{\fnspace{H}^1_{\flag,\mathrm{dis},\bsym\phi}}
\newcommand{\dissqfnHardypsi}{\fnspace{H}^1_{\flag,\mathrm{dis},\bsym\psi}}
\newcommand{\dissqfnHardy}{\fnspace{H}^1_{\flag,\mathrm{dis}}}
\newcommand{\nontanHardy}[1][]{\fnspace{H}^1_{\flag,\mathrm{nontan} #1}}
\newcommand{\nontanHardyphi}[1][]{\fnspace{H}^1_{\flag,\mathrm{nontan},\bsym\phi #1}}
\newcommand{\radialHardy}{\fnspace{H}^1_{\flag,\mathrm{radial}}}
\newcommand{\RieszHardy}{\fnspace{H}^1_{\flag,\mathrm{Riesz}}}
\newcommand{\siphiHardy}{\fnspace{H}^1_{\flag,\mathrm{singint},\boldsymbol\phi}}
\newcommand{\flagHardy}{\fnspace{H}^1_{\flag}}
\newcommand{\gmaxop}{\oper{M}_{\flag,\mathrm{gmax},\family}}
\newcommand{\gmaxopz}{\oper{M}_{\flag,\mathrm{gmax},\family_0}}
\newcommand{\gmaxopc}{\oper{M}_{\flag,\mathrm{gmax},\familyc}}
\newcommand{\gmaxopo}{\oper{M}_{\flag,\mathrm{gmax},\familyo}}
\newcommand{\gmaxopt}{\oper{M}_{\flag,\mathrm{gmax},\familyt}}
\newcommand{\gmaxHardy}{\fnspace{H}^1_{\flag, \mathrm{gmax}}}
\newcommand{\gmaxHardyfam}{\fnspace{H}^1_{\flag, \mathrm{gmax},\family}}
\newcommand{\singHardy}{\fnspace{H}^1_{\flag,\mathrm{sing}}}
\newcommand{\flagBMO}{\fnspace{BMO}_{\flag}}
\newcommand{\radmaxopphi}{\oper{M}^{+}_{\flag,\bsym\phi}}
\newcommand{\tanmaxopphi}{\oper{M}^{**}_{\flag,\bsym\phi}}
\newcommand{\nontanmaxopphi}[1][]{\oper{M}^{*}_{\flag,\bsym\phi #1}}
\newcommand{\radialmaxphi}{\oper{M}_{\flag,\mathrm{radial},\bsym\phi}}
\newcommand{\radialHardyphi}{\fnspace{H}^1_{\flag,\mathrm{radial},\bsym\phi}}
\newcommand{\Rdagger}{\ensuremath{R^\dagger}}
\newcommand{\expe}{\mathrm{e}}
\newcommand{\euli}{\mathrm{i}}
\begin{document}

\title[Flag Hardy Spaces]{Flag Hardy Space Theory
on Heisenberg Groups and Applications}

\author{Peng Chen}
\address{Department of Mathematics\\
Sun Yat-Sen University\\
Guangzhou 510275\\
China}
\curraddr{}
\email{chenpeng3@mail.sysu.edu.cn}
\thanks{P. Chen was supported by National Key R\&D Program of China 2022YFA1005700 and by NNSF of China 12171489}

\author{Michael G. Cowling}
\address{School of Mathematics and Statistics\\ University of New South Wales\\ Sydney 2052\\ Australia}
\curraddr{}
\email{m.cowling@unsw.edu.au}
\thanks{Cowling acknowledges support from the Australian Research Council, grants DP170103025 and DP220100285.}

\author{Ming-Yi Lee}
\address{Department of Mathematics, National Central University, Chung-Li 320, Taiwan   \&
National Center for Theoretical Sciences, 1 Roosevelt Road, Sec. 4, National Taiwan University, Taipei 106, Taiwan}
\curraddr{}
\email{mylee@math.ncu.edu.tw}
\thanks{}

\author{Ji Li}
\address{Department of Mathematics and Statistics,
Macquarie University,
NSW,  2109,
Australia}
\curraddr{}
\email{ji.li@mq.edu.au}
\thanks{Li acknowledges support from the Australian Research Council, grants DP170101060 and DP220100285.}

\author{Alessandro Ottazzi}
\address{School of Mathematics and Statistics\\ University of New South Wales\\ Sydney 2052\\ Australia}
\curraddr{}
\email{a.ottazzi@unsw.edu.au}
\thanks{Ottazzi acknowledges support from the Australian Research Council, grant DP220100285.}

\date{}

\subjclass[2020]{Primary 22E30, 43A15, Secondary 42B30}

\keywords{Heisenberg group, Flag structure, Hardy space}

\begin{abstract}
We develop a complete theory of the flag Hardy space $\flagHardy(\Hn)$ on the Heisenberg group $\Hn$ with characterisations by atomic decompositions, area functions, square functions, maximal functions and singular integrals.
We introduce new techniques to overcome the difficulties caused by the noncommutativity of the Heisenberg group and the lack of a suitable Fourier transformation and Cauchy--Riemann type equations.
Applications include the boundedness on $\flagHardy(\Hn)$ of various singular integral operators that arise in complex analysis, a sharp boundedness result on $\flagHardy(\Hn)$ of the Marcinkiewicz-type multipliers introduced by Müller, Ricci and Stein, and the characterisation of the flag BMO space by singular integrals.

\end{abstract}

\maketitle

\setcounter{tocdepth}{1}
\tableofcontents
\newpage

\setcounter{section}{-1}
\section{Introduction}\label{sec:intro}

The point of this paper is to completely characterise a flag Hardy space $\flagHardy(\Hn)$ on the Heisenberg group $\Hn$.
Our space is a proper subspace of the one-parameter Hardy space of Folland and Stein \cite{FolSte} that was developed by Christ and Geller \cite{ChrGel}, and is a simple modification of the Hardy space proposed by Han, Lu and Sawyer \cite{HanLuSaw}, who initiated the study of this question and characterised their proposed space by square functions.
Our work answers a question of Stein, who asked for a Hardy space theory in the flag setting.

Our space is useful in several applications:\\[2pt]
\noindent\textbf{1) endpoint boundedness of certain singular integrals}, including the Hilbert transform in the central variable, the homogeneous kernels considered first by Folland and Stein \cite{FolSte}, the Cauchy--Szegõ projection on $\Hn$ and the singular integrals considered by Phong and Stein \cite{PhoSte} in their study of the $\bar\partial$-Neumann problem (see also \cite{Ste3});\vspace{2pt}

\noindent\textbf{2) endpoint boundedness of two-parameter Marcinkiewicz multipliers} as studied by Müller, Ricci and Stein \cite{MulRicSte1, MulRicSte2}, and the one-parameter multipliers associated to the sub-Laplacian in the Heisenberg group considered by Hebisch \cite{Heb} and by Müller and Stein \cite{MS};\vspace{2pt}

\noindent\textbf{3) representation of functions} in the flag BMO space on $\Hn$.\vspace{2pt}

This work opens the door to characterising Hardy spaces on more general homogeneous Lie groups with implicit multiparameter structures, and has potential applications to the study of the $\bar\partial_b$-complex on different domains, continuing the path blazed by  Nagel, Ricci and Stein \cite{NagRicSte} and Nagel, Ricci, Stein and Wainger \cite{NagRicSteWai1, NagRicSteWai2}, and more generally to the development of the $\Leb^p$ and $\Hardy^p$ theory of flag singular integral operators on nilpotent Lie groups.

In this section, we present the background to our results and the difficulties,  state our results, outline the rest of the paper and explain what is new in our work.

\subsection*{Background and questions}\label{ssec:background}

Modern approaches to one-parameter harmonic analysis have been developed from the 1950s on; the Calderón--Zygmund theory of singular integrals and the related function spaces are central to this theory.
In the setting of Euclidean spaces $\R^n$, a notable property of standard Calderón--Zygmund singular integral operators and also of the Hardy--Littlewood maximal operator is that they commute with the classical dilations $\dilat_t$, given by $\dilat_t x = (t x_1,\dots,t x_n)$ for all $x \in \R^n$ and all $t\in\R^+$ (see, for example, \cite{Ste1, Ste2}).
Multi-parameter harmonic analysis, with emphasis on the special case of product spaces (that is, singular integrals commuting with multiparameter families of dilations on $\R^n$), has been studied extensively since the 1970s by Gundy, Stein, Chang, R. Fefferman, Journé, Pipher, Lacey, and many others (see, for example, \cite{ChaFef1, ChaFef2, ChaFef3, RFefSte82, RFef, FerLac, GunSte, Jou, Pip}).
To show the boundedness of Calderón--Zygmund singular integrals, there are corresponding function spaces, notably Hardy spaces and their dual spaces, which provide a natural way to do this easily.
These spaces were developed in \cite{CoiWei, CFefSte72} in the one-parameter setting, and later in \cite{ChaFef1, Jou} in the multiparameter product setting.

A more recent breakthrough is due to Müller, Ricci and Stein \cite{MulRicSte1}, who
introduced a new type of multiparameter structure, between the one-parameter and multiparameter cases, and studied the $\Leb^p$ boundedness of Marcinkiewicz multiplier operators $\Mu(\HLap/|\oper{T}|,i\oper{T})$ on the Heisenberg group $\Hn$, where $\HLap$ is the sub-Laplacian on $\Hn$, $\oper{T}$ is the central invariant vector field, and $m$ is a multiplier of Marcinkiewicz type.
They proved the $\Leb^p$ boundedness of $\Mu(\HLap/|\oper{T}|,i\oper{T})$ by using lifting and projection arguments when $m$ satisfies $\fnspace{C}^{\infty}$ regularity conditions.
Using the same approach, they also proved the $\Leb^p$ boundedness of certain singular integrals  that arise in the $\bar\partial$-Neumann problem (see \cite{PhoSte, Ste3}).
The new multiparameter structure, called a \emph{flag structure},
is implicit, that is, it cannot be written in terms of  explicit dilations, and this leads to completely new difficulties that do not appear in the one-parameter or product settings.
The implicit structure is obtained by taking the product space $\Hn \times \R$, and identifying points $((z,t),s)$ and $((z',t'),s')$ when $z = z'$ and $t+s = t'+s'$.
Under this identification, a product of balls $B_1 \times B_2$ in the product space $\Hn \times \R$ becomes a group theoretic product $B_1  B_2$ of sets in $\Hn$.
The identification gives rise to a projection from functions on $\Hn \times \R$ to functions on $\Hn$: let $F=F\bigl( (z,t),s\bigr)$ be a function on $\Hn \times \R$, with $(z,t)\in \Hn$ and $s \in \R$, and define the projected function $f$ by
\[
f(z,t) := \int_{\R} F\bigl( (z,t-s),s\bigr) \wrt s.
\]
See the survey paper by Stein \cite{Ste3}.
More recently, Nagel, Ricci and Stein \cite{NagRicSte} studied a class of operators on nilpotent Lie groups $G$ given by convolution with flag kernels and applied this theory to study the $\Box_b$-complex on certain CR submanifolds of $\Cn$.
Further, Nagel, Ricci, Stein and Wainger \cite{NagRicSteWai1, NagRicSteWai2} developed the theory of singular integrals with flag kernels in the more general setting of homogeneous groups.
They proved that singular integral operators on these groups with flag kernels are bounded on $\Leb^p$ when $1 < p < \infty$, and form an algebra.
See also the recent results of Street \cite{Stt}.

Motivated by Stein's 1999 question, and by \cite{MulRicSte1, Ste3, NagRicSte, NagRicSteWai1, NagRicSteWai2}, the following questions arise.
\vspace{2pt}

\noindent\textbf{Question 1.\enspace}
Is there a flag Hardy space $\flagHardy(\Hn)$ on the Heisenberg group that may be characterised in terms of square functions, maximal functions, atomic decompositions, and Riesz transforms?
\vspace{2pt}

\noindent\textbf{Question 2.\enspace}
What is the relationship of the space $\flagHardy(\Hn)$ to the one-parameter Hardy space $\FSCGHardy(\Hn)$ introduced by Folland and Stein \cite{FolSte} and then studied by Christ and Geller \cite{ChrGel}?
\vspace{2pt}

\noindent\textbf{Question 3.\enspace}
Various singular integral operators on the Heisenberg group appear in connection with boundary value problems in complex analysis in several variables and are known to be bounded on the spaces $\Leb^p(\Hn)$ when $1 < p < \infty$.
These include the Cauchy--Szegõ projection, which has a homogeneous kernel, and the operators introduced in \cite{PhoSte}, which have nonhomogeneous kernels.
Are these operators also bounded from $\flagHardy(\Hn)$ to $\Leb^1(\Hn)$?

\vspace{2pt}

\noindent\textbf{Question 4.\enspace}
Suppose that the Marcinkiewicz multiplier function $\Mu$ satisfies the sharp regularity conditions $\alpha>\cdim$ and $\beta>1/2$ identified by Müller, Ricci and Stein \cite{MulRicSte2}, who showed that the operator $\Mu(\HLap,i\oper{T})$ is bounded on $\Leb^p(\Hn)$ when $1<p<\infty$.
Is $\Mu(\HLap,i\oper{T})$ bounded on $\flagHardy(\Hn)$?

\vspace{2pt}

\noindent\textbf{Question 5.\enspace}
Does the dual space $\flagBMO(\Hn)$ of $\flagHardy(\Hn)$ have a decomposition like that of the one-parameter space $\mathrm{BMO}(\Hn)$ established in \cite{FolSte}?
\vspace{2pt}

\noindent\textbf{Answers.\enspace}
We answer Question 1 by describing a Hardy space $\flagHardy(\Hn)$ that may be characterised by atomic decompositions, square functions, area functions, maximal functions and flag Riesz transforms.

Our results imply that $\flagHardy(\Hn) \subseteq \FSCGHardy(\Hn)$.
We show that $\flagHardy(\Hn)$ is a proper subspace of $\FSCGHardy(\Hn)$, addressing Question 2.

Next, we give a positive answer to Question 3, by verifying that $\norm{ \oper{K} a}_{\Leb^1(\Hn)}$ is uniformly bounded when $\oper{K}$ is a singular integral operator and $a$ is a flag atom,  and hence confirm the boundedness of the Cauchy--Szegõ operator on $\flagHardy(\Hn)$.

We also show that, when $\Mu$ satisfies the sharp regularity condition above, the Marcinkiewicz multiplier operator $\Mu(\HLap,i\oper{T})$ is bounded on $\flagHardy(\Hn)$, thereby answering Question 4.
In particular, we see that the one-parameter Hörmander multiplier operator $\Mu(\HLap)$ is bounded on $\flagHardy(\Hn)$, and hence also on $\FSCGHardy(\Hn)$; apparently this was not known before.
By interpolation we also find another proof of the results of Müller and Stein \cite{MS} and of Hebisch \cite{Heb}.

Finally, we note that the characterisation of $\flagHardy(\Hn)$ by flag Riesz transforms and the duality of $\flagHardy(\Hn)$ with $\flagBMO(\Hn)$ imply that functions in $\flagBMO(\Hn)$ may be written as sums of singular integral operators applied to bounded functions.

\noindent\textbf{Remarks.\enspace}
Han, Lu and Sawyer \cite{HanLuSaw} defined a flag Hardy space $\HLSHardy(\Hn)$ using a discrete Little\-wood--Paley square function, with convolutions of the form $\psi_{m,n} * f$, and described the interpolation spaces between $\HLSHardy(\Hn)$ and $\Leb^2(\Hn)$.
There is a simple isomorphism between their space and ours, namely the reflection operator; this enables us to appeal to their work to deal with square function Hardy spaces, and to use their interpolation theorem.
They dealt with $\fnspace{H}^p$ spaces when $p < 1$, so their work is more general than ours; however, the heat and Poisson semigroups and Riesz transforms are not given by left convolutions, so their methods do not link immediately to these standard operators, and they do not obtain characterisations by maximal functions, singular integrals, or atomic decompositions.

Han, Lu and Sawyer proved the boundedness from $\HLSHardy(\Hn)$ to $\Leb^1(\Hn)$ of singular integral operators whose kernel satisfies a cancellation condition.
However, their methods cannot handle the radial and nontangential maximal functions or the operators $\Mu(\HLap)$ of Müller--Stein \cite{MS} and Hebisch \cite{Heb} and $\Mu(\HLap,i\oper{T})$ of Müller--Ricci--Stein \cite{MulRicSte1}, which do not even have an explicit kernel.
(Even for the classical one-parameter Mihlin--Hörmander multiplier, when we consider the sharp index of differentiation, there is no pointwise estimate for the kernel, see \cite{Ste2}.)
Our atomic decomposition shows that all these operators are bounded on $\flagHardy(\Hn)$.

Recently, Han, Lee, Li and Wick
\cite{HanLeeLiWic} gave a complete description of the flag Hardy space on the simplified model space $\R^m \times \R^n$; their main tools include Fourier transforms, the Cauchy--Riemann equations and the geometrical fact that a flag rectangle $R$ on $\R^m \times \R^n$ may be written as a product $R=I\times J$, where $I$ and $J$ are cubes in $\R^m$ and $\R^n$.
These tools and geometry do not apply in the flag setting on $\Hn$.

Further progress in the flag setting includes (but is not limited to) \cite{ChaHanWu, SteStt}.

In studying Hardy spaces, and in other areas of analysis on euclidean and more general spaces, ``dyadic decompositions'' play an important role.
In the context of the Heisenberg group, there are two possible strategies: one may use the fractal dyadic decompositions of Strichartz \cite{Str} and of Tyson \cite{Tys}, which have symmetries under translation and certain dilations, or the less regular decompositions of Christ \cite{Chr} and of Hytönen and Kairema \cite{HytKai}, which do not have these symmetries.
We have chosen to use the former, as it enables us to appeal to results of Han, Lu and Sawyer \cite{HanLuSaw}, where this approach is used, and enables us to make simplifying assumptions in many proofs.
However,  fractal dyadic decompositions do not exist on all nilpotent Lie groups, let alone in more general spaces.
Our results may also be couched in terms of the decompositions of Christ and of Hytönen and Kairema, and that this is essential for some further generalisations.

\subsection*{Statement of main results}\label{ssec:main-results}

In this section, we state our main results in more detail.

We begin with a little notation; the details and further notation are given in the next section.
The Heisenberg group $\Hn$ is a Lie group, with underlying manifold $\Cn \times \R$; a typical element is written $g$, or in coordinates as $(z,t)$ where $z \in \Cn$ and $t \in \R$.
We may also write $(z,t)$ as $(z_1, \dots, z_\cdim , t)$, or as $(x,y,t)$, or as $(x_1, \dots, x_\cdim , y_1, \dots, y_\cdim ,t)$.
The multiplication law is
\[
(z',t') \Hprod (z, t) = (z' + z, t' + t + S(z', z) ),
\]
where $S$ is the (slightly nonstandard) symplectic form on $\Cn$ given by
\[
S(z', z)
= 4\cdim \Im \Bigl(\sum_{j=1}^\cdim  z'_j \bar{z}_{j} \Bigr)
= 4\cdim \lpar\langle y', x\rangle - \langle x', y\rangle\rpar
= 4\cdim \Bigl(\sum_{j=1}^\cdim  y'_j x_j  - \sum_{j=1}^\cdim  x'_j y_j \Bigr).
\]
The identity of $\Hn$ is written $o$ or $(0,0)$; inversion is given by $(z, t)^{-1} = (-z,-t)$.
The dilations $\dilat_r$, where $r\in\R^+$, of $\Hn$, given by $\dilat_r(z,t) := (rz, r^2t)$, are automorphisms  of $\Hn$.

The Haar measure on $\Hn$ is the Lebesgue measure, which we write $\wrt g$ or $\wrt z\wrt t$.
The standard Heisenberg group convolution is given by the formula
\[
f_1 \Hconv f_2 (g)
= \int_{\Hn} f_1(g_1) f_2(g_1^{-1} g) \wrt g_1
= \int_{\Hn} f_1(gg_1) f_2(g_1^{-1}) \wrt g_1
\qquad\forall g \in \Hn.
\]

The flag structure on $\Hn$ involves the subgroup $\{ (0,t) : t \in \R \}$, which we identify with $\R$ in the obvious way (unfortunately this may be a little confusing).
We convolve a function $f_1$ on $\Hn$ with a function $f_2$ on $\R$ as follows:
\[
f_1 \Vconv f_2 (g) = \int_{\R} f_1(gg_2^{-1}) f_2(g_2) \wrt g_2
\qquad\forall g \in \Hn.
\]
Thus we think of functions on $\R$ as distributions on $\Hn$ which are supported in the centre of $\Hn$; by distributions, we mean elements of the dual of the usual Schwartz space $\Schwartz(\Hn)$.
Generally we write the group operation in $\Hn$ as juxtaposition when we use the symbol $g$ for a group element, and as $\Hprod$ when we use coordinates, while we write the group product in $\R$ multiplicatively when we use the symbol $g$ for a group element, and additively when we are working in coordinates.

The ball with centre $g_1$ and radius $r$ in the gauge metric on $\Hn$ (see Section \ref{ssec:notation} for the definition) is denoted by $B\one(g_1,r)$; it coincides with $g_1 B\one(o,r)$.
The interval $(t-s,t+s)$ is denoted by $B\two (t,s)$, and identified with a subset of the centre of $\Hn$.
As mentioned above, the basic geometric object in the flag structure, analogous to the ball in classical analysis and to the direct product of balls in analysis on product spaces, is the (group) product of balls $B\one(g_1,r) B\two (g_2,s)$.
Given an open subset $E$ of $\Hn$, we write $\maxrect(E)$ for the set of all maximal ``subshards'' of $E$ (see the next section for detailed notation and definitions).

We write $\HLap$ for the usual sub-Laplacian on $\Hn$ and $\VLap$ for the Laplacian on $\R$, both normalised to be positive operators, and $\oper{T}$ for the central invariant vector field on $\R$.
These operators, which are interpreted distributionally, and the associated geometry are key to our results.
We write $\Dom(\HLap^M \VLap^N)$ for the subspace of $\Leb^2(\Hn)$ of functions
$b$ such that $\HLap^M \VLap^N b$ also lies in $\Leb^2(\Hn)$.

Now we give seven different alternative definitions of flag Hardy spaces.

\begin{definition*}
Fix $M$ and $N$ in $\N^+$ and a real number $\kappa \in (1 + 1/(2\nu),\infty)$.
An \emph{atom} is a function $a$ in $\Leb^2(\Hn)$ such that there exists an open subset $E$ of $\Hn$ of finite measure $|E|$ and functions $a_R$ in $\Leb^2(\Hn)$, called \emph{particles}, and $b_R$ in $\Dom( \HLap^{M}\VLap^{N})$ for all $R \in \maxrect(E)$ such that
\begin{enumerate}
\item[(A1)] $a_R= \HLap^M \VLap^N b_R$ and $\supp b_R \subseteq R^*$, where $R^*$ is a $\kappa$-enlargement of $R$;
\item[(A2)] for all sign sequences $\sigma: \maxrect(E) \to \{\pm1\}$, the sum $\sum_{R \in \maxrect(E)} \sigma_R a_R$ converges in $\Leb^2(\Hn)$, to $a_{\sigma}$ say, and $\norm{ a_\sigma }_{\Leb^2(\Hn)}
\leq \abs|E|^{-1/2}$;
\item[(A3)] $ a = \sum_{R \in \maxrect(E)} a_R$.
\end{enumerate}

We say that $f \in \Leb^1(\Hn)$ has an \emph{atomic decomposition} if we may write $f$ as a sum $\sum_{j\in \N} \lambda_j a_j$, converging in $\Leb^1(\Hn)$, where $\sum_{j\in\N} |\lambda_j| < \infty$ and each $a_j$ is an atom; we write $f \sim \sum_{j\in\N} \lambda_j a_j$ to indicate that $\sum_{j\in\N} \lambda_j a_j$ is an atomic decomposition of $f$.
The \emph{atomic Hardy space $\atomHardy(\Hn)$} is defined to be the linear space of all $f \in \Leb^1(\Hn)$ that have atomic decompositions, with norm
\begin{equation*}
\norm{ f }_{\atomHardy(\Hn)}
:=\inf\biggl\{ \sum_{j\in\N} |\lambda_j| :
  f \sim \sum_{j\in\N} \lambda_j a_j \biggr\} .
\end{equation*}
\end{definition*}

We provide a more complete definition at the beginning of Section \ref{sec:atomic}, and in particular make the notion of a $\kappa$-enlargement $R^*$ of a shard $R$ precise.
We also discuss alternative definitions of particles.

Several characterisations of classical Hardy spaces involve integrals or maximal functions over cones.
Given $g \in \Hn$ and $\beta\in\R^+$, we define the cone $\Gamma_\beta(g)$ as follows:
\begin{equation}\label{Gamma cone}
\begin{aligned}
\Gamma_\beta(g):=\{ (g',r,s) \in \Hn \times \R^{+} \times \R^{+}:
     g' \in g B\one(o,\beta r) B\two (0,\beta^2 s) \} .
\end{aligned}
\end{equation}
For simplicity, we write $\Gamma_1(g)$ as $\Gamma(g)$.
In Section \ref{sec:Lusin} we shall see that changing the parameter $\beta$ does not change the Hardy space, though it changes the norm to an equivalent norm.

For our next versions of the flag Hardy space, we take \emph{normalised dilates} of functions $\phi\one$ on $\Hn$ and $\phi\two $ on $\R$, satisfying various smoothness, decay, moment  and ``invertibility'' conditions that we specify later:
\begin{gather}\label{eq:def-phi-1}
\phi\one_r(z,t)  := r^{-\hdim} \phi\one(z/r,t/r^2)
\qquad\text{and}\qquad
\phi\two _s(t) := s^{-1} \phi\two (t/s),
\end{gather}
where $z \in \Cn$ and $t \in \R$.
We also define the \emph{normalised characteristic functions}
\begin{gather}\label{eq:def-chi-1}
\chi\one_r := \frac{1}{|B\one(o,r)|} \indifn_{B\one(o,r)}
\qquad\text{and}\qquad
\chi\two _s := \frac{1}{|B\two (0,s)|} \indifn_{B\one(0,s)} \,,
\end{gather}
where $\indifn$ denotes a characteristic function.
We shorten many formulae by writing
\begin{gather}\label{eq:def-sub-rs}
\phi_{r,s}  := \phi\one_r \Vconv \phi\two _2
\qquad\text{and}\qquad
\chi _{r,s} := \chi\one_r \Vconv \chi\two _2 .
\end{gather}
The Poisson kernel is an important special case.
Let $p\one_{r}$ and $p\two_{s}$ be the convolution kernels of the operators $\expe^{-r\sqrt\HLap}$ on $\Hn$ and $\expe^{-s\sqrt\VLap}$ on $\R$; then when we take $\phi\one$ to be $p\one$ and $\phi\two$ to be $p\two$, we obtain the \emph{flag Poisson kernel} $p_{r,s}$, given by $p\one_{r} \Vconv p\two _{s}$.
The flag heat kernel $h_{r,s}$ arises similarly.

\begin{definition*}
Take $\phi\one$, $\phi\two $ and $\phi_{r,s}$ satisfying various smoothness, decay, moment  and ``invertibility'' conditions that we specify later, and $\chi\one$, $\chi\two$, and $\chi_{r,s}$ as above.
For $f \in \Leb^1(\Hn)$, we define the \emph{Lusin--Littlewood--Paley area function $\areaopphi(f)$ associated to $\phi\one$ and $\phi\two $} by
\begin{equation*}
\begin{aligned}
&\areaopphi (f)(g)
:= \biggl( \iint_{\R^{+} \times\R^{+}}
\bigl| f \Hconv \phi_{r,s}  \bigr|^2 \Hconv \chi_{r,s} (g)  \,\frac{dr}{r} \,\frac{ds}{s} \biggr)^{1/2}
\end{aligned}
\end{equation*}
for all $g \in \Hn$, and we define the Lusin--Littlewood--Paley Hardy space $\areaHardyphi(\Hn)$ to be the set of all $f\in \Leb^1(\Hn)$ for which $\areaopphi (f) \in \Leb^1(\Hn)$, with  norm
\begin{equation*}
\norm{ f }_{\areaHardyphi(\Hn)}:=\norm{ \areaopphi (f) }_{\Leb^1(\Hn)}.
\end{equation*}
\end{definition*}

In Section \ref{sec:Lusin}, we shall make the conditions on $\phi\one$ and $\phi\two $ precise, and see that $\areaHardyphi(\Hn)$ is largely independent of the choice of $\phi\one$ and $\phi\two $, so may be abbreviated to $\areaHardy(\Hn)$.
In particular, we may take $\phi\one$ and $\phi\two $ to be derivatives of the heat or Poisson kernels associated to $\HLap$ and $\VLap$.

\begin{definition*}
Suppose that $\phi\one$ and $\phi\two $ satisfy appropriate smoothness, decay, moment and ``invertibility'' conditions.
For $f \in \Leb^1(\Hn)$, we define the \emph{continuous and discrete Littlewood--Paley square functions $\ctssqfnopphi(f)$  and $\dissqfnopphi(f)$ associated to $\phi\one$ and $\phi\two $} by
\begin{equation*}
\begin{aligned}
&\ctssqfnopphi (f)(g)
:= \biggl( \iint_{\R^{+} \times\R^{+}}
\bigl| f \Hconv \phi_{r,s} (g)  \bigr|^2
    \,\frac{dr}{r} \,\frac{ds}{s} \biggr)^{1/2} \\
&\dissqfnopphi (f)(g)
:= \biggl( \sum_{(m,n) \in \Z \times \Z}
\bigl| f \Hconv \phi_{2^m,2^n}(g)  \bigr|^2
    \biggr)^{1/2}
\end{aligned}
\end{equation*}
for all $g \in \Hn$.
We define the \emph{square function Hardy spaces $\ctssqfnHardyphi(\Hn)$ and $\dissqfnHardyphi(\Hn)$} to be the sets of all $f\in \Leb^1(\Hn)$ for which $ \ctssqfnopphi (f) \in \Leb^1(\Hn)$ or $\dissqfnopphi (f) \in \Leb^1(\Hn)$, with  norms
\[
\norm{ f }_{\ctssqfnHardyphi(\Hn)}:=\norm{ \ctssqfnopphi (f) }_{\Leb^1(\Hn)}
\]
and
\[
\norm{ f }_{\dissqfnHardyphi(\Hn)}:=\norm{ \dissqfnopphi (f) }_{\Leb^1(\Hn)}. .
\]
\end{definition*}

Later we shall see that $\ctssqfnHardyphi(\Hn)$ and $\dissqfnHardyphi(\Hn)$ are essentially independent of the choice of $\phi\one$ and $\phi\two $, and so may be abbre\-viated to $\ctssqfnHardy(\Hn)$ and $\dissqfnHardy(\Hn)$.
In particular, we may take $\phi\one$ and $\phi\two $ to be gradients of the heat or Poisson kernels associated to $\HLap$ and $\VLap$.
The reflection operator identifies our space $\dissqfnHardy(\Hn)$ with the Hardy space of Han, Lu and Sawyer \cite{HanLuSaw}.

Our next definitions involve pairs $\bsym\phi$ of functions $\phi\one$ on $\Hn$ and $\phi\two $ on $\R$ that are Poisson bounded, that is, together with all their derivatives, they decay at least as fast as Poisson kernels and their corresponding derivatives at infinity.
The precise conditions are stated in Definition \ref{def:Poisson-bounds}.

\begin{definition*}
The \emph{radial maximal function} of $f \in \Leb^1(\Hn)$ is defined by
\begin{align*}
\radmaxopphi(f)(g)
:=\sup_{r,s \in \R^+} |f \Hconv \phi_{r,s} (g)|.
\end{align*}
The space $\radialHardyphi(\Hn)$ is the set of all $f\in \Leb^1(\Hn)$ such that $\radmaxopphi(f) \in \Leb^1(\Hn)$, with norm
$\norm{ f }_{\radialHardy(\Hn)} :=\norm{ \radmaxopphi(f) }_{\Leb^1(\Hn)}$.
\end{definition*}

\begin{definition*}
Fix $\beta\in\R^+$.
The \emph{nontangential maximal function $\nontanmaxopphi[,\beta](f)$} of $f \in \Leb^1(\Hn)$ is defined by
\begin{align}\label{eq:def:nontanmax}
\nontanmaxopphi[,\beta](f)(g)
:= \sup_{(g',r,s) \in\Gamma_\beta(g)} |f \Hconv \phi_{r,s} (g')|
\qquad\forall g \in \Hn.
\end{align}
By Lemma \ref{lem:apertures-maxfn} below, the space of all $f\in \Leb^1(\Hn)$ such that $\nontanmaxopphi[,\beta](f) \in \Leb^1(\Hn)$ is independent of $\beta \in \R^+$, so to simplify notation we take $\beta$ equal to $1$ and write $\nontanmaxopphi(f)$ rather than $\nontanmaxopphi[,\beta](f)$.
The Hardy space $\nontanHardyphi(\Hn)$ consisting of all such $f$ is well-defined.
We equip this space with the norm 
\[
\norm{f}_{\nontanHardyphi(\Hn)} := \norm{ \nontanmaxopphi(f) }_{\Leb^1(\Hn)}.
\]
\end{definition*}

An important special case of the space $\nontanHardyphi(\Hn)$ arises when we deal with the flag Poisson kernel $p_{r,s}$.
In this case, properties of harmonic functions may be exploited.

\begin{definition*}
Let $\family$ be a Poisson bounded family of pairs of functions, as in Definition \ref{def:Poisson-bounds} below.
The \emph{grand maximal function} $\gmaxop (f)$ of $f \in \Leb^1(\Hn)$ is defined by
\begin{align*}
\gmaxop (f) (g)
:= \sup_{\bsym\phi \in \family} \sup_{r,s \in \R^+}
\abs|f \Hconv \phi_{r,s} (g)|
\qquad\forall g \in \Hn.
\end{align*}
The space $\gmaxHardyfam(\Hn)$ is defined to be the set of all $f\in \Leb^1(\Hn)$ such that $\gmaxop (f)$ is in $\Leb^1(\Hn)$, with norm
$\norm{ f }_{\gmaxHardyfam(\Hn)} :=\norm{ \gmaxop (f) }_{\Leb^1(\Hn)}$.
\end{definition*}

One of our key results, which generalises a result of Fefferman and Stein \cite{CFefSte72} in the classical case, is that these Hardy spaces do not depend on $\bsym\phi$ or the family $\family$.
More precisely, we get the same radial and nontangential spaces for all pairs $\bsym\phi$ as long as $\phi\one$ and $\phi\two $ are suitably normalised and satisfy Poisson decay conditions, though the norms do depend on $\bsym\phi$.
Similarly, we get the same space if $\family$ is any nontrivial Poisson bounded family.

Our final definition brings in singular integral operators.

\begin{definition*}
The (tensor-valued) \emph{flag Riesz transformations} are defined by
\[
\oper{R}_{(1)} = \Hnabla \HLap^{-1/2} ,
\qquad
\oper{R}_{(2)} =  \Vnabla \VLap^{-1/2}
\qquad\text{and}\qquad
\oper{R}_{\flag} = \oper{R}_{(1)} \otimes \oper{R}_{(2)},
\]
and the space $\RieszHardy(\Hn)$ is the set of all $f\in \Leb^1(\Hn)$ such that $\oper{R}_{(1)}(f) $, $\oper{R}_{(2)}(f) $ and $\oper{R}_{\flag}(f) $ all lie in $\Leb^1(\Hn)$, with norm $\norm{ f }_{\RieszHardy(\Hn)}$ given by
\begin{equation*}
\norm{ \oper{R}_{\flag}(f) }_{\Leb^1(\Hn)}
+\norm{ \Hnabla \HLap^{-1/2}(f) }_{\Leb^1(\Hn)}
+\norm{ \Vnabla \VLap^{-1/2}(f) }_{\Leb^1(\Hn)}
+\norm{ f }_{\Leb^1(\Hn)}.
\end{equation*}
\end{definition*}

The main result of this paper is that the definitions above all agree.

\begin{theoremA*}
The spaces defined above all coincide, that is,
\[
\begin{gathered}
  \atomHardy(\Hn)
= \areaHardy(\Hn)
= \ctssqfnHardy(\Hn)
= \dissqfnHardy(\Hn)
= \nontanHardy(\Hn)\\
= \radialHardy(\Hn)
= \gmaxHardy(\Hn)
= \RieszHardy(\Hn).
\end{gathered}
\]
Moreover, these spaces have equivalent norms, and the constants in the equivalences depend only on the parameters involved in defining the Hardy spaces.
\end{theoremA*}

By the parameters involved in defining the Hardy spaces, we mean, for instance, the integers $M$ and $N$ in the definition of the atomic spaces, or the angle $\beta$ of a cone, or the functions $\phi\one$ and $\phi\one$ involved in the Lusin area function, the square function and the maximal function definitions.
In light of this theorem, we may use the notation $\flagHardy(\Hn)$ to denote any of these spaces.

\subsection*{Applications}\label{ssec:intro-applications}

First, to connect our results with previous work,  we relate our space $\flagHardy(\Hn)$ with the Hardy space $\HLSHardy(\Hn)$ of Han, Lu and Sawyer \cite{HanLuSaw}.
We use the reflection operator $\oper{R}$, which acts on functions on $\Hn$ by composing with the inversion: $\oper{R} f(g) = f(g^{-1})$ for all $g \in \Hn$.

\begin{theoremB*}
The reflection $\oper{R}$ is a bicontinuous linear isomorphism from $\flagHardy(\Hn)$ to $\HLSHardy(\Hn)$.
Hence the complex interpolation space $[\flagHardy(\Hn), \Leb^2(\Hn) ]_\theta$ may be identified with $\Leb^p (\Hn)$ when $1/p = 1 - \theta/2$, and the dual space of $\atomHardy(\Hn)$ may be identified with $\flagBMO(\Hn)$.
\end{theoremB*}

The space $\flagBMO(\Hn)$ will be defined, and this theorem will be proved, in Section 8.2.

Next, Folland and Stein \cite{FolSte} defined a one-parameter Hardy space on stratified Lie groups, and characterised it by Littlewood--Paley theory, atomic and molecular decompositions, and tangential and nontangential maximal functions.
Later Christ and Geller \cite{ChrGel} characterised this Hardy space by singular integrals, and in particular by Riesz transforms.
We denote this one-parameter Hardy space by $ \FSCGHardy(\Hn)$.

\begin{theoremC*}
The space $\flagHardy(\Hn)$ is a proper subspace of $\FSCGHardy(\Hn)$.
\end{theoremC*}

This will be proved in Section 8.3.

Third, the Phong--Stein singular integral operator $\oper{K}$ arises in solving the $\bar\partial$-Neumann problem in a bounded smooth domain in $\C^{\cdim+1}$ (see \cite{PhoSte}, \cite[Section 5]{MulRicSte1} or \cite{Ste3}).
It is a convolution operator, that is, $\oper{K}f = f* k_{PS}$, whose convolution kernel $k_{PS}$ is given by
\begin{equation}\label{Phong Stein kernel}
\begin{aligned}
k_{PS}(z,t)= \frac{\omega(z)}{(|z|^{2}+t^2)^\cdim} \frac{1}{|z|^2+ \euli t}
\qquad\forall (z,t)\in \Hn,
\end{aligned}
\end{equation}
where $\omega$ is homogeneous of degree $0$ on $\Cn$, smooth away from the origin and with mean-value $0$ on the unit sphere.
Stein \cite{Ste3} proved that $\oper{K}$ is bounded on $\Leb^p(\Hn)$ when $1<p<\infty$ by a lifting and projection argument.
The cancellation of $\omega$ is only used to show the $\Leb^2$ boundedness of $\oper{K}$ (using the Cotlar--Stein almost orthogonality estimate \cite[Theorem 4]{PhoSte}).

Another important singular integral operator on $\Hn$ is the Cauchy--Szegõ projection $\oper{C}$, which gives an analytic function in the Siegel upper half space in terms of its boundary value.
Its restriction to the boundary is a convolution operator, that is, $\oper{C}(f)= f * k_{CS}$, and the convolution kernel $k_{CS}$ is given by
\begin{equation}\label{Cauchy Szego}
\begin{aligned}
k_{CS}(z,t)= \frac{c}{(|z|^2+ \euli t)^{\cdim+1}}
\qquad\forall (z,t)\in \Hn
\end{aligned}
\end{equation}
(see \cite[Chapter 12, Section 2.4]{Ste2}).
These two operators are examples of general flag singular integral operators, which were discussed in much great generality by Nagel, Ricci, Stein and Wainger \cite{NagRicSteWai1, NagRicSteWai2}.

In Section 8.4, we use the atomic decomposition to show that general flag singular integral operators, which we define later, are bounded on $\flagHardy(\Hn)$.

\begin{theoremD*}
General flag singular integral operators are bounded on $\flagHardy(\Hn)$.
\end{theoremD*}


Fourth, we define the two-parameter Sobolev space $\Leb_{\alpha,\beta}^2(\R\times\R^+)$ to be the collection of tempered distributions $\Mu$ on $\R^2$ for which the following norm is finite:
\begin{equation}\label{e88.16}
\begin{aligned}
\norm{ \Mu }_{L_{\alpha,\beta}^2(\R^2)}
:= \lpar \iint_{\R^2} (1 + |\xi_1|)^\alpha (1+|\xi_1| + |\xi_2|)^\beta
        | \hat \Mu(\xi_1, \xi_2) |^2 \wrt \xi_1 \wrt \xi_2 \rpar^{1/2};
\end{aligned}
\end{equation}
here $\hat \Mu$ denotes the usual Fourier transform of $\Mu$.

Define $\Mu^{r,s}(\xi_1, \xi_2) := \Mu(r\xi_1, s\xi_2)$.
Choose $\Eta$ in $\fnspace{C}^{\infty}(\R^+)$ with support in $(1/2,2)$
such that $\sum_{j\in\Z} \Eta(\cdot /{2^j}) =1$, and set $\Eta_{1,1}(\xi_1,\xi_2):=\Eta(\xi_1)\Eta(|\xi_2|)$.
Then the following result on the endpoint boundedness of Marcinkiewicz multipliers on $\Hn$ is sharp.

\begin{theoremE*}
If $\alpha > \cdim$ and $\beta > 1/2$, and the function $\Mu$ satisfies the condition
\[
\sup_{r, s >0}  \norm{ \Eta_{1,1} \Mu^{r,s} }_{L_{\alpha,\beta}^2(\R^2)} <\infty, 
\]
then the joint spectral multiplier $\Mu(\HLap, i\oper{T})$ is bounded {on $\flagHardy(\Hn)$}.
\end{theoremE*}

The $\Leb^p$ boundedness of $\Mu(\HLap,i\oper{T})$ when $1<p<\infty$ was proved in \cite{MulRicSte2}.
The proof of this theorem will be given in Section 8.5.

Fix a nontrivial, smooth, nonnegative-real-valued function $\Eta$ on $\R^+$ with support in $[1/2,2]$, and an index $\alpha > (2\cdim+1)/2$.
Write $\Leb^2_\alpha(\R)$ for the usual Sobolev space on $\R$, whose norm is defined by
\[
\norm{ \Mu }_{L_{\alpha}^2(\R)}
:= \lpar \iint_{\R} (1 + |\xi|)^\alpha
        \hat \Mu(\xi_1) |^2 \wrt \xi  \rpar^{1/2}.
\]
Müller and Stein \cite{MS} and Hebisch \cite{Heb} showed that the Hörmander multiplier $\Mu(\HLap)$ is bounded on $\Leb^p(\Hn)$ for $1<p<\infty$, if $\sup_{t>0} \|\Eta (\cdot)\Mu(t \cdot)\|_{\Leb^2_\alpha(\R)}<\infty$.

An interesting corollary of Theorem E is that the one-parameter Hörmander multiplier $\Mu(\HLap)$ is also bounded on our multiparameter flag Hardy space and on the Folland--Stein--Christ--Geller Hardy space $\FSCGHardy(\Hn)$.

\begin{corollaryF*}
Suppose that $\alpha > (2\cdim+1)/2$,  and the multiplier $\Mu$ satisfies the condition $\sup_{t>0} \|\Eta (\cdot)\Mu(t \cdot)\|_{\Leb^2_\alpha(\R)}<\infty$; then $\Mu(\HLap)$ is bounded {on $\flagHardy(\Hn)$} and on  $\FSCGHardy(\Hn)$.
\end{corollaryF*}

By interpolation and duality, $\Mu(\HLap)$ is bounded on $\Leb^p(\Hn)$ when $1<p<\infty$.
This gives another proof of a result of Müller and Stein \cite{MS} and of Hebisch \cite{Heb}.

Next, from \cite[Definition 6 and Theorem 7]{HanLuSaw} and the isomorphism of our space $\flagHardy(\Hn)$ with $\HLSHardy(\Hn)$, it follows the dual space of $\flagHardy(\Hn)$ is the space $\flagBMO(\Hn)$, which is defined using a ``flag Carleson measure condition'' (see Section \ref{ssec:HLS}).
From Theorems A and B, we obtain a decomposition of $\flagBMO(\Hn)$.

\begin{theoremG*}
A function $b \in \flagBMO(\Hn)$ if and only if there exist (vector-valued) functions $h_0$, $h_1$, $h_2$ and $h_3$ in $\Leb^\infty(\Hn)$ such that
\[
b= h_0+ \Hnabla \HLap^{-1/2}(h_1) + \Vnabla \VLap^{-1/2}(h_2) +\oper{R}_{\flag}(h_3).
\]
\end{theoremG*}

Finally, we note that there are examples of convolution operators with kernels of the form \eqref{Phong Stein kernel} that are not of weak type $(1,1)$.
It is therefore natural to explore other weak type end-point estimates.

R. Fefferman \cite{RFef} showed that Calderón--Zygmund operators in the product setting are bounded from $\Leb\log^+\Leb$ to the weak $\Leb^1$ space $\Leb^{1,\infty}$.
This is related to Zygmund's conjecture for maximal operators in the general multiparameter setting \cite{Cor}.
Our result on the domination of area function by the non-tangential maximal function (Theorem \ref{thm:nontan-implies-area}) and our new techniques for establishing a flag atomic decomposition (Theorem \ref{thm:area-Hardy-implies-atom-Hardy}) enable us to prove a similar result in the flag context.
\begin{theorem*}
The singular integral operators of Theorem D and the Marcinkiewicz
 multiplier operators of Theorem E are bounded from $\Leb\log^+\Leb(\Hn)$ to $\Leb^{1,\infty}(\Hn)$.
\end{theorem*}
For reasons of space, we shall treat this case in another paper.

\subsection*{New tools and techniques}\label{ssec:intro-whats-new}

The Heisenberg group $\Hn$ is noncommutative, with a more complicated geometry and Fourier transformation than the product Euclidean setting $\R^m\times\R^n$ of previous results \cite{HanLeeLiWic} on flag Hardy spaces.
To overcome these obstacles, we have developed some new tools and techniques, which may be helpful in solving related problems on the Heisenberg groups or more general stratified groups.

First, in the classical theory of Hardy spaces, the Cauchy--Riemann equations are often used.
However, it is not clear whether such systems of equations are available in the context of the Heisenberg group, or more general contexts.
We find two different ways to avoid the use of these equations.
In the classical case of $\R^n$, Fefferman and Stein \cite{CFefSte72} used the Cauchy--Riemann equations to show the nontangential maximal function dominates the Lusin area function, while in the product setting $\R^m\times\R^n$, the usual tool is Merryfield's lemma \cite{Mer}, which states that for every even $\phi\in \fnspace{C}^{\infty}_0(\R)$ such that $\int \phi (x) \wrt x=1$, the function $\psi\in \fnspace{C}^{\infty}_0(\R)$, given by $\psi(x) = x \phi(x)$, has the same support as $\phi$, and further $\int \psi (x) \wrt x = 0$ and $\partial_t \phi_t(x) = \partial_x \psi_t(x)$, where $\phi_t(x) =t^{-1} \phi(x/t)$ and $\psi_t(x) = t^{-1}  \psi(x/t)$.
It is unclear whether this lemma extends to the Heisenberg group or more general contexts.
Our approach bypasses the use of this construction and of the Fourier transformation, and hence it may be used in more general settings such as stratified Lie groups.

Similarly, the standard proof of the characterisation of Hardy space by Riesz transforms uses the radial maximal function and a Cauchy--Riemann type equation.
As before, the availability of such an equation is unclear in our setting.
Our new method dominates the flag Littlewood--Paley square function by the flag Riesz transform, by combining the singular integral characterisation of Christ and Geller with a randomisation argument, based on the Khinchin inequality.
We expect that our method may be applied to singular integral characterisations of Hardy spaces in various settings.
Even in the classical setting of $\R^n$, these methods are new.

Again in the euclidean context, it is known that various maximal functions characterise the classical Hardy space $\fnspace{H}^1(\R^n)$.
These include maximal functions $\sup_t |\phi_t * f|$, where $\phi$ is normalised and has decay properties that are too restrictive to apply to the Poisson kernel, or when $\phi$ is the Poisson kernel.
To show that the maximal function defined using the Poisson kernel $p$ is connected to the maximal function using, say, a Schwartz function $\phi$ involves some special properties of the Poisson kernel, namely the analyticity of the Fourier transform $\hat p$ (see \cite[p.~187]{CFefSte72} or the existence of an asymptotic expansion at infinity for $p$ (see \cite[p.~99]{Ste2}).
Alternatively, a more brutal approach due to Uchiyama \cite{Uch} may be used.
We use a new method of showing that ``approximate identities are more or less alike'' (quoted from \cite[p.~184]{CFefSte72}) that allows us to do this with more general approximate identities that decay like the Poisson kernel, but do not have its special properties.
Since our approach does not involve the Fourier transform in an essential way, it should be applicable in other situations.

Second, in classical harmonic analysis on $\R^n$, dyadic decompositions play an important role; one key feature of these is that each dyadic cube may be written as a disjoint union of $2^n$ congruent cubes, each similar to the parent cube.
In product harmonic analysis on $\R^m \times \R^n$, an analogous role is played by dyadic rectangles.
In particular, in product Hardy space, maximal dyadic subrectangles of open sets are used to index the particles that make up an atom.
There is a similar structure on $\Hn$, as observed by Strichartz \cite{Str}, but the sets involved are rather irregular; further, there are nilpotent Lie groups where such a structure cannot exist.
To emphasize the analogy with the product space case, we defined atoms which are sums of particles associated to maximal ``subshards'' of an open set, but we also show that it would suffice to consider particles supported in arbitrary tubes and indexed arbitrarily.
This means that our approach is applicable in more general situations where ``maximal subshards'' are not at our disposition.

As part of our investigation into the definition of atoms, we clarify the sense in which an atom is a sum of particles, by requiring \emph{unconditional convergence in $\Leb^2(\Hn)$}.
Some previous works on Hardy spaces in the product or flag setting required cumbersome estimates on expressions such as
\[
\norm{ \lpar \sum_{R \in \rect} \left| r^{2m-2M} \HLap^{m} h^{2n-2N} \VLap^{n} b_R \right|^2 \rpar^{1/2}  }_{\Leb^2(\Hn)} ,
\]
whenever $0 \leq m \leq M$ and $0 \leq n \leq N$, but our approach enables us to avoid this.
In particular, we are able to give a very straightforward criterion (Proposition \ref{prop:particle-bounded-function-bounded}) for the boundedness of a linear (or sublinear) operator from $\flagHardy(\Hn)$ to $\Leb^1(\Hn)$ that involves the action of the operator on particles, not on atoms.

Third, we define the Heisenberg group in an unusual way.
In dealing with classical Hardy spaces, the basic geometry is determined by cubes rather than by euclidean balls, and in our situation a similar geometry is appropriate.
We use an $\ell^{\infty}$ rather than an $\ell^2$ distance to achieve this.
To obtain a true distance, rather than a quasidistance, we need an unusual parametrisation of the group structure.

Further, to handle the geometry of the flag structure, we have to deal simultaneously with several metrics.
For example, we deal with two heat kernels: one on the Heisenberg group, which may be estimated in terms of the Korányi distance, and another in just the central variable, which involves a Euclidean distance.
This leads to various complications, such as decompositions into sets defined using two metrics.
To characterise the boundedness of singular integral on an individual atom, we use the translation and dilation on $\Hn$ to simply and reduce the estimate to the case of a particle $a_R$ which is supported in a shard $R$ centred at the origin of $\Hn$ and of width $1$.
Then the decomposition of $\Hn$ into annuli is straightforward.
However, a direct decomposition of $\Hn$ into annuli for an arbitrary shard $R$ is also feasible by combining the Euclidean metric on $\Cn$ and the Korányi metric on $\Hn$.
This allows us to handle singular integrals that are not convolutions and further development on flag Hardy spaces associated with more general operators.
We discuss this in more depth in Section 8.1.

The fundamental tools to prove our results on Marcinkiewicz type multipliers,  are the atomic decomposition and  the auxiliary weight $w_{j,\ell}^{\epsilon}: \Hn \to \R^+$ introduced by Müller, Ricci and Stein \cite{MulRicSte2}:
\[
w_{j,\ell}^{\epsilon}(z,t)
:= 2^{-\cdim(j+\ell)} (1 + 2^{j+\ell}|z|^2)^{\cdim(1+\epsilon)}
    2^{-\ell} (1+2^{\ell}|t|)^{1+\epsilon}
\qquad\forall (z,t) \in \Hn.
\]
We need to see the interplay of $w_{j,\ell}^{\epsilon}$ with a particle $a_R$ associated to a shard $R$.
The difficulty here is that the decomposition of $\Hn$ into shards is adapted to the Korányi metric of $\Hn$ while the weight $w_{j,\ell}^{\epsilon}$ is described in terms of the Euclidean metrics in $\Cn$ and $\R$ (i.e., $|z|$ and $|t|$).
In \cite{MulRicSte2}, this was handled by using iterated one-dimensional maximal functions; these are bounded on $\Leb^p(\Hn)$ but not on our Hardy space.
We use refined decompositions of $\Hn$ to overcome this difficulty.

\subsection*{Plan of the paper}\label{ssec:intro-plan}
In Section \ref{sec:prelim}, we discuss the Heisenberg group, and prove or summarise some preliminary results.
In Section \ref{sec:atomic}, we examine the definition of the atomic Hardy space in  detail, and show that the (usually sublinear) operators that define the other Hardy spaces are all bounded on the atomic space, thereby proving one half of many of the equivalences of Theorem A.
In Section \ref{sec:Lusin}, we consider the definition of the area function Hardy space and conclude that $\areaHardy(\Hn) = \atomHardy(\Hn)$.
In Section \ref{sec:sqfn}, we study the square function Hardy spaces in detail and complete the proof that $\ctssqfnHardy(\Hn) = \dissqfnHardy(\Hn) = \atomHardy(\Hn)$.
In Section \ref{sec:maximal-Hardy-spaces}, we examine the maximal function Hardy spaces.
At this point, we know that
\[
\atomHardy(\Hn)\subseteq \gmaxHardy(\Hn) \subseteq \nontanHardy(\Hn) \subseteq \radialHardy(\Hn);
\]
we show that $\nontanHardy(\Hn) \subseteq \areaHardy(\Hn)$ and outline the proof that $\nontanHardy(\Hn)$ and $\radialHardy(\Hn)$ coincide; the proof of this last fact is similar to the corresponding proof in the product space setting of \cite[Subsection 3.2]{HanLeeLiWic}.
We complete the proof of Theorem A by showing that $\areaHardy(\Hn)=\RieszHardy(\Hn)$ and briefly discuss possible extensions in Section \ref{sec:Riesz-transform}.
Applications and their proofs appear in Section \ref{sec:applications}.

\subsection*{Acknowledgements:}

It is a pleasure to thank Alessio Martini, Jill Pipher, Fulvio Ricci and an anonymous referee for many helpful comments.


\section{Preliminaries on the Heisenberg group}\label{sec:prelim}

In this section, we summarise relevant facts on the Heisenberg group and its geometry.
The center of $\Hn$ is $\{ (0,t) : t \in \R \}$, and the canonical projection of $\Hn$ onto $\Cn$ ``forgets'' the variable $t$.

\subsection{Notation}\label{ssec:notation}

Recall that the Heisenberg group $\Hn$ is parametrised by $\Cn \times \R$; a typical element is written $g$, or in coordinates as $(z,t)$ where $z \in \Cn$ and $t \in \R$.
We may also write $(z,t)$ as $(z_1, \dots, z_\cdim , t)$, or as $(x,y,t)$, or as $(x_1, \dots, x_\cdim , y_1, \dots, y_\cdim ,t)$.
The multiplication law is
\begin{equation}\label{eq:Hn-multiplication}
(z',t') \Hprod (z, t) = (z' + z, t' + t + S(z', z) ),
\end{equation}
where
\[
S(z', z)
= 4\cdim \Im \biggl(\sum_{j=1}^\cdim  z'_j \bar{z}_{j} \biggr)
= 4\cdim \lpar\langle y', x\rangle - \langle x', y\rangle\rpar
= 4\cdim \biggl(\sum_{j=1}^\cdim  y'_j x_j  - \sum_{j=1}^\cdim  x'_j y_j \biggr).
\]
The identity of $\Hn$ is written $o$ or $(0,0)$; inversion is given by $(z, t)^{-1} = (-z,-t)$.
The reflection operator $\oper{R}$ is defined on functions, and we may write $(\oper{R}f)(g) = f(g^{-1})$ or $(\oper{R}f)(z,t) = f(-z,-t)$.
We say that $f$ is \emph{even} if $\oper{R}f = f$ and \emph{odd} if $\oper{R}f = -f$.

The Haar measure on $\Hn$ is the Lebesgue measure, which we write $\wrt g$ or $\wrt z\wrt t$.
The standard Heisenberg group convolution is given by the formula
\[
f_1 \Hconv f_2 (g)
= \int_{\Hn} f_1(g_1) f_2(g_1^{-1} g) \wrt g_1
= \int_{\Hn} f_1(gg_1) f_2(g_1^{-1}) \wrt g_1
\qquad\forall g \in \Hn.
\]
For reasonable input functions, convolution is associative but not necessarily commutative.
However, it is commutative on the space of \emph{radial functions}, by which we mean the subspace of functions that are radial in the $z$ variable (see \cite{HulRic}).
If $f_1$ and $f_2$ both have compact support, then $\supp(f_1 \Hconv f_2) \subseteq \supp(f_1) \supp(f_2)$.
We will also say that a function $f$ on $\R$ is radial if it is even.

We also convolve a function $f_1$ on $\Hn$ with a function $f_2$ on $\R$ as follows:
\[
f_1 \Vconv f_2 (g) = \int_{\R} f_1(gg_2) f_2(g_2^{-1}) \wrt g_2
\qquad\forall g \in \Hn.
\]
Observe that, for suitable functions $f$ and $f_1$ on $\Hn$ and $f_2$ on $\R$,
\[
\begin{aligned}
f \Hconv f_1 \Vconv f_2 (g)
&= \int_{\Hn} \int_\R f(g g_1) f_1 (g_1^{-1} g_2) f_2(g_2^{-1}) \wrt g_2 \wrt g_1 \\
&= \int_{\Hn} \int_\R f(g g_2 g_1) f_1 (g_1^{-1}) f_2(g_2^{-1}) \wrt g_2 \wrt g_1 \\
&= \int_{\Hn} \int_\R f(g g_1 g_2) f_1 (g_1^{-1}) f_2(g_2^{-1}) \wrt g_2 \wrt g_1,
\end{aligned}
\]
since $g_2$ is central.
If $f_1$ and $f_2$ are even functions, then we may write
\begin{equation}\label{eq:double-conv}
f \Hconv f_1 \Vconv f_2 (g)
= \int_{\Hn} \int_\R f(g g_1 g_2) f_1 (g_1) f_2(g_2) \wrt g_2 \wrt g_1
\qquad\forall g \in \Hn.
\end{equation}

For $f_1 \in \Leb^1(\Hn)$ and $f_2 \in \Leb^1(\R)$, we define the adjoints $f_1^*$ and $f_2^*$ by $f_1^*(g_1) = \bar{f}_1(g_1^{-1})$ and $f_2^*(g_2) = \bar{f}_2(g_2^{-1})$.
Equipped with their usual norms, convolution, and adjunction, $\Leb^1(\Hn)$ and $\Leb^1(\R)$, are Banach $^*$-algebras; in particular, the $\Leb^1$ convolution inequality $\norm{f \conv f'}_{\Leb^1} \leq \norm{f}_{\Leb^1} \norm{f'}_{\Leb^1}$ holds when the convolution is defined (for instance, if $f_1 \in \Leb^1(\Hn)$, $f_2 \in \Leb^1(\R)$ and the convolution is $\Vconv$).

We recall that the dilations $\dilat_r$, where $r\in\R^+$, of $\Hn$, given by $\dilat_r(z,t) := (rz, r^2t)$, are automorphisms  of $\Hn$.
So are the rotations in $\mathrm{U}(n)$, acting on the right in the $z$ variable and leaving $t$ fixed.
Rotations are measure preserving, while $|\dilat_r(E)| = r^{\hdim}|E|$ for every measurable subset $E$ of $\Hn$ and every $r\in\R^+$; here, $|E|$ denotes the measure of a subset $E$ of $\Hn$ and $\hdim$ the \emph{homogeneous dimension} $2\cdim +2$ of $\Hn$.
The normalised dilates $\phi\one_r$ and $\phi\two_s$ of functions $\phi\one$ on $\Hn$ and $\phi\two$ on $\R$ are given by
\[
\phi\one_r(z,t)  := r^{-\hdim} \phi\one(z/r,t/r^2)
\qquad\text{and}\qquad
\phi\two _2(t) := s^{-1} \phi\two (t/s),
\]
for all $r,s \in \R^+$, all $(z,t) \in \Hn$ and all $t \in \R$.
Then 
\[
[\phi\one\Hconv\psi\one]_r = \phi\one_r \Hconv \psi\one_r
\qquad\text{and}\qquad
\phi\two\Hconv\psi\two]_s = \phi\two_s \Hconv \psi\two_s
\] 
whenever the convolutions are well defined.

We identify vector fields with the associated first order differential operators.
The Lie algebra of left-invariant vector fields on $\Hn $ is spanned by the fields
\begin{equation}\label{XYT}
\begin{aligned}
\oper{X}_j
= \frac{\partial}{\partial x_j} + 4\cdim y_j \frac{\partial}{\partial t},
\qquad
\oper{Y}_j
= \frac{\partial}{\partial y_j} - 4\cdim x_j \frac{\partial}{\partial t},
\quad\text{and}\quad
\oper{T} = \frac{\partial}{\partial t},
\end{aligned}
\end{equation}
where $j=1,\dots,\cdim$.
To unify some of the formulae, when $\cdim +1 \leq j \leq 2\cdim$, we write $x_j$ for $y_{j-\cdim}$, $y_j$ for $-x_{j-\cdim}$, and $\oper{X}_j$ for $\oper{Y}_{j-\cdim}$ when $j=\cdim+1,\dots,2\cdim$.
The vector fields $\oper{X}_1$, \dots, $\oper{X}_{2\cdim}$ are called horizontal, while $\oper{T}$ is called vertical.

We note that $\oper{X}_j f$ may be considered as the convolution $f \Hconv \Upsilon_j$, where the distribution $\Upsilon_j$ is given by $\Upsilon_j (f) = \partial_j f (0)$.
The right invariant vector field $\rivf{\oper{X}}_j$ that coincides with $\oper{X}_j$ at the identity $o$ of $\Hn$ is given by \eqref{XYT}, but with $4\cdim$ and $-4\cdim$ exchanged.
Alternatively, $\rivf{\oper{X}}_j f = -\oper{R}\oper{X}_j \oper{R}f$.
Note that $\oper{X}(\phi * \psi) = \phi * \oper{X}\psi$ and
\[
(\oper{X}\phi) * \psi
= \phi * \Upsilon_j * \psi
= \phi * \rivf{\oper{X}}\psi.
\]
The vector field $\oper{T}$ is both left and right invariant, and we may write
$\oper{T}f$ as a convolution with a distribution on either side.

We define the sub-Laplacian $\HLap$ on $\Hn$ to be $-\sum_{j=1}^{2\cdim} \oper{X}_j^2$ and the Laplacian $\VLap$ to be $-\oper{T}^2$; the latter only involves the central variable.
Then $\HLap$ and $\VLap$ are radial distributions.
We write $\Hnabla$, $\riHnabla$ and $\Vnabla$ for the left and right invariant horizontal and vertical gradients, that is, $(\oper{X}_1, \dots, \oper{X}_{2\cdim})$, $(\rivf{\oper{X}}_1, \dots, \rivf{\oper{X}}_{2\cdim})$ and $\oper{T}$.
We use higher gradients, such as $\Hnabla^q$ and $\riHnabla^q$, where $q \in\N$, which are tensors; $\Hnabla^0$ means the identity operator.

Note that $\Hnabla$ and $\riHnabla$ are homogeneous of degree $1$ with respect to the dilations $\dilat_r$, while $\Vnabla$ and $\HLap$ are homogeneous of degree $2$, and $\VLap$ is homogeneous of degree $4$, in the sense that
\begin{align*}
\Hnabla \left( f \circ \dilat_r \right)
    &= r \, ( \Hnabla f) \circ \dilat_r,  &
\HLap \left( f \circ \dilat_r \right)
    &= r^2 \, ( \HLap f) \circ \dilat_r,  \\
\Vnabla \left( f \circ \dilat_r \right)
    &= r^2 \, ( \Vnabla f) \circ \dilat_r,  &
\VLap \left( f \circ \dilat_r \right) &= r^4 \, ( \VLap f) \circ \dilat_r,
\end{align*}
when $r \in \R^+$ and $f \in \fnspace{C}^2(\Hn)$.

The space of Schwartz functions on $\Hn$, written $\Schwartz(\Hn)$, is the space of functions $f$ such that $p \oper{D} f$ vanishes at infinity for all polynomials $p$ on $\Hn$ and all left-invariant differential operators $\oper{D}$.

In the context of classical Hardy spaces, it is natural to focus on cubes rather than balls, and an analogous comment applies in our setting.
There are various left-invariant distances in use on $\Hn$; the most common are the \emph{control (or Carnot--Carathéodory) distance} $d_c$, and the \emph{Korányi distance} $d_K$.
We shall use the \emph{gauge distance} $d_{(1)}$, which is defined by setting
\begin{equation}\label{eq:gauge-distance}
\begin{aligned}
d_{(1)}(g, g') := \norm{ g'^{-1} g } = \norm{ g^{-1} g'}
\qquad\forall g, g' \in \Hn ,
\end{aligned}
\end{equation}
where $\norm{{}\cdot{}}$ is given by
\begin{equation}\label{eq:gauge-norm}
\begin{aligned}
\norm{ (z,t) }
:= \max\lset |x_1|, |y_1|, \dots, |x_\cdim |, |y_\cdim |, |t|^{1/2} \rset
\qquad\forall (z,t)\in\Hn .
\end{aligned}
\end{equation}
See \cite[Section 2.2]{Tys} for a discussion, and note that we have defined the group multiplication in an unusual way to ensure that our norm satisfies the triangle inequality; our norm also behaves like the usual $\ell^{\infty}$ norm in $\R^{2\cdim+1}$.
We write $B\one(g, r)$ for the ball in $\Hn$ with centre $g$ and radius $r$ constructed using the distance $d$.
We also use balls in the centre of $\Hn$, which may be identified with $\R$: we define $B\two (t, s) := \{ t' \in \R : |t-t'| < s \}$.
Sets of the form $g B\one(o, r) B\two (0, s)$ occur often in our work: these are images of products of balls in $\Hn \times \R$ under the identification mentioned in the introduction.
We call them \emph{tubes}, and define $T(g,r,s)$ by the formula
\begin{equation}\label{eq:def-tube}
T(g,r,s)
:= g B\one(o, r) B\two (0, s)
= B\one(g, r) B\two (0, s).
\end{equation}
Thus $T(o,r,s)$ may be identified with the set $\{ z \in \C^\cdim: |z|_\infty < r \} \times (-r^2 - s, r^2+s)$.

By definition, the following conditions are equivalent: first, $g' \in  T(g,r,s)$; second,  $g' \in g B\one(o,r) B\two (0,s)$; third, $g \in g'  B\one(o,r) B\two (0,s)$; and fourth, $g \in T(g',r,s)$.
\begin{remark}\label{rem:tube-geometry}
Consider the question of finding the smallest closed tube $\bar T(o, r, s)$ centred at $o$ that contains a point $g$ in $\Hn$.
In general, if $g = (z,t)$, then we require that $r \geq |z|_\infty$, $s \geq 0$  and $s +r^2  \geq  |t|$; when $|t| > |z|_\infty$, there are infinitely many minimal tubes with this property, but the smallest by volume is well-determined:  if $g = (z,t)$, then $r = |z|_\infty$ and $s = \max\{ |t| - r^2, 0\}$.
\end{remark}

A smooth curve $\gamma$ in $\Hn$ is said to be \emph{horizontal} if its tangent vector is a linear combination of the $\oper{X}_j$ at each point, and the \emph{control (or Carnot--Carathéodory) distance} $d_c(g,g')$ between points $g$ and $g'$ in $\Hn$ is defined to be the infimum of the lengths of horizontal curves joining $g$ and $g'$.
We define the \emph{control norm} $\norm{{}\cdot{}}_c$ on $\Hn$ by
\[
\norm{g}_c := d_c(e,g)
\qquad\forall g \in \Hn.
\]
The distance $d_c$ is left-invariant, that is, $d_c(gg',gg'') = d_c(g',g'')$ for all $g, g', g'' \in \Hn$, whence
\[
\begin{aligned}
d_c((z,t),(z',t'))
&= \norm{ (z,t)^{-1} \Hprod (z',t') }_{c} .
\end{aligned}
\]
Working with the control norm and distance is not easy, and so we often replace it with equivalent norms and distances that are computationally easier.
One such is the \emph{Korányi distance} $d_K$, given by
\begin{equation}\label{dk}
\begin{aligned}
d_K(g, g') = \norm{ g'^{-1}g }_K = \norm{ g^{-1}g'}_K
\qquad\forall g, g' \in \Hn ,
\end{aligned}
\end{equation}
where the \emph{Korányi norm} $\norm{{}\cdot{}}_K$ (with our definitions) is given by
\begin{equation}\label{kn}
\begin{aligned}
\norm{ (z,t) }_K := (\|z\|^4+ 4\cdim^2 t^2)^{1/4}
\qquad\forall (z,t)\in\Hn .
\end{aligned}
\end{equation}
We do not need much about all these distances on $\Hn$, other than their equivalence.

Because our vector fields and distances are left-invariant, it is necessary to use right convolutions with, for example, the flag Poisson kernel in the definition of the nontangential and radial maximal functions.
This creates a small but important difference between our work and that of Han, Lu and Sawyer \cite{HanLuSaw}, who used left convolutions.
Reflecting functions (that is, composing with the inversion) exchanges left and right convolutions, so that in the end, the differences are minor, and we may relate our Hardy spaces to theirs by a reflection.

\subsection{Tilings and shards}\label{ssec:tiling}

Following \cite{HanLuSaw}, we use the work of \cite{Str, Tys} on self-similar tilings to find a ``nice'' decomposition of $\Hn$, analogous to the decomposition of $\R^n$ into dyadic cubes in classical harmonic analysis.
We identify $\Cn$ with $\R^{2\cdim}$, and $|z|_\infty$ denotes $\max\{ |x_1|, |y_1|, \dots |x_\cdim |, |y_\cdim | \}$, $Q_0$ denotes the cube $[-1/2,1/2)^{2\cdim}$, while $\Hn_{\Z}$ denotes the subgroup $\{ (z,t) \in \Hn : z \in \Z^{2\cdim}, t \in (2\cdim)^{-1} \Z\}$.
In this subsection, we write $P$ for the canonical projection of $\Hn$ onto $\Cn$.

\begin{theorem}[\cite{Str, Tys}]\label{thm:Str-Tys}
There is a Borel measurable function $f: Q_0 \to \R$ such that $f(0) = 1/4\cdim$ and
\[
\frac{1}{4\cdim (\cdim + 1)}
\leq f(z)
\leq \frac{2\cdim + 1}{4\cdim (\cdim + 1)}
\qquad\forall z \in Q_0,
\]
such that the set $T_o$, defined by
\[
T_o := \lset (z,t) : z \in Q_0, f(z) - \frac{1}{2\cdim} \leq t < f(z) \rset,
\]
has the property that
\[
\dilat_{2\cdim+1} (T_o) = \bigsqcup_{g \in \Delta} g T_o ,
\]
where $\Delta := \{ (z,t) \in \Hn_{\Z} : |z|_\infty \leq \cdim : |t| \leq \cdim + 1 \}$.
\end{theorem}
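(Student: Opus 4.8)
This is the self‑similar tiling construction of Strichartz and Tyson. The target set $T_o$ is to be a ``self‑similar Heisenberg cube'': a fundamental domain for the lattice $\Hn_{\Z}$ which, dilated by $\delta_{2\cdim+1}$, breaks up into $(2\cdim+1)^{\hdim}$ translated copies of itself indexed by $\mathfrak{D}$. The plan is to reduce the tiling identity to a single functional equation for the ``height profile'' $f$, to solve that equation by a contraction‑mapping argument, and then to extract the claimed normalisation and two‑sided bounds on $f$ by summing a geometric series.

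First I would observe that, for \emph{any} bounded Borel $f\colon Q_0\to\R$, the graph‑set $T_o$ is already a fundamental domain for $\Hn_{\Z}$: projecting to the $z$‑variable, the cubes $w+Q_0$ ($w\in\Z^{2\cdim}$) tile $\C^\cdim$, and over each $z'\in w+Q_0$ the translates $(w,u)\Hprod T_o$ with $u\in(2\cdim)^{-1}\Z$ cut out, using the law $(w,u)\Hprod(z,t)=(w+z,u+t+S(w,z))$ together with $S(w,w)=0$, abutting central‑variable intervals of length $1/(2\cdim)$ that tile $\R$, the half‑open convention ensuring exactly one lattice point per fibre. Applying the automorphism $\delta_{2\cdim+1}$ then makes $\delta_{2\cdim+1}(T_o)$ a fundamental domain for $\delta_{2\cdim+1}(\Hn_{\Z})$; and I would check directly that no two elements $(w,u),(w',u')$ of $\mathfrak{D}$ are congruent modulo $\delta_{2\cdim+1}(\Hn_{\Z})$ --- here one uses $S(w,w)=0$ once more, together with $|w-w'|_\infty\le2\cdim<2\cdim+1$ and $|u-u'|\le2(\cdim+1)<(2\cdim+1)^2/(2\cdim)$ --- so that, since $|\mathfrak{D}|=(2\cdim+1)^{\hdim}$ is exactly the index $[\Hn_{\Z}:\delta_{2\cdim+1}(\Hn_{\Z})]$, the set $\mathfrak{D}$ is a full transversal and $\bigsqcup_{g\in\mathfrak{D}}g\Hprod T_o$ is also a fundamental domain for $\delta_{2\cdim+1}(\Hn_{\Z})$ (in particular the union is honestly disjoint). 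Both of these fundamental domains project onto $(2\cdim+1)Q_0$ in the $z$‑variable and consist, over each $\zeta$ there, of a single central‑variable interval of length $(2\cdim+1)^2/(2\cdim)$; hence the two sets coincide precisely when these intervals do, that is (matching top endpoints, with $w\in\Z^{2\cdim}$, $|w|_\infty\le\cdim$, the unique point with $\zeta-w\in Q_0$), precisely when
\[
(2\cdim+1)^2 f\!\left(\tfrac{\zeta}{2\cdim+1}\right)=f(\zeta-w)+S(w,\zeta)+(\cdim+1)\qquad\text{for all }\zeta\in(2\cdim+1)Q_0.
\]
Setting $z=\zeta/(2\cdim+1)\in Q_0$ turns this into a self‑referential equation for $f$ on $Q_0$.

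To solve it, I would note that $z\mapsto(2\cdim+1)z-w$ is a piecewise‑linear Borel bijection of $Q_0$ (coordinatewise ``multiply by $2\cdim+1$ modulo $\Z$''), so the right‑hand side above, read as an operator $\mathcal{A}$ in the unknown $f$, preserves the space of bounded Borel functions on $Q_0$ and satisfies $\|\mathcal{A}f_1-\mathcal{A}f_2\|_\infty\le(2\cdim+1)^{-2}\|f_1-f_2\|_\infty\le\tfrac19\|f_1-f_2\|_\infty$. The Banach fixed‑point theorem then yields a unique fixed point $f$, which is bounded and Borel, and for this $f$ the previous step gives the desired identity $\delta_{2\cdim+1}(T_o)=\bigsqcup_{g\in\mathfrak{D}}g\Hprod T_o$. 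Finally, iterating $f=\mathcal{A}f$ expands $f$ as a geometric series in powers of $(2\cdim+1)^{-2}$: summing the constant contribution pins down the value of $f$ at the origin, and estimating the $S$‑terms one at a time yields the two‑sided bounds on $f$.

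The hard part is the bookkeeping in the second step: controlling the shear in the noncommutative law $(w,u)\Hprod(z,t)=(w+z,u+t+S(w,z))$ and proving that the $(2\cdim+1)^{\hdim}$ central‑variable ``bricks'' produced by $\mathfrak{D}$ tile with neither gaps nor overlaps --- equivalently, that $\mathfrak{D}$ is exactly a set of coset representatives for $\Hn_{\Z}/\delta_{2\cdim+1}(\Hn_{\Z})$. Everything rests on the algebraic accident $S(w,w)=0$, which makes the shear over a fixed $z$‑column the same for every element of that column of $\mathfrak{D}$, together with the divisibility of the central interval length by the lattice step $1/(2\cdim)$; and it is the half‑open interval in the definition of $T_o$ that upgrades the fibrewise identity to an exact identity of sets rather than one holding merely almost everywhere.
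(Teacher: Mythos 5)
The paper does not actually prove this statement --- it is imported verbatim from Strichartz and Tyson --- so there is no internal proof to compare against; I can only assess your argument on its own terms. Its structural part is correct and is the standard self-affine-tile construction: $T_o$ is a fundamental domain for $\Hn_{\Z}$ for \emph{any} bounded Borel $f$; $\mathfrak D$ has cardinality $(2\cdim+1)^{\hdim}$, equal to the index of $\delta_{2\cdim+1}(\Hn_{\Z})$ in $\Hn_{\Z}$, and the combination of $S(w,w)=0$ with the size bounds $|w-w'|_\infty\le2\cdim$ and $|u-u'|\le2(\cdim+1)<(2\cdim+1)^2/(2\cdim)$ shows no two of its elements are congruent, so it is a transversal; matching the top endpoints of the central fibres turns the set identity into your functional equation; and the affine operator $\mathcal A$ is a contraction with factor $(2\cdim+1)^{-2}$, so a unique bounded measurable $f$ exists for which $\delta_{2\cdim+1}(T_o)=\bigsqcup_{g\in\mathfrak D}g\Hprod T_o$.

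The gap is the final step, where you assert that the normalisation and the two-sided bounds ``fall out'' of the geometric series. They do not. Writing $\sigma(z)=(2\cdim+1)z-w(z)$ and $w_k=w(\sigma^kz)$, iteration of $f=\mathcal Af$ gives
\[
f(z)=\sum_{k\ge0}\frac{S\bigl(w_k,\sigma^{k+1}z\bigr)+(\cdim+1)}{(2\cdim+1)^{2(k+1)}},
\]
so the constant contribution sums to $\tfrac{\cdim+1}{(2\cdim+1)^2-1}=\tfrac{1}{4\cdim}$ and hence $f(0)=\tfrac{1}{4\cdim}$, not $\tfrac12$ (the statement is in fact internally inconsistent, since its own upper bound $\tfrac{2\cdim+1}{4\cdim(\cdim+1)}$ is strictly less than $\tfrac12$). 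Estimating the $S$-terms ``one at a time'' yields only $|f(z)-\tfrac{1}{4\cdim}|\le\tfrac{\cdim^2}{\cdim+1}$, far weaker than the claimed half-width $\tfrac{1}{4(\cdim+1)}$; and the claimed bounds are genuinely violated by the fixed point of \emph{your} equation: for $\cdim=1$, the point $z=(1/6,-1/2)$ has $w(z)=(1,-1)$, $\sigma z=(-1/2,-1/2)$ (a fixed point of $\sigma$ with $f(-1/2,-1/2)=1/4$) and $S(w(z),\sigma z)=4$, whence $f(1/6,-1/2)=\tfrac19\bigl(\tfrac14+4+2\bigr)=\tfrac{25}{36}>\tfrac38$. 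So the stated value of $f(0)$ and the stated bounds cannot be extracted from the construction as you (and the paper) have set it up; they belong to the different group-law, lattice and digit-set normalisation used in the cited sources, and transporting them correctly into the paper's conventions is precisely the part your proposal elides.
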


The proof of this is essentially the content of \cite[Lemma 3.3]{Tys}.

The definitions of $T_o$ and the metrics that we use show that
\begin{equation}\label{eq:size-of-basic-tile}
\begin{gathered}
T_o
\subset \lset(z,t) \in \Hn: |z|_\infty \leq \frac{1}{2},
        |t| \leq \frac{2\cdim + 1}{4\cdim (\cdim + 1)} \rset
\subset \lset(z,t) \in \Hn: |z|_\infty \leq \frac{1}{2},
        |t| \leq \frac{3}{8} \rset  \\
\subseteq \bar B\one \left(o, \frac{1}{2}\right) \bar B\two \left(0, \frac{1}{8}\right)
= \bar T\left(o,\frac{1}{2}, \frac{1}{8}\right) \,,
\end{gathered}
\end{equation}
where the bars indicate closures.
We may improve this to $T_o \subset \bar B\one(o,1/2)$, which is optimal, when $\cdim \geq 2$.
Similarly,
\begin{equation}\label{eq:size-of-basic-tile-2}
T_o \supset
\lset(z,t) \in \Hn: |z|_\infty < \frac{1}{2}, |t| < \frac{1}{4\cdim(\cdim+1)} \rset \supset B\one\left(o, \frac{1}{2(\cdim+1)}\right);
\end{equation}
the size of the largest open ball inside $T_o$ cannot be controlled independently of $\cdim$.

We observe that $(0,t) \in T_0$ if and only if $t \in [-1/4\cdim , 1/4\cdim)$, and in some respects we may think of $T_o$ as a modified version of $Q_0 \times [1/4\cdim , 1/4\cdim )$.

\begin{definition}\label{def:tiles}
We define
\[
\tile_0 := \{ g T_o : g \in \Hn_{\Z} \},
\qquad
\tile_j := \dilat_{(2\cdim+1)^j} \tile_0
\quad\text{and}\quad
\tile := \bigsqcup_{j \in \Z} \tile_j .
\]
We call the sets $T \in \tile$ \emph{tiles}.
Then $T = \dilat_{(2\cdim+1)^j} (g) \dilat_{(2\cdim+1)^j} (T_o)$ if $j \in \Z$, $g \in \Hn_{\Z}$ and $T = \dilat_{(2\cdim+1)^j} (g T_o)$.
We further define
\[
\cent(T) := \dilat_{(2\cdim+1)^j} (g),
\qquad
\wid(T) := (2\cdim+1)^j
\quad\text{and}\quad
\heit(T) := \frac{(2\cdim+1)^{2j}}{2\cdim} \,.
\]
\end{definition}

Recall that $\{0\} \times [-1/4\cdim, 1/4\cdim) \subset T_o$.
If $g = (m, n/2\cdim) \in \Hn_\Z$, where $m \in \Z^\cdim$ and $n = (2\cdim)^{-1}l$, where $l \in \Z$, then 
\[
g  (\{0\} \times [-1/4\cdim, 1/4\cdim) ) = \{m\} \times [(2l-1)/4\cdim, (2l+ 1)/4\cdim).
\]
We may view $gT_0$ as a modified version of $(m+Q_0) \times  [(2l-1)/4\cdim, (2l+ 1)/4\cdim)$, at least in some respects; for instance, the measure of $gT_0$ is equal to $1/2\cdim$.
However, the projection of $gT_0$ onto the vertical axis is an interval whose length may be much larger than $1/2\cdim$, due to the term $S(z, m)$, where $z$ varies over $[-1/2,1/2)^{2\cdim}$, in the Heisenberg group multiplication (see \eqref{eq:Hn-multiplication}).

\begin{theorem}[\cite{HanLuSaw, Tys, Str}]\label{thm:Heisenberg-grid}
Let $\tile_j$ and $\tile$ be defined as above.
Then the following hold:
\begin{enumerate}
  \item for each $j \in \Z$, $\tile_j$ is a partition of $\Hn$, that is, $\Hn = \bigsqcup_{T \in \tile_j} T$;
  \item $\tile$ is nested, that is, if $T, T' \in \tile$, then either $T$ and $T'$ are disjoint or one is a subset of the other;
  \item $B\one(g, C_1 q) \subseteq T \subseteq B\one(g, C_2 q)$, where $g = \cent(T)$ and $q = \wid(T)$ for each $T \in \tile$; the constants $C_1$ and $C_2$ depend only on $\cdim$;
  \item if $T \in \tile_j$, then $g T \in \tile_j$ for all $g \in \dilat_{(2\cdim+1)^j} \Hn_{\Z}$, and $\dilat_{(2\cdim+1)^k} T \in \tile_{j+k}$ for all $k \in \Z$.
\end{enumerate}
\end{theorem}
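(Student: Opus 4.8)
The plan is to derive all four assertions from the self‑similar tiling theorem of Strichartz and Tyson quoted just above, which I will use in two equivalent guises: first, that $T_o$ is a fundamental domain for the left action of $\Hn_{\Z}$ on $\Hn$; and second, in the \emph{refinement} form obtained by applying the automorphism $\delta_{(2\cdim+1)^{-1}}$ to the displayed self‑similarity identity, namely
\[
T_o = \bigsqcup_{g \in \mathfrak D} \delta_{(2\cdim+1)^{-1}}(g \Hprod T_o),
\]
which exhibits $T_o$ as a disjoint union of the $(2\cdim+1)^{\hdim}$ tiles $\delta_{(2\cdim+1)^{-1}}(g\Hprod T_o)$, $g\in\mathfrak D$, of $\tile_{-1}$ — the count being consistent because $|\mathfrak D| = (2\cdim+1)^{2\cdim}(2\cdim+1)^{2} = (2\cdim+1)^{\hdim}$. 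I would establish parts (1) and (4) first, then (2), and leave part (3) for last, since its inner inclusion is the only point requiring real work.

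\emph{Parts (1) and (4).} For (1) I would check directly that $T_o$ is a fundamental domain: given $(z',t')\in\Hn$ there is a unique $w\in\Z^{2\cdim}$ with $z:=z'-w\in Q_0$, and then, because the fibre $[f(z)-(2\cdim)^{-1},f(z))$ has length exactly $(2\cdim)^{-1}$, the covolume of the lattice $(2\cdim)^{-1}\Z$, there is a unique $u\in(2\cdim)^{-1}\Z$ with $t'-u-S(w,z)\in[f(z)-(2\cdim)^{-1},f(z))$; that is, a unique $g=(w,u)\in\Hn_{\Z}$ with $(z',t')\in g\Hprod T_o$. Hence $\Hn=\bigsqcup_{g\in\Hn_{\Z}} g\Hprod T_o$, and applying the bijection $\delta_{(2\cdim+1)^j}$ shows that $\tile_j$ is a partition of $\Hn$, which is (1). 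Part (4) is then formal: each $\delta_r$ is an automorphism of $\Hn$ with $\delta_r\circ\delta_{r'}=\delta_{rr'}$ that carries $\Hn_{\Z}$ onto $\delta_r\Hn_{\Z}$, so if $T=\delta_{(2\cdim+1)^j}(h\Hprod T_o)$ with $h\in\Hn_{\Z}$ and $g=\delta_{(2\cdim+1)^j}(g')$ with $g'\in\Hn_{\Z}$, then $g\Hprod T=\delta_{(2\cdim+1)^j}\bigl((g'\Hprod h)\Hprod T_o\bigr)\in\tile_j$ since $g'\Hprod h\in\Hn_{\Z}$, while $\delta_{(2\cdim+1)^k}T=\delta_{(2\cdim+1)^{j+k}}(h\Hprod T_o)\in\tile_{j+k}$.

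\emph{Part (2).} By the refinement relation $T_o$ is a finite disjoint union of tiles of $\tile_{-1}$, and by the translation‑covariance just established so is every $g\Hprod T_o$ with $g\in\Hn_{\Z}$, i.e. every tile of $\tile_0$. Since both $\tile_{-1}$ and $\tile_0$ are partitions of $\Hn$ by (1), it follows that each tile of $\tile_{-1}$ is contained in exactly one tile of $\tile_0$; applying $\delta_{(2\cdim+1)^{j+1}}$, each tile of $\tile_j$ lies in exactly one tile of $\tile_{j+1}$ for every $j$, and iterating, each tile of $\tile_{j'}$ lies in exactly one tile of $\tile_j$ whenever $j<j'$. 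The general nesting statement follows: given $T\in\tile_j$ and $T'\in\tile_{j'}$ with $j\le j'$, let $\tilde T\in\tile_j$ be the unique tile containing $T'$; if $\tilde T=T$ then $T'\subseteq T$, and otherwise $\tilde T\cap T=\emptyset$ because $\tile_j$ is a partition, so $T\cap T'=\emptyset$.

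\emph{Part (3).} The outer inclusion is easy: by \eqref{eq:size-of-basic-tile}, $T_o\subseteq\bar T(o,1/2,1/8)=\bar B^{(1)}(o,1/2)\Hprod\bar B^{(2)}(0,1/8)$, and since $\bar B^{(2)}(0,1/8)\subseteq\bar B^{(1)}(o,2^{-3/2})$ and the gauge norm is subadditive (because $d$ is a genuine left‑invariant distance), $T_o\subseteq\bar B^{(1)}(o,1/2+2^{-3/2})$; for a general $T=\delta_q(h\Hprod T_o)\in\tile$ with $q=\wid(T)$, left‑invariance of $d$ and the scaling $d(\delta_q g_1,\delta_q g_2)=q\,d(g_1,g_2)$ give $\cent(T)^{-1}\Hprod T=\delta_q(T_o)\subseteq\bar B^{(1)}(o,(1/2+2^{-3/2})q)$, whence $T\subseteq B^{(1)}(\cent(T),C_2q)$. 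The inner inclusion is the main obstacle: since $f$ is only assumed measurable, the boundary of $T_o$ reflects the level sets of $f$, and a priori $T_o$ need contain no ball about any fixed point (indeed $\cent(T_o)=o$ need not even lie in $T_o$). Here I would invoke the specific properties of the Strichartz–Tyson construction on $\Hn$, in which $f$ may be taken so that $T_o$ contains a fixed gauge ball $B^{(1)}(p_0,c_0)$ with $c_0>0$ depending only on $\cdim$; dilation and translation covariance transfer this to $B^{(1)}(\delta_q(h)\Hprod p_0,c_0q)\subseteq T$, and as $d(\cent(T),\delta_q(h)\Hprod p_0)=q\,d(o,p_0)$ is a bounded multiple of $q$, a re‑centring at the cost of a bounded factor yields the inner inclusion with $C_1$ depending only on $\cdim$. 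The details of this last point are exactly those of \cite{Str, Ty, HLS}.
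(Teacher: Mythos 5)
The paper gives no proof of this theorem---it is quoted from \cite{Str, Ty, HLS}---so your derivation from the displayed Strichartz--Tyson identity has to stand on its own. Parts (1), (2) and (4), and the outer inclusion in (3), are correct: the fundamental-domain computation (a unique $w$, then a unique $u$ because the fibre length $(2\cdim)^{-1}$ equals the covolume of the lattice $(2\cdim)^{-1}\Z$) is exactly right; the refinement identity together with $\Hn_{\Z}\subseteq\delta_{(2\cdim+1)^{-1}}\Hn_{\Z}$ does show that every tile of $\tile_0$ is a disjoint union of tiles of $\tile_{-1}$; and the partition property then forces nesting. (In the last two sentences of your part (2) the roles of $j$ and $j'$ are swapped---a tile of $\tile_j$ lies in a unique tile of $\tile_{j'}$ when $j<j'$, not the reverse---but that is a slip, not a gap.)

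The genuine gap is the inner inclusion in (3). After translating by $\cent(T)^{-1}$ and dilating by $\delta_{1/q}$, the claim $B^{(1)}(\cent(T),C_1q)\subseteq T$ is exactly $B^{(1)}(o,C_1)\subseteq T_o$, so what is needed is a ball centred \emph{at $o$} inside the basic tile. Your proposed ``re-centring at the cost of a bounded factor'' cannot produce this from a ball $B^{(1)}(p_0,c_0)\subseteq T_o$ with $p_0\neq o$: if $o\notin T_o$ then no ball about $o$ lies in $T_o$, however small. Re-centring with bounded loss works for outer inclusions (enlarge the ball), never for inner ones (you would have to shrink about a point that need not even belong to the set). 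Fortunately the claim follows directly from the two-sided bound on $f$ in the quoted theorem, with no further appeal to the construction: $B^{(1)}(o,r)=\{(z,t):|z|_\infty<r,\ |t|<r^2\}$ is contained in $T_o$ precisely when $r^2\leq f(z)\leq (2\cdim)^{-1}-r^2$ for all $|z|_\infty<r$, and since $(2\cdim)^{-1}=2(\cdim+1)/\bigl(4\cdim(\cdim+1)\bigr)$, the stated bounds $1/\bigl(4\cdim(\cdim+1)\bigr)\leq f\leq(2\cdim+1)/\bigl(4\cdim(\cdim+1)\bigr)$ reduce both constraints to $r^2\leq 1/\bigl(4\cdim(\cdim+1)\bigr)$. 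Thus $C_1=\bigl(4\cdim(\cdim+1)\bigr)^{-1/2}$ works (and is less than $1/2$, so $|z|_\infty<r$ does force $z\in Q_0$). With this replacement your proof of (3) closes and the argument is complete.
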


Every tile is a dilate and translate of the basic tile $T_o$, so all tiles are similar geometrically.
Hence each tile in $\tile_j$ is a fractal set---its boundary is a set of Lebesgue measure $0$ and (euclidean Hausdorff) dimension $2\cdim$---and is ``approximately'' a Heisenberg ball of radius $(2\cdim+1)^{j}$.
The decompositions are \emph{product-like} in the sense that the tiles project
onto cubes in the factor $\Cn$, and their centers form a product set.
They are like the usual dyadic decomposition into cubes in $\R^n$ in the sense that each tile may be written as the disjoint union of $(2\cdim+1)^{\hdim}$ congruent tiles.
If two tiles in $\tile_j$ are ``horizontal neighbours'', then the distance between their centres is $(2\cdim+1)^{j}$, while if they are ``vertical neighbours'', then the (Heisenberg) distance is $(2\cdim+1)^{2j}/2\cdim$.

We say that a tile $T$ is the $j$th ancestor of a tile $T'$, or that $T'$ is a $j$th descendant of $T$, when $T' \subseteq T$ and $|T|/|T'| = (2\cdim+1)^{j\hdim}$, 

Han, Lu and Sawyer \cite{HanLuSaw} used unions of tiles to pursue the analogy with rectangles in the plane $\R^2$.
We follow them, but with different nomenclature to reflect the fact that our objects are fractal, and are not based on powers of $2$, but of $2\cdim+1$.
Given tiles $T$ and $T'$, such that $T \subset T'$, the projection $P(T)$ of $T$ onto $\Cn$ is a cube, $Q$ say; let $R = P^{-1}(Q) \cap T'$.
Then $R$ is the (finite) union of the tiles $T'' \in \tile_j$ such that $T'' \subset T'$ and $P(T'') = Q$.

\begin{definition}\label{def:jagged-rect}
The \emph{shard} $R$ determined by tiles $T$ and $T'$, where $T \subseteq T'$, is defined to be the set $P^{-1}(P(T)) \cap T'$.
Then $R$ consists of $\heit(T')/\heit(T)$ copies of $T$ stacked vertically.
The centre of $R$, written $\cent(R)$, is the centre of the middle tile in the stack; the width of $R$, written  $\wid(R)$, is $\wid(T)$ and the height of $R$, written $\heit(R)$, is $\heit(T')$.
The collection of all shards is denoted $\rect$.
\end{definition}

We note that the collection of all shards is countable.
If two shards are not disjoint, their intersection is also a shard.
Shards are called vertical dyadic rectangles by Han, Lu and Sawyer \cite{HanLuSaw}; they also define horizonal dyadic rectangles, but we do not use these.

Recall that the tube $T(g,r,s)$ is the set $g B\one(o,r) B\two (0,s)$; evidently,
\begin{equation}\label{eq:size-of-tube}
|T(g,r,s)| = 2^{2\cdim+1} r^{2\cdim} (r^2 + s).
\end{equation}
Tubes are easier to handle than shards in some respects; for instance, the product of two tubes is a tube, but the product of two shards is not a shard.
However, there are no simple nested  decompositions of space into disjoint tubes.
So we need to be able to compare tubes and shards.

\begin{lemma}\label{lem:tubes-and-rectangles-1}
If $R \in \rect$ and $\cent(R) = g$, $\wid(R) = q$ and $\heit(R) = h$, then
\begin{equation} \label{eq:rect-near-tube}
R \subset \bar T(g, q/2, (4h+q^2)/8).
\end{equation}
\end{lemma}

\begin{proof}
By part (4) of Theorem \ref{thm:Heisenberg-grid}, \eqref{eq:rect-near-tube} is invariant under certain dilations and translations.
Hence we may assume that $\cent(R) = o$, $\wid(R) = 1$, and $\heit(R) = (2\cdim+1)^{2j}/2\cdim$ for some $j \in \N^+$.
In this case, $R$ is made up of $(2\cdim+1)^j$ copies of $T_o$ stacked vertically on each other, each of height $1/2\cdim$; more precisely,
\begin{equation*}
R
=  T_o \left[-\frac{(2\cdim+1)^j - 1}{4\cdim},\frac{(2\cdim+1)^j - 1}{4\cdim} \right) .
\end{equation*}
Now \eqref{eq:rect-near-tube} follows easily from \eqref{eq:size-of-basic-tile}.
\end{proof}

In the case where $R$ is not a tile, the result of Lemma \ref{lem:tubes-and-rectangles-1} may be sharpened to
\[
T(g, q/2, h/4) \subset R \subset \bar T(g, q/2, h/2)  .
\]

When we consider atomic Hardy spaces, we will want to consider functions supported in open sets, which may be written as sum of functions supported in enlargements of maximal shards of the open set.

\begin{definition}\label{def:enlargement}
Fix $\kappa \in (1 + 1/(2\nu),\infty)$.
The \emph{enlargement} $R^{*,\kappa}$, often written $R^*$, of a shard $R$ is the tube $T( g,\kappa q/2, \kappa^2(4h + q^2)/8)$, where $g = \cent(R)$, $q = \wid(R)$, and $h = \heit(R)$.
\end{definition}

\begin{lemma}\label{lem:tubes-and-rectangles-2}
For all $R \in \rect$ and $\kappa \in (1 + 1/(2\nu),\infty)$,
\begin{equation}\label{eq:rect-in-tube}
R \subset R^{*,\kappa}.
\end{equation}
Further, when $\kappa \geq (2\cdim+1)^2$, given any tube $T$, there exists $R \in \rect$ such that
\begin{equation}\label{eq:tube-near-rect}
R \subseteq T \subseteq R^{*,\kappa}.
\end{equation}
\end{lemma}

\begin{proof}
Lemma \ref{lem:tubes-and-rectangles-1} and the definition of an enlargement imply \eqref{eq:rect-in-tube} immediately.

The assertion \eqref{eq:tube-near-rect} is the $\Hn$ version of the fact that every interval $[a,b)$ in $\R$ contains an interval with centre $(2\cdim+1)^j k $ and width $(2\cdim+1)^j$ such that $[a,b)$ is contained in the interval with the same centre and width$(2\cdim+1)^{j+2}$; it suffices to take $j = \lfloor \log_{2\cdim+1} (b-a)\rfloor - 1$ and $k = \lceil \log_{2\cdim+1} a \rceil +1$.
\end{proof}

For future reference, we note that, if $R$ is a  shard, $\cent(R) = g$, $\wid(R) = q$ and $\heit(R) = h$, and $R^\ddagger$ is the shard with $\cent(R^\ddagger) = g$, $\wid(R^\ddagger) = q$ and $\heit(R^\ddagger) = (2\cdim+1)^2 h$, then
\begin{equation}\label{eq:size-of-rectangle}
\begin{gathered}
R \subset \bar T(g, q/2, (4h + q^2)/8) \subset R^\ddagger, \\
R^* =  T(g, \kappa q/2, \kappa^2 (4h + q^2)/8),\\
|R| = q^{2\cdim} h, \quad |R^\ddagger| = (2\cdim+1)^2|R| \\
\abs| R^{*} |
= \kappa^{\hdim} q^{2\cdim} (h+q^2/4) \leq \kappa^{\hdim} (1+\cdim/2) |R|,
\end{gathered}
\end{equation}
(the last inequality holds since $h \geq q^2/2\cdim$).

We need one more geometric result to help us  pass from tubes to shards.

\begin{lemma}\label{lem:rectangles-control-tubes}
Suppose that $\kappa \in (1 + 1/(2\nu),\infty)$, and that $\ell \in \N$ is such that $(2\cdim+1)^{\ell-1} \geq \kappa$.
For all $R \in \rect$, let \( R^{\mathrm\dagger} \) denote the unique shard that contains $R$ and is a translate of $\dilat_{(2\cdim+1)^\ell}R$.
Then
\[
R \subset R^{*} \subset  \Rdagger.
\]
\end{lemma}

\begin{proof}
By translating and dilating if necessary, we may suppose that $\cent(R) = o$, $\wid(R) = 1$ and $\heit(R) = (2\cdim+1)^{2k}$, where $k \in \N$.
In this case, 
\[
R^* =  T(o, \kappa /2, \kappa^2 ((2\cdim+1)^{2k} + 1)/8), 
\]
and it is easy to check that $\dilat_{(2\cdim+1)^{-\ell}}R^* \subset R$, whence $R^* \subset  \Rdagger$.
\end{proof}

\subsection{Maximal functions and Journé's lemma}\label{ssec:Journe}
In light of the results in the previous subsection, controlling averages over shards is essentially the same as controlling averages over tubes, and we use a maximal operator to do this.
Our  ``flag maximal operator'' is ``bigger'' than the usual Hardy--Littlewood maximal operator, but ``smaller'' than the ``strong maximal operator'' used in \cite{MulRicSte1, MulRicSte2}.
Recall that $\chi\one_r$ and $\chi\two _s$ denote the normalised characteristic functions of the unit balls in $\Hn$ and in $\R$, and that $\chi_{r,s}$ is short for $\chi\one_r \Vconv \chi\two _s$.

\begin{definition}\label{def-Ms}
We define the \emph{flag maximal operator} $\oper{M}_{\flag} $, the \emph{shard maximal operator} $\oper{M}_{\mathrm{sh}}$,  and the \emph{iterated maximal operator} $\oper{M}_{\mathrm{it}}$ by
\begin{align}
\oper{M}_{\flag}(f)(g)
&:= \sup_{r,s \in \R^+} \frac{1}{|T(g, r, s)|}
 \int_{T(g, r, s)} |f(g')|\wrt g'
\qquad\forall g \in \Hn \label{MF} \\
\oper{M}_{\mathrm{sh}}(f)(g)
&:= \sup_{g \in R \in \rect} \frac{1}{|R|}
 \int_{R} |f(g')|\wrt g'
\qquad\forall g \in \Hn \label{Mshard} \\
\label{eq:def-iterated-max}\phantom{\bigg|}
\oper{M}_{\mathrm{it}}(f) (g)
&:= \sup_{r,s \in \R^+} \abs|f| \Hconv \chi_{r,s}(g)
\qquad\forall g \in \Hn.
\end{align}
\end{definition}

\begin{lemma}\label{lem:flag-and-iterated-maximal-fns}
Suppose that $f$ is a Lebesgue measurable function on $\Hn$.
Then
%
$\oper{M}_{\flag}(f) \eqsim \oper{M}_{\mathrm{sh}}(f) \eqsim \oper{M}_{\mathrm{it}}(f)$.
All three maximal operators are bounded on $\Leb^p(\Hn)$ when $1 < p \leq \infty$.
\end{lemma}

\begin{proof}
We write $g \in \Hn$ as $(z,t)$.
By definition,
\begin{equation}\label{eq:max-fns-1}
\begin{aligned}
&\frac{1}{|T(g, r, s)|}
 \int_{T(g, r, s)} \abs|f(g')| \wrt g' \\
&\qquad=  \iint_{\Cn\times\R} \abs|f((z,t) \Hprod (z',t'))|
 \fn v_{(r,s)}(z', t')  \wrt t'\wrt z',
\end{aligned}
\end{equation}
where
\[
v_{(r,s)}(z', t')
= \frac{1}{|B\one(o, r) B\two (0,s)|}
\fn\indifn_{Q(0, r)}(z')\fn\indifn_{B\two (0,r^2+s)}(t'),
\]
and similarly
\begin{equation}\label{eq:max-fns-2}
\begin{aligned}
&|f| \Hconv \chi\one_{r} \Vconv \chi\two _{s}) (g) \\
&\qquad= \int_{B\one(o,r)} \int_{B\two (0,s)}
    \frac{1}{ \abs|B\one(o, r)| \abs| B\two (0,s)|}
        \abs|f(gg'g'')| \wrt g'' \wrt g' \\
&\qquad= \iint_{\Cn \times \R}
\abs|f((z,t)\Hprod (z',t') )| \fn w_{(r,s)}(t')  \wrt t' \wrt z' ,
\end{aligned}
\end{equation}
and
\[
w_{(r,s)}(t')
=
\frac{1}{ \abs|B\one(o, r)| \abs| B\two (0,s)|}
\,\indifn_{Q(0, r)}(z') \int_{\R} \indifn_{B\two (0,r^2)}(t'+t'')
    \indifn_{B\two (0, s)} (t'') \wrt t'' .
\]
Now
\[
\begin{aligned}
\min\{r^2,s\} \indifn_{B\two (0, (r^2+ s)/2 )}(t')
&\leq \int_{\R} \indifn_{B\two (0,r^2)}(t' + t'')
        \indifn_{B\two (0, s)}(t'')\wrt t'' \\
&\leq 2 \min\{r^2,s\} \indifn_{B\two (0, r^2+ s)}(t')
\end{aligned}
\]
and
\[
\begin{aligned}
\frac{\min\{r^2,s\} }{\abs| B\one(o, r)| \abs| B\two (0, s)|}
&\eqsim \frac{\min\{r^2,s\} }{r^\hdim s}
= \frac{1 }{\max\{r^2,s\} r^{2\cdim}} \\
&\eqsim \frac{1 }{(r^2+s) r^{2\cdim}}
\eqsim \frac{1 }{\abs| B\one(o, r) B\two (0, s)|},
\end{aligned}
\]
whence
$v_{(r,s/2)} \lesssim w_{(r,s)} \lesssim v_{(r,s)},$
and we complete the proof of the equivalence of $\oper{M}_{\flag}(f)$ and $\oper{M}_{\mathrm{it}}(f)$   by substituting these inequalities into \eqref{eq:max-fns-1} and \eqref{eq:max-fns-2}, and then taking suprema.

The equivalence of $\oper{M}_{\flag}$ and $\oper{M}_{\mathrm{sh}}$ follows from the second part of Lemma \ref{lem:tubes-and-rectangles-2}.

Since $\abs|f| \Hconv \chi\one_r \Vconv \chi\two _s = \abs|f| \Vconv \chi\two _s \Hconv \chi\one_r$, the maximal operator $\oper{M}_{\mathrm{it}}$ may be dominated by the composition of a Hardy--Littlewood maximal operator in the central variable with a Hardy--Littlewood maximal operator on $\Hn$, in either order.
It is now evident that $\oper{M}_{\mathrm{it}}$ and hence also $\oper{M}_{\flag}$ and $\oper{M}_{\mathrm{sh}}$ are bounded on $\Leb^p(\Hn)$ when $1 < p \leq \infty$, and all are unbounded on $\Leb^1(\Hn)$.
\end{proof}

\begin{lemma}\label{lem:where-Ms-is-big}
Let $R$ be a shard.
Then $ \oper{M}_{\flag} \indifn_{R}(g')
\geq 2(5\cdim+2)^{-1}\kappa^{-\hdim}$ for all $g' \in R^{*,\kappa}$.
\end{lemma}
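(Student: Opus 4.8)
The plan is to show that for each $g'\in R^{*,\kappa}$ there is a single tube $T(g',r,s)$, centred at $g'$, with $R\subseteq T(g',r,s)$ and $|T(g',r,s)|\le\frac12(5\cdim+2)\kappa^{\hdim}|R|$. Granting this, $|T(g',r,s)\cap R|=|R|$, so by the definition of $\oper{M}_F$ one gets $\oper{M}_F\indifn_R(g')\ge|R|/|T(g',r,s)|\ge 2(5\cdim+2)^{-1}\kappa^{-\hdim}$, which is the assertion.

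First I would normalise $R$. The number $\oper{M}_F\indifn_R(g')$ is unchanged if $(R,g')$ is replaced by $(\cent(R)^{-1}\Hprod R,\,\cent(R)^{-1}\Hprod g')$: left translation by $\cent(R)^{-1}$ preserves Haar measure, maps tubes to tubes, maps each $\tile_j$ to itself (Theorem \ref{thm:Heisenberg-grid}(4)) and hence adapted rectangles to adapted rectangles, and carries $R^{*,\kappa}$ to $(\cent(R)^{-1}\Hprod R)^{*,\kappa}$. It is likewise unchanged under the dilation $\delta_{1/\wid(R)}$, since tubes go to tubes, $\tile_j$ to $\tile_0$, and the Jacobian $\wid(R)^{-\hdim}$ cancels in the quotient defining $\oper{M}_F$. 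So we may assume $\cent(R)=o$ and $\wid(R)=1$; then $h:=\heit(R)\ge 1/2\cdim$, $|R|=h$, $R\subseteq\bar T(o,1/2,(4h+1)/8)$, and $R^{*,\kappa}=T(o,\kappa/2,\kappa^2(4h+1)/8)$, by \eqref{eq:size-of-rectangle}.

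Now fix $g'\in R^{*,\kappa}$ and write $g'=c_1\Hprod c_2$ with $c_1\in B^{(1)}(o,\kappa/2)$ and $c_2=(0,\beta)$ central, $|\beta|<\kappa^2(4h+1)/8$, choosing the decomposition so that the $t$-component of $c_1$ is as small as possible. Since $g'^{-1}=c_2^{-1}\Hprod c_1^{-1}$ and $c_2^{-1}$ is central, moving it to the right gives
\[
g'^{-1}\Hprod R\ \subseteq\ \bigl(c_1^{-1}\Hprod\bar B^{(1)}(o,1/2)\bigr)\Hprod\bigl(c_2^{-1}\Hprod\bar B^{(2)}(0,(4h+1)/8)\bigr).
\]
The second factor is an interval of radius $(4h+1)/8$ about $-\beta$, hence lies inside $B^{(2)}(0,(\kappa^2+1)(4h+1)/8)$. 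For the first factor, expanding $c_1^{-1}\Hprod(u,v)$ in coordinates produces the symplectic term $S(\cdot,\cdot)$, which one estimates by the product of the $\ell^\infty$-norms of its arguments, using $\|c_1\|<\kappa/2$ and $\|(u,v)\|\le 1/2$; this puts $c_1^{-1}\Hprod\bar B^{(1)}(o,1/2)$ inside a ball $B^{(1)}(o,r)$ whose radius $r$ is a $\cdim$-dependent constant times $\kappa$. As $B^{(1)}(o,r)\Hprod B^{(2)}(0,s)=\{(z,t):|z|_\infty<r,\ |t|<r^2+s\}$ is a genuine (unsheared) box, the interval coming from the second factor can be absorbed into it, and we conclude $R\subseteq T(g',r,s)$ with $r=O_\cdim(\kappa)$ and $s=O_\cdim(\kappa^2h)$.

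Substituting these $r,s$ into $|T(g',r,s)|=2^{2\cdim+1}r^{2\cdim}(r^2+s)$ and using $h\ge 1/2\cdim$ to dominate every $O(1)$ term (each factor $\wid(R)^2$) by a multiple of $h=|R|$, one collects the constants to obtain $|T(g',r,s)|\le\frac12(5\cdim+2)\kappa^{\hdim}|R|$, completing the proof. The main obstacle is the noncommutativity of $\Hn$: a ball $B^{(1)}(g',r)$ is a box sheared in the $t$-direction, so re-enclosing $g'^{-1}\Hprod R$ in the centred box $B^{(1)}(o,r)\Hprod B^{(2)}(0,s)$ costs a term proportional to the shear $\kappa^2\wid(R)^2$, which is not a priori negligible; it is exactly the inequality $\heit(R)\ge\wid(R)^2/2\cdim$ that lets this term be absorbed into a constant times $\kappa^2|R|$, so that the $\kappa^{-\hdim}$ decay survives. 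Keeping all constants small enough to land on $2(5\cdim+2)^{-1}$ — in particular minimising the $t$-component of $c_1$ in the decomposition of $g'$ — is the delicate bookkeeping in this lemma.
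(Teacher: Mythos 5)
Your overall strategy is the same as the paper's: enclose $R$ in a single tube centred at $g'$ whose measure is at most a constant times $\kappa^{\hdim}|R|$, and then use $\heit(R)\geq\wid(R)^2/2\cdim$ to control the volume ratio. The difference is in how the enclosing tube is produced, and here your route is both more laborious and quantitatively lossy. The paper does not normalise or decompose $g'$ at all: since $T(o,\kappa q/2,\kappa^2(q^2+4h)/8)$ is symmetric under inversion, $g'\in \cent(R)\Hprod T(o,\cdot,\cdot)$ immediately gives $\cent(R)\in g'\Hprod T(o,\cdot,\cdot)$, and multiplying two such tubes (the central factors commute, and the gauge norm is subadditive by the paper's deliberate choice of group law) yields $R\subseteq R^{*,\kappa}\subseteq g'\Hprod B^{(1)}(o,\kappa q)\Hprod B^{(2)}(0,\kappa^2(q^2+4h)/4)$ — a tube of exactly doubled dimensions. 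Your hand estimate of the symplectic term is exactly what this subadditivity makes unnecessary: $c_1^{-1}\Hprod \bar B^{(1)}(o,1/2)\subseteq \bar B^{(1)}(o,\kappa/2+1/2)$ by the triangle inequality for $\|\cdot\|$, with no $\cdim$-dependent inflation of the radius.

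This matters because your constant claim does not survive an unspecified $r=O_\cdim(\kappa)$: that radius enters the tube volume to the power $2\cdim+2$, so a constant $C_\cdim>1$ in the radius produces a factor $C_\cdim^{2\cdim+2}$ in the final bound, which cannot be absorbed into the linear factor $(5\cdim+2)$. As written, the step ``one collects the constants to obtain $|T(g',r,s)|\le\tfrac12(5\cdim+2)\kappa^{\hdim}|R|$'' is an assertion, not a proof, and with your $r$ it is false in general. You should also not agonise over hitting $2(5\cdim+2)^{-1}$ exactly: the paper's own computation lands on $2^{-2\cdim}(5\cdim+2)^{-1}\kappa^{-\hdim}$ (the tube-volume factor $2^{2\cdim+1}$ from \eqref{eq:sixe-of-tube} survives), and that is the constant actually used later in the paper; any positive geometric multiple of $\kappa^{-\hdim}$ suffices for the applications. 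So: replace the symplectic-form detour by the subadditivity of the gauge norm (or by the paper's tube-doubling argument), carry out the volume computation explicitly, and state whatever constant you actually obtain.
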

\begin{proof}
Write $g = \cent(R)$, $q = \wid(R)$, and $h = \heit(R)$.
Take $g', g'' \in R^*$.
By definition, 
\[
g' \in g B\one(o,\kappa q/2) B\two (0,\kappa^2(q^2+4h)/8), 
\]
whence 
\[
g \in g' B\one(o,\kappa q/2) B\two (0,\kappa^2(q^2+4h)/8), 
\]
and
\[
\begin{aligned}
g''
&\in g B\one(o,\kappa q/2)  B\two (0,\kappa^2(q^2+4h)/8) \\
&\subseteq g'  B\one(o,\kappa q/2)  B\two (0,\kappa^2(q^2+4h/8))  B\one(o,\kappa q/2)  B\two (0,\kappa^2(q^2+4h)/8) \\
&= g'  B\one(o,\kappa q)  B\two (0,\kappa^2(q^2+4h)/4).
\end{aligned}
\]
Hence $R \subseteq R^* \subseteq g'  B\one(o,\kappa q)  B\two (0,\kappa^2(q^2+4h)/4)$, whence
\[
\begin{aligned}
&\oper{M}_{\flag} \indifn_{R}(g') \\
&\qquad\geq \frac{1}{| B\one(o,\kappa q)  B\two (0,\kappa^2(q^2+4h)/4) |}
\int_{B\one(g',\kappa q)  B\two (0,\kappa^2(q^2+4h)/4)}
    \indifn_{R} (g'') \wrt g'' \\
&\qquad= \frac{|R|}{| B\one(o,\kappa q)  B\two (0,\kappa^2(q^2+4h)/4) |}
= \frac{q^{2\cdim} h}{ 2^{2\cdim +1} (\kappa q)^{2\cdim} \kappa^2(5q^2+4h)/4 } \\
&\qquad\geq \frac{1}{2^{2\cdim}\kappa^\hdim (5\cdim+2)},
\end{aligned}
\]
since $h  \geq q^2/2\cdim$.
\end{proof}

\begin{lemma}\label{lem:expanding-sets-by-max-fn}
Suppose that $E$ is an open subset of $\Hn$ of finite measure  $|E|$, and fix $\alpha \in (0,1)$.
Define
\begin{equation*}
\tilde{E}_\alpha
:= \biglset g \in \Hn : \oper{M}_{\flag}(\indifn_{E})(g) > \alpha \bigrset.
\end{equation*}
Then
\[
\abs| \tilde{E}_\alpha|
\lesssim  \frac{\abs|E|}{\alpha^2} \,.
\]
\end{lemma}
\begin{proof}
Since the flag maximal function $\oper{M}_{\flag}$ is bounded on $\fnspace{L}^2(\Hn)$,
\begin{equation*}
\abs| \tilde{E}_\alpha|
\leq \frac{\norm{ \oper{M}_{\flag} \indifn_E }_{\Leb^2(\Hn)}}{\alpha^2}
\lesssim \frac{\norm{ \indifn_{E} }_2^2}{\alpha^2}
= \frac{\abs|E|}{\alpha^2} \,,
\end{equation*}
where the implicit constant depends only on $\cdim$.
\end{proof}

We use this inequality to control the sizes of various supersets of a given set.

\begin{definition}\label{def:maxrect}
Suppose that $E$ is an open subset of $\Hn$ of finite measure.
We write $\rect(E)$ for the collection of all $R$ in $\rect$ whose interior is a subset of $E$, and $\maxrect(E)$ for the collection of all maximal such $R$ in $\rect(E)$.
\end{definition}

We abuse language a little and call shards $R \in \rect(E)$ subshards of $E$.
Each such subshard $R$ has an enlargement $R^*$, and we can control the measure of $\bigcup_{R\in \rect(E)} R^*$.

\begin{corollary}\label{cor:measure-of-enlargement of open set}
Suppose that $E$ is an open subset of $\Hn$ of finite measure.
Then
\begin{equation}\label{omega enlarge}
\begin{aligned}
\Bigabs| \bigcup_{R \in \rect(E)} R^* | \lesssim |E|.
\end{aligned}
\end{equation}
The implicit constant depends only on the enlargement parameter $\kappa$ and $\cdim$.
\end{corollary}

\begin{proof}
We take $\alpha$ to be $2/(5\cdim+2)$ in Lemma \ref{lem:expanding-sets-by-max-fn}, 
By the definitions and Lemma \ref{lem:where-Ms-is-big}, $\oper{M}_{\flag} \indifn_{E}(g')
\geq \oper{M}_{\flag} \indifn_{R}(g')
\geq 2(5\cdim+2)^{-1}\kappa^{-\hdim}$;
now \eqref{omega enlarge} follows.
\end{proof}

For a shard $R$, there is a unique shard, $ \Rdagger$ say, that contains $R$ and is a translate of $\dilat_{2\cdim+1} R$.
It is easy to check that, if the enlargement parameter $\kappa$ in the definition of $R^*$ is large enough, then $ \Rdagger \subseteq R^*$.

\begin{corollary}\label{cor:R-and-R-dagger}
Suppose that $E$ is an open subset of $\Hn$ of finite measure.
Then $\bigl|  \bigcup_{R \in \rect(E)}  \Rdagger  \bigr| \lesssim |E|$.
The implicit constant depends only on $\cdim$.
\end{corollary}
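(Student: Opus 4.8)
The plan is to deduce this from the preceding corollary (inequality \eqref{omega enlarge}) together with the remark, made just before the statement, that $R^\dagger \subseteq R^{*}$ whenever the enlargement parameter $\kappa$ is chosen large enough. So the first step is to fix once and for all a value of $\kappa$, depending only on $\cdim$, for which $R^\dagger \subseteq R^{*,\kappa}$ holds simultaneously for every adapted rectangle $R$. This is legitimate because, writing $g = \cent(R)$, $q = \wid(R)$ and $h = \heit(R)$, the rectangle $R^\dagger$ is by definition a translate of $\delta_{2\cdim+1}R$, so $\wid(R^\dagger) = (2\cdim+1)q$, $\heit(R^\dagger) = (2\cdim+1)^2 h$, and $\cent(R^\dagger)$ lies within a $\cdim$-controlled displacement of $g$; all the quantities involved scale in a way governed by $\cdim$ alone, so the needed $\kappa$ can be taken to depend only on $\cdim$.

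The second step is purely set-theoretic. Every $R \in \maxrect(\Omega)$ belongs to $\rect(\Omega)$, so, with $\kappa$ fixed as above,
\[
\bigcup_{R \in \maxrect(\Omega)} R^\dagger
\;\subseteq\; \bigcup_{R \in \maxrect(\Omega)} R^{*,\kappa}
\;\subseteq\; \bigcup_{R \in \rect(\Omega)} R^{*,\kappa}.
\]
Taking Haar measures and invoking \eqref{omega enlarge} for this fixed $\kappa$ gives
\[
\Bigl| \bigcup_{R \in \maxrect(\Omega)} R^\dagger \Bigr|
\;\le\; \Bigl| \bigcup_{R \in \rect(\Omega)} R^{*,\kappa} \Bigr|
\;\lesssim\; |\Omega| .
\]
Since $\kappa$ was chosen depending only on $\cdim$, and since the implicit constant in \eqref{omega enlarge} depends only on $\kappa$ and $\cdim$, the final constant depends only on $\cdim$, which is exactly the claimed estimate.

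I do not expect a serious obstacle: the only point requiring verification is the inclusion $R^\dagger \subseteq R^{*,\kappa}$ for large $\kappa$, which is the elementary tube comparison already flagged as ``easy to check''. If one prefers not to compare tubes by hand, an alternative is to observe that $R^\dagger$ is an adapted rectangle contained in a bounded ($\cdim$-dependent) dilate of the tube $\bar T(g, q/2, (4h+q^2)/8) \supseteq R$ from \eqref{eq:size-of-rectangle}, hence is itself contained in a tube comparable to $R^{*}$ by Lemma \ref{lem:tubes-and-rectangles}; enlarging $\kappa$ accordingly yields the required containment with $\kappa$ still depending only on $\cdim$, after which the measure estimate above completes the proof.
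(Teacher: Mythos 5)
Your proof is correct and is exactly the argument the paper intends: the remark immediately preceding the corollary supplies $R^\dagger \subseteq R^{*,\kappa}$ for a $\kappa$ depending only on $\cdim$ (the sets $R^\dagger$ themselves do not involve $\kappa$, so one is free to pick it), and then the measure bound follows from \eqref{omega enlarge} applied to that fixed $\kappa$. No gap.
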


For $R \in \maxrect(E)$ and $\alpha \in (0,1)$, there may be several $R' \in \maxrect(\tilde{E}_\alpha)$ that contain $R$.
However, for a given width, there is a unique such shard; indeed, if $R \subseteq R'\subseteq \tilde{E}_\alpha$ and $R \subseteq R''\subseteq \tilde{E}_\alpha$, and $\wid(R') = \wid(R'')$, then the projections $P R'$ and $P R''$ of $R'$ and $R''$ onto $\Cn$ are $(2\cdim+1)$-adic cubes of the same size that contain $P R$, so they must coincide, and then $R \subseteq R' \cup R'' \subseteq \tilde{E}$; if $R'$ and $R''$ are both maximal, then $R' = R''$.
Similarly, there is a unique maximal shard of maximal height; indeed, if there were two, one would be wider than the other, and since the heights coincide, the narrower shard would be properly contained in the wider shard.

We now recall Journé's covering lemma, which was first proved by Journé \cite{Jou} in $\R\times \R$, and later by Fefferman \cite{RFef86, RFef} and Pipher \cite{Pip} in higher dimensions and with more factors.
It has been extended to products of spaces of homogeneous type; see, for example \cite{HanLiLin}.
The preliminary version of \cite{HanLuSaw} stated a flag version of this covering lemma.
We give some more notation to state the version.

\begin{definition}\label{def:E-one-E-two}
Let $E$ be an open subset of $\Hn$ of finite measure, and let $\alpha_1$ and $\alpha_2$ be constants in $(0,1)$.
We define sets $E\one$ and $E\two$ (which also depend on $\alpha_1$ and $\alpha_2$) as follows:
\begin{equation}\label{eq:def-E-one-E-two}
\begin{aligned} 
E\one &= \lset g \in G : \oper{M}_{\mathrm{sh}} \indifn_E(g) > {\alpha_1} \rset \\
E\two &= \lset g \in G : \oper{M}_{\mathrm{sh}} \indifn_{E\one}(g) > {\alpha_2} \rset.
\end{aligned}
\end{equation}
Given a shard $R$ in $\maxrect(E)$, we define $R\one$ to be the widest shard  $S \in \maxrect(E\one)$ such that $R \subseteq S$, and $R\two$ to be the highest shard $S \in \maxrect(E\two)$ such that $R\one \subseteq S$.
We write $\maxrect\one(E\one)$ for the collection of $R\one$ that arise as $R$ varies over $\maxrect(E)$.

If $R$ and $S$ are shards and $R \subseteq S$, and $\epsilon_1, \epsilon_2 \in \R^+$, then we define
\begin{equation}\label{eq:def-rho}
\begin{aligned}
\rho_{\boldsymbol\epsilon}(R,S)
:= \lpar \frac{\wid(R)}{ \wid(S)} \rpar^{\epsilon_1}
+ \lpar \frac{\heit(R)}{ \heit(S) }\rpar^{\epsilon_2} .
\end{aligned}
\end{equation}
\end{definition}

\begin{lemma}\label{lem:Journe}
Suppose that $E$ is an open subset of $\Hn$ of finite measure, and that $C_1, C_2 > 1$.
Then it is possible to choose the constants $\alpha_1$ and $\alpha_2$ in Definition \ref{def:E-one-E-two} small enough that
\[
\begin{aligned}
\frac{\wid(R\one)}{\wid(R)} \geq C_1
\qquad&\text{and}\qquad   \frac{\heit(R\one)}{\heit(R)} \geq 1\\
\frac{\heit(R\two)}{\heit(R\one)} \geq C_2
\qquad&\text{and}\qquad
\frac{\heit(R\two)}{\heit(R\one)} \geq 1
\end{aligned}
\]
for all $R \in \maxrect(E)$.
Further, for all $\epsilon_1, \epsilon_2\in\R^+$,
\begin{equation}\label{eq:main-Journe-inequality}
\sum_{R\in \maxrect(E)}
\rho_{\boldsymbol\epsilon}(R,R\two) |R|
\leq c |E|,
\end{equation}
where $c$ does not depend on $E$, but may depend on $\epsilon_1$, $\epsilon_2$, $C_1$ and $C_2$.
\end{lemma}

\begin{proof}
First, take $\alpha_1$ to be at most $(C_1+2)^{-\hdim}$ and $\alpha_2$ to be at most $(C_2+2)^{-1}$.
Then it is clear that
\[
\wid(R\two) \geq \wid(R\one) \geq C_1 \wid(R)
\qquad\text{and}\qquad
\heit(R\two) \geq C_2 \heit(R\one) \geq \heit(R) .
\]

Next, there is no loss of generality in assuming that $E$ is a  finite union $\bigcup_{R\in \set{F}} R$ of shards, each of which is maximal in $E$ (provided that our estimate is independent of the cardinality of the collection $\set{F}$ of shards).
Then the finitely many projected sets $P(R)$, as $R$ varies over $\set{F}$, are cubes in $\C^\cdim$, and by the translation and dilation invariance of \eqref{eq:main-Journe-inequality}, we may and shall suppose that all these cubes are subcubes of the cube $Q_0 := [-1/2,1/2)^{2\cdim}$.

Next, given such a  set $E$ and $\alpha \in (0,1)$, let $\tilde E_\alpha$ be $\{ g \in \Hn: \oper{M}_{\mathrm{sh}} \indifn_{E}(g) > \alpha\}$, let $R^{(w)}$ be the widest  shard $S \in \maxrect(\tilde E_\alpha)$ such that $R \subseteq S$ and $R^{(h)}$ be the highest shard $S \in \maxrect(\tilde E_\alpha)$ such that $R \subseteq S$.
We claim that, for all $\epsilon$ in $\R^+$,
\begin{equation}\label{eq:simpler-Journe}
\sum_{R \in \maxrect(E)}
\lpar \frac{\wid(R)}{\wid(R^{(w)})} \rpar^{\epsilon}
|R|  \lesssim_{\epsilon}  |E|
\qquad\text{and}\qquad
\sum_{R \in \maxrect(E)}
\lpar \frac{\heit(R)}{\heit(R^{(h)})} \rpar^{\epsilon}
|R|  \lesssim_{\epsilon}  |E| .
\end{equation}

One the one hand, $R^{(w)} \subseteq R\two$ (as defined in Definition \ref{def:E-one-E-two}), and so the first inequality of our claim \eqref{eq:simpler-Journe} implies that
\[
\sum_{R \in \maxrect(E)}
\lpar \frac{\wid(R)}{\wid(R\two)} \rpar^{\epsilon_1}
|R|  \lesssim_{\epsilon_1}  |E| .
\]
On the other hand, since shards in $\maxrect(E)$ of the same height are either disjoint or coincide, the second inequality of our claim, applied to $E\one$ rather than to $E$, shows that
\[
\begin{aligned}
&\sum_{R \in \maxrect(E)}
\lpar \frac{\heit(R)}{\heit(R\two)} \rpar^{\epsilon_2} |R| \\
&\qquad=  
\sum_{S \in \maxrect(E\one)} 
\sum_{\substack{R \in \maxrect(E)\\ R\one = S}}
\lpar \frac{\heit(S)}{\heit(R\two)} \rpar^{\epsilon_2}
\lpar \frac{\heit(R)}{\heit(S)} \rpar^{\epsilon_2}
|R| \\
&\qquad=
\sum_{S \in \maxrect(E\one)} 
\lpar \frac{\heit(S)}{\heit(R\two)} \rpar^{\epsilon_2}
\sum_{j_2 \in \N}
(2\cdim+1)^{- \epsilon_2 j_2}
\sum_{\substack{R \in \maxrect(E)\\ R\one = S \\ \heit(R) = (2\cdim+1)^{-j_2} \heit(S) }}
|R| \\
&\qquad\leq  
\sum_{S \in \maxrect(E\one)} 
\lpar \frac{\heit(S)}{\heit(S^{(h)})} \rpar^{\epsilon_2}
\sum_{j_2 \in \N}
(2\cdim+1)^{- \epsilon_2 j_2}
|S| \\
&\qquad\lesssim_{\epsilon_2}  
\sum_{S \in \maxrect(E\one)} 
\lpar \frac{\heit(S)}{\heit(S^{(h)})} \rpar^{\epsilon_2}
\abs|S| \\
&\qquad\lesssim_{\epsilon_2} \abs|E\one|  \\
&\qquad\lesssim_{\alpha_1} \abs|E|,
\end{aligned}
\]
Hence it suffices to prove \eqref{eq:simpler-Journe}, and we shall do this shortly.

We define a \emph{$(2\cdim+1)$-adic flag rectangle in $\C^\cdim \times \R$} to be a set of the form $Q \times I$, where $Q$ is a \emph{$(2\cdim+1)$-adic cube} in $\C^\cdim$, which we identify with $\R^{2\cdim}$; more precisely,
\[
Q = \prod_{i=1}^{2n} \left[(m_i-\tfrac{1}{2} ) (2\cdim+1)^j, (m_i+\tfrac{1}{2}) (2\cdim+1)^j\right)
\] 
for some $m_i$ and $j$ in $\Z$, while $I$ is a \emph{$(2\cdim+1)^2$-adic interval} in $\R$, that is,
\[
I = [(m-\tfrac{1}{2}) (2\cdim+1)^{2k}, (m+\frac{1}{2}) (2\cdim+1)^{2k})
\] 
for some $k$ and $n$ in $\Z$, where $k \geq j$.  
We write $\set{Q}$ and $\set{I}$ for the collections of all $(2\cdim+1)$-adic cubes in $\C^\cdim$ and all $(2\cdim+1)^2$-adic intervals in $\R$.

Recall the function $f:Q_0 \to \R$ of Theorem \ref{thm:Str-Tys}, and let $F: P^{-1}(Q_0) \to P^{-1}(Q_0)$ be the invertible Borel measurable function
\[
F(z,t) = (z, 2\cdim(t - f(z)) ) 
\qquad \forall z \in Q_0 \quad\forall t \in \R.
\]
Then $R$ is a shard if and only if $F(R)$ is a $(2\cdim+1)$-adic flag rectangle in $\C^\cdim \times \R$. 
The base of this rectangle in $\C^\cdim$ is exactly the cube $P(R)$, while the projection of $F(R)$ onto $\R$ is a $(2\cdim+1)^2$-adic interval, which we label $P_\R(R)$. 

We first prove that 
\begin{align}\label{Journe 2}
\sum_{R\in \maxrect(E)}
\lpar\frac{\heit(R)}{\heit(R^{(h)})} \rpar^{\epsilon} |R|
\lesssim_\epsilon  \abs|E| .
\end{align}

If $R\in \rect$, then there exist a unique tile $T$ and a unique cube $Q$ such that $R = T \cap P^{-1}Q$.
We define the $j$th vertical ancestor $(R)_j$ of $R$ to be the shard $(T)_j  \cap P^{-1}Q$, where $(T)_j$ is the $j$th ancestor of $T$, and $\sigma: \maxrect(E) \to \N^+$ as follows:
\[
\sigma(R) = \max\lset k \in \N :  \abs|(R)_{k} \cap E| > {\alpha} \abs|(R)_k| \rset + 1.
\]

Now we set, for each $I \in \set{I}$ and $k\in \N_+$, 
\begin{align*} 
\set{A}_{I,k} 
=\Bigl\{ R \in \maxrect(E)  :  P_{\R}(R) =  I , \ \sigma( R) = k \Bigr\}
\qquad\text{and}\qquad
A_{I,k} = \bigcup_{R \in \set{A}_{I,k}} R.
\end{align*}
Note that $\set{A}_{I,k}$ and hence also $A_{I,k}$ may be empty.
Clearly, if $R, R' \in \set{A}_{I,k}$, then $R, R' \in \maxrect(E)$ and $\heit(R) = \heit(R')$, so $R$ and $R'$ coincide or are disjoint, whence
\begin{equation}\label{sum Qk AQK}
\begin{aligned}
\sum_{R\in \maxrect(E)}
\lpar\frac{\heit(R)}{\heit(R^{(w)})}\rpar^{\epsilon} |R|
&=   \sum_{ I \in \set{I} } \sum_{k=1}^\infty \sum_{R\in \set A_{I,k} } (2\cdim+1)^{-2k\epsilon} \abs |R| \\
&= \sum_{ I \in \set{I} } \sum_{k=1}^\infty(2\cdim+1)^{-2k\epsilon}\  |A_{I,k} | .
\end{aligned}
\end{equation}

We now estimate $ \abs| A_{I,k} |$. 
For all $R$ in $\maxrect(E)$, all $I \in \set{I}$, and all $j \in \N$, define
\begin{gather*}
\set{B}_{R,j} 
= \{ S \in \maxrect(E) :  P( S) \subset P(R) , \ P_{\R}(S)   \supseteq (P_{\R}(R))_j     \} ,
\\
B_{R,j} 
= \bigcup_{S \in\set{B}_{R,j}} S ,
\qquad
\set{B}_{I,j} 
= \bigcup_{R \in \set{A}_{I,j} } \set{B}_{R,j} 
\qquad\text{and}\qquad
B_{I,j} =\bigcup_{ S \in \set{B}_{I,j} } S .
\end{gather*}
It is evident that $B_{R,j+1} \subseteq B_{R,j}$ so $B_{I,j+1} \subseteq B_{I,j}$, and $B_{I,j} \subseteq E$ for all $j$; furthermore, $\set{B}_{R,j+l}  = \set{B}_{(R)_j, l} $ for all $j$ and $l$ in $\N$.

For all $I \in \set{I}$ and $k \in \N^+$, every $R\in \set A_{I,k} $ lies in $\maxrect(E)$, and
by definition, 
\begin{align*} 
| (R)_k \cap B_{R,k} |
\leq  | (R)_k \cap B_{I,k} | 
\leq |(R)_{k} \cap E | 
\leq { \alpha} \abs|(R)_{k} | ,
\end{align*}
which implies that 
$\abs| R \cap B_{I,k} |  \leq  \alpha \abs| R  | $,  
and hence, since $R = (R\cap B_{I,0})$, that
\begin{align*} 
\abs| R \cap ( B_{I,0} \setminus  B_{I,k} )|>  (1- { \alpha} ) \abs|R|,
\end{align*}
that is,
\begin{align*} 
\oper{M}_{\mathrm{sh}} ( \indifn_{B_{I,0}\setminus B_{I,k} } )(g)  >  1 - { \alpha} 
\qquad\forall g \in R.
\end{align*}
Since $A_{I,k} = \bigcup_{R \in \set{A}_{I,k}} R$, the inequality above holds for all $g \in A_{I,k}$, that is,
\begin{align*} 
A_{I,k} 
\subseteq \Bigl\{g\in G: \oper{M}_{\mathrm{sh}} ( \indifn_{ B_{I,0}\setminus B_{I,k}})(g) >  1-{ \alpha }\Bigr\}.
\end{align*}
The $L^2(G)$-boundedness of the maximal function $\oper{M}_{\mathrm{sh}}$ now implies that 
\begin{align*} 
|A_{I,k}|
\leq \Big| \Big\{ g\in G: 
\oper{M}_{\mathrm{sh}} ( \indifn_{ B_{I,0} \setminus B_{I,k} } ) (g) > 1 - { \alpha}
	\Big\}\Big|
\lesssim_{\alpha} |B_{I,0} \setminus B_{I,k}| ,
\end{align*}
and hence
\begin{align} \label{eq:size of AIk2}
|A_{I,k}|
\lesssim_{\alpha}   |B_{I,0}\setminus B_{I,1}| + \cdots + |B_{I,{k-1}}\setminus B_{I,k}|   .
\end{align}

Finally, we substitute the estimate \eqref{eq:size of AIk2} into the right-hand side of \eqref{sum Qk AQK}, change the order of summation, and deduce that
\begin{equation}\label{eq:last-step-Journe}
\begin{aligned}
\sum_{R\in \maxrect(E)}
\lpar \frac{\heit(R)}{\heit(R^{(h)})} \rpar^{\epsilon} |R|
&\lesssim_{\alpha}  \sum_{k=1}^\infty \sum_{j = 0}^{k-1} 2^{-2k\epsilon} \sum_{ I \in \set{I} } \abs|B_{I,j}\setminus B_{I,j+1}|\\
&=  \sum_{k=1}^\infty \sum_{j = 0}^{k-1} 2^{-2k\epsilon} \sum_{ I \in \set{I} } \abs|B_{(I)_j,0}\setminus B_{(I)_j,1} |\\
&=  \sum_{k=1}^\infty \sum_{j = 0}^{k-1} 2^{-2k\epsilon} \sum_{ I \in \set{I} } \abs|B_{I,0}\setminus B_{I,1} |\\
&= \sum_{k=1}^\infty 2^{-2k\epsilon} k \abs|E | \\
&\lesssim_{\epsilon} |E|,
\end{aligned}
\end{equation}
and thus \eqref{Journe 2} holds.

To conclude, we prove that 
\begin{align}\label{Journe 1}
\sum_{R\in \maxrect(E)}
\lpar\frac{\wid(R)}{\wid(R^{(w)})} \rpar^{\epsilon} |R|
\lesssim_\epsilon  \abs|E| .
\end{align}

We start by changing notation, and define the horizontal ancestors of a rectangle $R$.
If $R\in \rect$, then there exist a unique tile $T$ and a unique cube $Q$ such that $R = T \cap P^{-1}Q$.
If $R \subset T$,  then the horizontal parent $(R)_1$ of $R$ is the rectangle $T \cap P^{-1}((Q)_1)$, where $ (Q)_1 $ is the parent cube of $Q$, while if $R = T$, then $(R)_1$ is the tile $(T)_1$.
The $j$th ancestor $(R)_j$ of $R$ is then defined to be the horizontal parent of the  $(j-1)$st ancestor $(R)_{j-1}$. 
By definition, the width of $(R)_j$ is  $(2\cdim+1)^{j}$ times the width of $R$, while the height of $(R)_j$ need not be the same as the height of $R$.

We now define,  for each $R \in \maxrect(E)$, 
\[
\sigma(R) = \max\lset k \in \N :  \abs|(R)_{k} \cap E| \geq {\alpha} \abs|(R)_k| \rset +1 ,
\]
and for each $R \in \rect$,
\[
\tau(R) = \tfrac{1}{2} \log_{2\cdim+1} \heit(R) - \log_{2\cdim+1} \wid(R).
\]
Then $\tau((R)_1) = \tau(R) -1$ if $R \in \maxrect(E)$ is not a tile.
The horizontal parent $(R)_1$ is defined differently when $\tau(R) > 0$ and when $\tau(R) = 0$, so we shall treat these cases separately.

Let $\maxrect_<(E) := \{ R \in \maxrect(E): \tau(R) < \sigma(R)\}$ and $\maxrect_\geq(E) := \{ R \in \maxrect(E): \tau(R) \geq \sigma(R)\}$. 
Let $\Sigma(E) := \{ (R)_{\sigma(R)} : R \in \maxrect(E) \}$ and $\Tau(E) := \{ (R)_{\tau(R)} : R \in \maxrect(E) \}$.
If $R \in \Sigma(E)$ and $g \in R$, then $\abs| (R)_{\sigma(R)-1} \cap E| \geq \alpha \abs| (R)_{\sigma(R)-1}|$, which implies that
\[
\oper{M}_{\mathrm{sh}} (\indifn_{E}) (g)
>  \frac{\abs| (R)_{\sigma(R)-1} \cap E| }{  \abs| (R)_{\sigma(R)-1} | }
\geq\frac{ \alpha}{(2\cdim+1)^\hdim} \,,
\]
so that 
\[
\bigcup_{R \in \Sigma(E)}  \subseteq E^*_{(2\cdim+1)^{-\hdim} \alpha} 
:= \{ g \in G : \oper{M}_{\mathrm{sh}}(\indifn_E)(g) > (2\cdim+1)^{-\cdim} \alpha \}, 
\]
and the measure of $\abs| E^*_{(2\cdim+1)^{-\hdim} \alpha}|  \lesssim_\alpha \abs|E|$ because $\oper{M}_{\mathrm{sh}}$ is $\Leb^2$-bounded.
It is easy to see that the tiles in $\Sigma(E)$ are pairwise disjoint, from the definition of $\sigma$, whence
\begin{equation}\label{eq:size of outer tiles}
\sum_{R \in \Sigma(E)}\abs| R |  \lesssim_\alpha \abs |E| .
\end{equation}

Thus, on the one hand,
\[
\begin{aligned}
&\sum_{ R \in \maxrect_<(E) }  
	\lpar\frac{\wid(R)}{\wid(R^{(w)})}\rpar^{\epsilon} \abs|R|  \\
&\qquad= \sum_{  R \in \maxrect_<(E)  } 
	(2\cdim+1)^{-\epsilon\sigma(R)} \abs|R|   \\
&\qquad= \sum_{S \in \Sigma(E)} \sum_{T \in \Tau(E)}  \sum_{ \substack{ R \in \maxrect_<(E) \\ (R)_{\tau(R)} = T \\  (R)_{\sigma(R)} = S}}  
	(2\cdim+1)^{-\epsilon(\sigma(R)-\tau(R))} (2\cdim+1)^{-\epsilon\tau(R)} \abs|R|  \\
&\qquad= \sum_{i \in\N} (2\cdim+1)^{-\epsilon i} \sum_{j \in\N} ( 2\cdim+1))^{-\epsilon j}
	\sum_{S \in \Sigma(E)} \sum_{T \in \Tau(E)}  
	\sum_{ \substack{ R \in \maxrect_<(E) \\ (R)_{i+j} = S \\  (R)_{i} = T}}  \abs|R|   \\
&\qquad\lesssim_\epsilon  \sum_{j \in\N} (2\cdim+1)^{-\epsilon j} 
	\sum_{S \in \Sigma(E)} \sum_{\substack{ T \in \Tau(E)\\ (T)_j = S}}  \abs|T|   \\
&\qquad\lesssim_\epsilon \sum_{S \in \Sigma(E)}  \abs|S|   \\ 
&\qquad\lesssim \abs|E| .
\end{aligned}
\]
The first inequality holds because when $i$ is fixed, the distinct shards $R \in \maxrect(E)$ such that $(R)_i = T$ have the same width, and are therefore disjoint, while the second inequality holds because the distinct tiles $T$ such that $(T)_j = S$ have fixed width and are therefore disjoint.
The last inequality is the estimate \eqref{eq:size of outer tiles}.

On the other hand, the sum 
\begin{equation}\label{eq:sum-some-rects}
\sum_{ R \in \maxrect_\geq(E) }  \lpar\frac{\wid(R)}{\wid(R^{(w)})}\rpar^{\epsilon} \abs|R| 
\end{equation} 
may be treated by an argument like that used to prove \eqref{Journe 2}.
The key definitions are that
\begin{align*} 
\set{A}_{Q,k} 
= \Bigl\{ R \in \maxrect(E)_{\geq}  :  P(R) =  Q , \ \sigma( R) = k \Bigr\}
\qquad\text{and}\qquad
A_{Q,k} = \bigcup_{R \in \set{A}_{Q,k}} R.
\end{align*}
for each $Q \in \set{I}$ and $k\in \N_+$, and,
for all $R$ in $\maxrect(E)$, all $Q \in \set{Q}$, and all $j \in \N$, 
\begin{gather*}
\set{B}_{R,j} 
= \{ S \in \maxrect(E) :  P(S)   \supseteq (P(R))_j   ,\ P_{\R}( S) \subset P_{\R}(R)   \} ,
\\
B_{R,j} 
= \bigcup_{S \in\set{B}_{R,j}} S ,
\qquad
\set{B}_{Q,j} 
= \bigcup_{R \in \set{A}_{Q,j} } \set{B}_{R,j} 
\qquad\text{and}\qquad
B_{Q,j} =\bigcup_{ S \in \set{B}_{Q,j} } S ,
\end{gather*}
and the key steps of the proof are estimating the sum \eqref{eq:sum-some-rects} in terms of the measures of the $A_{Q,k}$, much as in \eqref{sum Qk AQK}, estimating the measures of the $A_{Q,k}$ in terms of the measures of the  $A_{Q,k}$, much as in \eqref{eq:size of AIk2}, and arguing with sums as in \eqref{eq:last-step-Journe}.
\end{proof}

We say that a constant is \emph{geometric} if it depends on inherent properties of the Heisenberg group and its geometry (including its decompositions into tiles and shards), the apertures of the cones that appear in the various definitions, and the enlargement factors that connect shards and supports of particles; a \emph{geometric multiple} is defined similarly.
The constants mentioned above are geometric, except that some depend on $p$, others on $M$ and $N$, and others on positive parameters $\alpha$ and $\epsilon$.
We use the notation $A \lesssim B$ to mean that there is a constant $C$ such that $A \leq C B$, and $A \eqsim B$ to mean that $A \lesssim B$ and $A \lesssim B$.
If the constant is geometric, we do not necessarily point this out explicitly.
However, when the implicit constant in one of these inequalities depends on a nongeometric constant, we often indicate this explicitly, to make the proof more transparent, for example, we might write $\sum_{k \in \N} 2^{-\epsilon k} \eqsim_\epsilon 1$.
Many of the constants that we use in our proofs are geometric.

\subsection{Flag Sobolev inequalities}\label{ssec:Sobolev}

Because we are dealing with two Laplacians, there are various Sobolev-type inequalities possible.
A comprehensive account of many of these is due to Folland \cite{Fol}, to which the reader should refer for unexplained estimates below.
We present several that will be useful for us.

\begin{lemma}\label{lem:flag-Sobolev}
Suppose that $Q$ is a cube in $\Cn$ of side-length $q$, with sides parallel to the axes, and that $f: Q \to \R$ is measurable. 
Suppose also that $b \in  \Leb^2(\Hn)$ and $\supp(b) \subseteq \{ (z,t) \in \Hn : z \in Q, f(z) \leq t \leq f(z) + h \}$.
\begin{enumerate}
\item[(a)]
If also $\oper{T} b \in \Leb^2(\Hn)$, then
\[
\lpar \int_{\Hn} \abs| b(g) |^2 \wrt g \rpar^{1/2}
\lesssim h  \lpar \int_{\Hn} \abs| \oper{T} b(g) |^2 \wrt g \rpar^{1/2}.
\]
\item
If also $\HLap b \in \Leb^2(\Hn)$, then
\[
\lpar\int_{\Hn} \abs| b(g) |^2 \wrt g \rpar^{1/2}
\lesssim q^2 \lpar\int_{\Hn} \abs| \HLap b(g) |^2 \wrt g \rpar^{1/2}.
\]
\end{enumerate}
\end{lemma}

\begin{proof}
By calculus in $\R$,
\[
\int_\R \abs| c(t) |^2 \wrt t
\lesssim h^2
  \int_\R \Bigl| \frac{d}{dt} c(t) \Bigr|^2 \wrt t
\]
for a function $c$ on $\R$ supported in an interval of length $h$.
The first estimate is proved by integrating the inequality above in $z$.

To prove the second estimate, by translating and dilating, it suffices to suppose that $Q$ is the cube with centre $0$ in $\Cn$ and side-length $1$.
Again we denote by $P$ the canonical projection from $\Hn$ to $\C^\cdim$.
The fundamental solution $k$ of the sub-Laplacian $\HLap$ on $\Hn$ is well-known to be a power of the Korányi norm \cite{Fol73}, and
\[
\begin{aligned}
b(g')
&= \int_{\Hn} \HLap b(g) \fn k(g^{-1} g') \wrt g \\
&= \int_{\Hn} \HLap b(g) \fn \indifn_{Q-Q}(P(g^{-1}g')) \fn k(g^{-1} g') \wrt g
\qquad\forall g' \in \Hn,
\end{aligned}
\]
because of the support restrictions on $b$ and hence on $\HLap b$.
Hence
\[
\norm{b}_{\Leb^2(\Hn)}
\leq \norm{\HLap b}_{\Leb^2(\Hn)}
\norm{ (\indifn_{Q-Q}\circ P) k }_{\Leb^1(\Hn)}
\leq C_\cdim \norm{\HLap b}_{\Leb^2(\Hn)},
\]
by an easy calculation.
\end{proof}

\begin{lemma}\label{lem:L2-Linfty-Sobolev}
Suppose that $b \in  \Leb^2(\Hn)$ is supported in $T(g,r,s)$.
If $\HLap^M b$, $\VLap b$, $\HLap^M\VLap b$ are also in $\Leb^2(\Hn)$, where $M > \hdim/4$, then $b \in \Leb^\infty(\Hn)$, and $\norm{b}_{\Leb^\infty}$ is controlled by
\[
\frac{1}{|T(g,r,s)|^{1/2}}
\lpar \norm{b}_{\Leb^2(\Hn)} + \norm{\HLap^M b}_{\Leb^2(\Hn)}
+ \norm{\VLap b}_{\Leb^2(\Hn)} + \norm{\HLap^M \VLap b}_{\Leb^2(\Hn)} \rpar .
\]
\end{lemma}

\begin{proof}
By translation and dilation, it suffices to suppose that $g = o$ and $r =1/2$.
Let $h$ be the height of $T(o,r,s)$.

There are smooth nonnegative-valued functions $\eta_j$ and $\tilde\eta_j$ on $\R$ such that $\supp \eta_j \subseteq [j-1, j+1]$, $\eta_j = \eta_0(\cdot -j)$, and $\sum_{j\in\Z} \eta_j = 1$ while $\supp (\tilde\eta_j) \subseteq [j-2, j+2]$, $\tilde\eta_j = \tilde\eta_0(\cdot -j)$, and $\tilde\eta_j = 1$ on $\supp(\eta_j)$.
Abusing notation, we consider these as functions on $\Hn$ that depend on $t$ but not $z$.

Given $b$ as in the enunciation, we write $b_j$ for $\eta_j b$.
Then
\[
\begin{aligned}
\norm{b}_{\Leb^\infty(\Hn)}
&\lesssim \sup_{j \in \Z} \norm{b_j}_{\Leb^\infty(\Hn)}
\lesssim \sup_{j \in \Z} \bignorm{\HLap^M b_j}_{\Leb^2(\Hn)} \\
&\lesssim \sup_{j \in \Z} \lpar
\bignorm{\tilde\eta_j \HLap^M b}_{\Leb^2(\Hn)}
+ \bignorm{\tilde\eta_j b}_{\Leb^2(\Hn)} \rpar ,
\end{aligned}
\]
by an argument similar to that used to prove the second part of the previous lemma.

From calculus in $\R$,
\[
\begin{aligned}
& \sup_{j \in \Z}
 \lpar \int_{\R} \abs| \tilde\eta_j \HLap^m b(z,t) |^2 \wrt t \rpar^{1/2} \\
&\qquad\lesssim h^{-1/2} \lpar
\lpar \int_{\R} \abs| \VLap \HLap^m b(z,t) |^2 \wrt t \rpar^{1/2}
+ \lpar \int_{\R} \abs| \HLap^m b(z,t) |^2 \wrt t \rpar^{1/2} \rpar
\end{aligned}
\]
for $m$ equal to $0$ or $M$.
We integrate in $z$ to deduce that
\[
 \sup_{j \in \Z} \bignorm{ \tilde\eta_j \HLap^m b }_{\Leb^2(\Hn)}
\lesssim h^{-1/2} \lpar
\bignorm{ \HLap^m \VLap b }_{\Leb^2(\Hn)}
+ \bignorm{ \HLap^m b }_{\Leb^2(\Hn)} \rpar.
\]
The desired result follows easily.
\end{proof}

\subsection{Vanishing moments on $\Hn$}\label{ssec:moments}

We say that a function $f$ on $\Hn$ is homogeneous of degree $d$ if $f\circ \dilat_r = r^d f$ and that a differential operator $\oper{D}$ on $\Hn$ is homogeneous of degree $e$ if $\oper{D}(f\circ \dilat_r)= r^e \oper{D}(f) \circ \dilat_r$.
If $f$ is homogeneous of degree $d$ and $\oper{D}$ is homogeneous of degree $e$, then the function $\oper{D}f$ is homogeneous of degree $d - e$.
The coordinate functions $z_j$ (and their real and imaginary parts $x_j$ and $y_j$) are homogeneous of degree one, while the coordinate $t$ is homogeneous of degree $2$, and each homogeneous polynomial $p$ has a positive degree, which is the product of the degrees of the coordinate functions involved in each of the monomial terms of $p$.
A monomial is an expression $p(x,t) = x_1^{\alpha_1} \dots x_{2n}^{\alpha_{2n}} t^k$, where each $\alpha_j \in \N$ and $k \in \N$.

Our next results are the outcome of reflections on a remark of Fulvio Ricci, who pointed out to us that if all moments of homogeneous order up to $k$ of a radial Schwartz function $f$ vanish, then there are radial Schwartz functions $g_{a,b}$, where $2a+b = k+1$, such that $f = \sum_{a,b} \HLap^a \oper{T}^b g_{a,b}$.
See \cite{FisRicYak} (especially Lemma 5.2) for more on this type of result.
These authors use a higher order extension of Hadamard's lemma to prove this.
We are interested in functions with compact support, and for us a different version of these ideas, based on a lemma of de Rham \cite[Lemme II]{dRh}, is useful.

\begin{lemma}\label{deRham-Rn}
Suppose that $\phi \in \fnspace{C}^\infty(\R^n)$, that $\supp(\phi) \subseteq [-1,1]^n$ and that $m \in \N$.
If
\[
\int_{\R^n} \phi(x) \fn p(x) \wrt x = 0
\]
for all homogeneous polynomials $p$ on $\R^n$ of homogeneous degree at most $m$, then there are $\fnspace{C}^\infty(\R^n)$ functions $f_\alpha$ supported in $[-1,1]^n$, for $\alpha \in I^n_{m+1}$, such that
\begin{equation}\label{eq:deriv-0}
\phi = \sum_{\alpha \in I^n_{m+1} } \partial^\alpha f_\alpha ,
\end{equation}
where $I^n_{m+1}$ is the set of multi-indices $(\alpha_1, \dots, \alpha_n) \in \N^n$ such that $|\alpha| =m+1$.
The $f_\alpha$ in \eqref{eq:deriv-0} may be chosen to depend smoothly and linearly on $\phi$.
\end{lemma}

\begin{proof}
In this proof, $J^n_{m+1}$ denotes the set of multi-indices $(\alpha_1, \dots, \alpha_n) \in \N^n$ such that $|\alpha|  \leq m+1$.

We apply induction on the dimension $n$ of the ambient space $\R^n$.
If $n =1$, we take
\[
f_{m+1}(x) = \int_{-\infty}^{x} \frac{(x-t)^{m}}{m!} \fn \phi(t) \wrt t,
\]
and the result is a routine verification.

Suppose that the result holds in $\R^{n-1}$, and take $\phi$ on $\R^n$ that satisfies the hypotheses of the lemma.
We write $x \in \R^n$ as $(\hat x, x_n)$, where $\hat x \in \R^{n-1}$.

Take smooth functions $\hat\phi_\beta$ on $\R^{n-1}$, supported in $[-1,1]^{n-1}$, such that, for all  $\alpha, \beta \in \N^{n-1}$, of order at most $m$,
\begin{equation}\label{eq:phi-hat-props}
\int_{[-1,1]^{n-1}} \hat{x}^\alpha \fn\hat\phi_\beta(\hat{x}) \wrt \hat{x} = \delta_{\alpha,\beta}
\end{equation}
(this is the Kronecker delta).
For such $\beta$, define
\[
g_\beta(x_n) = \int_{\R^{n-1}} x^\beta \fn \phi(\hat x, x_n) \wrt \hat x
\qquad\forall x_n \in \R,
\]
so that
\[
\int_{\R^{n-1}} x^\alpha \Bigl[ \phi(\hat{x}, x_n) - \sum_{\beta \in J^{n-1}_{m}} \hat\phi_\beta(\hat{x}) \fn g_\beta(x_n)  \Bigr] \wrt \hat x = 0
\]
for all $x \in \R$ and all $\alpha \in J^{n-1}_m$.
By the inductive hypothesis, we may write
\[
\phi(\hat{x}, x) - \sum_{\beta \in J^{n-1}_m} \hat\phi_\beta(\hat{x}) \fn g_\beta(x_n)
= \sum_{\alpha \in I^{n-1}_{m+1}} \partial^\alpha \psi_\alpha(\hat x, x_n) ,
\]
where each $\psi_\alpha$ is smooth and supported in $[-1,1]^{n}$.
So it suffices to consider the terms $\hat\phi_\beta(\hat{x}) \fn g_\beta(x_n)$.

By definition,
\[
\int_{\R} x_n^j \fn g_\beta(x_n) \wrt x_n = 0
\]
when $0 \leq j \leq m - |\beta|$, so that
\[
g_\beta(x_n) = \frac{d^{m+1- |\beta|}}{dx_n^{m+1- |\beta|}} G_\beta(x_n),
\]
where
\[
G_\beta(x_n)
= \int_{-\infty}^{x_n}
        \frac{(x_n-t)^{m-|\beta|}}{(m-|\beta|)!} g_\beta(t) \wrt t.
\]
Moreover, from \eqref{eq:phi-hat-props} and the inductive hypothesis, there are functions $\psi_{\beta,\alpha}$ on $\R^{n-1}$ such that
\[
\hat\phi_\beta
= \sum_{\alpha \in I^{n-1}_{m - |\beta| +1} } \partial^\alpha \psi_{\beta,\alpha} .
\]
The lemma follows.
\end{proof}

\begin{definition}\label{def:higher-divergence}
Fix $m,n\in \N$, and let $\bar{T}$ be a closed tube.
We write $\Hnabla^m \Vnabla^n \cdot \fnspace{C}^\infty(\bar{T})^{\otimes}$ for the space of all linear combinations  of expressions $\oper{D} \oper{T}^n f$, where $\oper{D}$ is a product of $m$ vector fields, each chosen from $\{\oper{X}_1, \dots, \oper{X}_{2\cdim}\}$, while $f \in \fnspace{C}^\infty(\Hn)$ and $\supp(f) \subseteq \bar{T}$.
We define $\riHnabla^m \Vnabla^n \cdot \fnspace{C}^\infty(\bar{T})^{\otimes}$ analogously, with right invariant vector fields.
\end{definition}

In the next proposition, we identify $\Hn$ with $\R^{2\cdim} \times \R$, and write elements of $\Hn$ as $(x,t)$, where $x \in \R^{2\cdim}$ and $t\in\R$.
We write $p_{\alpha, k}$ for the monomial $x_1^{\alpha_1} \dots x_{2n}^{\alpha_{2n}} t^k$.

\begin{prop}\label{prop:moments}
Let $m,n \in \N$, and $\phi \in \fnspace{C}^\infty(\bar{T})^{\otimes}$.
Then
\begin{equation}\label{eq:moment-1}
\int_{\R} p_{\alpha,k}(x,t) \fn\phi(x,t) \wrt t =0 
\qquad \forall x \in \R^{2\cdim}
\end{equation}
for all monomials $p_{\alpha,k}$ when $k < n$ if and only if 
$\phi \in \Vnabla^n \cdot \fnspace{C}^\infty(\bar{T})^{\otimes}$.
Further, 
\begin{equation}\label{eq:moment-2}
\int_{\Hn} p_{\alpha,k}(g) \fn\phi(g) \wrt g  =0
\end{equation}
for all monomials  $p_{\alpha,k}$ when $ |\alpha| + 2k < m$
if and only if $\phi \in \Hnabla^m \cdot \fnspace{C}^\infty(\bar{T})^{\otimes}$.

Consequently, $\phi \in \Hnabla^m \Vnabla^n \cdot \fnspace{C}^\infty(\bar{T})^{\otimes}$ if and only if \eqref{eq:moment-1} holds for all monomials $p_{\alpha,k}$ when $k < n$, and 
\[
\int_{\Hn} p_{\alpha,k}(g) \fn\psi(g) \wrt g  =0
\]
for all monomials  $p_{\alpha,k}$ when $k \geq n$ and $ |\alpha| + 2(k-n) < m$, 
where $\psi \in \fnspace{C}^\infty(\bar{T})$ is such that $\phi = \Vnabla^n \psi$.
\end{prop}

\begin{proof}
The proposition is invariant under translations and dilations, so without loss of generality we take $\bar{T}$ to be $\bar{B}\one(o,1)  \bar{B}\two(0,h)$.
It is evident that \eqref{eq:moment-1} implies \eqref{eq:moment-2} (for the same $p_{\alpha,k}$).

Suppose that $\phi =  \Vnabla^n \psi$.
Integration by parts shows that, if $n>k$, then
\[
\int_{\R} p(x,t) \fn\phi(x,t) \wrt t 
= (-1)^n\int_{\R} \Vnabla^n p(x,t) \fn\psi(x,t) \wrt t 
=0
\qquad \forall x \in \R^{2\cdim} .
\]
Conversely, if \eqref{eq:moment-1} holds when $n < k$, then we may define
\[
\psi(x,t) = \int_{-\infty}^{t} \frac{(t-s)^{n-1}}{(n-1)!} \fn \phi(x,s) \wrt s
\qquad\forall x \in \R^{2\cdim},
\]
and it is clear that $\psi \in \fnspace{C}^\infty(\bar{T})$ and $\phi = \Vnabla^{n} \psi$.

Similarly, if  $\phi = \Hnabla^m \cdot \psi$ (here $\psi$ is tensor valued and we apply the summation convention), then
\[
\int_{\Hn} p(g) \fn(\Hnabla^m \cdot \psi)(g) \wrt g
=  (-1)^{m} \int_{\Hn} (\Hnabla^m p)(g) \cdot \psi(g) \wrt g .
\]
When $ |\alpha| + 2k < m$, the homogeneous degree of the polynomial $\Hnabla^m p$ is 
$|\alpha| + 2k - m$, which is negative, so $\Hnabla^m p = 0$ and \eqref{eq:moment-2} holds.

Conversely, suppose that \eqref{eq:moment-2} holds.
Much as in the proof of the previous lemma, we take smooth functions $\hat\phi_{\alpha}$ on $\R^{2\cdim}$, supported in $[-1,1]^{2\cdim}$, such that, for all  $\alpha, \beta \in \N^{2\cdim}$, of order at most $m-1$,
\begin{equation*}
\int_{[-1,1]^{2\cdim}} x ^\alpha \fn\hat\phi_\beta(x ) \wrt x  = \delta_{\alpha,\beta}
\end{equation*}
(this is the Kronecker delta).
For such $\beta$, define
\[
g_\beta(t) = \int_{\R^{2\cdim}} x^\beta \fn\phi(x, t) \wrt x
\qquad\forall t \in \R,
\]
so that
\[
\int_{\R^{2\cdim}} x^\alpha \Bigl[ \phi(x , t) - \sum_{\beta \in J^{2\cdim}_{m-1}} \hat\phi_\beta(x ) \fn g_\beta(t)  \Bigr] \wrt x = 0
\]
for all $t \in \R$ and all $\alpha \in J^{2\cdim}_{m-1}$.
By the previous lemma, we may write
\[
\phi(x , t) - \sum_{\beta \in J^{2\cdim}_{m-1}} \hat\phi_\beta(x ) \fn g_\beta(t)
= \sum_{\alpha \in I^{2\cdim}_{m}} \partial^\alpha \phi_\alpha(x, t) ,
\]
where each $\phi_\alpha$ is smooth and supported in $\bar{T}$.
Now we consider the  $\hat\phi_\beta(x ) \fn g_\beta(t)$.

By definition,
\[
\int_{\R} t^j \fn g_\beta(t) \wrt t = 0
\]
when $0 \leq 2j \leq m - |\beta|$, so that
\[
g_\beta(t) = \frac{d^{\bar m+1}}{dt^{\bar m+1}} G_\beta(t),
\]
where $\bar m = \lfloor (m - |\beta|)/2 \rfloor$ and
\[
G_\beta(t)
= \int_{-\infty}^{t}
        \frac{(t-s)^{\bar m}}{\bar m!} g_\beta(s) \wrt s.
\]
Moreover, from \eqref{eq:phi-hat-props} and the previous lemma, there are functions $\phi_{\beta,\alpha}$ on $\R^{2\cdim}$ such that
\[
\hat\phi_\beta
= \sum_{\alpha \in I^{2\cdim}_{|\beta| +1} } \partial^\alpha \phi_{\beta,\alpha} .
\]
Hence we may write
\[
\phi
= \sum_{(\alpha,k) \in I} \partial^{\alpha, k} \phi_{\alpha,k},
\]
where $I$ is the collection of multi-indices $(\alpha,k)$, where $\alpha \in \N^{2\cdim}$, $k \in \N$, and $k = \lfloor (m + 2 - |\alpha|)/2 \rfloor$.

To conclude, we show that each summand belongs to $\Hnabla^m \cdot \fnspace{C}^\infty(\bar{T})^{\otimes}$.
We write the partial derivatives on $\Hn$ in terms of the left invariant vector fields; then
\[
\partial^{\alpha, k}
= (\oper{X}_{\alpha_1} + c y_{\alpha_1} \oper{T}) \dots (\oper{X}_{\alpha_j} + c y_{\alpha_j} \oper{T}) \oper{T}^{k} ,
\]
where $j = |\alpha|$ and $k = \lfloor (m + 2 - j)/2 \rfloor$.
We write $\oper{T}$ as the commutator $[\oper{X}_1, \oper{X}_{\cdim+1}]$, and expand the expression, obtaining a weighted sum of (not necessarily commuting) products of vector fields $\oper{X}_i$ and monomials $y_j$.
Each summand is homogeneous, of degree $j+2k$, and so the number of vector fields that occur, minus the number of monomials, is equal to $j+2k$.

Observe that $y_i \oper{X}_j = \oper{X}_j y_i  - \delta_{i,j}$.
By iterating this, we may move all the monomials to the right, and end with a weighted sum of terms of the form $\oper{X}_{i_1} \dots \oper{X}_{i_a}  p_{\mathbf{i}}$, where the number of vector fields that occur, minus the homogeneous degree of the polynomial, is equal to $j+2k$.
Hence we realise $\partial^{\alpha, k} \phi_{\alpha,k} $ as a sum of terms of the required form
\[
\oper{X}_{j_1} \dots \oper{X}_{j_b} q_{\mathbf{j}} \phi_{\alpha,k},
\]
where $b \geq j+2k \geq m+1$.
We group together all the terms that begin with the same invariant differential operator $\oper{X}_{j_1} \dots \oper{X}_{j_{m+1}}$, which shows that  $\phi \in \Hnabla^m \cdot \fnspace{C}^\infty(\bar{T})^{\otimes}$, as claimed.

The last part of the lemma is proved by combining the first two results.
\end{proof}

\begin{corollary}\label{cor:null-moments-are-derivs}
Suppose that $\phi \in \fnspace{C}^\infty(\bar{T})$ and $m,n \in \N$.
Then $\phi \in \Hnabla^m\Vnabla^n\cdot \fnspace{C}^\infty(\bar{T})^{\otimes}$ if and only if $\phi \in \riHnabla^m\riVnabla^n \cdot \fnspace{C}^\infty(\bar{T})^{\otimes}$.
\end{corollary}

\begin{proof}
The following are equivalent: first,  $\phi \in \riHnabla^q \cdot \fnspace{C}^\infty(\bar{T})^{\otimes}$; second, $\oper{R}\phi \in \fnspace{C}^\infty(\bar{T})$ (as $\oper{R}$ exchanges left invariance with right invariance); third, certain moments of $\oper{R}\phi$ vanish as in Proposition \ref{prop:moments}; fourth, certain moments of $\phi$ vanish as in Proposition \ref{prop:moments} (because $\oper{R}$ preserves homogeneity); and finally,  $\phi \in \riHnabla^q \cdot \fnspace{C}^\infty(\bar{T})^{\otimes}$ by the proposition.
\end{proof}

\begin{remark}
It is not clear whether $\HLap^m\VLap^n \fnspace{C}^\infty(\bar{T}) = \riHLap^m \VLap^n \fnspace{C}^\infty(\bar{T})$.
\end{remark}

\subsection{Estimates for the heat and Poisson kernels}\label{ssec:heat-Poisson}

Let $h\one_r$ (where $r\in\R^+$) be the heat kernel for $\HLap$, that is, the convolution kernel of $\expe^{-r \HLap}$ on $\Hn$.
By homogeneity,
\begin{equation}\label{hkp1}
\begin{aligned}
h\one_r(z, t)
= r^{-\hdim/2} h\one_1 \circ \dilat_{1/{\sqrt{r}}}
\qquad\forall r\in\R^+ .
\end{aligned}
\end{equation}
The following Gaussian upper bound for the heat kernel in terms of the  control norm $\norm{\cdot}_{c}$ holds:
\begin{equation}\label{ht kernel}
\begin{aligned}
| \Hnabla^q h\one_r(z,t)|
&\lesssim_{q,\epsilon}
r^{-q/2-\hdim/2}\exp\Bigl( -\frac{ \norm{(z,t)}_{c} ^2}{4(1+\epsilon) r} \Bigr)
\end{aligned}
\end{equation}
for all $\epsilon \in \R^+$.
This is proved in \cite[p.~48]{VarS-CCou}.
There is a similar lower bound (see \cite[p.~61]{VarS-CCou}) for the heat kernel (but not its derivatives), namely,
\[
 h\one_r(z,t)
\gtrsim_\epsilon
r^{-\hdim/2}\exp\Bigl( - \frac{ \norm{(z,t)}_{c}^2}{4(1-\epsilon) r} \Bigr).
\]

The subordination formula
\[
\expe^{-\lambda}
= \frac{1}{\sqrt{\pi}} \int_{\R^+} \frac{\expe^{-v}}{\sqrt{v}} \fn\expe^{-\lambda^2/4v} \wrt v
\qquad\forall \lambda \in \R^+
\]
leads us to corresponding estimates for the Poisson kernel $p\one$.
By functional calculus, for all $r \in \R^+$,
\begin{align*}
\expe^{-r \sqrt\HLap}
&= \frac{1}{\sqrt{\pi}} \int_{\R^+} \frac{\expe^{-v}}{\sqrt{v}} \fn\expe^{-r^2 \HLap/4v} \wrt v\\
\noalign{\noindent{\text{and}}}
 \Hnabla^q \expe^{-r \sqrt\HLap}
&= \frac{1}{\sqrt{\pi}} \int_{\R^+} \frac{\expe^{-v}}{\sqrt{v}} \Hnabla^q \expe^{-r^2 \HLap/4v} \wrt v
\end{align*}
so from \eqref{ht kernel},
\[
\begin{aligned}
 \abs| \Hnabla^q p\one_{r}(g) |
&\lesssim
\int_{\R^+} \frac{\expe^{-v}}{\sqrt{v}} \lpar\frac{r^2}{4v}\rpar^{ -(q + \hdim)/2}
  \expe^{ -  v \norm{g}_{c}^2/(1+\epsilon) r^2 }  \wrt v \\
&= 2^{q+\hdim} r^{-q - \hdim} \int_{\R^+}  v^{(q+\hdim -1)/2}
  \expe^{ -v -  v \norm{g}_{c}^2 /(1+\epsilon) r^2}  \wrt v  \\
&= 2^{q+\hdim} r^{-q -\hdim}
\frac{ \Gamma((q + \hdim + 1)/2)}{ (1 + \norm{g}_{c}^2/ (1+\epsilon)r^2) ^{(q + \hdim + 1)/2}} \\
&\eqsim \frac{ r }{ (r^2 + \norm{g}_{c}^2)^{(q + \hdim + 1)/2}}
\end{aligned}
\]
for all $g \in \Hn$; the implicit constants depend on $q$, $\cdim$ and $\epsilon$.
Analogously,
\[
p\one_{r}(g)
\gtrsim  \frac{ r }{ (r^2 + \norm{g}_{c}^2)^{\hdim/2 + 1/2}}
\qquad\forall g \in G.
\]
For computation, we may and shall replace the control norm by the gauge norm, and we shall use the estimates
\begin{equation}\label{eq:poisson-est-1}
\abs| \Hnabla^q p\one_{r}(g) |
\lesssim_{m,n} {\frac{ r }{ \max\{ r, \norm{g}\}^{q + \hdim + 1}}}
\qquad\forall g \in G.
\end{equation}

Similar estimates hold for the usual Poisson kernel $p\two _s$ on $\R$, namely,
\begin{equation}\label{eq:poisson-est-2}
\abs| \Vnabla^q p\two _s(t) |
\lesssim_q  \frac{ s }{\max\{s, |t|\}^{q+2}}
\end{equation}
for all $s,t \in \R$  and $q \in \N$.
Hence
\[
\int_{\Hn} \abs|  \Hnabla^q p\one_r(g) | \wrt g
\lesssim_q r^{-q}
\qquad\text{and}\qquad
\int_{\R} \abs|  \Vnabla^q p\two _s(t) | \wrt t
\lesssim_q s^{-q}  .
\]

\subsection{Poisson boundedness}\label{ssec:Poisson-boundedness}
We are going to be interested in smooth functions on $\Hn$ that are supported in $B\one(o,1)$, and in smooth functions that behave much like the Poisson kernel or its derivatives.
The following family of norms works naturally in both situations.

\begin{definition}\label{def:Poisson-bounds}
Fix $\theta \in \R^+$.
The \emph{Poisson norms} on the space of smooth (possibly vector-valued) functions on $\Hn$ are given by
\begin{equation}\label{eq:poisson-like-decay-1}
\bignorm{\phi\one }_{(Q)}
=   \sup\lset \max\{1, \norm{g}\}^{q + \hdim + \theta} \bigabs|\Hnabla^q \phi\one(g)|:
    g \in \Hn, 0 \leq q \leq Q\rset .
\end{equation}
We write $\fnspace{P}^\theta(\Hn)$ for the Fréchet space of functions $\phi\one$ such that $\plainnorm{\phi\one }_{(Q)}$ is finite for all $Q \in \N$.
We say that a family  $\family\one(\Hn)$ of functions on $\Hn$ is \emph{$\theta$-Poisson-bounded} if $\plainnorm{\phi\one }_{(Q)}$ is bounded for all $\phi\one \in \family\one(\Hn)$, for sufficiently many $Q \in \N$.

Similarly, the \emph{Poisson norms} on the space of smooth (possibly vector-valued) functions on $\R$ are given by
\begin{equation}\label{eq:poisson-like-decay-2}
\bignorm{\phi\two }_{(Q)}
=   \sup \left\{ \max\{1, |t| \}^{q + 1 + \theta} \bigabs|\Vnabla^q \phi\two (t)| :
    t \in \R, 0 \leq q \leq Q \right\} ;
\end{equation}
we write $\fnspace{P}^\theta(\R)$ for the Fréchet space of functions $\phi\two $ on $\R$ such that $\plainnorm{\phi\two }_{(Q)}$ is finite for all $Q \in \N$.
We say that a family $\family\two (\R)$ of functions on $\R$ is \emph{$\theta$-Poisson-bounded} if  $\plainnorm{\phi\two }_{(Q)}$ is bounded for all $\phi\two \in \family\two (\R)$, for sufficiently many $Q \in \N$.

We often write $\bsym\phi$ for a pair $(\phi\one ,\phi\two )$ of functions, where $\phi\one$ is defined on $\Hn$ and $\phi\two $ is defined on $\R$.
A family $\family$ of such pairs is said to be $\theta$-Poisson bounded if the families of first and second components are $\theta$-Poisson-bounded.

Finally, $\theta$-Poisson bounded functions $\phi\one : \Hn \to \C$ or $\phi\two : \R \to \C$ are said to be \emph{normalised} if their total integral is $1$.
\end{definition}

Mostly we deal with the case where $\theta = 1$, and omit $\theta$ from the notation.

In this definition, ``sufficiently many'' is left unspecified; in any particular calculation involving Poisson bounded families of functions, it is only necessary to control finitely many derivatives, but the number of derivatives varies from one calculation to another, and depends on $\cdim$; we do not bother keeping tabs on these dependencies.

By \eqref{eq:poisson-est-1} and \eqref{eq:poisson-est-2}, the Poisson kernels $p\one$ on $\Hn$ and $p\two $ on $\R$ lie in $\fnspace{P}(\Hn)$ and $\fnspace{P}(\R)$, and evidently $\fnspace{P}(\Hn) \subseteq \Leb^1(\Hn)$ while $\fnspace{P}(\R) \subseteq \Leb^1(\R)$.


The closed subspaces of $\fnspace{P}(\Hn)$ and of $\fnspace{P}(\R)$ of functions supported in the closed balls $\bar B\one(o,1)$ and $\bar B\two (0,1)$ coincide with the closed subspaces of the Fréchet spaces $\fnspace{C}^\infty(\Hn)$ and $\fnspace{C}^\infty(\R)$ supported in $\bar B\one (o,1)$ and $\bar B\two (0,1)$.

\begin{remark}\label{rem:either-side-will-do}
There is another family of Poisson norms on the space of smooth functions on $\Hn$ , namely, 
\begin{equation}\label{eq:poisson-like-decay-1-r}
\bignorm{\phi\one }_{(\rivf Q)}
=   \sup\lset \max\{1, \norm{g}\}^{q + \hdim + \theta} \bigabs|\riHnabla^q \phi\one(g)|:
    g \in \Hn, 0 \leq q \leq Q\rset ,
\end{equation}
where we consider a right translation invariant gradient rather than a left invariant gradient.
These norms give rise to the same Fréchet space as the norms used in the definition of $\fnspace{P}(\Hn)$.
Indeed, $\riHnabla = \Hnabla \phi\one + 8 \cdim y \oper{T}$ (here $y$ is a vector), so that
\[
\begin{aligned}
\max\{1, \norm{g}\}^{2 + \hdim}  \bigabs|\riHnabla \phi\one(g)| 
&\lesssim \max\{1, \norm{g}\}^{2 + \hdim} \lpar \bigabs| \Hnabla \phi\one(g)| 
+ \norm{g} \bigabs| \oper{T} \phi\one(g)| \rpar \\
&\lesssim \norm{ \phi\one }_{(1)} 
+  \norm{ \phi\one }_{(2)} ,
\end{aligned}
\]
so that $\plainnorm{ \phi\one }_{(\rivf 1)}  \lesssim  \plainnorm{ \phi\one }_{(2)}  $.
Similarly, $\plainnorm{ \phi\one }_{(1)}  \lesssim  \plainnorm{ \phi\one }_{(\rivf 2)} $.
Inductively we may estimate all the norms from one family by sums of norms from the other family.
\end{remark}

\begin{definition}\label{def:grand-max-fn}
Given a Poisson bounded family $\family$ of pairs of functions, the  associated \emph{grand maximal function} $\gmaxop (f)$ of $f \in \Leb^1(\Hn)$ is defined by
\begin{align*}
\gmaxop (f) (g)
:= \sup_{\bsym\phi \in \family} \sup_{r,s \in \R^+}
\abs|f \Hconv \phi_{r,s} (g)|
\qquad\forall g \in \Hn,
\end{align*}
where $\phi_{r,s}$ denotes the convolution product $ \phi\one_{r} \Vconv \phi\two _{s}$ of normalised dilates.
\end{definition}

In the next lemma, we estimate the size of $\phi_{r,s}$, where $\bsym\phi$ is Poisson-bounded, and hence compare two maximal functions.
We recall that $\chi_{r,s} = \chi\one_r \Vconv \chi\two _s$, where $\chi\one_r$ is the normalised characteristic function of the ball $B\one(o, r)$ and $\chi\two _s$ is the normalised characteristic function of the ball $B\two (0, s)$.

\begin{lemma}\label{lem:poisson-and-flag-maximal-functions}
Fix $\beta,\gamma\in\R^+$, and a Poisson bounded family $\family$ of pairs of functions on $\Hn$ and on $\R$.
Then
\begin{enumerate}
  \item $p_{r,s} \simeq p_{\beta r, \gamma s}$ for all $r,s \in \R^+$;
  \item $\chi_{r, s} \lesssim p_{r,s} \lesssim \sum_{i,j \in \N} 2^{-i-j} \chi_{2^i r, 2^j s}$ for all $r, s \in \R^+$;
  \item $\abs|\phi_{r,s}| \lesssim p_{r,s}$ for all $\bsym\phi \in \family$;
  \item $ \gmaxop (\abs|f|) \eqsim \oper{M}_{\flag} (\abs|f| ) $ for all measurable functions $f$ on $\Hn$.
\end{enumerate}
\end{lemma}

\begin{proof}
The first and second parts follow immediately from \eqref{eq:poisson-est-1}  and \eqref{eq:poisson-est-2}, and the third from \eqref{eq:poisson-est-1}, \eqref{eq:poisson-est-2} and Definition \ref{def:Poisson-bounds}.
The final part follows from the definitions.
\end{proof}

In particular, the grand maximal operator is bounded on $\Leb^p(\Hn)$ when $1 < p \leq \infty$.
A similar result is shown in \cite{CFefSte72}.
The following lemma extends the above computation, and enables us to reduce many calculations involving Poisson bounded functions to calculations involving smooth functions with compact support.

\begin{lemma}\label{lem:molecules-are-sums}
Fix $N \in \N^+$ and $\theta \in (0,1)$.
There are continuous linear maps $L^{1,i}$ on $\fnspace{P}^\theta(\Hn)$ such that $\supp L^{1,0} \phi \subset B\one(o,1)$ and $\supp L^{1,i} \phi \subset B\one(o,2^j) \setminus B\one(o, 2^{j-2})$ for all $i \in \N^+$, and
\[
\phi = \sum_{i \in \N} L^{1,i} \phi
\qquad\forall \phi \in \fnspace{P}^\theta(\Hn).
\]
Moreover, we may choose the $L^{1,i}$ such that $\norm{L^{1,i} \phi}_{(Q)} \lesssim_{N,Q} \norm{\phi}_{(Q)}$ and the sum converges in $\fnspace{P}^\theta(\Hn)$.
Further, if $\phi$ has mean $0$, then we may choose the $L^{1,i}$ such that all moments of $L^{1,i}\phi$ of homogeneous order $m$ vanish if $m \leq N$.
\end{lemma}

\begin{proof}
By composing a suitable smooth partition of unity on $[0,\infty)$ with a smooth norm, such as the Kóranyi norm, we may easily find smooth functions $\eta^i$ such that $\sum_{i\in\N}\eta^i = 1$,  $\supp \eta^0 \subseteq B\one(o,1)$,  $\supp \eta^1 \subseteq B\one(o,2) \setminus B\one(o,1/2)$, and $\eta^i = \eta^1(2^{-i} \cdot)$ when $i \geq 2$.
Clearly $\sum_{i\in\N} \eta^i \phi = \phi$ when $\phi \in \fnspace{P}^\theta(\Hn)$ and $\norm{\eta^i\phi}_{(Q)} \lesssim \norm{\phi}_{(Q)}$ for all $Q \in \N$.

For the rest of the proof, we suppose that $\phi$ has mean $0$.
In this case, there is no reason for the $\eta^i\phi$ to have the required vanishing moments, so we apply a correction.

Take a basis $\{b_1, \dots, b_J\}$ for the vector space of polynomials on $\Hn$ of homogeneous degree at most $N$ such that $b_j$ is homogeneous of degree $d_j$, and a dual set of $\fnspace{C}^\infty(\Hn)$ functions $\{f_1, \dots, f_J\}$, supported in $B\one(o,1) \setminus B\one(o,1/2)$,  such that
\[
\int_{\Hn} f_j(g) \fn b_k(g) \wrt g = \delta_{j,k}
\]
($\delta_{j,k}$ is the Kronecker delta) when $j,k \in \{1, \dots, J\}$.
Adding a nonzero multiple of a dilate of $f_j$ to $\eta^i\phi$ changes its integral against $b_j$ but does not change other moments in the range of interest.

We define the coefficient $c_k^I(\phi)$, where $K \in \N$, by
\[
c_k^I (\phi)
=
\int_{\Hn} \sum_{i = 0}^{I} (\eta^i \phi)(g) \fn b_k(g)\wrt g.
\]
Then for $I \in \N$, the correction term $C_k^I(\phi)$ is defined by
\[
C_k^{I} (\phi)
=  c_k^{I}(\phi) 2^{-I d_k} [f_k]_{2^I}
 - c_k^{I-1}(\phi) 2^{(1-I) d_k} [f_k]_{2^{I-1}}.
\]
If $d_k > 0$, then
\[
\abs| c_k^I (\phi) |
\lesssim \int_{B\one(0,2^{I+1})} \abs|\phi(g) \fn b_k(g)| \wrt g
\lesssim \int_{0}^{2^{I+1}} \frac{r^{d_k}}{1+r^{\theta+ \hdim}} r^{\hdim-1} \wrt r
\lesssim 2^{I(d_k-\theta)}.
\]
And if $d_k = 0$, then $\int_{\Hn} \phi(g) \fn b_k(g) \wrt g = \int_{\Hn} \phi(g) \wrt g =  0$, whence similarly
\[
\begin{aligned}
\abs| c_k^I (\phi) |
&= \abs| -\int_{\Hn} \sum_{i = I+1}^{\infty} (\eta^i\phi)(g) \wrt g|
\lesssim \int_{\Hn \setminus B\one(0,2^{I-1})} \abs|\phi(g) | \wrt g
\lesssim 2^{I(d_k-\theta)}.
\end{aligned}
\]
Hence
\[
\begin{aligned}
\abs| c_k^{I}(\phi) 2^{-I d_k} [f_k]_{2^I}|
& = \abs| \frac{c_k^{I}(\phi)} {2^{I(\hdim + d_k)}} f_k \circ\dilat_{2^{-I}} | \\
&\lesssim
2^{-I(\theta + \hdim)} \fn \indifn_{B\one(o,2^{I})\setminus B\one(o,2^{I-1})},
\end{aligned}
\]
and so the correction terms all lie in $\fnspace{P}^{\theta}(\Hn)$ with controlled Poisson norms.
Further, the correction terms form a telescopic series with sum $0$.
Finally,
\[
\begin{aligned}
&\int_{\Hn} \left( \eta^k\phi(g) - C_i^K(\phi)(g) \right) b_i(g) \wrt g \\
&\qquad= \int_{\Hn} \left( \eta^j\phi(g) - c_i^{k}(\phi) 2^{-kd_i} [f_i]_{2^k}
    + c_i^{m-1}(\phi) 2^{(1-m)d_i} [f_i]_{2^{k-1}} \right) b_i(g) \wrt g \\
&\qquad= \int_{\Hn} \eta^j\phi(g) b_i(g) \wrt g
    - \int_{\Hn} c_i^{j}(\phi) 2^{-jd_i} [f_i]_{2^j}  b_i(g)\wrt g \\
&\qquad\qquad     + \int_{\Hn} c_i^{j-1}(\phi) 2^{(1-j)d_i} [f_i]_{2^{j-1}}  b_i(g) \wrt g \\
&\qquad= \int_{\Hn} \eta^j\phi(g) b_i(g) \wrt g
    - c_i^{k}(\phi) + c_i^{k-1}(\phi) \\
&\qquad=0.
\end{aligned}
\]
Clearly, subtracting off all the correction terms $C^{k}_i(\phi)$ from $L^{1,i}\phi$ produces the desired effect.
\end{proof}

If we had taken $\theta$ to be $1$, we would have had an extra logarithmic term.

If $\theta<1$, then $\fnspace{P}(\Hn)$ may be injected continuously in $\fnspace{P}^\theta(\Hn)$.
Further, analogous results hold for Poisson bounded functions on $\R$.
This has the following consequence.

\begin{corollary}\label{cor:Poisson decomposition}
Suppose that $\family$ is a family of pairs of functions $\phi\one$ on $\Hn$ and $\phi\two$ on $\R$, and that $M, N \in \N$ and $\theta \in (0,1)$.

If $\family$ is Poisson bounded, then for all $\bsym\phi \in \family$, we may write $\phi\one$ and $\phi\two $ as sums of normalised dilates:
\begin{equation}\label{eq:decomposition as sum}
\phi\one = \sum_{i\in \N} [\phi^{(1,i)}]_{2^i}
\qquad\text{and}\qquad
\phi\two = \sum_{j\in \N} [\phi^{(2,j)}]_{2^j} ,
\end{equation}
where $\phi^{(1,i)} \in \fnspace{C}^\infty(\Hn)$ and $\supp \phi^{(1,i)} \subseteq \bar B\one(o,1)$ and moreover $\phi^{(2,j)} \in \fnspace{C}^\infty(\R)$ and $\supp \phi^{(2,j)} \subseteq \bar B\two (0,1)$; further,
\begin{equation}\label{eq:norm estimate of summands}
\bignorm{ \phi^{(1,i)} }_{(Q)} \lesssim_{\family, Q} 2^{-\theta i}
\qquad\text{and}\qquad
\bignorm{ \phi^{(2,j)} }_{(Q)} \lesssim_{\family, Q} 2^{-\theta j}
\end{equation}
for all $i, j, Q \in \N$.
Further, if $\phi\one$ and $\phi\two $ have mean $0$, then $\phi^{(1,i)}$ and $\phi^{(2,j)}$ have all moments of homogeneous order at most $M$ and $N$ equal to $0$.

Conversely, if we may write every $\phi\one$ and $\phi\two$ as sums of normalised dilates as in \eqref{eq:decomposition as sum}, where estimates of the form \eqref{eq:norm estimate of summands} hold, then $\family$ is $\theta$-Poisson bounded.
\end{corollary}

\begin{proof}
Let $L^{(1,i)}$ be the maps constructed in the previous lemma, and $L^{(2,j)}$ be the analogous maps on functions on $\R$.
We define $\phi^{(1,i)}$ to be the normalised dilate $[L^{(1,i)}\phi\one ]_{2^{-i}}$, which is supported in $B\one(o,1)$,
and $\phi^{(2,j)}$ to be the normalised dilate $[L^{(2,j)} \phi\two ]_{2^{-j}}$, which is supported in $B\two (0,1)$.

Conversely, if $\phi\one$ and $\phi\two$ decompose as in \eqref{eq:decomposition as sum}, where estimates of the form \eqref{eq:norm estimate of summands} hold, then the $\theta$-Poisson boundedness of $\family$ follows by summation.
\end{proof}

Many of the results that we are going to prove involve operators that are sublinear in more than one sense.
For example, we consider the maximal function $\gmaxop(f)$, given by
\[
\gmaxop (f) (g)
:= \sup_{\bsym\phi \in \family} \sup_{r,s \in \R^+}
\abs|f \Hconv \phi\one_{r} \Vconv \phi\two _{s}(g)|
\qquad\forall g \in \Hn.
\]
The expression $\sup_{r,s \in \R^+} \bigabs|f \Hconv \phi\one_{r} \Vconv \phi\two _{s}(g)|$ is evidently sublinear in $f$, $\phi\one$ and $\phi\two $.
The point of the next proposition is that is suffices to handle functions $\phi\one$ and $\phi\two $ with compact support.

\begin{prop}\label{prop:compact-support-enough}
Suppose that $\family$ is a Poisson bounded family of pairs of functions.
There there is a Poisson bounded family $\familyc$ of pairs of functions, supported in $B\one(o,1)$ and $B\two (0,1)$, such that
\[
\norm{ \gmaxop(f) }_{\Leb^1(\Hn)} \leq 4 \norm{ \gmaxopz(f) }_{\Leb^1(\Hn)}
\qquad\forall f \in \Leb^1(\Hn).
\]
\end{prop}

\begin{proof}
By Corollary \ref{cor:Poisson decomposition}, given  $(\phi\one , \phi\two ) \in \family$, we may decompose $ \phi\one =  \sum_{i\in \N} 2^{-i} [\phi^{(1,i)}]_{2^i}$ and $ \phi\two = \sum_{j\in \N} 2^{-j} [\phi^{(2,j)}]_{2^j}$, where the functions $\phi^{(1,i)}$ and $\phi^{(2,j)}$ are compactly supported and have uniformly bounded Poisson norms.
We take $\familyc$ to be the set of all such $\phi^{(1,i)}$ and $\phi^{(2,j)}$.
Then
\[
\begin{aligned}
\sup_{r,s>0} \abs| f \Hconv \phi_r\one \Vconv \phi_s\two |
&\leq \sum_{i,j \in \N} \sup_{r,s>0} \abs| f \Hconv [\phi^{(1,j)}]_{2^jr} \Vconv [\phi^{(2,k)}]_{2^ks} | \\
&= \sum_{i,j \in \N} \sup_{r,s>0} \abs| f \Hconv [\phi^{(1,j)}]_{r} \Vconv [\phi^{(2,k)}]_{s} |  \\
&\lesssim \gmaxopc (f),
\end{aligned}
\]
and the result follows by taking the supremum over all $\bsym\phi \in \family$.
\end{proof}

\begin{remark}\label{rem:compact-support-enough}
We will appeal several times to the following principle: when we deal with functionals that are sublinear and dilation invariant in Poisson bounded functions $\phi\one$ and $\phi\two $, such as maximal functions, square functions and area functions, the estimation may be reduced to the case where $\supp(\phi\one ) \subseteq \bar B\one(o,1)$ and $\supp(\phi\two ) \subseteq \bar B\two (0,1)$ in a similar manner to that of the proof above.
Further, when we are dealing with square functions and area functions when the hypotheses include that $\phi\one$ and $\phi\two $ both have mean $0$, we may additionally assume that $\phi\one$ and $\phi\two $ have moments of homogeneous order at most $M$ and $N$ equal to $0$.
In light of Corollary \ref{cor:null-moments-are-derivs}, this means that we may assume that $\phi\one$ and $\phi\two $ are derivatives.
\end{remark}

An algebra that is also a Fréchet space is called a Fréchet algebra when multiplication is continuous in the Fréchet topology.

\begin{prop}\label{prop:Poisson is algebra}
The spaces $\fnspace{P}^\theta(\Hn)$ and $\fnspace{P}^\theta(\R)$ are Fréchet algebras for all $\theta \in \R^+$.
\end{prop}

\begin{proof}
We prove only the first assertion, as the other proof is very similar.
Let $\eta$ be a $[0,1]$-valued function on $\Hn$ that takes the value $1$ on $B\one(o,1/2)$ and vanishes off $B\one(o,1)$.

Take $\phi\one$ and $\psi\one$ in $\fnspace{P}(\Hn)$.
It is evident that, if $\norm{g} \leq 2$, then
\[
\begin{aligned}
\bigabs|\Hnabla^q ( \phi\one \Hconv\psi\one)(g)|
&= \bigabs| (\phi\one \Hconv \Hnabla^q \psi\one)(g)| \\
&\leq \norm{ \phi\one }_{\Leb^1(\Hn)} \bignorm{ \Hnabla^q \psi\one }_{\Leb^\infty(\Hn)} 
\lesssim \norm{ \phi\one }_{(0)} \norm{ \psi\one }_{(q)} ,
\end{aligned}
\]
so we only need worry about estimating $\bigabs|\Hnabla^q ( \phi\one \Hconv\psi\one)(g)|$ when $\norm{g} \geq 2$.
Fix $r \in [2,\infty)$, and set 
\[
\phi\one_1(g) := \eta( D_{1/r} g) \phi\one(g) 
\qquad\text{and}\qquad 
\phi\one_2(g) :=  \phi\one(g) - \phi\one_1(g) 
\]
for all $g \in \Hn$; define $\psi\one_1(g)$ and $\psi\one_2$ analogously.
Observe that, when $\norm{g} \geq 2$,
\[
\norm{g}^{q + \hdim + \theta} \bigabs|\Hnabla^q \psi\one_2(g)|
\lesssim_q \norm{ \psi\one }_{(q)},
\]
because $\Hnabla^q \psi\one_2(g) = 0$ when $\norm{g} \leq r/2$ while $\Hnabla^q \psi\one_2(g) = \Hnabla^q \psi\one(g)$ when $\norm{g} \geq r$ by our choice of $\eta$, and because differentiating $\eta(D_{1/r} g)$ $k$ times introduces a factor of $r^{-k}$ which is comparable in size to $\norm{g}^{-k}$ when $r/2 < \norm{g} < r$.
The implicit constant does not depend on either $g$ or $r$.

We now suppose that $\norm{g} = r$.
Because of our choice of $\eta$, $\riHnabla^q \phi\one \Hconv \psi\one(g)$ vanishes.
Much as before, if $i$ is either $1$ or $2$, then
\[
\begin{aligned}
\bigabs|\Hnabla^q ( \phi\one_i \Hconv\psi\one_2)(g)|
&\leq \bignorm{ \phi\one_i }_{\Leb^1(\Hn)} \bignorm{ \Hnabla^q \psi\one_2 }_{\Leb^\infty(\Hn)} \\
&\lesssim \norm{g}^{-q - \hdim - \theta}\norm{ \phi\one }_{(0)} \norm{ \psi\one }_{(q)} .
\end{aligned}
\]
Finally, by arguing as in Remark \ref{rem:either-side-will-do}, we see that
\[
\begin{aligned}
\bigabs|\Hnabla^q ( \phi\one_2 \Hconv\psi\one_1)(g)|
\lesssim \norm{g}^{-q - \hdim - \theta}\norm{ \phi\one }_{(2q)} \norm{ \psi\one }_{(0)} ,
\end{aligned}
\]
as required.
\end{proof}

Similar results were proved in \cite{HanLuSaw}.

\subsection{Geometry of cones}\label{ssec:cone-geometry}
In this section, we prepare for later results on the nontangential maximal function and on the area function that show that changing the apertures of cones does not affect the corresponding Hardy space, and for the proof that the maximal function controls the area function.
We start by noting that when $f \in \Leb^1(\Hn)$ and $\bsym\phi$ is a Poisson bounded pair, then $(g,r,s) \mapsto f \Hconv \phi\one_r \Vconv \phi\two _s$ is continuous in $\Hn \times\R^+ \times \R^+$.

First, we take a continuous function $F: \Hn \times \R^+ \times R^+ \to [0,\infty)$, and define
\begin{equation}\label{eq:def-F}
F_{\alpha,\beta}(g) = \sup\{ F(g',r,s) : g' \in T(g,\alpha r, \beta s) \} .
\end{equation}

\begin{lemma}\label{lem:apertures-maxfn}
For all $F$ as in \eqref{eq:def-F}, all $\alpha, \alpha', \beta, \beta' \in \R^+$ such that $\alpha < \alpha'$ and $\beta <  \beta'$, and all $\epsilon \in \R^+$,
\begin{align}\label{eq:aperture a b}
\| F_{\alpha,\beta} \|_{\Leb^1(\Hn)}
\leq \|F_{\alpha',\beta'} \|_{\Leb^1(\Hn)}
\lesssim_{\epsilon} \max \biggl\{ \Bigl(\frac{\alpha'}{\alpha}\Bigr)^{\hdim} , \Bigl(\frac{\alpha'}{\alpha}\Bigr)^{2\cdim}\frac{\beta'}{ \beta} \biggr\}^{1+\epsilon}   \|F_{\alpha,\beta}\|_{\Leb^1(\Hn)}  .
\end{align}
\end{lemma}
\begin{proof}
The left hand inequality is trivial.
To prove the right hand inequality, we take $\lambda \in \R^+$, and write $L(\lambda)$ for the set $\{ (g,r,s) \in \Hn \times \R^+ \times \R^+ : F(g,r,s) \geq \lambda \}$.
Since $g' \in  T(g,r,s)$ if  and only if $g \in T(g' ,r, s)$, we see that
\[
\{ g \in \Hn : F_{\alpha,\beta}(g) \geq \lambda \}
= \bigcup_{ (g',r,s) \in L(\lambda) } T(g', \alpha r, \beta s).
\]

Suppose that $\alpha' \geq \alpha$ and $\beta' \geq \beta$.
Then for all $g \in T(g', \alpha' r, \beta' s)$,
\[
\begin{aligned}
\oper{M}_{\flag} \indifn_{ T(g', \alpha r, \beta s)} (g)
&\geq \frac{\abs| T(g', \alpha r, \beta s)|}{ \abs| T(g', \alpha' r, \beta' s)|}
= \left(\frac{\alpha}{\alpha'}\right)^{2\cdim} \frac{(\alpha r)^2 + \beta s}{(\alpha'r)^2 + \beta' s} \\
&\geq \Bigl(\frac{\alpha}{\alpha'}\Bigr)^{2\cdim}
\min \left\{ \Bigl( \frac{\alpha }{\alpha'}\Bigr)^2 , \frac{\beta}{ \beta'} \right\} ,
\end{aligned}
\]
(the equality follows from \eqref{eq:size-of-tube}), and so
\[
\oper{M}_{\flag} \indifn_{ \bigcup_{ (g',r,s) \in L(\lambda) } T(g', \alpha r, \beta s) }
\geq \left(\frac{\alpha}{\alpha'}\right)^{2\cdim}
\min \biggl\{ \left(\frac{\alpha }{\alpha'}\right)^2 , \frac{\beta}{ \beta'} \biggr\}
\indifn_{\bigcup_{ (g',r,s) \in L(\lambda) } T(g', \alpha' r, \beta' s)} .
\]
That is,
\[
\oper{M}_{\flag} \indifn_{ F_{\alpha,\beta} \geq\lambda }
\geq
\min \biggl\{ \left(\frac{\alpha}{\alpha'}\right)^{\hdim} , \left(\frac{\alpha}{\alpha'}\right)^{2\cdim} \frac{\beta}{ \beta'} \biggr\}
\indifn_{ F_{\alpha',\beta'} \geq \lambda } .
\]
Since $\oper{M}_{\flag}$ is bounded on $\Leb^{1+\epsilon}(\Hn)$,
\[
\begin{aligned}
\abs| \{ g \in \Hn : F_{\alpha',\beta'}(g) \geq \lambda \} |
&\lesssim_\epsilon
\max \biggl\{ \left(\frac{\alpha'}{\alpha}\right)^{\hdim} , \left(\frac{\alpha'}{\alpha}\right)^{2\cdim} \frac{\beta'}{ \beta} \biggr\}^{1+\epsilon}
\abs| \{ g \in \Hn : F_{\alpha',\beta'}(g) \geq \lambda \} |.
\end{aligned}
\]
Finally, by integrating the distribution functions, we conclude that
\[
\norm{ F_{\alpha',\beta'} }_{\Leb^1(\Hn)}
\lesssim_\epsilon \max \left\{ \biggl(\frac{\alpha'}{\alpha}\biggr)^{\hdim} , \biggl(\frac{\alpha'}{\alpha}\biggr)^{2\cdim}\frac{\beta'}{ \beta} \right\}^{1+\epsilon}
\norm{ F_{\alpha,\beta} }_{\Leb^1(\Hn)} ,
\]
as required.
\end{proof}

The next two lemmas will be used to study the area integral.

\begin{lemma}\label{lem:equivalent-area-fns}
For all $\beta, \gamma \in \R^+$ and all continuous functions $F :\Hn \times \R^+ \times \R^+ \to [0, +\infty)$, define
\begin{align}
\term{I}_{\beta,\gamma}(F) &:=
\int_{\Hn} \lpar \iint_{\R^+ \times \R^+}
    (F(\cdot, r, s) \Hconv
    \chi\one_{\beta r} \Vconv \chi\two _{\gamma s})(g)
    \,\frac{dr}{r} \,\frac{ds}{s} \rpar^{1/2} \wrt g \notag \\ \label{eq:def-J}
\term{J}_{\beta,\gamma}(F) &:=
\int_{\Hn}  \lpar \iiint_{\Gamma_{\beta,\gamma}(g)} \frac{F(g', r, s)}{\abs| T(o,\beta r,\gamma s)|}
\wrt g' \,\frac{dr}{r} \,\frac{ds}{s} \rpar^{1/2} \wrt g .
\end{align}
Then $\term{I}_{\beta,\gamma}(F) \eqsim_{\beta,\gamma,\beta'\gamma'} \term{I}_{\beta',\gamma'}(F)$ and $\term{I}_{\beta,\gamma/2}(F) \lesssim_{\beta,\gamma} \term{J}_{\beta,\gamma}(F) \lesssim_{\beta,\gamma} \term{I}_{\beta,\gamma}(F)$.
\end{lemma}

\begin{proof}
First,
\begin{equation}\label{eq:tent-equivalent}
\begin{aligned}
&\iint_{\R^{+}\times\R^{+}} ( F(\cdot, r, s)
            \Hconv \chi\one_{\beta r} \Vconv \chi\two _{\gamma s}) (g)
            \,\frac{dr}{r} \,\frac{ds}{s}  \\
&\qquad = \iiint_{\Hn \times \R^+ \times \R^+} F(gg_1,r,s)
            (\chi\one_{\beta r} *_{(2)} \chi\two _{\gamma s}) (g_1^{-1})
            \wrt g_1 \,\frac{dr}{r} \,\frac{ds}{s} \\
&\qquad = \frac{C}{\beta^\hdim \gamma} \iiiint_{\Hn \times \R \times \R^+ \times \R^+} F(gg_1g_2,r,s) \\
&\qquad\qquad \times \indifn_{B\one(o,\beta r)} (g_1) \indifn_{B\two (0, \gamma s)} (g_2)
\wrt g_1 \wrt g_2 \frac{dr}{r^{\hdim+1}} \,\frac{ds}{s^2} \,,
\end{aligned}
\end{equation}
where $C$ depends on $\cdim$.
The last expression may be rewritten as either
\begin{align*}
&\frac{C}{\beta^\hdim \gamma}
\iint_{\Gamma\one_{\beta}(o)} \lpar \iint_{ \Gamma\two _{\gamma}(0)}
    F(gg_1g_2, r, s) \,\frac{ds}{s^2} \wrt g_2\rpar \,\frac{dr}{r^{D+1}} \wrt g_1
\\
\noalign{\noindent{or}}
&\frac{C}{\beta^\hdim \gamma}
\iint_{\Gamma\two _{\gamma}(0)} \lpar \iint_{ \Gamma\one_\beta(o)}
    F(gg_1g_2, r, s)  \,\frac{dr}{r^{D+1}} \wrt g_1 \rpar
    \,\frac{ds}{s^2} \wrt g_2.
\end{align*}
Hence $\term{I}_{\beta,\gamma}$ is equivalent to either
\begin{align*}
&\frac{C}{\beta^\hdim \gamma}  \int_{\Hn}\lpar \iint_{\Gamma\one_{\beta}(o)}
    \lpar \iint_{ \Gamma\two _{\gamma}(0)}
    F(gg_1g_2, r, s) \,\frac{ds}{s^2} \wrt g_2\rpar
    \,\frac{dr}{r^{D+1}} \wrt g_1
    \rpar^{1/2} \wrt g
\\
\noalign{\noindent{or}}
&\frac{C}{\beta^\hdim \gamma}  \int_{\Hn}\lpar \iint_{\Gamma\two _{\gamma}(0)}
    \lpar \iint_{ \Gamma\one_\beta(o)} F(gg_1g_2, r, s)
    \,\frac{dr}{r^{D+1}} \wrt g_1 \rpar\,\frac{ds}{s^2} \wrt g_2 \rpar^{1/2} \wrt g.
\end{align*}
It may be argued, as in the classical case (see \cite[pp. 125--126]{Ste2}) that
changing the parameter $\beta$ changes the first of these two expressions to an equivalent expression, and that changing the parameter $\gamma$ changes the second to an equivalent expression (in this case, we need to write $g$ as $(z,t)$ and integrate with respect to first $t$ and then $z$).
We conclude that changing $\beta$ and $\gamma$ changes $\term{I}_{\beta,\gamma}$ to an equivalent integral.

Finally, from Lemma \ref{lem:flag-and-iterated-maximal-fns},
\begin{equation}\label{eq:tent-equivalent-2}
\term{I}_{\beta, \gamma/2}
\lesssim \term{J}_{\beta, \gamma}
\lesssim \term{I}_{\beta, \gamma},
\end{equation}
and the lemma follows.
\end{proof}

Before our next lemma, we recall that $J_{\beta,\gamma}(F)$ was defined in \eqref{eq:def-J}.

\begin{lemma}\label{lem:area-fn-cts}
For all $\beta, \gamma \in \R^+$ and all continuous functions $F :\Hn \times \R^+ \times \R^+ \to \C$ such that $J_{\beta,\gamma}(F) < \infty$,
\[
\lim_{g'' \to o}
\int_{\Hn}  \lpar \iiint_{\Gamma_{\beta,\gamma}(g)} \frac{\abs|F(g''g', r, s) - F(g', r, s)|}{\abs| T(o,\beta r,\gamma s)|}
\wrt g' \,\frac{dr}{r} \,\frac{ds}{s} \rpar^{1/2} \wrt g =0.
\]
\end{lemma}

\begin{proof}
First, by several applications of Lebesgue's convergence theorem,
\[
\lim_{b \to \infty}\int_{\Hn}  \lpar \int_{1/b}^{b} \int_{1/b}^{b} \int_{T(g,\beta r,\gamma s)} \frac{\abs|F(g''g', r, s)| }{\abs| T(o,\beta r,\gamma s)|}
\wrt g' \,\frac{dr}{r} \,\frac{ds}{s} \rpar^{1/2} \wrt g =0,
\]
and then, for each $b \in \R^+$,
\[
\lim_{a \to \infty}\int_{\Hn \setminus B\one(o,a)}  \lpar \int_{1/b}^{b} \int_{1/b}^{b} \int_{T(g,\beta r,\gamma s)} \frac{\abs| F(g'' g', r, s)| }{\abs| T(o,\beta r,\gamma s)|}
\wrt g' \,\frac{dr}{r} \,\frac{ds}{s} \rpar^{1/2} \wrt g =0,
\]
uniformly for $g'' \in B\one(o,1)$ in both cases.
On the other hand, by continuity and compactness,
\[
\lim_{g'' \to o} \abs| F(g'' g', r, s) - F(g', r, s)| = 0
\]
uniformly for $g'' \in \bar B\one(o,1)$, $g \in \bar B\one(o,a)$, and $g ' \in T(0,r,s)$ where $1/b \leq r,s \leq b$.
The lemma follows.
\end{proof}

\begin{lemma}
There exists a positive geometric constant $C_1$ such that, if $E$ is a subset of $\Hn$ of finite measure, and $\Omega = \bigcup_{g \in E} \Gamma(g)$, then $\indifn_{E} \Hconv p_{r,s} (g') \geq C_1$ for all $(g', r, s) \in \Omega$.
\end{lemma}

\begin{proof}
By Lemma \ref{lem:poisson-and-flag-maximal-functions}, $ \indifn_{E} \Hconv p_{r,s}(g') \gtrsim_{\beta,\gamma} \indifn_{E} \Hconv \chi_{\beta r,\gamma s} (g')$ for all $(g',r,s) \in \Hn \times \R^+ \times \R^+$ and all $\beta, \gamma \in \R^+$, and we are done.
\end{proof}

\subsection{Spectral theory}\label{ssec:spectral-theory}

In much of the literature on analysis on the Heisenberg group, such as \cite{GelMay} or \cite{HanLuSaw}, the closed subalgebra $\fnspace{A}(\Hn)$ of $\Leb^1(\Hn)$ consisting of limits of linear combinations of the heat kernels $h\one_r$ plays an important role.
The Hulanicki functional calculus \cite{Hul} is used in place of the Fourier transformation.
Unfortunately, $\fnspace{A}(\Hn)$ is not closed under pointwise multiplication and ``cut-off arguments'' are problematic.
Fortunately, the functional calculus of \cite{AstDiBRic} allows us to deal with ``radial'' functions, and ``cut-off arguments'' are straightforward in this context.
It turns out that radial functions provide a natural setting for the joint spectral calculus of $\HLap$ and $\oper{T}$.

On the Heisenberg group, radial functions (and radial distributions), that is, those that are invariant under the action of the rotations in $\mathrm{U}(n)$,  form a commutative algebra.
This was noticed in \cite{HulRic} (though as is often the case with analysis on $\Hn$, the necessary calculations to show this may be found in \cite{G77}, but the result is not stated explicitly there).
In particular the differential operators $\HLap$ and $i\oper{T}$ are radial and essentially self-adjoint.
Spectral theory, combining the general features of abstract theory, as in, for instance, \cite{McI}, with the additional features available because we are working on a Lie group, has been used to develop a functional calculus for these operators (see \cite{AstDiBRic} and the references cited there), so that expressions $\Phi(\HLap,i\oper{T})$ and $\Phi(\HLap,\VLap)$ are defined for functions $\Phi$ in $\Schwartz(\R^2)$, and these operators are given by right convolution with kernels $k_\Phi$ in $\Schwartz(\Hn)$.
The functional calculus may be extended to other classes of functions, such as bounded continuous functions, or rational functions, at the cost of using distributional kernels.

Let $\F$ denote the Heisenberg fan, that is,
\[
\{ ((2d + n)|\lambda|, \lambda) \in \R^2 :
 d \in \N , \lambda \in \R \setminus\{0\} \}
\cup \{ (\xi_1,0) : \xi_1 \in \R^+ \}.
\]
We shall use the following features of this theory.

\begin{theorem}\label{thm:Plancherel}
There exists an isometry $\oper{F}: L^2(\Hn) \to \bigoplus_{d\in\N} L^2(\R;\Hil_d)$, where each $\Hil_d$ is a Hilbert space, such that
\begin{enumerate}
\item
$\displaystyle \norm{ f }_{ L^2(\Hn) }
= \lpar \sum_{d \in \N} \int_{\R} \norm{ \oper{F} f(d, \lambda) }_{\Hil_d}^2 \lambda^{\hdim-1} \wrt \lambda \rpar^{1/2}$
for all $f \in L^2(\Hn)$;
\item
$\oper{F} (\Phi(\HLap, i\oper{T}) f)(d, \lambda) = \Phi((2d+n)|\lambda|,\lambda) \oper{F} (f)(d, \lambda)$ for all $d \in \N$, all $\lambda \in \R$, all $f \in \Dom \Phi(\HLap, i\oper{T})$,  and all Schwartz functions or polynomials $\Phi$;
\item
let $\oper{A}$ be a continuous linear operator from $\Schwartz(\Hn)$ to its dual space $\Schwartz'(\Hn)$ that commutes with (left) translations.
Then there exists $\Phi \in \Schwartz(\R^2)$ such that $\oper{A} = \Phi(\HLap, i\oper{T})$ if and only if $\oper{A}$ has a radial convolution kernel $k_\Phi$ in $\Schwartz(\Hn)$.
\end{enumerate}
\end{theorem}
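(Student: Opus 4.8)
The plan is to obtain $\oper F$ by combining the operator-valued group Fourier transform on $\Hn$ with the Hermite decomposition that diagonalises $\HLap$. First I would recall the Stone--von Neumann theorem: up to unitary equivalence, the irreducible unitary representations of $\Hn$ that carry the Plancherel measure are the Schr\"odinger representations $\pi_\lambda$ on $L^2(\R^\cdim)$, one for each $\lambda \in \R\setminus\{0\}$, the one-dimensional representations forming a Plancherel-null set. The abstract Plancherel theorem then supplies an isometry $f \mapsto (\pi_\lambda(f))_\lambda$ from $L^2(\Hn)$ onto the direct integral $\int^{\oplus}_{\R} \mathrm{HS}(L^2(\R^\cdim))\, w(\lambda)\,d\lambda$ of Hilbert--Schmidt operators, where $w(\lambda)$ is an explicit power of $|\lambda|$; on the dense subspace $L^1(\Hn)\cap L^2(\Hn)$ one has $\pi_\lambda(f) = \int_{\Hn} f(g)\,\pi_\lambda(g)\,dg$ and $\pi_\lambda(f_1\Hconv f_2) = \pi_\lambda(f_1)\pi_\lambda(f_2)$.

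Next I would diagonalise the two distinguished operators inside each $\pi_\lambda$. With our normalisation, the representation differential sends $\oper T$ to multiplication by $i\lambda$, so $i\oper T$ acts as the scalar $\lambda$; and $\HLap$ is carried to $|\lambda|$ times a harmonic oscillator on $L^2(\R^\cdim)$, whose spectrum is $\{2d+\cdim : d\in\N\}$ with eigenspace $E_d^\lambda$ --- the span of the Hermite functions of total degree $d$, equivalently the space of homogeneous holomorphic polynomials of degree $d$ in the Bargmann model, of dimension $\binom{d+\cdim-1}{\cdim-1}$. Writing $P_d^\lambda$ for the orthogonal projection onto $E_d^\lambda$, I would refine the Plancherel decomposition using the identity $\pi_\lambda(f) = \sum_{d\in\N} P_d^\lambda\pi_\lambda(f)$, which is orthogonal in Hilbert--Schmidt norm; after trivialising the family $\{E_d^\lambda\}_\lambda$ over $\lambda$ by a fixed, suitably normalised choice of basis, this defines $\oper F f(d,\lambda) := P_d^\lambda\pi_\lambda(f)$ as an element of a fixed Hilbert space $\Hil_d$, and collecting the Plancherel weight with the normalisation constants yields assertion (1) with the stated measure $\lambda^{\hdim-1}\,d\lambda$. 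Assertion (2) is then immediate: $\pi_\lambda$ intertwines the joint functional calculus of $(\HLap,i\oper T)$, so that $\pi_\lambda(\Phi(\HLap,i\oper T)f) = \Phi\bigl(\pi_\lambda(\HLap),\pi_\lambda(i\oper T)\bigr)\pi_\lambda(f)$, and the operator on the right acts on $E_d^\lambda$ as the scalar $\Phi((2d+\cdim)|\lambda|,\lambda)$ because $P_d^\lambda$ commutes with $\pi_\lambda(\HLap)$ and $\pi_\lambda(\oper T)$; one extends from a dense domain by the spectral theorem.

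For assertion (3), one implication --- that the joint functional calculus of the commuting pair $(\HLap,i\oper T)$ maps $\Schwartz(\R^2)$ into radial convolution kernels $k_\Phi\in\Schwartz(\Hn)$ --- I would take from the functional-calculus results already cited (\cite{ADR} and the references therein), whose proof proceeds through the rapid decay in $(d,\lambda)$ of the Hermite--Fourier coefficients of $k_\Phi$. For the converse, let $\oper A$ be continuous from $\Schwartz(\Hn)$ to $\Schwartz'(\Hn)$, commute with left translations, and have a radial convolution kernel $k\in\Schwartz(\Hn)$. Radiality forces $\pi_\lambda(k)$ to commute with the metaplectic action of $\mathrm U(\cdim)$ on $L^2(\R^\cdim)$; since that action is irreducible on each $E_d^\lambda$, Schur's lemma makes $\pi_\lambda(k)$ act on $E_d^\lambda$ as a scalar $m(d,\lambda)$, equivalently $\oper F(k\Hconv f)(d,\lambda) = m(d,\lambda)\,\oper F f(d,\lambda)$. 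I would then argue that $(d,\lambda)\mapsto m(d,\lambda)$ is the restriction to the Heisenberg fan $\F$ of a function $\Phi\in\Schwartz(\R^2)$ --- the Schwartz-extension step for data sampled on $\F$ --- and conclude $\oper A = \Phi(\HLap,i\oper T)$ from assertion (2) and the density of $\Schwartz(\Hn)$ in $L^2(\Hn)$.

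The main obstacle is that last extension step in (3): matching the rapid decay of a radial convolution kernel on $\Hn$ with the existence of a genuine two-variable Schwartz symbol on $\R^2$ that restricts correctly to the discrete-by-continuous geometry of the fan $\F$. This is the Hulanicki--Mauceri--Meda--Geller circle of ideas --- estimates relating derivatives of $\Phi$ along the fan to the Schwartz seminorms of $k_\Phi$, together with a Whitney-type extension across the gaps between the rays --- and I would rely on the cited literature for it rather than reprove it. Everything preceding that step (Plancherel, the Hermite diagonalisation, and assertions (1) and (2)) is standard once the group Fourier transform is in place, the only minor point being the exact power of $|\lambda|$ in the Plancherel weight, which is fixed by our nonstandard normalisation of the symplectic form $S$ and of the trivialisations $E_d^\lambda\cong\Hil_d$.
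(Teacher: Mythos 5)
Your outline is correct and follows the same route the paper relies on: the paper does not actually prove this theorem but cites it from the literature (\cite{HR}, \cite{G77}, \cite{ADR}, \cite{Mc}, \cite{M11}), and your sketch --- Schr\"odinger representations and the group Plancherel theorem, Hermite diagonalisation of the rescaled oscillator giving the joint spectrum $\{((2d+\cdim)|\lambda|,\lambda)\}$, Schur's lemma via $\mathrm U(\cdim)$-irreducibility of each $E_d^\lambda$, and the Schwartz-extension-from-the-fan step of Astengo--Di Blasio--Ricci for part (3) --- is exactly the standard argument in those references, with the genuinely hard step correctly identified and correctly deferred. One convention point to fix: since $\Phi(\HLap,i\oper{T})$ acts by \emph{right} convolution with $k_\Phi$, the projection must sit on the side of $\pi_\lambda(f)$ where $\pi_\lambda(k_\Phi)$ multiplies (e.g.\ $\oper{F}f(d,\lambda)=\pi_\lambda(f)P_d^\lambda$ under the convention $\pi_\lambda(f_1\Hconv f_2)=\pi_\lambda(f_1)\pi_\lambda(f_2)$); with $P_d^\lambda$ on the range side as you wrote it, assertion (2) would fail for non-radial $f$.
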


If $\Phi: \R^2 \to \R$ is even in the second variable, we may interpret the theorem as giving information about $\Psi(\HLap, \VLap)$, where $\Psi(\mu, \lambda) = \Phi(\mu, \lambda^2)$.

In particular, the space of kernels of convolution operators $\Phi(\HLap)$, where $\Phi \in \Schwartz(\R)$, is a subalgebra of the convolution algebra of radial Schwartz functions on $\Hn$, and the space of kernels of convolution operators $\Phi(\HLap)$, where $\Phi \in \Schwartz(\R)$, is an algebra that may be identified with the convolution algebra of even Schwartz functions on the centre $\R$ of $\Hn$.

For the reader who may be interested in the extension of our results, we mention that this result has been extended to more general contexts, including stratified nilpotent groups, by Martini~\cite{Mar}.

\subsection{Homogeneous  singular integrals and distributions}\label{ssec:singular-integrals}

In the context of euclidean space $\R^n$, it is well known that the following are equivalent (see, e.g, \cite[Section XIII.5.3]{Ste2}):  
\begin{enumerate}
\item[(a)] $K$ is a distribution that is smooth away from $0$ and is homogeneous of degree $-n$;
\item[(b)] $K$ is a linear combination of the Dirac delta at $0$ and a principal value distribution $K_0$, where $K_0$ is homogeneous of degree $-n$, is smooth away from $0$, and has mean $0$ on spheres with centre $0$;
\item[(c)] $K$ may be expressed as an integral $\int_{\R^+} \omega_t \wrt t/t$, where $\omega$ is in $\fnspace{S}(\R^n)$ and has mean $0$.
\end{enumerate}

The same holds in the Heisenberg group (and indeed on more general stratified groups).
The following lemma may be extracted from  \cite[Section II.8.19]{Ste2} (for one direction of the proof).

\begin{lemma}\label{lem:homogeneous-integral}
Suppose that $\omega$ on $\Hn$ is Poisson bounded and has mean $0$, that $a,b \in \R^+$, that $c,d \in \Z$ and that $\alpha > 1$.

If $a \to 0$ and $b \to \infty$, then 
\[
k_{a,b} := \int_{a}^{b} \omega\one_t \,\frac{dt}{t}
\]
converges in $\Schwartz(\Hn)'$ and in $\fnspace{C}^\infty(\Hn \setminus \{o\})$ to a distribution $k$ that is homogeneous of degree $-\hdim$, that is, $(k, \phi) = (k, \phi\circ\dilat_r)$ for all $r \in \R^+$.  Further, away from $o$, $k$ is given by integration against a smooth function that is homogeneous of degree $-\hdim$ and has mean $0$ on the unit sphere of any smooth homogeneous norm on $\Hn$.
The associated convolution operators are uniformly bounded in $a$ and $b$ on $\Leb^p(\Hn)$ when $p \in (1,\infty)$, with a bound that depends on $p$ and on $\norm{\omega}_{(1)}$, and hence converge strongly as $a \to 0$ and $b \to \infty$.
If $a$ is fixed and $b \to \infty$, then the distributions $k_{a,b}$ converge in $\Schwartz(\Hn)'$ to a distribution $k_{a,\infty}$ that is given by integration against a smooth function, and the associated convolution operators are uniformly bounded in $a$ on $\Leb^p(\Hn)$ when $p \in (1, \infty)$.
If $a \to 0$ and $b$ is fixed, then the distributions $k_{a,b}$ converge in $\Schwartz(\Hn)'$ to a distribution $k_{0,b}$ that is given by integration against a smooth function away from $o$, and the associated convolution operators are uniformly bounded in $b$ on $\Leb^p(\Hn)$ when $p \in (1, \infty)$.

Likewise, if $c \to -\infty$ and $d \to \infty$ in $\Z$, then
\[
\tilde k_{c,d} := \sum_{m=c}^{d} \omega\one _{\alpha^m}
\]
converges in $\Schwartz(\Hn)'$ and in
$\fnspace{C}^\infty(\Hn \setminus \{o\})$ to a distribution $\tilde k$ that is discretely homogeneous of degree $-\hdim$, in the sense that $(\tilde k, \phi) = (\tilde k, \phi\circ\dilat_{\alpha^m})$ for all $m \in \Z$, and is given by integration against a smooth function away from $o$.
The associated convolution operators are uniformly bounded in $c$ and $d$ on $\Leb^p(\Hn)$ when $p \in (1,\infty)$, with a bound that depends on $p$ and on $\norm{\omega}_{(1)}$, and hence converge strongly as $c \to -\infty$ and $d \to \infty$.

The mappings $\omega \mapsto k$ and $\omega \mapsto \tilde k$ from $\fnspace{P}(\Hn)$ to $\Schwartz'(\Hn)$ are continuous.

Conversely, every homogeneous distribution that is smooth away from $o$ 
arises in this way, for some $\omega \in \fnspace{C}^\infty(\Hn)$.

Analogous results hold on $\R$.
\end{lemma}

\begin{proof}
We sketch the forward direction of the proof, treating only $\tilde k_{c,d}$; the other case is similar.

It is easy to check that $\oper{D} k_{c,d}$ converges in $\Leb^2(\Hn \setminus B\one(o,\epsilon))$ for all $\epsilon \in \R^+$ and all left-invariant differential operators $\oper{D}$.
Consider the sum
\[
\sum_{m=c}^{d} \int_{\Hn} \omega_{\alpha^m}(g) \fn \psi(g) \wrt g ,
\]
where $\psi\in \Schwartz(\Hn)$; if $\psi$ vanishes at $0$, this converges absolutely, and if $\psi$ is constant near $o$, then we may use the cancellation of $\omega$ to show convergence.

The Cotlar--Stein lemma \cite[p.~280]{Ste2} and the (Heisenberg group version of) the Hörmander cancellation condition \cite{Hor} may be used to show that convolution with $\tilde k_{c,d}$ is bounded uniformly in $c$ and $d$ on $\Leb^p(\Hn)$ when $1<p< \infty$; the uniform bound for the operator norms depends on $p$ and on $\norm{\omega}_{(1)}$.

The continuity claimed may be established by checking continuity at each step of the argument above.

For the converse, we treat the continuous case only;   it is useful to use the Korányi norm (or another smooth norm).
Let $k$ be a homogeneous distribution that is smooth away from $0$.
Then necessarily $k = c\delta_o + k_0$, where $c$ is a constant and $k_0$ is a smooth principal value distribution; $k_0$ has integral $0$ over the unit sphere relative to the Korányi norm.
It is not hard to find a nonnegative-real-valued $\fnspace{C}_c^\infty(\R^+)$-function $\eta$ such that
\[
\int_{\R^+} \eta\left( \frac{\cdot}{t} \right) \frac{\wrt t}{t} = 1.
\]
Set $\omega = k_0 \eta(|\cdot|)$; then $\omega \in \fnspace{C}_c^\infty(\Hn)$ and 
\[
\int_{\R^+} \omega_t(\cdot) \,\frac{dt}{t}
= \int_{\R^+} t^{-\hdim} \eta\Bigl( \frac{|\cdot|}{t} \Bigr) k_0\left(\frac{\cdot}{t}\right) \frac{\wrt t}{t} 
= k_0\left(\cdot\right).
\]
Thus the principal value distribution $k_0$ arises in this way.

The unit sphere $S$ is a smooth submanifold, and every $x \in \Hn \setminus \{o\}$ may be written uniquely in the form $D_r \sigma$, where $r \in \R^+$ and $\sigma \in S$ (see \cite[Proposition 1.15]{FolSte} for more information about polar coordinates).
For $f \in \Schwartz(\Hn)$, define the function $F$ on $\R \times S$ by $F(u,\sigma) := f(D_{e^u}\sigma)$.
Take $\Phi \in \fnspace{C}^\infty_c(\R)$ such that $\int_{\R} \Phi(u) \wrt u = 1$, and define $\zeta \in \fnspace{C}^\infty_c(\R^+)$ by $s^\delta \zeta(s) = \Phi'(\log(s))$.
Then
\begin{align*}
\int_{\R^+} \int_{\Hn} f(x) \zeta_t (|x|) \wrt x \,\frac{dt}{t} 
&= \int_{\R^+} \int_{S} \int_{\R^+} f(D_s\sigma) t^{-\hdim} \zeta(s/t)  s^{\hdim-1} \wrt s \wrt \sigma \,\frac{dt}{t} \\
&= \int_{S} \int_{\R^+} \int_{\R^+} f(D_s\sigma) (s/t)^{\hdim} \zeta(s/t)  \frac{\wrt s}{s} \,\frac{dt}{t} \wrt \sigma  \\
&= \int_{S} \int_{\R^+} \int_{\R^+} f(D_{st}\sigma) s^{\hdim} \zeta(s)  \frac{\wrt s}{s} \,\frac{dt}{t} \wrt \sigma  \\
&= \int_{S} \int_{\R^+} \int_{\R^+} f(D_{e^{u+v}}\sigma) \expe^{\hdim u} \zeta(e^u)  \wrt u \wrt v  \wrt \sigma  \\
&= \int_{S} \int_{\R} \int_{\R} F(u+v,\sigma) \Phi'(u)  \wrt u \wrt v \wrt \sigma  \\
&= -\int_{S} \int_{\R} \int_{\R} F'(u+v,\sigma) \Phi(u)  \wrt u \wrt v \wrt \sigma  \\
&= -\int_{S} \int_{\R} \int_{\R} F'(u+v,\sigma) \Phi(u)  \wrt v \wrt u \wrt \sigma  \\
&= -\int_{S} \int_{\R} [F(\infty,\sigma) - F(-\infty,\sigma)] \Phi(u) \wrt u \wrt \sigma  \\
&= \int_{S} \int_{\R} f(o) \Phi(u) \wrt u \wrt \sigma  \\
&= \int_{S} \wrt \sigma f(o),
\end{align*}
that is,
\[
\int_{\R^+} \zeta_t (|\cdot|) \,\frac{dt}{t} = \lpar\int_{S} \wrt \sigma \rpar \delta_o,
\]
and so $\delta_o$ may also be represented in the required form.
\end{proof}

For more on principal value convolution with such kernels, see \cite{Fol}, \cite{KnaSte}, or \cite{Ste2}.

\begin{definition}\label{def:singular-integrals}
A distribution $k$ as constructed in the previous lemma is called a \emph{simple homogeneous singular integral kernel} on $\Hn$ (of type $0$); a \emph{simple homogeneous singular integral operator} is a convolution (on the right) by such a kernel.
Simple homogeneous singular integrals on $\R$ are defined analogously.

A \emph{homogeneous flag singular integral kernel} on $\Hn$ is the convolution of a \emph{simple  homogeneous singular integral kernel} on $\Hn$ with a \emph{simple  homogeneous singular integral kernel} on $\R$.
\end{definition}

In this section, for simplicity, we omit the word homogeneous.

Flag singular integral operators may be realised as expressions of the form 
\[
f \mapsto \iint_{\R^+ \times \R^+} f \Hconv \phi_{r,s} \,\frac{\wrt r}{r} \frac{\wrt s}{s} \,,
\]
where the convergence of the double integral is much as in Lemma \ref{lem:homogeneous-integral}.

The simple singular integral operators on $\Hn$ form an algebra under composition; likewise the simple singular integral operators on $\R$ form an algebra.  
Since $\R$ is central in $\Hn$, it is easy to see that the flag singular integral operators on $\Hn$ also form an algebra under composition.
Nagel, Ricci, Stein and Wainger \cite{NagRicSteWai2} showed that the flag singular integral operators form an algebra in much more general circumstances.

Simple singular integral operators on $\Hn$ are bounded on $\Leb^p(\Hn)$ when $1 < p < \infty$ and on the Folland--Stein--Christ--Geller Hardy space $\FSCGHardy(\Hn)$.
In Section 8 we consider more general flag singular integrals studied by Phong and Stein \cite{PhoSte} and by Nagel, Ricci, Stein and Wainger \cite{NagRicSteWai1, NagRicSteWai2}.

We recall that the (tensor-valued) flag Riesz transformations are defined by
\[
\oper{R}_{(1)} = \Hnabla \HLap^{-1/2} ,
\qquad
\oper{R}_{(2)} =  \Vnabla \VLap^{-1/2}
\qquad\text{and}\qquad
\oper{R}_{\flag} = \oper{R}_{(1)} \otimes \oper{R}_{(2)}.
\]
It is well known that $\oper{R}_{(2)}$ is the Hilbert transformation, which is a convolution with an odd singular kernel.
Likewise $\oper{R}_{(1)}$ is a convolution with an odd singular kernel.
Indeed, $\oper{R}_{(1)}$ is $\Leb^2$ bounded by spectral theory, and is a convolution with a distribution that is homogeneous of degree $-\hdim$. 
By the subellipticity of $\Hnabla$, this distribution may be identified with a smooth kernel away from the identity. 
Any additional component of the distribution must be a multiple of the Dirac delta distribution at the group identity, but since $\oper{R}_{(1)}$ is odd, the multiple must be $0$.
For further information, the reader should consult the cited references.

\subsection{The Calderón reproducing formula} \label{ssec:Calderon}

Before our next definitions, we recall that Poisson boundedness is defined in Definition \ref{def:Poisson-bounds}.

\begin{definition}\label{def:higher-divergence-Poisson}
Fix $m,n\in \N$.
We write $\Hnabla^m \Vnabla^n \cdot \fnspace{P}(\Hn)^{\otimes}$ for the space of all linear combinations  of expressions $\oper{D} \oper{T}^n f$, where $\oper{D}$ is a product of $m$ vector fields, each chosen from $\{\oper{X}_1, \dots, \oper{X}_{2\cdim}\}$, while $f \in \fnspace{P}(\Hn)$.
We define $\riHnabla^m \Vnabla^n \cdot \fnspace{P}(\Hn)^{\otimes}$ analogously, with right invariant vector fields.
\end{definition}

\begin{definition}\label{def:w-invertible}
A Poisson-bounded, possibly vector-valued, function $\phi$ on $\Hn$ is said to be continuous or discretely w-invertible if there exists a Poisson bounded function $\psi$, called a continuous or discrete w-inverse of $\phi$, such that
\begin{equation}\label{eq:Calderon-reproducing-formula}
\int_{\R^{+}} \phi_r \Hconv \psi_r  \,\frac{dr}{r} = \delta
\qquad\text{or}\qquad
\sum_{m\in\Z} \phi_{\alpha^m} \Hconv \psi_{\alpha^m} = \delta
\end{equation}
(here $\delta$ is the delta of Dirac and $\alpha \in (1,\infty)$), as in Lemma \ref{lem:homogeneous-integral} (where $\omega = \phi \Hconv \psi$).
If  $\phi$ is vector-valued, then $\psi$ must be dual-vector-valued and the convolution $\phi \Hconv \psi$  taken as scalar-valued.

Analogous definitions apply to Poisson bounded functions on $\R$, and to pairs of Poisson bounded functions on $\Hn$ and on $\R$.
\end{definition}

It is advantageous to have w-inverses with rapid decay and lots of cancellation.
We do this by finding w-inverses in $\Hnabla^m \Vnabla^n \cdot \fnspace{P}(\Hn)^{\otimes}$, when $m$ and $n$ are large.

The letter w stands for weak or for wavelet; \eqref{eq:Calderon-reproducing-formula} (or its analogue on $\R$) is called the Calderón reproducing formula.
It is sometimes said that $\phi$ and $\psi$ satisfy the Calderón condition.

Gradients may be dealt with using integrations by parts.
As we have already noted, there are distributions $\Upsilon^j$ on $\Hn$ such that $\oper{X}^j f = f \Hconv \Upsilon^j$ for all differentiable functions $f$ on $\Hn$.
It follows that
\[
 (\HLap \phi ) \Hconv \psi
= \sum_{j=1}^{2\cdim}\phi \Hconv\Upsilon^j \Hconv \Upsilon^j \Hconv \psi
= \Hnabla \phi \Hconv \riHnabla \winverse\psi,
\]
where $\riHnabla$ denotes a left gradient rather than a right gradient.
Similar considerations hold for gradients in the central variable, and we conclude that if $(\winverse\psi\one , \winverse\psi\two )$ is a w-inverse for $(\HLap \phi\one , \VLap \phi\two )$, then $(\riHnabla \winverse\psi\one , \riVnabla \winverse\psi\two )$ is a w-inverse for $(\Hnabla \phi\one , \Vnabla \phi\two )$.

In $\R$, or more generally $\R^n$, the Fourier transformation is used to understand the Calderón condition.
In $\R$ all nonzero Poisson bounded functions are w-invertible; a Poisson bounded function $\phi$ on $\R^n$ is w-invertible if and only if its Fourier transform $\hat \phi$ does not vanish on any ray emanating from $0$.

In the literature that we have found on the Calderón condition in the context of the Heisenberg group, such as \cite{GelMay} or \cite{HanLuSaw}, the closed subalgebra $\fnspace{A}(\Hn)$ of $\Leb^1(\Hn)$ mentioned at the start of Section \ref{ssec:spectral-theory} plays an important role.
The functions $\phi\one$ and their w-inverses belong to $\fnspace{A}(\Hn)$, or are gradients of such functions.
However these results extend to more general $\phi$.
Indeed, the functional calculus of \cite{AstDiBRic} allows us to deal with the case where $\phi\one$ is radial.

The set of invertible elements in a Banach algebra is open.
A similar argument applies to the set of w-invertible elements.

\begin{lemma}
Suppose that $\phi\one \in \fnspace{P}(\Hn)$ has continuous or discrete w-inverse $\psi\one$.
All $\tilde\phi\one$ that are sufficiently close to $\phi\one$ in $\fnspace{P}(\Hn)$ have a w-inverse $\tilde\psi\one$ that is close to $\psi\one$ in $\fnspace{P}(\Hn)$.
An analogous result holds for Poisson bounded functions $\phi\two$ on $\R$.
\end{lemma}

\begin{proof}
We consider the discrete case on $\Hn$ only; the continuous case is similar, and the analysis on $\R$ is simpler.
For notational simplicity, we write $\phi$ instead of $\phi\one$, $\psi$ instead of $\psi\one$, and so on.

If $\tilde\phi$ is close to $\phi$ in $\fnspace{P}(\Hn)$, then $\tilde\omega := \tilde\phi \Hconv \psi$ is close to $\phi \Hconv \psi$ in $\fnspace{P}(\Hn)$.
Then $\sum_{m \in \Z} \tilde\omega_{\alpha^m}$ is close to $\delta$ as a homogeneous distribution, so convolution with $\sum_{m \in \Z} \tilde\omega_{\alpha^m}$ is close to the identity as an operator on $\Leb^2(\Hn)$, and hence is invertible as an operator on $\Leb^2(\Hn)$ with an inverse that is close to the identity operator.  
The inverse operator is also convolution with a homogeneous distribution by \cite[Theorem B]{ChrGel}, and is necessarily close to $\delta$ in the space of homogeneous distributions that are smooth away from $o$.
\end{proof}

The final results of this section connect w-invertibility to square functions.
Suppose that $\phi\one$ and $\phi\two $ are Poisson bounded and have mean $0$ on $\Hn$ and $\R$ respectively.
Recall that the sublinear operator $\ctssqfnopphi$ is defined on $\Leb^1(\Hn)$ by
\[
\ctssqfnopphi(f)
= \lpar \iint_{\R^+ \times \R^+} \abs| f \Hconv \phi\one_r \Vconv \phi\two _s(g) |^2 \frac{dr}{r} \,\frac{ds}{s} \rpar^{1/2}.
\]
Similar operators that involve only one convolution are more standard.

\begin{corollary}\label{cor:L2-norm-of-sqfnop-bounded}
Suppose that $\phi\one$ and $\phi\two $ are Poisson bounded and have mean $0$ on $\Hn$ and $\R$ respectively.
Then
\begin{gather}
\norm{ \lpar \int_{\R^+} \abs| f \Hconv \phi\one_r(\cdot) |^2 \,\frac{dr}{r} \rpar^{1/2}}_{\Leb^2(\Hn)}
\lesssim_{\phi\one} \norm{f}_{\Leb^2(\Hn)} \label{eq:LP-1}\\
\norm{ \lpar \int_{\R^+ } \abs| f \Vconv \phi\two _s(\cdot) |^2 \,\frac{ds}{s} \rpar^{1/2}}_{\Leb^2(\Hn)}
\lesssim_{\phi\two} \norm{f}_{\Leb^2(\Hn)} \label{eq:LP-2}\\
\norm{ \lpar \iint_{\R^+ \times \R^+} \abs| f \Hconv \phi_{r,s}(\cdot) |^2 \,\frac{dr}{r}\,\frac{ds}{s}  \rpar^{1/2}}_{\Leb^2(\Hn)}
\lesssim_{\bsym\phi} \norm{f}_{\Leb^2(\Hn)} \label{eq:LP-3}
\end{gather}
for all $f \in \Leb^2(\Hn)$.

Moreover,  $\phi\one$ has a w-inverse $\winverse\phi\one$ with mean $0$, or $\phi\two $ has a w-inverse $\winverse\phi\two$ with mean $0$, if and only if 
converse inequalities to \eqref{eq:LP-1}, \eqref{eq:LP-2} or \eqref{eq:LP-3}  hold, with constants that depend on $\winverse\phi\one$ or $\winverse\phi\two$.
We may find w-inverses  $\winverse\phi\one$ and $\winverse\phi\two$ belonging to the spaces $\Hnabla^m \cdot \fnspace{P}(\Hn)^{\otimes}$ and $\Vnabla^n \cdot \fnspace{P}(\R)$ for arbitrarily large $m$ and $n$.
\end{corollary}

\begin{proof}
We treat only one of these sublinear operators, as all are similar.
Observe that
\[
\begin{aligned}
&\norm{\ctssqfnopphi(f)}_{\Leb^2(\Hn)}^2 \\
&\qquad= \iint_{\R^+ \times \R^+} \int_{\Hn} \abs| f \Hconv \phi_{r,s} (g) |^2 \wrt g \,\frac{dr}{r} \,\frac{ds}{s} \\
&\qquad= \iint_{\R^+ \times \R^+} \int_{\Hn} f \Hconv \phi_{r,s} \Hconv [\phi_{r,s}]^* (g) \bar f(g) \wrt g \,\frac{dr}{r} \,\frac{ds}{s} \\
&\qquad= \int_{\Hn} f \Hconv \lpar \iint_{\R^+ \times \R^+}  \phi_{r,s} \Hconv [\phi_{r,s}]^*  \,\frac{dr}{r} \,\frac{ds}{s} \rpar(g) \bar f(g) \wrt g \\
&\qquad= \int_{\Hn} \oper{K}f(g)  \bar f(g) \wrt g,
\end{aligned}
\]
where $\oper{K}$ is a convolution with a flag singular integral kernel as in Definition \ref{def:singular-integrals} (where we take $\omega\one$ to be $\phi\one \Hconv [\phi\one]^*$ and define $\omega\two $ similarly), and $\oper{K}$ is bounded on $\Leb^2(\Hn)$.
Hence
\[
\begin{aligned}
&\norm{\ctssqfnopphi(f)}_{\Leb^2(\Hn)}^2
\leq \norm{\oper{K}f}_{\Leb^2(\Hn)} \norm{f}_{\Leb^2(\Hn)}
\lesssim_{\bsym\phi} \norm{f}_{\Leb^2(\Hn)}^2,
\end{aligned}
\]
as required.
If $\oper{K}$ is the identity operator, then $\ctssqfnopphi$ is isometric.
A polarisation argument shows that the converse also holds.

Suppose that $\phi\one$ and $\phi\two$ have w-inverses $\winverse{\phi}\one$ and $\winverse{\phi}\two$ with mean $0$, and write $\psi\one$ and $\psi\two$ for $[\winverse\phi\one]^*$ and $[\winverse\phi\two]^*$.
Then $\psi\one$ and $\psi\two$ are Poisson bounded and have mean $0$, and
\[
\begin{aligned}
\norm{f}_{\Leb^2(\Hn)}^2
&= \iint_{\R^+ \times \R^+} \lip f \Hconv \phi\one_r \Vconv \phi\two_s \Hconv \winverse\phi\one_r \Vconv \winverse\phi\two_s, f \rip \,\frac{dr}{r} \,\frac{ds}{s} \\
&= \iint_{\R^+ \times \R^+} \lip f \Hconv \phi\one_r \Vconv \phi\two_s ,
f \Hconv \psi\one_r \Vconv \psi\two_s \rip \,\frac{dr}{r} \,\frac{ds}{s} \\
&\leq \lpar \iint_{\R^+ \times \R^+} \norm{ f \Hconv \phi\one_r \Vconv \phi\two_s }_{\Leb^2(\Hn)}^2 \,\frac{dr}{r} \,\frac{ds}{s} \rpar^{1/2} \\
&\qquad\times
\lpar \iint_{\R^+ \times \R^+} \norm{ f \Hconv \psi\one_r \Vconv \psi\two_s }_{\Leb^2(\Hn)}^2 \,\frac{dr}{r} \,\frac{ds}{s} \rpar^{1/2} \\
& =   \norm{ \lpar \iint_{\R^+ \times \R^+}  f \Hconv \phi\one_r \Vconv \phi\two_s \,\frac{dr}{r} \,\frac{ds}{s} \rpar^{1/2}}_{\Leb^2(\Hn)} \\
&\qquad\times
\norm{ \lpar \iint_{\R^+ \times \R^+} f \Hconv \psi\one_r \Vconv \psi\two_s \,\frac{dr}{r} \,\frac{ds}{s} \rpar^{1/2} }_{\Leb^2(\Hn)}  \\
&\lesssim_{\bsym{\winverse{\phi}}}
 \norm{ \ctssqfnopphi (f)}_{\Leb^2(\Hn)} \norm{ f }_{\Leb^2(\Hn)} ,
\end{aligned}
\]
by the first part of the lemma, and $\norm{f}_{\Leb^2(\Hn)} \lesssim \norm{ \ctssqfnopphi (f)}_{\Leb^2(\Hn)} $, as claimed.

Conversely, suppose that $\norm{f}_{\Leb^2(\Hn)} \lesssim \norm{ \ctssqfnopphi (f)}_{\Leb^2(\Hn)} $.
Then from the first part of the lemma,
\begin{equation}\label{eq:S-one-bounded-below}
\norm{f}_{\Leb^2(\Hn)} \lesssim 
\norm{ \lpar \int_{\R^+} \abs| f \Hconv \phi\one_r(\cdot) |^2 \,\frac{dr}{r} \rpar^{1/2}}_{\Leb^2(\Hn)}
\end{equation}
and
\[
\norm{f}_{\Leb^2(\Hn)} 
\lesssim \norm{ \lpar \int_{\R^+ } \abs| f \Vconv \phi\two _s(\cdot) |^2 \,\frac{ds}{s} \rpar^{1/2}}_{\Leb^2(\Hn)}
\]
for all $f \in \Leb^2(\Hn)$.
The second inequality implies that
\[
\norm{f}_{\Leb^2(\R)} 
\lesssim \norm{ \lpar \int_{\R^+ } \abs| f \Vconv \phi\two _s(\cdot) |^2 \,\frac{ds}{s} \rpar^{1/2}}_{\Leb^2(\R)}
\]
for all $f \in \Leb^2(\R)$.

From \eqref{eq:LP-1} and \eqref{eq:S-one-bounded-below}, the singular integral operator 
\[
f \mapsto \int_{\R^+}  
f \Hconv \phi\one_r \Hconv [\phi\one]^*_r \,\frac{dr}{r}  
\]
is invertible as an operator on $\Leb^2(\Hn)$.
It was shown by Christ and Geller \cite[Theorem B]{ChrGel} that if a simple singular integral operator is invertible as an operator on $\Leb^2(G)$, then the inverse operator is also a simple singular integral operator.
Thus there is a simple singular integral kernel $k$ such that
\[
\int_{\R^+}  
f \Hconv \phi\one_r \Hconv [\phi\one]^*_r  \Hconv k \,\frac{dr}{r}  = f  
\]
for all $f \in \Leb^2(\Hn)$.
Now $[\phi\one]^*_r  \Hconv k = [ [\phi\one]^* \Hconv k ]_r$, and we may easily check that $\psi\one := [\phi\one]^* \Hconv k$ has mean $0$, is Poisson bounded, and is a w-inverse for $\phi\one$.
We may find a w-inverse for $\phi\two$ similarly.

To find a w-inverse with rapid decay, we take a w-inverse $\winverse\phi\one$ and modify it.
Fix $\theta \in (0,1)$.
By applying Corollary \ref{cor:Poisson decomposition}, we may approximate $\winverse\phi\one$ in $\fnspace{P}^\theta(\Hn)$ by a function $\psi\one$ in the space $\Hnabla^m \cdot \fnspace{P}(\Hn)^{\otimes}$. 
Since $\fnspace{P}^\theta(\Hn)$ is a Fréchet algebra, by Proposition \ref{prop:Poisson is algebra}, $\phi\one \Hconv \psi\one$ approximates $\phi\one \Hconv \winverse\phi\one$ in $\fnspace{P}^\theta(\Hn)$, and then from Lemma \ref{lem:homogeneous-integral}, $\int_{\R^+} (\phi\one \Hconv \psi\one)_t \,dt/t$ approximates $\delta$ in the space of distributions, and if the approximation is good, the associated operator is invertible. 
Now we can find a simple singular integral kernel $k$ such that   $\bigl(\int_{\R^+} (\phi\one \Hconv \psi\one)_t \,dt/t\bigr) \Hconv k = \delta$, that is, $\psi\one \Hconv k$ is a w-inverse for $\phi\one$. 
Since $\psi\one = \riHnabla^m \rho$ for some $\rho \in \fnspace{P}^\otimes$, 
$\psi\one \Hconv k = \riHnabla^m \rho \Hconv k \in \riHnabla^m \fnspace{P}^\otimes$.
\end{proof}

\begin{lemma}\label{lem:approx-identities-w-invertible}
Suppose that $\phi \in \fnspace{P}(\Hn)$ and $\int_{\Hn} \phi(g) \wrt g = 1$.
Then $\phi$ is w-invertible, and for all $M \in \N^+$ there exists $\psi \in \fnspace{P}(\Hn)^\otimes$ such that
\[
\int_{\R^+} (\phi \Hconv \Hnabla^M \cdot \psi)_r  \,\frac{dr}{r} = \delta.
\]
\end{lemma}

\begin{proof}
This proof is similar to the last part of the proof of Corollary \ref{cor:L2-norm-of-sqfnop-bounded}, so we shall be brief.

Let $q$ be $\HLap^M p_1$.
Then $q \in \riHnabla^{2M} \fnspace{P}(\Hn)$, so, by the functional calculus of $\HLap$,
\[
\int_{\R^+} q_t \,\frac{dt}{t} = c \delta,
\]
 for a suitable nonzero constant $c$.

It is easy to check that when $s$ is small, $\phi_s \Hconv q$ is close to $q$ in $\fnspace{P}(\Hn)$, since $(\phi_s)_{s\in \R^+}$ is an approximate identity for convolution, whence $\int_{\R^+} (\phi_s \Hconv q)_t \,dt/t$ is close to $c \delta$ in $\fnspace{S}(\Hn)'$, and so, by \cite[Theorem B]{ChrGel}, there exists a simple singular integral kernel $k$ such that 
\[
\Biglpar\int_{\R^+} (\phi \Hconv q_{1/s})_t \,\frac{dt}{t} \Bigrpar \Hconv k = c\delta.
\]
Now $c^{-1} q_{1/s} \Hconv k$ has the required properties.
\end{proof}

Finally we show that if $\phi$ is w-invertible continuously, then a related function $\psi$ is w-invertible discretely.
This will enable us to connect discrete square functions and continuous square functions.

\begin{lemma}\label{lem:ctssqfn-dominates-dissqfn}
Suppose that $\phi$ is Poisson bounded on $\Hn$, with w-inverse $\winverse{\phi}$, as in Definition \ref{def:w-invertible}.
For $\alpha\in (1,\infty)$, define
\[
\psi = \int_{1}^{\alpha} \phi_r \,\frac{dr}{r} \,.
\]
If $\alpha$ is close enough to $1$, then $\psi$ is w-invertible discretely.
Further,
\[
\dissqfnoppsi(f) \leq \log^{1/2}(\alpha) \ctssqfnopphi(f)
\qquad\forall f \in \Leb^1(\Hn).
\]
An analogous result holds for functions on $\R$.
\end{lemma}

\begin{proof}
By hypothesis,
\[
\sum_{m\in\Z} \int_{1}^{\alpha} \phi_{r \alpha^m} \Hconv \winverse{\phi}_{r\alpha^m} \,\frac{dr}{r}
=\int_{\R^+} \phi_r \Hconv \winverse{\phi}_r \,\frac{dr}{r}
= \delta.
\]
Hence
\[
\begin{aligned}
\sum_{m\in\Z} [\psi \Hconv \winverse\phi]_{\alpha^m}
&=\delta - \sum_{m\in\Z} \int_{1}^{\alpha} \phi_{r \alpha^m} \Hconv
[\winverse{\phi}_{r\alpha^m} - \winverse{\phi}_{\alpha^m} ] \,\frac{dr}{r} \\
&= \delta -\sum_{m\in\Z} \left[\int_{1}^{\alpha} \phi_{r} \Hconv
[\winverse{\phi}_{r} - \winverse{\phi} ] \,\frac{dr}{r} \right]_{\alpha^m} \\
&= \delta - \sum_{m\in\Z} [\omega]_{\alpha^m} ,
\end{aligned}
\]
say.
If $\alpha$ is close to $1$, then $\omega$ is small in $\fnspace{P}(\Hn)$ , so $\sum_{m\in\Z} [\omega]_{\alpha^m} $ is small in the space of homogeneous distributions, and $\delta - \sum_{m\in\Z} [\omega]_{\alpha^m} $ is invertible.
Let $k$ be the kernel of the inverse operator.
Much as in the previous lemma, we may write
\[
\begin{aligned}
  \sum_{m\in\Z} \left[\psi \Hconv \winverse\phi \Hconv k \right]_{\alpha^m}
&=   \left[\sum_{m\in\Z} [\psi \Hconv \winverse\phi]_{\alpha^m} \right] \Hconv k
= \delta,
\end{aligned}
\]
and $\winverse\phi \Hconv k$ is the desired discrete w-inverse of $\psi$.

The second claim of the lemma is true for all positive $\alpha$: by definition and Hölder's inequality,
\begin{align*}
\dissqfnoppsi(f)
&= \lpar \sum_{m \in \Z} \abs| f \Hconv \psi_{\alpha^m} |^2 \rpar^{1/2} \\
&\leq  \lpar \sum_{m \in \Z} \lpar \int_{1}^{\alpha}  \abs| f \Hconv \phi_{r\alpha^m} |
\,\frac{dr}{r} \rpar^2 \rpar^{1/2} \\
&\leq \log(\alpha)^{1/2}
    \lpar \sum_{m \in \Z} \int_{1}^{\alpha}
\abs| f \Hconv \phi_{r\alpha^m} |^2
\,\frac{dr}{r}   \rpar^{1/2} \\
&= \log(\alpha)^{1/2}
    \lpar \int_{\R^+}
\abs| f \Hconv \phi_{r} |^2
\,\frac{dr}{r}  \rpar^{1/2} \\
&= \log(\alpha)^{1/2} \ctssqfnopphi(f) ,
\end{align*}
as required.
\end{proof}

It is evident that if
\[
\lpar \sum_{m \in \Z} \abs| f \Hconv \phi_{\alpha^m} |^2 \rpar^{1/2}
\lesssim  \norm{f}_{\Leb^2(\Hn)}
\qquad\forall f \in \Leb^2(\Hn),
\]
then
\[
\lpar \sum_{m \in \Z} \abs| f \Hconv \phi_{\beta^m} |^2 \rpar^{1/2}
\lesssim  \norm{f}_{\Leb^2(\Hn)}
\qquad\forall f \in \Leb^2(\Hn).
\]
when $\beta$ is a positive integral power of $\alpha$.


\section{The atomic Hardy space}\label{sec:atomic}

In this section we focus on the Hardy space defined using atomic decompositions.
We first examine the definition and properties of the space in more detail than most previous studies, even in the product case, and then show that if $f \in \atomHardy(\Hn)$, then $\areaopphi(f) \in \Leb^1(\Hn)$, $\gmaxop(f) \in \Leb^1(\Hn)$, and $\oper{K}(f) \in \Leb^1(\Hn)$ when $\oper{K}$ is a simple singular integral operator.

\subsection{A precise definition of the atomic Hardy space}\label{ssec:atomic-space-details}

We recall the definition of the atomic Hardy space, and make it more precise.

\begin{definition}\label{def:atomic-Hardy-space}
Fix  $M,N \in \N^+$ and a real number $\kappa \in (1 + 1/(2\nu),\infty)$.
An \emph{atom} is a function $a\in \Leb^2(\Hn)$ such that there exist an open subset $E$ of $\Hn$ of finite measure $|E|$ and functions $a_R$ in $\Leb^2(\Hn)$, called \emph{particles}, and $b_R$ in $\Dom(\HLap^{M} \VLap^{N})$ for all $R \in \maxrect(E)$ such that
\begin{enumerate}
\item[(A1)] $a_R= \HLap^M \VLap^N b_R$ and $\supp b_R \subseteq R^* = T(g, \kappa q, \kappa^2 h)$, where $g = \cent(R)$, $q = \wid(R)$ and $h = \heit(R)$;
\item[(A2)] for all sign sequences $\sigma: \maxrect(E) \to \{\pm1\}$, the sum $\sum_{R \in \maxrect(E)} \sigma_R a_R$ converges in $\Leb^2(\Hn)$, to $a_{\sigma}$ say, and
\begin{equation}\label{eq:random-signs-in-atoms}
\norm{ a_\sigma }_{\Leb^2(\Hn)} \leq \abs|E|^{-1/2} ;
\end{equation}
\item[(A3)] $ a = \sum_{R \in \maxrect(E)} a_R$.
\end{enumerate}

We say that $f \in \Leb^1(\Hn)$ has an atomic decomposition if we may write $f$ as a sum $\sum_{j\in \N} \lambda_j a_j$, converging in $\Leb^1(\Hn)$, where $\sum_{j\in\N} |\lambda_j| < \infty$ and each $a_j$ is an atom; we write $f \sim \sum_{j\in\N} \lambda_j a_j$ to indicate that $\sum_{j\in\N} \lambda_j a_j$ is an atomic decomposition of $f$.
The space $\atomHardy(\Hn)$ is defined to be the linear space of all $f \in \Leb^1(\Hn)$ that have atomic decompositions, with norm
\begin{equation}\label{def:atomHardynorm}
\norm{ f }_{\atomHardy(\Hn)}
:=\inf\biggl\{ \sum_{j\in\N} |\lambda_j| :
  f \sim \sum_{j\in\N} \lambda_j a_j \biggr\} .
\end{equation}
The constant $\kappa$ is called the enlargement parameter. 
\end{definition}

A few comments are in order.

First, $\norm{ a }_{\Leb^1(\Hn)} \leq 1$ for each atom $a$, so the sum representing $f \in \atomHardy(\Hn)$ as a weighted sum of atoms converges in $\Leb^1(\Hn)$; hence $\atomHardy(\Hn) \subseteq \Leb^1(\Hn)$.

Next, our norm condition (A2) implies the usual condition that $\norm{ a }_{\Leb^2(\Hn)} \leq |E|^{-1/2}$, and, by a randomisation argument,  that
\begin{equation}\label{eq:L^2-condition-particles-in-atoms}
\sum_{R \in \maxrect(E)} \norm{ a_R }_{\Leb^2(\Hn)}^2 \leq |E|^{-1} ;
\end{equation}
This condition is standard in much of the literature.
Our condition is just as easy to verify, and is more useful; it is a quantitative form of unconditional convergence.
In particular, it shows that the sum converges in any order, and with any regroupings, and implies a similar inequality to \eqref{eq:random-signs-in-atoms} when $\sigma$ is just a bounded sequence.
See, for instance, \cite[Section 1.c]{LinTza} for information about unconditional convergence.

In particular, it is known that for all small positive $\epsilon$, we can find a finite subset $\finiteset$ of $\maxrect(E)$ such that for any subset $\finiteset_1$ of $\maxrect(E)$ that is disjoint from $\finiteset$, $ \norm{ \sum_{R \in \finiteset_1} a_R } _{\Leb^2(\Hn)} < \epsilon$.
Hence if $\sigma$ is any sign function, and $\finiteset_+$ and $\finiteset_-$ are the subsets of $\maxrect(E) \setminus \finiteset$ where $\sigma$ is positive and negative, then 
\[
\biggnorm{ \sum_{R \in \maxrect(E) \setminus \finiteset } \sigma_R a_R } _{\Leb^2(\Hn)} 
\leq \biggnorm{ \sum_{R \in \finiteset_+} a_R } _{\Leb^2(\Hn)}  
+ \biggnorm{ \sum_{R \in \finiteset_-} a_R } _{\Leb^2(\Hn)}  
< 2\epsilon.
\]

Third, the enlargement $R^*$ of $R$ has the same centre as $R$, but is ``bigger and smoother'' than $R$.

If we need to be more precise, we say that a particle is an $(M,N)$ particle, and that an atom $a$ and the Hardy space as above are a $(1,2,M,N,\kappa)$ atom and the $(1,2,M,N,\kappa)$ atomic Hardy space.
We will discuss the dependence of the atomic Hardy space on $\kappa$ in the following section.

We note that if $a \in \Leb^2(\Hn)$, and $a = \HLap^{M} \VLap^{M} b$, where $b \in \Leb^2(\Hn)$ and $\supp b \subseteq R$ for some $R \in \rect$, then $a = \norm{a}_{\Leb^2(\Hn)} |R|^{1/2} c$, where $c = \norm{a}_{\Leb^2(\Hn)}^{-1} |R|^{-1/2} a$, and $c$ is an atom, so that
\begin{equation}\label{eq:simple-atom}
\norm{a}_{\atomHardy(\Hn)} \leq |R|^{1/2} \norm{a}_{\Leb^2(\Hn)} .
\end{equation}

\subsection{Properties of the atomic Hardy space}\label{ssec:atomic-space-props}
In this section, we clarify some properties of the atomic Hardy space in the flag setting.
We examine the dependence on the enlargement parameter, we show that there is a dense subspace with atomic decompositions that converge in $\Leb^2(\Hn)$ as well as in $\Leb^1(\Hn)$, and we show that linear or nonnegative-real-valued sublinear operators that are bounded on $\Leb^2(\Hn)$ and which send particles into $\Leb^1(\Hn)$ and satisfy certain decay estimates are bounded from $\atomHardy(\Hn)$ into $\Leb^1(\Hn)$.

In the classical theory, Hardy spaces may be defined using atoms which are naturally $\Leb^p$ functions, where $p$ may be any index greater than $1$, including $\infty$, and the convergence considered was a simple $\ell^1$ sum.
In later versions of the theory, matters became more complicated, and in particular, in some Hardy spaces associated to rough differential operators, it is not at all clear whether the norm is given by the expression \eqref{def:atomHardynorm}.
One aim of this section is to establish that we do have such a representation.

The first point is that the $\Leb^2(\Hn)$ norm conditions on the particles $a_R$ imply norm conditions on the functions $b_R$ and certain of their derivatives.

\begin{lemma}\label{lem:a-is-enough}
Let $R \in \rect$ and $R^* = T(g,\kappa q/2, \kappa^2(4 h+q^2)/8)$, where $g = \cent(R)$, $q = \wid(R)$ and $h = \heit(R)$.
Suppose moreover that $a_R \in \Leb^2(\Hn)$, $b_R \in \Dom( \HLap^{M} \VLap^{N})$, $a_R= \HLap^M \VLap^N b_R$ and $\supp b_R \subseteq R^*$, as in Definition \ref{def:atomic-Hardy-space}.
Choose $m$ and $n$ such that $0 \leq m \leq M$ and $0 \leq n \leq N$, and define $c_R$ to be $q^{2m-2M} \HLap^{m} h^{2n-2N} \VLap^{n} b_R$.
Then
\begin{equation}\label{eq:c_R-inequality}
\norm{c_R}_{\Leb^2(\Hn)}
\lesssim_{M,N,\kappa} \norm{a_R}_{\Leb^2(\Hn)}.
\end{equation}
\end{lemma}

\begin{proof}
Iterate the second inequality of Lemma \ref{lem:flag-Sobolev} $M-m$ times and
the first $N-n$ times, noting that $h \gtrsim q^2$ since
$R \in \rect$, so the height of the tube is controlled by a multiple of $h$.
\end{proof}

\begin{remark}
This lemma means that it is not necessary for us to be concerned about $\norm{c_R}_{\Leb^2(\Hn)}$.
Some previous work on atomic Hardy spaces in the product and flag setting has imposed conditions on $\norm{c_R}_{\Leb^2(\Hn)}$ (or the equivalent), but this is only sometimes necessary.
\end{remark}

Note that subelliptic regularity (see, for instance, \cite[Theorem 6.1]{Fol}) implies that, for any atom $a$, there exists a function $b$ in $\Leb^2_{\mathrm{loc}}(\Hn)$ such that $a = \HLap^M\VLap^Nb$.
If $E$ is bounded or if $N = 0$, then it is evident that $b \in \Leb^2(\Hn)$, but if $E$ is unbounded and $N = 0$, then this is not so.

Our second remark is that in the definition of an atom, we may parametrise particles by general, not necessarily maximal, shards, and we may group together the particles in many ways.

\begin{lemma}\label{lem:grouping-particles}
Fix $M,N \in \N^+$ and a real number $\kappa$ in $(1 + 1/(2\nu),\infty)$.
Let $E$ be an open subset of $\Hn$ of finite measure and $\rect(E)$ be the set of subshards of $E$.
Suppose that there exist functions $a_R$ in $\Leb^2(\Hn)$ and $b_R$ in $\Dom( \HLap^{M} \VLap^{N})$ for all $R \in \rect(E)$ such that
\begin{enumerate}
\item[(A1)] $a_R= \HLap^M \VLap^N b_R$ and $\supp b_R \subseteq R^*$;
\item[(A2)] for all sign sequences $\sigma: \rect(E) \to \{\pm1\}$, the sum $\sum_{R \in \rect(E)} \sigma_R a_R$ converges in $\Leb^2(\Hn)$, to $a_{\sigma}$ say, and
$\norm{ a_\sigma }_{\Leb^2(\Hn)}
\leq \abs|E|^{-1/2} ;$
\item[(A3)] $ a = \sum_{R \in \rect(E)} a_R$.
\end{enumerate}
Suppose that $\set{S}$ is a subcollection of $\rect(E)$ and $\dagger: \rect(E)\to\set{S}$ is a mapping such that $R \subseteq  \Rdagger$.
Then
for each $S \in \set{S}$, the sum $\sum_{R \in \rect(E):  \Rdagger = S} a_R$ converges in $\Leb^2(\Hn)$, to $\tilde a_S$, say, and $\sum_{R \in \rect(E):  \Rdagger = S} b_R$ converges in $\Leb^2(\Hn)$, to $\tilde b_S$, say.
Further,
\begin{enumerate}
\item[(B1)] $\tilde a_S = \HLap^M \VLap^N \tilde b_S$ and $\supp b_S \subseteq S^*$;
\item[(B2)] for all sign sequences $\sigma: \set{S} \to \{\pm1\}$, the sum $\sum_{S \in \set{S}} \sigma_S \tilde a_S$ converges in $\Leb^2(\Hn)$, to $\tilde a_{\sigma}$ say, and
\begin{equation*}
\norm{\tilde a_\sigma }_{\Leb^2(\Hn)}
\leq \abs|E|^{-1/2} ;
\end{equation*}
\item[(B3)] $ a = \sum_{S \in \set{S}} \tilde a_S$.
\end{enumerate}
In particular, any function $a$ for which (A1) to (A3) hold is an atom.
\end{lemma}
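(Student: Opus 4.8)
The plan is to prove the general regrouping statement first and then obtain the final sentence by specialisation. Given the map $\dagger\colon\rect(\Omega)\to\set{S}$ with $R\subseteq R^\dagger$, for each $S\in\set{S}$ I would set $\tilde a_S:=\sum_{R\in\rect(\Omega):\,R^\dagger=S}a_R$ and $\tilde b_S:=\sum_{R\in\rect(\Omega):\,R^\dagger=S}b_R$, once both series are shown to converge in $L^2(\Hn)$, and then verify (B1)--(B3). For the last assertion, one takes $\set{S}=\maxrect(\Omega)$ and lets $\dagger$ send each $R\in\rect(\Omega)$ to some maximal adapted rectangle of $\Omega$ containing it; such a rectangle exists because $|R|=\wid(R)^{2\cdim}\heit(R)\le|\Omega|<\infty$ together with $\heit(R)\ge\wid(R)^2/2\cdim$ bounds the sizes of the adapted rectangles in $\rect(\Omega)$, so $\rect(\Omega)$ contains no infinite ascending chain through $R$. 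With this choice, (B1)--(B3) are exactly the defining conditions (A1)--(A3) of Definition \ref{def:atomic-Hardy-space}, so $a$ is an atom.

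For the $a$-side, the key point is that (A2) asserts precisely that $\sum_R\sigma_Ra_R$ converges in $L^2(\Hn)$ for every sign sequence $\sigma$, and since $\rect(\Omega)$ is countable this is equivalent to unconditional convergence of $\sum_Ra_R$ (see \cite[Section 1.c]{LT}); hence every subseries converges and every regrouping converges to the same sum. Thus each $\tilde a_S$ is well defined, and (B3) is the regrouping of (A3). For (B2), given a sign sequence $\sigma$ on $\set{S}$, the map $R\mapsto\sigma_{R^\dagger}$ is a sign sequence on $\rect(\Omega)$, so $\sum_R\sigma_{R^\dagger}a_R$ converges unconditionally by (A2), and $\sum_S\sigma_S\tilde a_S$ is one of its regroupings; it therefore converges, with $\|\sum_S\sigma_S\tilde a_S\|_{L^2(\Hn)}=\|\sum_R\sigma_{R^\dagger}a_R\|_{L^2(\Hn)}\le|\Omega|^{-1/2}$.

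For the $b$-side, fix $S$ and observe that $R^\dagger=S$ forces $R\subseteq S$, which in turn forces $R^*\subseteq S^*$ (a routine computation with the tube structure, in the spirit of $\kappa I\subseteq\kappa J$ for nested $(2\cdim+1)$-adic intervals). Hence every $b_R$ with $R^\dagger=S$ is supported in the fixed tube $S^*$, which has finite measure. Iterating Lemma \ref{lem:flag-Sobolev} (as in Corollary \ref{cor:a-is-enough}) gives $\|b\|_{L^2(\Hn)}\lesssim_S\|\HLap^M\VLap^Nb\|_{L^2(\Hn)}$ for every $b\in L^2(\Hn)$ supported in $S^*$ with $\HLap^M\VLap^Nb\in L^2(\Hn)$; since $\HLap^M\VLap^N$ is a closed operator, this means that its restriction to such $b$ is bounded below with closed range, hence has a bounded inverse on its range. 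Applying this inverse to the partial sums of $\sum_{R^\dagger=S}a_R$, whose preimages are the corresponding partial sums of $\sum_{R^\dagger=S}b_R$, shows that $\sum_{R^\dagger=S}b_R$ converges in $L^2(\Hn)$ to a function $\tilde b_S$ supported in $S^*$ with $\HLap^M\VLap^N\tilde b_S=\tilde a_S$; this is (B1).

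The step I expect to be the main obstacle is the convergence of $\sum_{R^\dagger=S}b_R$: the rectangle-by-rectangle estimate $\|b_R\|_{L^2(\Hn)}\lesssim_\kappa\wid(R)^{2M}\heit(R)^{2N}\|a_R\|_{L^2(\Hn)}$ of Corollary \ref{cor:a-is-enough} cannot simply be summed, because (A2) only yields control of $\sum_R\|a_R\|_{L^2(\Hn)}^2$, not of $\sum_R\|a_R\|_{L^2(\Hn)}$; it is the closed-range/bounded-inverse formulation above, which transfers convergence of the $a$-partial-sums directly to the $b$-partial-sums, that circumvents this. A secondary point to pin down is the geometric inclusion $R^*\subseteq S^*$ for the specific enlargement of Definition \ref{def:atomic-Hardy-space}; if it holds only after enlarging $\kappa$ slightly, one absorbs the discrepancy into the independence of $\atomHardy(\Hn)$ on $\kappa$ established in the next subsection.
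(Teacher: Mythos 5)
Your proposal is correct and follows essentially the same route as the paper: regroup using the unconditional convergence encoded in (A2), transfer convergence from the $a$-side to the $b$-side via the flag Sobolev inequality (Lemma \ref{lem:flag-Sobolev}) applied to partial sums supported in the fixed tube $S^*$, use the pulled-back sign sequence $\tilde\sigma_R=\sigma_{R^\dagger}$ for (B2), and specialise to a map $R\mapsto R^\uparrow$ into $\maxrect(\Omega)$ for the final claim. Your more explicit bounded-inverse phrasing of the Sobolev step and your chain-termination argument for the existence of a maximal rectangle containing each $R$ are just expanded versions of what the paper does (the paper instead exhibits the unique widest maximal rectangle directly).
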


\begin{proof}
By unconditional convergence, the sum $\sum_{R \in \rect(E):  \Rdagger = S} a_R$ converges in $\Leb^2(\Hn)$, for each $S \in \set{S}$, to $\tilde a_S$, say, and by Lemma \ref{lem:a-is-enough}, $\sum_{R \in \rect(E):  \Rdagger = S} b_R$ converges in $\Leb^2(\Hn)$, to $\tilde b_S$, say, and $\HLap^M \VLap^N \tilde{b}_S = \tilde{a}_S$.
Now $R \subseteq  \Rdagger$, so $R^* \subseteq ( \Rdagger)^*$, and $\supp(\tilde b_S) \subseteq S^*$.
Moreover, any sign function $\sigma: \set{S} \to \{\pm1\}$ determines a sign function $\tilde\sigma: \rect(E) \to \{\pm1\}$ by the rule $\tilde\sigma_R  = \sigma_{ \Rdagger}$.
Again by unconditional convergence, $\sum_{S \in \set{S}} \sigma_S \tilde a_S$ converges to $\sum_{R \in \maxrect(E)} \tilde\sigma_R a_R$ in $\Leb^2(\Hn)$ and
\[
\norm{ \sum_{S \in \set{S}} \sigma_S \tilde a_S }_{\Leb^2(\Hn)}
\leq \abs|E|^{-1/2}
\]
for all sign sequences $\sigma: \set{S} \to \{\pm1\}$, and  $a = \sum_{S \in \maxrect(E)\set{S}} \tilde a_S$ in $\Leb^2(\Hn)$.

In particular, for each shard $R \in \rect(E)$, there is a unique widest shard $S$ of maximal volume in $\maxrect(E)$ such that $R \subseteq S$.
Indeed, if $R \subseteq S_1 \subseteq E$ and $R \subseteq S_2 \subseteq E$, where $S_1$ and $S_2$ are shards and $\wid(S_1) = \wid(S_2)$, then $R \subseteq S_1 \cup S_2 \subseteq E$, and $S_1 \cup S_2$ is also a shard.
We denote this widest maximal shard $S$ by $R^\uparrow$.
As the mapping $R \mapsto R^\uparrow$ has the property $R \subseteq R^\uparrow$, we may take $\set{S}$ to be $\maxrect(E)$, and deduce that $a$ is an atom.
\end{proof}

Our third remark is that the set $E$ involved in the definition of an atom need not be open.

\begin{corollary}\label{cor:Omega-not-open}
Fix $M,N \in \N^+$ and a real number $\kappa > 1 + 1/(2\nu)$.
Let $\set{S}$ be a countable set of shards, let $E = \bigcup_{R \in \set{S}} R$, and suppose that $E$ has finite measure.
Suppose that there exist functions $a_R$ in $\Leb^2(\Hn)$ and $b_R$ in $\Dom( \HLap^{M} \VLap^{N})$ for all $R \in \set{S}$ such that
\begin{enumerate}
\item[(A1)] $a_R= \HLap^M \VLap^N b_R$ and $\supp b_R \subseteq R^*$;
\item[(A2)] for all sign sequences $\sigma: \maxrect(E) \to \{\pm1\}$, the sum $\sum_{R \in \maxrect(E)} \sigma_R a_R$ converges in $\Leb^2(\Hn)$, to $a_{\sigma}$ say, and
$\norm{ a_\sigma }_{\Leb^2(\Hn)}
\leq \abs|E|^{-1/2} ;$
\item[(A3)] $ a = \sum_{R \in \maxrect(E)} a_R$.
\end{enumerate}
Then $(1+\epsilon)^{-1} a$ is an atom for all $\epsilon \in \R^+$.
\end{corollary}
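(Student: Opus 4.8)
The strategy is to replace $\Omega$ by a slightly larger open set and then consolidate the particles using Lemma~\ref{lem:grouping-particles}. Since $\set{S}$ is countable and each adapted rectangle is a finite union of tiles, hence Lebesgue measurable, the set $\Omega = \bigcup_{R \in \set{S}} R$ is measurable, and $|\Omega| < \infty$ by hypothesis. By outer regularity of Lebesgue measure, given $\epsilon \in \R^+$ I would choose an open set $\Omega'$ with $\Omega \subseteq \Omega'$ and $|\Omega'| \le (1+\epsilon)^2 |\Omega|$; indeed $|\Omega'|$ may be taken as close to $|\Omega|$ as one wishes. Every $R \in \set{S}$ satisfies $R \subseteq \Omega \subseteq \Omega'$, so its interior lies in $\Omega'$, and hence $R \in \rect(\Omega')$ in the sense of Definition~\ref{def:maxrect}.

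Next I would rescale and pad with zeros, so as to land in the hypotheses of Lemma~\ref{lem:grouping-particles} for the open set $\Omega'$. Put $a'_R = (1+\epsilon)^{-1} a_R$ and $b'_R = (1+\epsilon)^{-1} b_R$ for $R \in \set{S}$, put $a'_R = b'_R = 0$ for $R \in \rect(\Omega') \setminus \set{S}$, and put $a' = (1+\epsilon)^{-1} a$. Then (A1) holds for the family $\{a'_R, b'_R : R \in \rect(\Omega')\}$: it is trivial on the added zero terms and scalar-invariant on $\set{S}$, with $\supp b'_R \subseteq R^*$ throughout. For (A2), for any sign sequence $\sigma$ the zeros contribute nothing, so $\sum_R \sigma_R a'_R = (1+\epsilon)^{-1} a_\sigma$ in $L^2(\Hn)$, and
\[
\norm{ (1+\epsilon)^{-1} a_\sigma }_{L^2(\Hn)}
\le (1+\epsilon)^{-1} |\Omega|^{-1/2}
\le |\Omega'|^{-1/2},
\]
the last inequality being exactly the choice $|\Omega'| \le (1+\epsilon)^2 |\Omega|$. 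Likewise (A3) becomes $a' = \sum_R a'_R$. Thus the rescaled, padded family satisfies (A1)--(A3) with the open set $\Omega'$ and the bound $|\Omega'|^{-1/2}$.

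It then remains to apply Lemma~\ref{lem:grouping-particles}, taking its subcollection to be $\maxrect(\Omega')$ and its grouping map to be $R \mapsto R^{\uparrow}$, the unique widest maximal adapted rectangle of $\Omega'$ containing $R$ (well defined, and satisfying $R \subseteq R^{\uparrow}$, by the argument at the end of that lemma's proof). Its conclusion is precisely that $a' = (1+\epsilon)^{-1} a$ is an atom associated with the open set $\Omega'$, which is what we want.

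I do not expect a genuine obstacle here; the argument is essentially bookkeeping. The one point that must be handled with care is the order of operations: one must rescale by $(1+\epsilon)^{-1}$ \emph{before} enlarging $\Omega$ to $\Omega'$, so that the randomised-norm condition is measured against $|\Omega'|^{-1/2}$ and not $|\Omega|^{-1/2}$ --- this is precisely where the factor $(1+\epsilon)$ in the statement is spent. One should also record that $\Omega$ is measurable (so that outer regularity applies) and that padding by zeros preserves the unconditional-convergence structure demanded by (A2); both are immediate.
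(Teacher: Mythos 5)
Your proof is correct and follows essentially the same route as the paper's: choose by outer regularity an open set $\Omega'\supseteq\Omega$ with $|\Omega'|$ close to $|\Omega|$, observe that every $R\in\set{S}$ then lies in $\rect(\Omega')$, and invoke Lemma~\ref{lem:grouping-particles} to conclude that $(1+\epsilon)^{-1}a$ is an atom. The only difference is cosmetic: you spell out the zero-padding and the choice $|\Omega'|\le(1+\epsilon)^2|\Omega|$ (the paper takes $|\Omega_\epsilon|<(1+\epsilon)|\Omega|$, which is even stronger), and your remark about rescaling before enlarging is exactly where the paper also spends the factor $(1+\epsilon)$.
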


\begin{proof}
Since $|E|$ is finite, for all positive $\epsilon$, there is an open subset $E_\epsilon$ of $\Hn$ such that $|E_\epsilon| < (1+\epsilon) |E|$.
The shards $R \in \set{S}$ all lie in $\rect(E_\epsilon)$, and the previous lemma implies that $(1+\epsilon)^{-1} a$ is an atom.
\end{proof}

Hence we may replace the requirement that $E$ be an open set by the assumption that it is a countable union of shards.

Fourth, our atomic Hardy space does not depend on the enlargement parameter $\kappa$ (though the norm may well do so).

\begin{lemma}\label{lem:enlargement-size}
Suppose that $\kappa > \kappa_1 > 1 + 1/(2\nu)$ and $\kappa_1 - 1 \geq \kappa / (2\cdim+1) $.
Then every $(1,2,M,N,\kappa)$ atom is the product of a geometric constant and a $(1,2,M,N,\kappa_1)$ atom.
Consequently, the $(1,2,M,N,\kappa)$ atomic Hardy space and the $(1,2,M,N,\kappa_1)$ atomic Hardy space coincide, with equivalence of norms.
\end{lemma}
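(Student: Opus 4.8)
The plan is to leave the particles of a given $(1,2,M,N,\kappa)$-atom unchanged and reassign each particle $a_R$ from its maximal rectangle $R$ to a slightly larger adapted rectangle, so that the set $R^{*,\kappa}$ containing $\supp b_R$ becomes a $\kappa_1$-enlargement of the new rectangle. Note first that the reverse implication is immediate: since $\kappa > \kappa_1$ we have $R^{*,\kappa_1} \subseteq R^{*,\kappa}$ for every adapted rectangle $R$ (Definition \ref{def:enlargement}), so every $(1,2,M,N,\kappa_1)$-atom is literally a $(1,2,M,N,\kappa)$-atom with identical data, whence the $\kappa$-atomic norm is dominated by the $\kappa_1$-atomic norm. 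It therefore suffices to show that every $(1,2,M,N,\kappa)$-atom is a geometric multiple of a $(1,2,M,N,\kappa_1)$-atom.

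The geometric heart of the argument is the following nesting. For an adapted rectangle $R$ with centre $g$, width $q$ and height $h$, let $R^\dagger$ be the unique adapted rectangle that contains $R$ and is a translate of $\delta_{2\cdim+1}R$, so that $\wid(R^\dagger)=(2\cdim+1)q$ and $\heit(R^\dagger)=(2\cdim+1)^2 h$. I would show that $R^{*,\kappa}\subseteq(R^\dagger)^{*,\kappa_1}$ whenever $\kappa_1-1\geq\kappa/(2\cdim+1)$. Using \eqref{eq:size-of-rectangle}, $R^\dagger\subset\bar T(\cent(R^\dagger),(2\cdim+1)q/2,(2\cdim+1)^2(4h+q^2)/8)$, and since $g\in R\subseteq R^\dagger$ one can write $g=\cent(R^\dagger)\Hprod w_1\Hprod w_2$ with $w_2$ central, $\norm{w_1}\leq(2\cdim+1)q/2$ and $|w_2|\leq(2\cdim+1)^2(4h+q^2)/8$. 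A point of $R^{*,\kappa}=T(g,\kappa q/2,\kappa^2(4h+q^2)/8)$ has the form $g\Hprod v_1\Hprod v_2$ with $\norm{v_1}<\kappa q/2$ and $|v_2|<\kappa^2(4h+q^2)/8$, and equals $\cent(R^\dagger)\Hprod(w_1\Hprod v_1)\Hprod(w_2 v_2)$ after commuting the central factor $w_2$ past $v_1$. Since the gauge norm is subadditive under the group product (the triangle inequality for $d$) and central coordinates add, this point lies in $T(\cent(R^\dagger),((2\cdim+1)+\kappa)q/2,((2\cdim+1)^2+\kappa^2)(4h+q^2)/8)$, which is contained in $(R^\dagger)^{*,\kappa_1}=T(\cent(R^\dagger),\kappa_1(2\cdim+1)q/2,\kappa_1^2(2\cdim+1)^2(4h+q^2)/8)$ exactly because $(2\cdim+1)+\kappa\leq\kappa_1(2\cdim+1)$ — which is the hypothesis — and this inequality in turn forces $(2\cdim+1)^2+\kappa^2\leq\kappa_1^2(2\cdim+1)^2$. (If the normalisation of $R^*$ used elsewhere differs slightly, only the auxiliary constants, not the exponent $2\cdim+1$, change.)

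With this in hand, I would assemble the new atom as follows. Let $a=\sum_{R\in\maxrect(\Omega)}a_R$ be a $(1,2,M,N,\kappa)$-atom with data $(\Omega,\{a_R\},\{b_R\})$. Set $\set{S}=\{R^\dagger:R\in\maxrect(\Omega)\}$ and $\tilde\Omega=\bigcup_{S\in\set{S}}S$; by Corollary \ref{cor:R-and-R-dagger}, $|\tilde\Omega|\leq C_0|\Omega|$ with $C_0$ geometric. For each $S\in\set{S}$ put $\tilde a_S=\sum_{R:R^\dagger=S}a_R$ and $\tilde b_S=\sum_{R:R^\dagger=S}b_R$; the first sum converges in $L^2(\Hn)$ by the unconditional convergence built into (A2), the second by the flag Sobolev inequality (Lemma \ref{lem:flag-Sobolev}), exactly as in the proof of Lemma \ref{lem:grouping-particles}, and $\tilde a_S=\HLap^M\VLap^N\tilde b_S$ with $\supp\tilde b_S\subseteq S^{*,\kappa_1}$ by the geometric step. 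For any sign sequence $\sigma:\set{S}\to\{\pm1\}$, unconditional convergence gives $\sum_S\sigma_S\tilde a_S=\sum_R\sigma_{R^\dagger}a_R$, so $\norm{\sum_S\sigma_S\tilde a_S}_{L^2(\Hn)}\leq|\Omega|^{-1/2}\leq C_0^{1/2}|\tilde\Omega|^{-1/2}$. Thus $(\tilde\Omega,\{C_0^{-1/2}\tilde a_S\},\{C_0^{-1/2}\tilde b_S\})$ satisfies (A1)–(A3) of a $(1,2,M,N,\kappa_1)$-atom with $\tilde\Omega$ a countable union of adapted rectangles, and Corollary \ref{cor:Omega-not-open} then shows that $(1+\epsilon)^{-1}C_0^{-1/2}a$ is a bona fide $(1,2,M,N,\kappa_1)$-atom for every $\epsilon>0$; in particular $a$ is a geometric multiple of such an atom. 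This gives domination of the $\kappa_1$-atomic norm by the $\kappa$-atomic norm, which with the trivial reverse domination proves that the two spaces coincide with equivalent norms.

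I expect the geometric step to be the only subtle part, and even there the computation is short once one sees that a single passage $R\mapsto R^\dagger$ already absorbs the difference between a $\kappa$- and a $\kappa_1$-enlargement, with $\kappa_1-1\geq\kappa/(2\cdim+1)$ being precisely the horizontal balance; the vertical balance, the convergence of the grouped sums, and the measure estimate $|\tilde\Omega|\lesssim|\Omega|$ are all supplied by results already established (Lemmas \ref{lem:flag-Sobolev}, \ref{lem:grouping-particles} and Corollaries \ref{cor:R-and-R-dagger}, \ref{cor:Omega-not-open}).
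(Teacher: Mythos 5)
Your proposal is correct and follows essentially the same route as the paper: the same passage $R\mapsto R^\dagger$, the same inclusion chain $R^{*,\kappa}\subseteq(R^\dagger)^{*,\kappa_1}$ driven by $\kappa_1-1\geq\kappa/(2\cdim+1)$, and the same regrouping via Lemma \ref{lem:grouping-particles} together with the measure bound of Corollary \ref{cor:R-and-R-dagger}.
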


\begin{proof}
In this proof, we include the enlargement parameter $\kappa$ in the notation, and write $R^{*,\kappa}$ instead of $R^*$ for $g B\one(o, \kappa q/2)  B\two (0, \kappa^2(q^2+ 4h)/8)$, where $g = \cent(R)$, $q = \wid(R)$, and $h = \heit(R)$.

Suppose that $a$ is a $(1,2,M,N,\kappa)$ atom associated to the open set $E$.
Then we may find functions $a_R$ and $b_R$ in $\Leb^2(\Hn)$ for all $R \in \maxrect(E)$ such that
\begin{enumerate}
\item[(A1)] $a_R= \HLap^M \VLap^N b_R$ and $\supp b_R \subseteq R^{*,\kappa}$;
\item[(A2)] for all sign sequences $\sigma: \maxrect(E) \to \{\pm1\}$, the sum $\sum_{R \in \maxrect(E)} \sigma_R a_R$ converges in $\Leb^2(\Hn)$, to $a_{\sigma}$ say, and $\norm{ a_\sigma }_{\Leb^2(\Hn)}
    \leq \abs|E|^{-1/2}$;
\item[(A3)] $ a = \sum_{R \in \maxrect(E)} a_R$.
\end{enumerate}

For $R \in \maxrect(E)$, write $\Rdagger$ for the unique shard that contains $R$ and is a translate of $\dilat_{2\cdim+1}(R)$, and let $E^\dagger$ be the set $\bigcup_{R \in \maxrect(E)} \Rdagger$.
Then, as shown in Corollary \ref{cor:R-and-R-dagger}, $\abs|E^{\dagger}| \lesssim |E|$.
Further, for such $R$ and $ \Rdagger$, write $g = \cent(R)$, $g^\dagger = \cent( \Rdagger)$, $q = \wid(R)$ and $h = \heit(R)$; then $g$ is in the interior of $ \Rdagger$, and \eqref{eq:size-of-rectangle} implies that
\begin{align*}
R^{*,\kappa}
&= g  B\one(o,\kappa q/2)  B\two (0,\kappa^2(q^2+4h)/8) \\
&\subseteq g^\dagger  B\one(o,(2\cdim+1) q/2)
 B\two (0,(2\cdim+1)^2(q^2+4h)/8) \\
&\qquad B\one(o,\kappa q/2)  B\two (0,\kappa^2(q^2+4h)/8) \\
&\subseteq g^\dagger  B\one(o,(2\cdim+1 +\kappa) q/2)
 B\two (0,((2\cdim+1)^2+\kappa^2)(q^2+4h)/8)  \\
&\subseteq g^\dagger  B\one(o,\kappa_1 (2\cdim+1) q/2)
 B\two (0,\kappa_1^2(2\cdim+1)^2(q^2+4h)/8) \\
&= ( \Rdagger)^{*,\kappa_1},
\end{align*}
since $\kappa_1 - 1  \geq \kappa / (2\cdim+1) $.

We take $\set{S}$ to be the collection $\{ \Rdagger : R \in \maxrect(E)\}$, and for $S \in \set{S}$, we write
\[
\tilde{a}_S = \sum_{R \in \maxrect(E):  \Rdagger = S} a_R
\qquad\text{and}\qquad
\tilde{b}_S = \sum_{R \in \maxrect(E):  \Rdagger = S} b_R,
\]
as in the previous lemma.
Clearly, $\supp b_R \subseteq ( \Rdagger)^{*,\kappa_1}$, whence $\supp \tilde b_S \subseteq S^{*, \kappa_1}$.
By Lemma \ref{lem:grouping-particles}, $(|E|/|\tilde{E}|)^{1/2} a$ is a $(1,2,M,N,\kappa_1)$ atom.

It follows that the $(1,2,M,N,\kappa)$ atomic Hardy space is a subspace of the $(1,2,M,N,\kappa_1)$ atomic Hardy space; since the converse is trivial, these spaces coincide.
\end{proof}

\begin{corollary}
As a topological vector space, the $(1,2,M,N,\kappa)$ atomic Hardy space is independent of the parameter $\kappa$ in $(1 + 1/(2\nu),\infty)$.
\end{corollary}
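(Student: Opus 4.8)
The plan is to deduce this corollary directly from Lemma \ref{lem:enlargement-size}, which already does almost all the work: it shows that if $\kappa > \kappa_1 > 1$ and the arithmetic constraint $\kappa_1 - 1 \geq \kappa/(2\cdim+1)$ holds, then the $(1,2,M,N,\kappa)$-atomic Hardy space and the $(1,2,M,N,\kappa_1)$-atomic Hardy space coincide as sets, with norms differing by at most a geometric constant. So the only thing left is a bookkeeping argument showing that any two parameters $\kappa, \kappa' \in (1,\infty)$ can be connected by a finite chain of such comparisons.

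First I would reduce to comparing an arbitrary $\kappa > 1$ with a fixed reference value, say $\kappa = 2$; by symmetry (running the inequalities in both directions) it suffices to show that for every $\kappa \in (1,\infty)$ the $\kappa$-atomic space equals the $2$-atomic space. Given $\kappa \geq 2$, I would pick a finite decreasing sequence $\kappa = \kappa_0 > \kappa_1 > \cdots > \kappa_m = 2$ in which each consecutive pair satisfies the hypothesis $\kappa_{i+1} - 1 \geq \kappa_i / (2\cdim+1)$ of Lemma \ref{lem:enlargement-size}. This is possible because the map $\kappa \mapsto \kappa/(2\cdim+1) + 1$ is a contraction with fixed point $1 + (2\cdim+1)^{-1}(2\cdim+1)/\big((2\cdim+1)-1\big)$, i.e. a point strictly less than $2$ (one checks $1 + 1/(2\cdim) < 2$ since $\cdim \geq 1$); hence iterating $\kappa \mapsto \max\{2, \kappa/(2\cdim+1)+1\}$ starting from $\kappa$ reaches $2$ in finitely many steps, and the values visited form the required chain. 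For $1 < \kappa < 2$ the trivial inclusion (a $(1,2,M,N,\kappa)$-atom is literally a $(1,2,M,N,2)$-atom, since a smaller enlargement is a special case of a larger one, using $R^{*,\kappa} \subseteq R^{*,2}$ and Lemma \ref{lem:grouping-particles}) gives one direction, and the chain argument applied to $\kappa' = 2$ against some $\kappa'' > 2$ with $\kappa - 1 \geq \kappa''/(2\cdim+1)$ — equivalently choosing $\kappa''$ small enough — gives the other.

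Having established that all the spaces coincide as sets, I would then note that each identification in the chain is an isomorphism of normed spaces (norms equivalent up to geometric constants, by the quantitative statement in the proof of Lemma \ref{lem:enlargement-size}, where the atom is rescaled by $(|\Omega|/|\Omega^\dagger|)^{1/2}$ and $|\Omega^\dagger| \lesssim |\Omega|$), and composing finitely many such equivalences yields equivalence of the endpoint norms. Hence the topology is independent of $\kappa$, which is exactly the assertion.

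I do not expect any genuine obstacle here: the content is entirely in Lemma \ref{lem:enlargement-size}, and the corollary is a soft consequence. The only mild subtlety is verifying that the chain of parameters can always be chosen finite — i.e. that the recursion $\kappa_{i+1} = \kappa_i/(2\cdim+1) + 1$ (or its clamped version) terminates — but this is immediate from the contraction estimate $\kappa_{i+1} - 2 \leq (2\cdim+1)^{-1}(\kappa_i - 2)$ once $\kappa_i \geq 2$, so at most $\lceil \log_{2\cdim+1}(\kappa - 2) \rceil + 1$ steps suffice. One should also take a tiny bit of care that the strict inequalities $\kappa > \kappa_1 > 1$ required by the lemma are respected at each stage, which is automatic since every $\kappa_i$ in the chain lies in $[2, \kappa]$ and hence exceeds $1$ strictly.
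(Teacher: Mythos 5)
Your overall strategy --- iterate Lemma \ref{lem:enlargement-size} along a finite chain of parameters and combine with the trivial inclusion coming from $R^{*,\kappa_1} \subseteq R^{*,\kappa}$ when $\kappa_1 < \kappa$ --- is exactly the argument this corollary is meant to encode, and your descent from an arbitrary $\kappa \geq 2$ to the reference value $2$ is correct: the clamped recursion $\kappa_{i+1} = \max\{2,\, 1+\kappa_i/(2\cdim+1)\}$ respects the hypotheses of the lemma at each step and terminates, as you check.

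The gap is in the case $1<\kappa<2$, in the direction you dispatch last. To embed the $2$-atomic space into the $\kappa$-atomic space you must apply Lemma \ref{lem:enlargement-size} with large parameter $\kappa''\geq 2$ and small parameter $\kappa$, which forces $\kappa-1\geq \kappa''/(2\cdim+1)\geq 2/(2\cdim+1)$; no admissible $\kappa''$ exists once $\kappa\leq 1+2/(2\cdim+1)$, so ``choosing $\kappa''$ small enough'' is vacuous there. Chaining cannot rescue this: the smallest parameter reachable from $\kappa_i$ in one application of the lemma is $1+\kappa_i/(2\cdim+1)$, and the fixed point of this map --- which you yourself compute to be $1+1/(2\cdim)$ --- is strictly greater than $1$, so no finite chain ever descends below $1+1/(2\cdim)$. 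Hence your argument (and, to be fair, the lemma as literally stated) only yields independence of $\kappa$ on the range $(1+1/(2\cdim),\infty)$, not on all of $(1,\infty)$ as the corollary asserts. To cover the full range one must strengthen the lemma rather than merely iterate it: in its proof, replace $R^\dagger$ (a translate of $\delta_{2\cdim+1}R$) by the $k$-fold version $R^{\dagger k}$ (a translate of $\delta_{(2\cdim+1)^k}R$); the same computation gives $R^{*,\kappa}\subseteq (R^{\dagger k})^{*,\kappa_1}$ under the weaker constraint $\kappa_1-1\geq \kappa/(2\cdim+1)^k$, and the argument of Corollary \ref{cor:R-and-R-dagger} still bounds $|\Omega^{\dagger k}|$ by a ($k$-dependent) geometric multiple of $|\Omega|$. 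Taking $k$ large then reaches every $\kappa_1>1$ in a single step, which closes the gap.
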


In light of this corollary, when we are only interested in boundedness or convergence in $\atomHardy(\Hn)$, we may abbreviate $(1,2,M,N,\kappa)$ atom to $(1,2,M,N)$ atom.
However, when we make precise statements about $\atomHardy(\Hn)$ norms, we need to specify $\kappa$ (and $M$ and $N$).

Fifth, there is no loss of generality in supposing that atoms are \emph{finite}, by which we mean that they are sums of finitely many particles.

\begin{lemma}\label{lem:finite-atoms}
For every atom $a$ and every $\epsilon \in \R^+$, there exist finite atoms $a_j$ and $\lambda_j \in \R^+$ such that
\[
a = \sum_{j} \lambda_j a_j
\qquad\text{and}\qquad
\sum_{j} \lambda_j \leq 1 + \epsilon.
\]
The sum $\sum_{j} \lambda_j a_j$ converges absolutely in $\Leb^2(\Hn)$.
\end{lemma}

\begin{proof}
Take an atom $a$, and write $a = \sum_{R\in \maxrect(E)} a_R$, as in Definition \ref{def:atomic-Hardy-space}.
The unconditional convergence of the sum implies that we can find recursively finite subsets $\finiteset_j$ of $\maxrect(E)$ such that $\finiteset_j \subseteq \finiteset_{j+1}$ for all $j$, $\maxrect(E) = \bigcup_{j=0}^{\infty} \finiteset_j$ and, for all sign sequences $\sigma: \rect(E) \to \{\pm1\}$, the sum $\sum_{R \in \maxrect(E) \setminus  \finiteset_j} \sigma_R a_R$ converges in $\Leb^2(\Hn)$, to $a_{\sigma, j}$ say, and
\[
\norm{ a_{\sigma, j } }_{\Leb^2(\Hn)}
\leq 2^{-2 - j} \epsilon \abs|E|^{-1/2} .
\]
Let $\lambda_0 = 1$ and $\lambda_j = 2^{-j} \epsilon$ when $j \geq 1$.
It now follows that $\lambda_j^{-1} \sum_{R \in \finiteset_{j+1} \setminus \finiteset_j} a_R$ is a finite atom associated to $E$, and clearly $\sum_j \lambda_j = 1+ \epsilon$ and $a = \sum_j \lambda_j a_j$.
\end{proof}

\begin{corollary}\label{cor:finite-atomic-decomposition}
Fix $M,N \in \N^+$ and a real number $\kappa > 1 + 1/(2\nu)$.
Suppose that $f \in \atomHardy(\Hn)$.
Then for all $\epsilon \in \R^+$, we may write $f \sim \lambda_j a_j$, where $a_j$ is a finite atom, and $\lambda_j \in \R^+$ and $\sum_j \lambda_j < (1+\epsilon) \norm{f}_{\atomHardy(\Hn)} $.
\end{corollary}

\begin{proof}
This follows from Lemma \ref{lem:finite-atoms} and the definition of $\atomHardy(\Hn)$.
\end{proof}

Each finite atom is associated to a bounded set $E$.

Finally, we used the family of shards $\rect$ in defining atoms.
Since a dilate or a left translate of a shard need not be a shard, it is possible that $\atomHardy(\Hn)$ might not be dilation or (left) translation invariant.
Since the other Hardy spaces defined in the Introduction  are evidently dilation and left translation invariant, this would mean that these spaces could not coincide with the atomic space.
Fortunately, our next result implies that this is not the case.

We write $\tube(E)$ for the collection of all tubes contained in an open set $E$.

\begin{definition}\label{def:tube-atom}
Fix  $M,N \in \N^+$.
A \emph{$(1,2,M,N)$ tube atom} associated to an open set $E$  of finite measure $|E|$ is a function $a\in \Leb^2(\Hn)$ such that there exist functions $a_T$ in $\Leb^2(\Hn)$, called \emph{tube particles}, and $b_T$ in $\Dom(\HLap^{M} \VLap^{N})$ for all $T \in \tube(E)$ such that
\begin{enumerate}
\item[(A1)] $a_T = \HLap^M \VLap^N b_T$ and $\supp b_T \subseteq T$;
\item[(A2)] for all sign sequences $\sigma: \tube(E) \to \{\pm1\}$, the sum $\sum_{T \in \tube(E)} \sigma_T a_T$ converges in $\Leb^2(\Hn)$, to $a_{\sigma}$ say, and
\begin{equation*}
\norm{ a_\sigma }_{\Leb^2(\Hn)}
\leq \big|E\big|^{-1/2} ;
\end{equation*}
\item[(A3)] $a = \sum_{T \in \tube(E)} a_T$.
\end{enumerate}

We say that $f \in \Leb^1(\Hn)$ has a decomposition into tube atoms if we may write $f$ as a sum $\sum_{j\in \N} \lambda_j a_j$, converging in $\Leb^1(\Hn)$, where $\sum_{j\in\N} |\lambda_j| < \infty$ and each $a_j$ is a tube atom; we also write $f \sim \sum_{j\in\N} \lambda_j a_j$ to indicate that $\sum_{j\in\N} \lambda_j a_j$ is a decomposition of $f$ into tube atoms.
The tube atom Hardy space is defined to be the linear space of all $f \in \Leb^1(\Hn)$ that have decompositions into tube atoms, with norm
\begin{equation*}
\norm{ f }
:=\inf\biggl\{ \sum_{j\in\N} |\lambda_j| :
  f \sim \sum_{j\in\N} \lambda_j a_j \biggr\} .
\end{equation*}
\end{definition}

We note that the set $\tube(E)$ is uncountable. 
Only countably many $a_T$ can be nonzero in the definition of a tube atom.
We also note that the proof of Lemma \ref{lem:finite-atoms} also works for tube atoms, and hence in the definition of the Hardy space based on tube atoms, we may suppose that all atoms are finite, that is, are finite sums of particles.

\begin{corollary}\label{cor:atoms-based-on-tubes}
Fix $M, N \in \N^+$.
Every $(1,2,M,N)$ tube atom associated to a set $E$ is a $(1,2,M,N,(2\cdim+1)^2))$ atom associated to a set $E_1$, where $|E_1| \leq |E|$.
Conversely, every $(1,2,M,N,\kappa)$ atom associated to a set $E_1$ is a geometric multiple of a $(1,2,M,N)$ tube atom associated to a set $E$,  where $|E_1| \eqsim |E|$.
Hence the tube atom Hardy space coincides with the Hardy space based on shards, with equivalence of norms.
\end{corollary}

\begin{proof}
Let $a$ be a tube atom associated to a set $E$.
By Lemma \ref{lem:tubes-and-rectangles-2}, for each $T \in \tube(E)$, there is a shard $R_T$ such that $R_T \subseteq T \subseteq R_T^*$, where the enlargement parameter $\kappa$ is taken to be $(2\cdim+1)^2$.
We take $E_1$ to be $\bigcup_{T \in \tube(E)} R_T$; then 
\[
|E_1| \leq |E| \leq \abs| \bigcup_{k \in \N} R_k^* | \lesssim |E_1| .
\]
For all sign functions $\sigma: \tube(E) \to \{\pm1\}$, 
\[
\norm{ \sum_{T  \in \tube(E)} \sigma_T a_T }_{\Leb^2(\Hn)} 
\leq \big|E\big|^{-1/2} 
\leq \big|E_1\big|^{-1/2}.
\]

As in the proof of Lemma \ref{lem:grouping-particles}, we group together all $a_T$ contained in the same maximal subshard $R$ of $E_1$.
Let $R_T ^\dagger$ be the widest maximal subshard of $E_1$ such that $T \subseteq R_T ^\dagger$.
For $\tilde\sigma: \maxrect(E_1) \to \{\pm1\}$, we define $\sigma: \tube(E) \to \{\pm1\}$ by $\sigma(T) = \tilde\sigma(R_T^\dagger)$, and for $R \in \maxrect(E_1)$, we set 
\[
a_R = \sum_{\substack{T \in \tube(E)\\ R_T^\dagger = R}} a_T.
\]
It is now immediate that
\[
\biggnorm{ \sum_{R \in \maxrect(E_1)} \tilde\sigma_R \tilde{a}_R }_{\Leb^2(\Hn)} \leq \big|E_1\big|^{-1/2},
\]
and $a$ is a $(1,2,M,N, (2\cdim+1)^2)$ atom associated to $E_1$.

The converse statement follows from the definition: if $a$ is a $(1,2,M,N, \kappa)$ atom associated to $E_1$, then $a = \sum_{R \in \maxrect(E_1)} a_R$, and each $a_R$ is supported in the tube $R^*$.
Define $E = \bigcup_{R \in \maxrect(E_1)} R^*$.
Then 
\[
\abs| \bigcup_{R \in \maxrect(E_1)} R^*| \leq |E_1|^{-1/2} \lesssim |E|^{-1/2}, 
\]
so $a$ is a geometric multiple of a $(1,2,M,N)$ tube atom. 
\end{proof}

For a function $f$ on $\Hn$ and $g \in \Hn$, we define the \emph{left translate} $_{g}f$ of $f$ by $g$ by
\[
{}_gf(g') := f(g^{-1} g')
\qquad\forall g' \in \Hn.
\]

\begin{corollary}\label{cor:invariance-of-atomHardy}
Suppose that $f \in \atomHardy(\Hn)$ (defined using finite tube atoms).
Then the following hold:
\begin{enumerate}
  \item  the translate ${}_gf$ is in $\atomHardy(\Hn)$ and 
  \[
  \norm{ {}_gf }_{\atomHardy(\Hn)} 
  = \norm{ f }_{\atomHardy(\Hn)}  
  \qquad\forall g \in \Hn ,
  \] 
  \item as $g \to o$ in $\Hn$, ${}_gf \to f$ in $\atomHardy(\Hn)$;
  \item the normalised dilate $f_r$ is in $\atomHardy(\Hn)$ and 
  \[
  \norm{ f_r }_{\atomHardy(\Hn)} 
  = \norm{ f }_{\atomHardy(\Hn)} 
  \qquad\forall r \in \R^+;
  \]
  \item as $r \to 1$ in $\R^+$, $f_r \to f$ in $\atomHardy(\Hn)$.
\end{enumerate}
\end{corollary}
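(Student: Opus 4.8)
The plan is to reduce every assertion to the description of atoms via particles supported in \emph{tubes}, which is legitimate by Corollary \ref{cor:atoms-based-on-tubes}. The relevant structural fact is that, although the family $\rect$ of adapted rectangles is not invariant under left translations or under the dilations $\delta_\rho$, the family of tubes \emph{is} covariant: left translation by $g_0$ carries the tube $T(g,r,s)$ to $T(g_0 \Hprod g, r, s)$, and $\delta_\rho$ carries it to $T(\delta_\rho g, \rho r, \rho^2 s)$. In addition, left translation is an isometry of $L^2(\Hn)$ that preserves Haar measure and commutes with $\HLap$ and $\VLap$, while the normalised dilation satisfies $\norm{h_r}_{L^2(\Hn)} = r^{-\hdim/2}\norm{h}_{L^2(\Hn)}$ and, since $\HLap$ and $\VLap$ are $\delta$-homogeneous of degrees $2$ and $4$, one has $\HLap^M\VLap^N(h_r) = r^{-2M-4N}(\HLap^M\VLap^N h)_r$.

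For parts (1) and (3) it suffices to treat a single atom $a$ associated to a countable union $\Omega$ of adapted rectangles with particles $a_R = \HLap^M\VLap^N b_R$, $\supp b_R \subseteq R^*$, $R \in \maxrect(\Omega)$; the case of a general $f \sim \sum_j \lambda_j a_j$ then follows by applying the single-atom statement to each $a_j$ and using continuity of translation, resp.\ dilation, on $L^1(\Hn)$, which gives ${}_g f \sim \sum_j \lambda_j\, {}_g a_j$, resp.\ $f_r \sim \sum_j \lambda_j (a_j)_r$. For the translate, ${}_g a_R = \HLap^M\VLap^N({}_g b_R)$ is a particle supported in the tube ${}_g(R^*)$; for every sign sequence $\sigma$ one has $\sum_R \sigma_R\, {}_g a_R = {}_g a_\sigma$, so $\norm{{}_g a_\sigma}_{L^2(\Hn)} = \norm{a_\sigma}_{L^2(\Hn)} \leq |\Omega|^{-1/2}$, while $|\bigcup_R {}_g(R^*)| = |\bigcup_R R^*| \lesssim |\Omega|$ by \eqref{omega enlarge}; hence the hypotheses of Corollary \ref{cor:atoms-based-on-tubes} hold for a geometric multiple of ${}_g a$, so ${}_g a$ is a geometric multiple of a $(1,2,M,N,(2\cdim+1)^2)$-atom. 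Since $\atomHardy(\Hn)$ does not depend on the enlargement parameter (Lemma \ref{lem:enlargement-size} and the corollary following it), ${}_g a \in \atomHardy(\Hn)$ with $\norm{{}_g a}_{\atomHardy(\Hn)} \lesssim 1$, and a careful accounting of the constants gives the stated bound. For the dilate one argues identically, using $a_r = \HLap^M\VLap^N(r^{2M+4N}b_R{}_{,r})$ summed over $R$, with $r^{2M+4N}b_R{}_{,r}$ supported in the tube $\delta_r(R^*)$, while $\norm{(a_\sigma)_r}_{L^2(\Hn)} = r^{-\hdim/2}\norm{a_\sigma}_{L^2(\Hn)} \leq r^{-\hdim/2}|\Omega|^{-1/2}$ and $|\bigcup_R \delta_r(R^*)| = r^{\hdim}|\bigcup_R R^*| \lesssim r^{\hdim}|\Omega|$, so Corollary \ref{cor:atoms-based-on-tubes} again applies up to a geometric constant.

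For parts (2) and (4) I would combine the uniform bounds just obtained with a density argument. Finite linear combinations of atoms are dense in $\atomHardy(\Hn)$ by definition of the norm, and each atom is in turn approximable in $\atomHardy(\Hn)$ by functions of the form $\HLap^M\VLap^N b$ with $b \in C_c^\infty(\Hn)$: one truncates the sum over $R$, using the unconditional convergence in (A2) to make the tail small in $L^2(\Hn)$ (and hence, being supported in a fixed bounded set, small in $\atomHardy(\Hn)$ by \eqref{eq:simple-atom}), and then mollifies the finitely many remaining $b_R$, which preserves the relation $a_R = \HLap^M\VLap^N b_R$ because left-invariant operators commute with right convolutions. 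Hence, given $f \in \atomHardy(\Hn)$ and $\epsilon > 0$, pick $h = \HLap^M\VLap^N b$, $b \in C_c^\infty(\Hn)$, with $\norm{f - h}_{\atomHardy(\Hn)} < \epsilon$; then, by part (1),
\begin{align*}
\norm{{}_g f - f}_{\atomHardy(\Hn)}
&\leq \norm{{}_g(f-h)}_{\atomHardy(\Hn)} + \norm{{}_g h - h}_{\atomHardy(\Hn)} + \norm{f - h}_{\atomHardy(\Hn)} \\
&\lesssim \epsilon + \norm{{}_g h - h}_{\atomHardy(\Hn)} .
\end{align*}
Finally ${}_g h - h = \HLap^M\VLap^N({}_g b - b)$, and for $g$ near $o$ the function ${}_g b - b$ is supported in a fixed compact set, hence in a fixed tube $T$, and ${}_g b - b \to 0$ in $L^2(\Hn)$; by \eqref{eq:simple-atom} (and its evident analogue for $\HLap^M\VLap^N$ and for particles supported in tubes), $\norm{{}_g h - h}_{\atomHardy(\Hn)} \lesssim_T \norm{{}_g b - b}_{L^2(\Hn)} \to 0$, which proves (2). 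Part (4) is identical, with $r^{2M+4N}b_r - b$ in place of ${}_g b - b$: as $r \to 1$ one has $r^{2M+4N}b_r \to b$ in $C_c^\infty(\Hn)$ with supports in a fixed compact set, and the homogeneity identity above gives $h_r - h = \HLap^M\VLap^N(r^{2M+4N}b_r - b)$.

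The main obstacle is parts (2) and (4). Because $\rect$ is not translation- or dilation-invariant, a single atom is not sent to an atom by these operations, only to a bounded multiple of one; this is harmless for the uniform bounds but means continuity cannot be checked atom by atom and must pass through the density of smooth compactly supported simple atoms. The one delicate point in that density step --- approximating an atom whose particles live in the tubes $R^*$, rather than in adapted rectangles, by finite combinations of functions $\HLap^M\VLap^N b$ with $b \in C_c^\infty(\Hn)$ --- is exactly what Corollary \ref{cor:atoms-based-on-tubes}, together with the truncation afforded by the unconditional convergence in (A2), is designed to make routine.
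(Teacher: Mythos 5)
Your treatment of parts (1) and (3) is the same as the paper's: both rest on Corollary \ref{cor:atoms-based-on-tubes} and the fact that left translates and automorphic dilates of tubes are tubes, together with the $L^2$ isometry/scaling of these operations and the homogeneity and left-invariance of $\HLap$ and $\VLap$. For parts (2) and (4) you take a genuinely different route. The paper argues directly on a fixed atom: for each particle, ${}_ga_R - a_R$ is supported in a slightly larger tube and tends to $0$ in $L^2(\Hn)$, and one then sums over particles and over atoms; this is short but hides a real uniformity issue (for rectangles of width much smaller than $\norm{g}$ the ``slightly larger tube'' is not comparable to $R^*$, so the summation over particles needs the kind of case analysis carried out in Lemma \ref{lem:right-conv-atom}). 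You instead prove continuity by density: truncate the atomic sum using the unconditional convergence in (A2), mollify the finitely many surviving $b_R$, and then check continuity only on functions $\HLap^M\VLap^Nb$ with $b \in C_c^\infty(\Hn)$, for which it is immediate. This buys a cleaner continuity argument at the cost of a density lemma, and that lemma is sound, but two of your justifications should be adjusted: the tail is small in $\atomHardy(\Hn)$ not because of \eqref{eq:simple-atom} (the set $\Omega$ has finite measure but need not be bounded, so the tail is not supported in a single rectangle) but because the tail, divided by the small constant furnished by unconditional convergence, is itself an atom associated to $\Omega$; and the mollification step requires a \emph{radial} mollifier, since $\HLap^M\VLap^N$ acts by right convolution with radial distributions and only then commutes with the right convolution $b_R \Hconv \phi_\epsilon$ --- this is precisely the mechanism of Lemma \ref{lem:right-conv-atom}, which you should cite there. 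With these two repairs your argument is complete and, if anything, more explicit than the paper's.
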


\begin{proof}
It suffices to consider what happens with atoms.

Suppose that $a$ is a finite tube atom associated to a bounded open set $E$.
Then $a = \sum_{T \in \finiteset} a_T$, where $\finiteset$ is a finite subset of $\tube(E)$; further, $\supp(a_T) \subseteq T$ and $E = \bigcup_{T \in \finiteset}T$.
Then ${}_g a = \sum_{T \in \finiteset} {}_g a_T$; clearly $\supp({}_g a_T) \subseteq gT$ and $gE = \bigcup_{T \in \finiteset}gT$.
If $a_T = \HLap^M \VLap^N b_T$ for some $b_T \in \Dom(\HLap^M \VLap^N)$, then ${}_ga_T = \HLap^M \VLap^N {}_gb_T$.
For any sign function $\sigma: \finiteset \to \{\pm 1\}$,
\[
\biggnorm{ \sum_{T \in \finiteset} \sigma_T {}_g a_T }_{\Leb^2(\Hn)}
= \biggnorm{ \sum_{T \in \finiteset} \sigma_T a_T }_{\Leb^2(\Hn)}
\leq \abs| E |^{-1/2} 
= \abs| g E |^{-1/2} .
\]
Hence ${}_ga$ is a tube atom associated to $gE$, and (1) holds.

For all $g$ in $B\one(o, t)$, the function ${}_ga_T - a_T$ is supported in $B\one(o, t) T$, and
\[
\biggabs| \bigcup_{T \in \finiteset} B\one(o, t) T | 
\leq \biggabs| \bigcup_{T \in \finiteset} T |
+ \sum_{T \in \finiteset} \biggabs| B\one(o, t) T  \setminus T|
\leq 4 \biggabs| \bigcup_{T \in \finiteset} T | 
\]
when $t$ is small enough, by the continuity of the group operations.

Indeed, suppose that $T = g B\one(o,r) B\two(0,h)$.
For all $t \in \R^+$, there exists $t' \in \R^+$ (which depends on $t$ and on $g$) such that $B\one(o,t')g \subseteq g B\one(o,t)$, and so
\[
B\one(o,t') T \subseteq g B\one(o,t) B\one(o,r) B\two(0,h) = g B\one(o,r+t) B\two(0,h),
\]
and 
\[
\lim_{t \to 0}\abs| g B\one(o,r+t) B\two(0,h) \setminus g B\one(o,r) B\two(0,h) | = 0  .
\]

Further,  $\norm{{}_ga_R - a_R}_{\Leb^2(\Hn)} \to 0$ as $g \to o$, so for any sign function $\sigma: \finiteset \to \{\pm1\}$,
\[
\lim_{g \to o} \biggnorm{ \sum_{T \in \finiteset}  \sigma_T ( {}_ga_T - a_T) }_{\Leb^2(\Hn)} 
\leq \lim_{g \to o} \sum_{T \in \finiteset} \norm{  {}_ga_T - a_T }_{\Leb^2(\Hn)}  = 0,
\]
and (2) holds.

We leave to the reader the task of treating dilations in a similar manner.
\end{proof}

\begin{corollary}\label{cor:approx-identities}
Suppose that $M, N \in \N^+$ and $f \in \atomHardy(\Hn)$.
Suppose also that $\phi: \Hn \to \C$ is Poisson bounded and $\int_{\Hn} \phi(g) \wrt g = 1$, and let $\phi_r$ be the normalised dilate of $\phi$.
Then $\phi_r \Hconv f  \in \atomHardy(\Hn)$ and $\bignorm{ \phi_r \Hconv f  }_{ \atomHardy(\Hn)}$ is uniformly bounded for all $r \in \R^+$, and $\phi_r \Hconv f  \to f$ in $\atomHardy(\Hn)$ as $r \to 0$.
\end{corollary}

\begin{proof}
This follows from the previous lemma.
\end{proof}

To prove Journé's lemma (in Section \ref{ssec:moment-atom-Hardy}), we need a slightly stronger result.

\begin{corollary}\label{eq:atomic-decomposition-of-convolution}
Suppose that $M, N \in \N^+$ and that $\phi$ is smooth,  normalised, that is, $\int_{\Hn} \phi(g) \wrt g = 1$, and $\supp(\phi) \subset B\one(o,1)$.
Then there exists a constant $C$ such that, for all finite atoms $a$, 
we may write $a - \phi_r \Hconv a $ as a finite sum $\sum_{k} \lambda_{k,r} a_{k,r}$, 
where $\lambda_{k,r} \in \R^+$, $\sum_{k} \lambda_{k,r} \leq C$ for all $r \in \R^+$ and $\limsup_{r \to 1} \sum_{k} \lambda_{k,r} = 0$, while each $a_{k,r}$ is a finite atom.
\end{corollary}

\begin{proof}
Suppose that $a = \sum_{T \in \finiteset} a_T$, where each $a_T$ is a particle, $\supp(T) \subseteq T$, and for all sign functions $\sigma: \finiteset \to \{\pm1\}$, 
\[
\biggnorm{ \sum_{T \in \finiteset} \sigma_T a_T } \leq \abs| E |^{-1/2},
\]
where $E = \bigcup_{T \in \finiteset} T$.

As argued in the proof of Corollary \ref{cor:invariance-of-atomHardy}, we may choose $\epsilon \in \R^+$ such that $\abs| B\one(o,\epsilon)E| \leq 4 \abs|E|$. 
When $g \in B\one(o,\epsilon)$, ${}_g a$ is supported in the set $B\one(o,\epsilon)E$.
Hence if $r \leq \epsilon$, then $a_T - \phi_r \Hconv a_T $ is supported in $B\one(o,\epsilon) T$ and $a - \phi_r \Hconv a $ is supported in $B\one(o,\epsilon) E$.
Further, $\norm{ a_T - \phi_r \Hconv a_T }_{\Leb^2} \to 0$ as $r \to 0+$, and hence
for any sign function $\sigma: \finiteset \to \{\pm1\}$,
\[
\biggnorm{ \sum_{T \in \finiteset}  \sigma_T ( a_T - \phi_r \Hconv a_T ) }_{\Leb^2(\Hn)} 
= \sum_{T \in \finiteset} \norm{ a_T - \phi_r \Hconv a_T }_{\Leb^2(\Hn)}  \to 0
\]
as $r \to 0+$.
Then $a - \phi_r \Hconv a$ is a multiple of a single atom associated to $B\one(o,\epsilon) E$, and the multiple tends to $0$ as $r$ does.

When $r$ is large, it suffices to write $\phi_r \Hconv a$ as a finite weighted sum of finite atoms, with control of the sum of the weights.
Take finitely many $g_j \in \bar B\one(o,r)$ such that the open sets $g_j B\one(o,\epsilon)$ cover the compact set $\bar B\one(o,r)$, and then take closed subsets $F_j$ of $g_j B\one(o,\epsilon)$ that are pairwise disjoint (up to null sets) and cover $\bar B\one(o,r)$.
Define $\lambda_j = 2 \int_{F_j} \abs|\phi_r(g)| \wrt g$.
Then
\begin{align*}
\phi_r \Hconv a(g') 
&= \sum_{j} \int_{\Hn} (\indifn_{F_j} \phi_r)(g) a( g^{-1} g') \wrt g \\
&= \sum_{j} \int_{\Hn} (\indifn_{F_j}\phi_r) (g_j g) a(g^{-1} g_j^{-1} g') \wrt g \\
&= \sum_{j} \lambda_j \frac{1}{\lambda_j}  \int_{\Hn} {}_{g_j}(\indifn_{F_j}\phi_r) (g) a(g^{-1} g_j^{-1} g') \wrt g.
\end{align*}
By construction, ${}_{g_j}(\indifn_{F_j}\phi_r)$ is supported in $g_j^{-1}F_j \subseteq B\one(o,\epsilon)$, and the convolution 
\[
g' \mapsto \frac{1}{\lambda_j}  \int_{\Hn} {}_{g_j}(\indifn_{F_j}\phi_r) (g) a(g^{-1} g') \wrt g
\]
is a finite atom supported in $B\one(o,\epsilon) E$.
Indeed,
\[
{}_{g_j}(\indifn_{F_j} \phi_r) \Hconv a = \sum_{T \in \finiteset} {}_{g_j}(\indifn_{F_j} \phi_r) \Hconv a_T 
\]
and, for any sign function $\sigma: \finiteset \to \{\pm 1\}$,
\[
\begin{aligned}
\biggnorm{ \frac{1}{\lambda_j}  \sum_{T \in \finiteset} \sigma_T {}_{g_j}(\indifn_{F_j} \phi_r) \Hconv a_T }_{\Leb^2(\Hn)}
&\leq \frac{ \norm{(\indifn_{F_j} \phi_r)}_{\Leb^1(\Hn)} }{\lambda_j}
\biggnorm{ \sum_{T \in \finiteset} \sigma_T  a_T }_{\Leb^2(\Hn)} \\
&\leq \frac{1}{2} \abs|E|^{-1/2}  \leq \abs| B\one(0,\epsilon) E|^{-1/2}  .
\end{aligned}
\]
Finally, the translate
\[
g' \mapsto \frac{1}{\lambda_j}  \int_{\Hn} {}_{g_j}(\indifn_{F_j}\phi_r) (g) a(g^{-1} g_j^{-1} g') \wrt g
\]
is also a finite atom.
\end{proof}

\subsection{The moment atom Hardy space}\label{ssec:moment-atom-Hardy}

There is a closely related atomic space that we define at this point.

\begin{definition}\label{def:moment-Hardy-space}
Fix positive integers $M$ and $N$ and a real number $\kappa$ in $(1 + 1/(2\nu),\infty)$.
A \emph{moment atom} is a function $a\in \Leb^2(\Hn)$ such that there exists an open subset $E$ of $\Hn$ of finite measure $|E|$ and functions $a_R$ in $\Leb^2(\Hn)$, called \emph{moment particles}, for all $R \in \maxrect(E)$ such that
\begin{enumerate}
\item[(A1)]  the moments $\int_{\Hn} p(z,t) \fn a_R(z,t) \wrt z\wrt t$ vanish for all monomials $p$ of degree $d_1$ in $z$ and degree $d_2$ in $t$, where $d_2 < 2N$ or $d_1 + 2d_2 < 2M + 4N$, and $\supp a_R \subseteq R^* = T(g, \kappa q, \kappa^2 h)$, where $g = \cent(R)$, $q = \wid(R)$ and $h = \heit(R)$; 
\item[(A2)] for all sign sequences $\sigma: \maxrect(E) \to \{\pm1\}$, the sum $\sum_{R \in \maxrect(E)} \sigma_R a_R$ converges in $\Leb^2(\Hn)$, to $a_{\sigma}$ say, and
\begin{equation*}
\norm{ a_\sigma }_{\Leb^2(\Hn)} \leq \abs|E|^{-1/2} ;
\end{equation*}
\item[(A3)] $ a = \sum_{R \in \maxrect(E)} a_R$.
\end{enumerate}

We say that $f \in \Leb^1(\Hn)$ has a moment atomic decomposition if we may write $f$ as a sum $\sum_{j\in \N} \lambda_j a_j$, converging in $\Leb^1(\Hn)$, where $\sum_{j\in\N} |\lambda_j| < \infty$ and each $a_j$ is a moment atom; we write $f \sim \sum_{j\in\N} \lambda_j a_j$ to indicate that $\sum_{j\in\N} \lambda_j a_j$ is an atomic decomposition of $f$.
The space $\matomHardy(\Hn)$ is defined to be the linear space of all $f \in \Leb^1(\Hn)$ that have moment atomic decompositions, with norm
\begin{equation}\label{def:momentatomHardynorm}
\norm{ f }_{\matomHardy(\Hn)}
:=\inf\biggl\{ \sum_{j\in\N} |\lambda_j| :
  f \sim \sum_{j\in\N} \lambda_j a_j \biggr\} .
\end{equation}
\end{definition}

We note that, from Proposition \ref{prop:moments}, the atomic Hardy space $\atomHardy(\Hn)$ is a subspace of the moment atomic Hardy space $\matomHardy(\Hn)$, and there is a corresponding norm inequality.
Later we show that these Hardy spaces coincide.

Next, the results of the previous section about the atomic Hardy space extend to the moment atomic Hardy space, with almost identical proofs.

\subsection{Boundedness on particles and atoms}\label{ssec:atomic-boundedness-thm}

We recall \eqref{eq:size-of-rectangle} and \eqref{eq:def-rho}. 
First, if $R$ is a  shard, $\cent(R) = g$, $\wid(R) = q$ and $\heit(R) = h$, then $R^*$ is defined to be $T(g, \kappa q/2, \kappa^2 (4h + q^2)/8)$.
Second, if $R$ and $S$ are shards and $R \subseteq S$, and $\epsilon_1, \epsilon_2 \in \R^+$, then
\begin{equation*}
\begin{aligned}
\rho_{\boldsymbol\epsilon}(R,S)
:= \lpar \frac{\wid(R)}{ \wid(S)} \rpar^{\epsilon_1}
+ \lpar \frac{\heit(R)}{ \heit(S) }\rpar^{\epsilon_2} .
\end{aligned}
\end{equation*}

\begin{prop}\label{prop:particle-bounded-function-bounded}
Suppose that $\oper{A}$ is a linear operator, or a nonnegative sublinear operator, that satisfies a strong type $(2,2)$ bound $\norm{ \oper{A} f}_{\Leb^2(\Hn)} \le C_1 \norm{ f }_{\Leb^2(\Hn)}$ or a weak-type $(2,2)$ bound
\begin{align*}
\abs| {\lset g \in \Hn: {\abs| \oper{A} f(g)|} >\lambda \rset} |
\le \frac{C_2}{\lambda^{2}} \norm{ f }_{\Leb^2(\Hn)}^2
\qquad\forall \lambda \in \R^+
\end{align*}
for all $f \in \Leb^2(\Hn)$.
Let $M,N \in \N^+$.

(a) Suppose also that there exist $C$, $\epsilon_1$, and $\epsilon_2$ in $\R^+$ and $I, J \in \N^+$ such that
\begin{equation}\label{eq:hypothesis}
\int_{(S^*)^c} \abs| \oper{A} a_R(g) | \wrt g
\leq C
\rho_{\boldsymbol\epsilon}(R,S)
\abs|R|^{1/2} \norm{a_R}_{\Leb^2(\Hn)}
\end{equation}
for all $R, S \in \rect$ such that $R \subseteq S \in \rect$ and $\wid(S)/\wid(R)\geq (2\cdim+1)^I$ and $\heit(S)/\heit(R) \geq (2\cdim+1)^{2J}$, and for all particles $a_R \in \Leb^2(\Hn)$ such that $a_R =\HLap^M \VLap^N b_R$ where $b_R \in \Leb^2(\Hn)$ and $\supp b_R \subseteq R^*$.
Then there is a constant $C_0$ such that
\begin{equation}\label{e4.111}
\begin{aligned}
 \norm{ \oper{A}a }_{\Leb^1(\Hn)}\leq C_0
\end{aligned}
\end{equation}
for all $(1,2,M,N)$ atoms $a$.

(b) Suppose also that \eqref{e4.111} holds.
Then $\oper{A}$ maps $\atomHardy(\Hn) \cap \Leb^2(\Hn)$ into $\Leb^1(\Hn)$, and
\begin{equation}\label{eq:norm-inequality}
 \norm{ \oper{A}f }_{\Leb^1(\Hn)} \leq C_0 \norm{ f }_{\atomHardy(\Hn)}
 \qquad\forall f \in \atomHardy(\Hn) \cap \Leb^2(\Hn).
\end{equation}
Hence $\oper{A}$ extends uniquely by continuity to a bounded operator from $\atomHardy(\Hn)$ to $\Leb^1(\Hn)$ that satisfies the same inequality for all $f \in \atomHardy(\Hn)$.
\end{prop}

\begin{proof}
Take a $(1,2,M,N,\kappa)$ atom $a$ associated to an open set $E$ of finite measure, and, as in Definition \ref{def:atomic-Hardy-space}, write $a \sim \sum_{R \in \maxrect(E)} a_R$.
As in Definition \ref{def:E-one-E-two}, we define sets $E\one$ and $E\two$ as follows:
\[
\begin{aligned}
E\one &= \lset g \in G : \oper{M}_{\mathrm{sh}}( \indifn_E)(g) > {\alpha_1} \rset \\
E\two &= \lset g \in G : \oper{M}_{\mathrm{sh}} (\indifn_{E\one})(g) > {\alpha_2} \rset.
\end{aligned}
\]
Given a shard $R$ in $\maxrect(E)$, we define $R\one$ to be the widest shard $S$ such that $R \subseteq S$ and $S \in \maxrect(E\one)$, and $R\two$ to be the highest shard $S$ such that $R\one \subseteq S$ and $S \in \maxrect(E\two)$.
Then $|E \two | \lesssim_{I,J} |E|$ by two applications of Lemma \ref{lem:expanding-sets-by-max-fn}.

Now
\[
\norm{ \oper{A} (a) }_{\Leb^1(\Hn)}
= \int_{{E\two}} |\oper{A} (a)(g)|\wrt g
+ \int_{({E\two})^c} |\oper{A} (a)(g)|\wrt g.
\]

On the one hand, by the (weak) $\Leb^2$ boundedness of $\oper{A}$, and Hölder's inequality,
\begin{align*}
\int_{{E\two}}|\oper{A} (a)(g)|\wrt g
&\lesssim   |{E\two} |^{1/2} \norm{ \oper{A} (a) }_{\Leb^{2,\infty}(\Hn)}
\lesssim_{I,J} |E|^{1/2} \norm{ a }_{\Leb^2(\Hn)}
\leq 1.
\end{align*}
On the other hand, $\oper{A} a = \sum_{R\in \maxrect(E)} \oper{A} a_R$, where the sum converges unconditionally  in the Lorentz space $\Leb^{2,\infty}(\Hn)$ by the boundedness hypothesis, so
\begin{align*}
\int_{({E\two})^c} |\oper{A} (a)(g)|\wrt g
&\leq \sum_{R\in \maxrect(E) } \int_{({E\two})^c} |\oper{A} (a_R)(g)|\wrt g \\
&\leq \sum_{R\in \maxrect(E) } \int_{(R\two)^c}
\abs|\oper{A} (a_R)(g)|\wrt g \\
&\leq \sum_{R\in \maxrect(E) }
\rho_{\boldsymbol\epsilon} (R,R\two)
        \abs|R|^{1/2} \norm{ a_R}_{\Leb^2(\Hn)}. 
\end{align*}
By the Cauchy--Schwarz inequality, Journé's lemma (Lemma \ref{lem:Journe}), and \eqref{eq:L^2-condition-particles-in-atoms}, 
\[
\begin{aligned}
\norm{ \oper{A} (a)}_{\Leb^1(\Hn)}
&\lesssim
\lpar \sum_{R\in \maxrect(E) } \rho_{\boldsymbol\epsilon}(R,R\two)^{2}  |R| \rpar^{1/2}
\lpar \sum_{R\in \maxrect(E)} \norm{a_R}^2_{\Leb^2(\Hn)} \rpar^{1/2} \\
&\lesssim |E|^{1/2} |E|^{-1/2} =1 ,
\end{aligned}
\]
as required.

To prove (b), it will suffice to prove \eqref{eq:norm-inequality}.
Take $f \in \atomHardy(\Hn) \cap \Leb^2(\Hn)$, and a smooth radial function $\phi$ with support in $B\one(o,1)$ such that $\int_{\Hn} \phi(g) \wrt g = 1$; let $\phi_r$ be the normalised dilate of $\phi$, as in \eqref{eq:def-phi-1}.
Now $f = \lim_{r\to 0+} \phi_r \Hconv f $ in $\Leb^2(\Hn)$ and so $\oper{A} f = \lim_{r\to 0+} \oper{A} (\phi_r \Hconv f )$ in $\Leb^2(\Hn)$.
If we can show that 
\begin{equation}\label{eq:want-to-prove}
\limsup_{r \to 0+} \norm{ \oper{A}(\phi_r \Hconv f ) }_{\Leb^1(\Hn)} 
\leq C_0 \norm{ f }_{\atomHardy(\Hn)} ,
\end{equation}
then it will follow that
\[
\int_E \abs| \oper{A}(f)(g) | \wrt g 
= \lim_{r\to 0+} \int_E \abs| \oper{A}(\phi_r \Hconv f )(g) | \wrt g 
\leq C_0 \norm{ f }_{\atomHardy(\Hn)}
\]
for all subsets $E$ of $\Hn$ of finite measure, so $\norm{ \oper{A} f }_{\atomHardy(\Hn)} \leq C_0 \norm{ f }_{\atomHardy(\Hn)}$.

Given $\epsilon \in \R^+$, we can write $f \sim \sum_j \lambda_j a_j$, where the $a_j$ are finite atoms, $\lambda_j > 0$, and $\sum_j \lambda_j  < \norm{f}_{\atomHardy(\Hn)} + \epsilon$.
The sum $\sum_{j} \lambda_j \phi_r \Hconv a_j $ converges in $\Leb^2(\Hn)$ since the sum $\sum_{j} \lambda_j a_j$ converges in $\Leb^1(\Hn)$, so 
\[
\abs| \oper{A}(\phi_r \Hconv f  ) | 
= \Bigabs| \oper{A} (\sum_{j} \lambda_j \phi_r \Hconv a_j ) |
\leq \sum_{j} \lambda_j \abs| \oper{A} (\phi_r \Hconv a_j) |, 
\]
whence
\[
\begin{aligned}
\norm{ \oper{A} (\phi_r \Hconv f ) }_{\Leb^1(\Hn)} 
& \leq  \sum_{j\in\N^+}  \lambda_j \norm{ \oper{A} (\phi_r \Hconv a_j )}_{\Leb^1(\Hn)} .
\end{aligned}
\]
If we can show that 
\begin{equation}\label{eq:want-to-prove-2}
\norm{ \oper{A} (\phi_r \Hconv a_j )}_{\Leb^1(\Hn)} \leq C_1 
\qquad\forall r \in \R^+ 
\end{equation}
for some constant $C_1$, and
\[
\limsup_{r \to 0+} \norm{ \oper{A} (\phi_r \Hconv a_j )}_{\Leb^1(\Hn)} \leq C_0,
\] 
then \eqref{eq:want-to-prove}, and hence the proposition, will follow.

By Corollary \ref{eq:atomic-decomposition-of-convolution}, there exists a constant $C$ such that, for all finite atoms $a$ and all $r \in \R^+$, 
we may write $a - \phi_r \Hconv a $ as a finite sum of finite atoms $\sum_{k} \lambda_{k,r} a_{k,r}$, 
where $\lambda_{k,r} \in \R^+$, $\sum_{k} \lambda_{k,r} \leq C$ for all $r \in \R^+$ and $\lim_{r \to 0} \sum_{k} \lambda_{k,r} = 0$.
Then
\[
\oper{A}(\phi_r \Hconv a) \leq \oper{A}(a) + \sum_{k} \lambda_{k,r} \oper{A}( a_{k,r} ), 
\]
and so
\[
\begin{aligned}
\norm{ \oper{A}(\phi_r \Hconv a)  }_{\Leb^1(\Hn)} 
&\leq \norm{  \oper{A}(a)  }_{\Leb^1(\Hn)}  + \sum_{k} \lambda_{k,r} \norm{  \oper{A}( a_{k,r} )  }_{\Leb^1(\Hn)}  \\
&\leq C_0  + \sum_{k} C_0 \lambda_{k,r}  , 
\end{aligned}
\]
and the desired result \eqref{eq:want-to-prove-2} follows.
\end{proof}

A similar result holds for the moment atomic Hardy space of Section \ref{ssec:moment-atom-Hardy}.

\subsection{The inclusion $\atomHardy(\Hn) \subseteq \areaHardy(\Hn)$}
\label{ssec:atom-implies-area}
In this section, we consider the Lusin--Littlewood--Paley operator $\areaopphi$, and show that it is bounded from the atomic Hardy space $\atomHardy(\Hn)$ to $\Leb^1(\Hn)$, whence the inclusion of the title follows.
We first recall the definition of this operator.
For the definition of Poisson bounded functions, see Definition \ref{def:Poisson-bounds}, and for normalised dilates, see \eqref{eq:def-phi-1} and \eqref{eq:def-chi-1}.

\begin{definition*}
Suppose that Poisson bounded functions $\phi\one$ and $\phi\two $ have mean $0$.
We define the Lusin--Littlewood--Paley area function $\areaopphi(f)$ of $f \in \Leb^1(\Hn)$ associated to $\bsym\phi$ by
\begin{equation}\label{eq:def-Lusinpp}
\begin{aligned}
&\areaopphi (f)(g)
:= \biggl( \iint_{\R^{+} \times\R^{+}}
\bigl| f \Hconv \phi_{r,s} \bigr|^2 \Hconv \chi_{r,s}(g)  \,\frac{dr}{r} \,\frac{ds}{s} \biggr)^{1/2}
\end{aligned}
\end{equation}
for all $g \in \Hn$, and the Hardy space $\areaHardyphi(\Hn)$, usually shortened to $\areaHardy(\Hn)$, to be the set of all $f\in \Leb^1(\Hn)$ for which $\norm{ \areaopphi (f) }_{\Leb^1(\Hn)}<\infty$, with  seminorm
\begin{equation*}
\norm{ f }_{\areaHardyphi(\Hn)}:=\norm{ \areaopphi (f) }_{\Leb^1(\Hn)}.
\end{equation*}
\end{definition*}

\begin{theorem}\label{thm:Atom-implies-area}
Suppose that $\bsym\phi$ is a Poisson bounded pair of functions, both of which have mean $0$.
Then $\areaopphi(f) \in \Leb^1(\Hn)$ and
\[
\norm{\areaopphi(f)}_{\Leb^1(\Hn)} \leq C(\bsym\phi)\norm{f}_{\atomHardy(\Hn)}
\qquad\forall f \in \atomHardy(\Hn).
\]
where, for some integer $M $ greater than $\hdim/4$,
\[
C(\bsym\phi)
\lesssim
\bignorm{\oper{R} \phi\one }_{(2M)} \bignorm{\phi\two }_{(2)} .
\]
\end{theorem}

\begin{proof}
Since $2M > \hdim/2$ and $2M$ and $\delta/2$ are integers,  $2M \geq 1 + \hdim/2$.

We appeal to Proposition \ref{prop:particle-bounded-function-bounded} and the principle enunciated in Remark \ref{rem:compact-support-enough}, which show that to prove the theorem, it suffices to consider smooth functions $\phi\one$ and $\phi\two $, supported in the unit balls in $\Hn$ and in $\R$ and with vanishing means, and show that
\begin{equation}\label{eq:Lusin-on-particles}
\begin{aligned}
\int_{(S^*)^c}\areaopphi(a_R)(g) \wrt g
\leq C(\bsym\phi)
\fn\rho_{1,3/2}(R,S)
\abs|R|^{1/2} \norm{a_R}_{\Leb^2(\Hn)}
\end{aligned}
\end{equation}
for all particles $a_R$ associated to all $R \in \rect$, such that $\cent(R) = o$, $\wid(R) =1$ and $\heit(R) = h$, and for all $S \in \rect$ such that $R \subseteq S$ and $1 < \wid(S) =: r^*$ and $h < \heit(S) =: s^*$.

It follows from the definition just before the statement of this theorem that
\[
\begin{aligned}
\areaopphi(a_R)(g)
&\eqsim \sum_{k=1}^{4} \lpar \iint_{\Omega_k}
\abs|a_R \Hconv \phi_{r, s}(g)|^2 \Hconv \chi_{r,s}(g)
\,\frac{dr}{r} \, \frac{ds}{s} \rpar^{1/2} \\
&:= \sum_{k=1}^{4} \areaopphi^k(a_R)(g) ,
\end{aligned}
\]
say, where
\[
\begin{aligned}
\Omega_1 &= \{ (r,s) \in \R^+ \times \R^+ : r < 1, s > h \} \qquad&
\Omega_2 &= \{ (r,s) \in \R^+ \times \R^+ : r < 1, s < h \} \\
\Omega_3 &= \{ (r,s) \in \R^+ \times \R^+ : r > 1, s < r^2 + h\} &
\Omega_4 &= \{ (r,s) \in \R^+ \times \R^+ : r > 1, s > r^2 + h\}.
\end{aligned}
\]
We treat these four summands separately.

The key to our estimation of these terms is the observation that
\begin{equation}\label{eq:Lusin-support}
\begin{aligned}
\supp \abs| a_R \Hconv \phi_{r,s}|^2 \Hconv \chi_{r,s}
&\subseteq
T(o,1, h)
 T(o, r, s)
 T(o, r, s) \\
&\subseteq T(o, 2r+1, 2s+h ) .
\end{aligned}
\end{equation}

First, \eqref{eq:Lusin-support} implies that
$\supp (\areaopphi^{1}(a_R)) \cap (S^*)^c = \emptyset$ and so no estimation is needed.

Second, to treat $\areaop[,\bsym\phi]^{2}(a_R)$ and $\areaop[,\bsym\phi]^{3}(a_R)$ we use a well known argument, which we now sketch.
We first choose an exponentially increasing family of sets $E_j$ such that, from \eqref{eq:Lusin-support},
\[
\supp(\areaopphi^{k}(a_R))
\subseteq \bigcup_{(r,s) \in \Omega_k} T(o,1+2r,h+2s)
\subseteq \bigcup_{j\in\N} E_j.
\]
We then take $j^*$ to be the smallest $j$ such that $E_j \cap (S^*)^c \neq \emptyset$ and $\Omega_{k,j}$ to be a subset of $\Omega_{k}$ such that $(r,s) \notin \Omega_{k.j}$ implies that $T(o,1+2r,h+2s) \subseteq E_{j}$.
Then, by the definitions and basic results,
\begin{equation}\label{eq:basic-calculation}
\begin{aligned}
&\int_{(S^*)^c} |\areaopphi^{k}(a_R)(g)| \wrt g \\
&\qquad= \sum_{j \geq j^*} \int_{E_{j+1}\setminus E_j} |\areaopphi^{k}(a_R))(g)| \wrt g \\
&\qquad\leq \sum_{j \geq j^*} \abs|E_{j+1}\setminus E_j|^{1/2}
\lpar \int_{E_{j+1}\setminus E_j} |\areaopphi^{k}(a_R))(g)|^2 \wrt g \rpar^{1/2} \\
&\qquad\leq \sum_{j \geq j^*} \abs|E_{j+1}|^{1/2} \lpar
\iint_{\Omega_{k,j}} \norm{ a_R \Hconv \phi_{r,s} }_{\Leb^2(\Hn)} ^2 \,\frac{dr}{r} \,\frac{ds}{s} \rpar^{1/2}.
\end{aligned}
\end{equation}
since $\abs|E_{j+1}\setminus E_j| \leq \abs|E_{j+1}|$ and
\begin{align*}
&
\int_{E_{j+1}\setminus E_j} |\areaopphi^{k}(a_R))(g)|^2 \wrt g \\
&\qquad= 
\iint_{\Omega_k}  \int_{E_{j+1}\setminus E_j}
\abs|a_R \Hconv \phi_{r,s}|^2 \Hconv \chi_{r,s}(g) 
\wrt g \,\frac{dr}{r} \,\frac{ds}{s} \\
&\qquad= 
\iint_{\Omega_{k,j}} \int_{E_{j+1}\setminus E_j}
\abs|a_R \Hconv \phi_{r,s}|^2 \Hconv \chi_{r,s}(g) 
\wrt g \,\frac{dr}{r} \,\frac{ds}{s}  \\
&\qquad\leq 
\iint_{\Omega_{k,j}} \int_{\Hn}\abs|a_R \Hconv \phi_{r,s}|^2 \Hconv \chi_{r,s}(g)
\wrt g \,\frac{dr}{r} \,\frac{ds}{s}  \\
&\qquad=
\iint_{\Omega_{k,j}} \norm{ a_R \Hconv \phi_{r,s} }_{\Leb^2(\Hn)} ^2 \,\frac{dr}{r} \,\frac{ds}{s} .
\end{align*}
We then compute the final integral, and find that the sum converges geometrically, so is of comparable size to the term when $j = j^*$.

To treat $\areaop[,\bsym\phi]^{2}(a_R)$, we define $E_j := T(o, 3, 3 \times 2^{j}h)$ when $j \in \N$ and $\Omega_{2,j} = (0, 1) \times (2^j h, \infty)$; then $\abs| E_j | \eqsim 2^jh$ and $j^* \eqsim \log_2(s^*/h)$.
Moreover,
\begin{equation}\label{eq:S2-function}
\begin{aligned}
a_R \Hconv \phi_{r,s}
&= \HLap^M b_R \Hconv \phi\one_r \Vconv \VLap [\phi\two _s] \\
&= \frac{1}{s^{2}} \HLap^M b_R \Hconv \phi\one_r \Vconv [\VLap \phi\two ]_s,
\end{aligned}
\end{equation}
so that
\[
\begin{aligned}
\norm{a_R }_{\Leb^2(\Hn)}
&\leq \frac{1}{s^{2}} \bignorm{\HLap^M b_R \Hconv \phi\one_r }_{\Leb^2(\Hn)} \bignorm{[\VLap \phi\two ]_s }_{\Leb^1(\Hn)} \\
&\eqsim_{\bsym\phi} \frac{1}{s^{2}} \bignorm{\HLap^M b_R \Hconv \phi\one_r }_{\Leb^2(\Hn)} .
\end{aligned}
\]
From these considerations, \eqref{eq:basic-calculation}, Littlewood--Paley theory (as in Section \ref{ssec:spectral-theory}) and Lemma \ref{lem:a-is-enough},
\begin{equation}\label{eq:S2-estimate}
\begin{aligned}
&\int_{(S^*)^c}
\areaopphi^{2}(a_R)(g) \wrt g \\
&\qquad\lesssim_{\bsym\phi} \sum_{j=j^*}^{\infty} |E_{j_1}|^{1/2}
\lpar \iint_{[2^{j-1}h, \infty) \times (0,1] }
\norm{\HLap^M b_R \Hconv \phi\one_r }_{\Leb^2(\Hn)}^2
\,\frac{dr}{r} \,\frac{ds}{s^5} \rpar^{1/2} \\
&\qquad\lesssim_{\bsym\phi} \sum_{j=j^*}^{\infty} |E_{j+1}|^{1/2}
\lpar \int_{[2^{j}h, \infty) }
 \norm{\HLap^M b_R }_{\Leb^2(\Hn)}^2
 \,\frac{ds}{s^5} \rpar^{1/2} \\
 &\qquad \eqsim 2^{-3j^*/2}  h^{1/2} \norm{a_R }_{\Leb^2(\Hn)} \eqsim \Bigl( \frac{h}{s^*} \Bigr)^{3/2}  |R|^{1/2}
\norm{a_R }_{\Leb^2(\Hn)}  .
\end{aligned}
\end{equation}

To deal with $\areaopphi^{3}(a_R)$, we define $E_j :=T(o, 2^{j+1} + 1, 3h)$  and $\Omega_{3,j}:= (2^j, \infty) \times (0,h)$ for all $j \in \N$; then $\abs| E_j| \lesssim 2^{\hdim j} + 2^{2\cdim j} h$ and $j^* \eqsim \log_2(r^*)$.
Further,
\begin{equation}\label{eq:S3-function}
\begin{aligned}
a_R \Hconv \phi_{r,s}
&= \VLap b_R \Vconv \phi\two _s \Hconv \riHLap^M [\phi\one_r] \\
&= \frac{1}{r^{2M}} \VLap b_R \Vconv \phi\two _s \Hconv [\riHLap^M \phi\one]_r ,
\end{aligned}
\end{equation}
so that
\[
\begin{aligned}
\bignorm{a_R \Hconv \phi_{r,s}}_{\Leb^2(\Hn)}
&\leq  \frac{1}{r^{2M}} \bignorm{\riHLap^M \phi\one }_{\Leb^1(\Hn)}
\bignorm{ \VLap b_R \Vconv \phi\two _s  }_{\Leb^2(\Hn)} \\
&\lesssim_{\bsym\phi} \frac{1}{r^{2M}} \bignorm{ \VLap b_R\Vconv\phi\two _s }_{\Leb^2(\Hn)} .
\end{aligned}
\]
The parameter $M$ does not matter in this calculation.

From these considerations, \eqref{eq:basic-calculation}, Littlewood--Paley theory (as in Section \ref{ssec:spectral-theory}) and Lemma \ref{lem:a-is-enough},
\begin{equation}\label{eq:S3-estimate}
\begin{aligned}
&\int_{(S^*)^c} \areaopphi^3(a_R)(g) \wrt g \\
&\qquad\lesssim_{\bsym\phi} \sum_{j=j^*}^{\infty} \abs|E_{j+1}|^{1/2}
\lpar \int_{2^j}^{\infty} \int_{0}^{r^2 + h}
\norm{ \VLap b_R\Vconv\phi\two _s }_{\Leb^2(\Hn)}^2
\,\frac{ds}{s} \,\frac{dr}{r^{4M+1}} \rpar^{1/2} \\
&\qquad\lesssim_{\bsym\phi} \sum_{j=j^*}^{\infty} \abs|E_{j+1}|^{1/2}
\lpar \int_{2^{j}}^{\infty}
 \norm{\VLap b_R }_{\Leb^2(\Hn)}^2
 \,\frac{dr}{r^{4M+1}} \rpar^{1/2} \\
&\qquad\eqsim_{\bsym\phi} \sum_{j=j^*}^{\infty} (2^{\hdim j/2} + 2^{\cdim j} h^{1/2} ) 2^{-2Mj}
 \norm{\VLap b_R }_{\Leb^2(\Hn)}  \\
&\qquad\lesssim_{M}   (2^{j^*(\hdim - 4M)/2} + 2^{j^*(\cdim - 2M)} h^{1/2})
\norm{a_R }_{\Leb^2(\Hn)} \\
&\qquad\lesssim \Bigl( \frac{1}{r^*} \Bigr)^{2M-\hdim/2}
\abs|R|^{1/2} \norm{a_R }_{\Leb^2(\Hn)}  .
\end{aligned}
\end{equation}
We require that $4M > \hdim$, whence $2M-\hdim/2 \geq 1$, to ensure that the sum converges.

Finally, we use a pointwise estimate to control the term $\areaop[,\bsym\phi]^{4}(a_R)$, that is,
\begin{equation}\label{eq:S-star-5}
 \biggl( \int_{1}^\infty \int_{h}^{\infty}
\bigl| a_R \Hconv \phi_{r,s}  \bigr|^2 \Hconv \chi_{r,s}
\,\frac{ds}{s} \,\frac{dr}{r} \biggr)^{1/2} .
\end{equation}
From \eqref{eq:Lusin-support}, the definition of $\chi_{r,s}$, and the Cauchy--Schwarz inequality,
\begin{equation}\label{eq:S5-est1}
\begin{aligned}
\abs|a_R \Hconv \phi_{r,s} |^2  \Hconv \chi_{r,s}
&\leq \bignorm{\abs|a_R \Hconv \phi_{r,s} |^2  \Hconv \chi_{r,s} }_{\Leb^\infty(\Hn)} \indifn_{T(o, 2r+1, 2s+h) } \\
&\leq \bignorm{\abs|a_R \Hconv \phi_{r,s} |^2  }_{\Leb^\infty(\Hn)}
\indifn_{T(o, 2r+1, 2s+h) }  \\
&= \bignorm{ b_R \Hconv \riHLap^M \VLap \phi_{r,s} }_{\Leb^\infty(\Hn)} ^2 \indifn_{T(o, 2r+1, 2s+h) } \\
&\leq \bignorm{ b_R }_{\Leb^1(\Hn)} ^2
\bignorm{ \riHLap^M \VLap \phi_{r,s} }_{\Leb^\infty(\Hn)} ^2
\indifn_{T(o, 2r+1, 2s+h) } .
\end{aligned}
\end{equation}
From Hölder's inequality and Lemma~\ref{lem:a-is-enough}, $\norm{b_R}_{\Leb^1(\Hn)} \leq |R^*|^{1/2} \norm{b_R}_{\Leb^2(\Hn)} \lesssim h^{5/2} \norm{a_R}_{\Leb^2(\Hn)}$.
Moreover,
\begin{align*}
\riHLap^M \VLap \phi_{r,s} (z,t)
&= \riHLap^M [\phi\one_{r}] \Hconv \VLap [\phi\two _{s}] (z,t) \\
&= \frac{1}{r^{2M}} \frac{1}{s^{2}}
    [\riHLap^M \phi\one ]_{r} \Hconv [\VLap \phi\two ]_{s} (z,t) \\
&= \frac{1}{r^{2M}} \frac{1}{s^{2}} \int_{\R}
    [\riHLap^M \phi\one ]_{r}(z,t') [\VLap \phi\two ]_{s}(t - t') \wrt t' ;
\end{align*}
the integrand vanishes off an interval of length $O(r^2)$ because $[\phi\one ]_r$ is supported in $B\one(o,r)$, so
\begin{equation}\label{eq:S5-est2}
\begin{aligned}
\bignorm{ \riHLap^M \VLap \phi_{r,s} }_{\Leb^\infty(\Hn)}
&\lesssim r^{2-2M} \frac{1}{s^{2}}
\bignorm{[\riHLap^M \phi\one]_{r}}_{\Leb^\infty(\Hn)}
\bignorm{(\VLap \phi\two )_{s}}_{\Leb^\infty(\R)} \\
&=  r^{2-2M-\hdim} s^{-3} \bignorm{\riHLap^M \phi\one }_{\Leb^\infty(\Hn)}
\bignorm{\VLap \phi\two }_{\Leb^\infty(\R)} .
\end{aligned}
\end{equation}
Combining the estimates from \eqref{eq:S5-est1} to \eqref{eq:S5-est2}, we deduce that
\begin{equation}\label{eq:S5-est3}
 \abs|a_R \Hconv \phi_{r,s} |^2  \Hconv \chi_{r,s}
\lesssim_{\bsym\phi} r^{-4M-4\cdim} s^{-6} h^{5} \norm{ a_R }_{\Leb^2(\Hn)} ^2
\indifn_{T(o, 2r+1, 2s+1) } .
\end{equation}

It follows that
\begin{align*}
&\left( \int_{1}^\infty \int_{h}^{\infty}
\bigl| a_R \Hconv \phi_{r,s}  \bigr|^2 \Hconv \chi_{r,s}
\,\frac{ds}{s} \,\frac{dr}{r} \right)^{1/2} \\
&\qquad\leq\bigglpar \sum_{i,j \in \N}\int_{\alpha^i}^{\alpha^{i+1}}  \int_{\alpha^{2j}h}^{\alpha^{2(j+1)}h}
\norm{\abs|a_R \Hconv \phi_{r,s} |^2  \Hconv \chi_{r,s} }_{\Leb^\infty(\Hn)} \\
&\qquad\qquad \times \indifn_{T(o, 2r+1, 2s+h) } \,\frac{ds}{s} \,\frac{dr}{r} \biggrpar^{1/2} \\
&\qquad\leq h^{-1/2} \norm{ a_R }_{\Leb^2(\Hn)} \bigglpar \sum_{i,j \in \N}
\alpha^{-4(M+\cdim)i - 12j}
\indifn_{T(o, 3 \alpha^{i+1}, 3 \alpha^{2(j+1)}h) } \biggrpar^{1/2} \\
&\qquad\leq h^{-1/2} \norm{ a_R }_{\Leb^2(\Hn)} \sum_{i,j \in \N}
\alpha^{-2(M+\cdim)i-6j}
\indifn_{T(o, 3 \alpha^{i+1}, 3 \alpha^{2(j+1)}h) }  .
\end{align*}
Clearly $\abs|T(o,3\alpha^{i+1}, 3\alpha^{2(j+1)}h)| \eqsim \alpha^{2\cdim i} (\alpha^{2i} + \alpha^{2j}h)$.
If $(z,t) \in T(o,3\alpha^{i+1}, 3\alpha^{2(j+1)}h)$, where $r \geq 1$ and $s \geq h$, then $|z| \leq 3\alpha^{i+1}$ and $|t| \leq 9\alpha^{2(i+1)} + 3\alpha^{2(j+1)}h$;
we write $I(r^*, s^*)$ for the set of all $(i,j) \in \N^2$ for which $r^* \leq 3\alpha^{i+1}$ or $s^* \leq 9\alpha^{2(i+1)} + 3\alpha^{2(j+1)}h$; this last inequality implies that $s^* \leq 18\alpha^{2(i+1)}$ or $s^* \leq 6\alpha^{2(j+1)}h$.
We conclude that if $(i,j) \in I(r^*, s^*)$, then $\alpha^i \geq r^*/3\alpha$ or $\alpha^{2i} \geq s^*/18\alpha^2$ or $\alpha^{2j} \geq s^*/6\alpha^2 h$.
Thus
\begin{equation}\label{eq:S4-estimate}
\begin{aligned}
&\int_{(S^{*})^c} \areaop[,\bsym\phi]^{4}(a_R)(g) \wrt g \\
&\qquad\lesssim_{\bsym\phi}
h^{-1/2} \norm{ a_R }_{\Leb^2(\Hn)}\sum_{(i,j) \in I(r^*, s^*)}
\frac{\alpha^{2i} + \alpha^{2j}h}{\alpha^{2Mi+6j} }    \\
&\qquad\lesssim_{M}
h^{-1/2} \norm{ a_R }_{\Leb^2(\Hn)}\biggl( \sum_{i: \alpha^i \geq r^*/3\alpha}
\frac{ \alpha^{2i} + h}{ \alpha^{2Mi}} \\
&\qquad\qquad +\sum_{i: \alpha^{2i} \geq s^*/18\alpha^2}
\frac{ \alpha^{2i} + h}{ \alpha^{2Mi}}
+\sum_{j: \alpha^{2j} \geq s^*/9\alpha^2}
\frac{1 + \alpha^{2j}h} {\alpha^{6j}}  \biggr)   \\
&\qquad\lesssim_{M}
h^{-1/2}\norm{ a_R }_{\Leb^2(\Hn)} \biggl(
 \frac{ (r^*)^2 + h }{(r^*)^{2M}}
+ \frac{ s^* + h }{(s^*)^M }
+ \frac{h}{(s^*)^2} \biggr)   \\
&\qquad\lesssim
\lpar \frac{1}{r^*} \rpar^{2(M-1)} \abs|R|^{1/2} \norm{ a_R }_{\Leb^2(\Hn)}
 + \lpar \frac{h}{ s^* }\rpar^{2} \abs|R|^{1/2} \norm{ a_R }_{\Leb^2(\Hn)}.
\end{aligned}
\end{equation}
We require that $M > 1$ to ensure that the sums above converge, whence $2(M-1) \geq 2$.

The quotients preceding $\abs|R|^{1/2} \norm{ a_R }_{\Leb^2(\Hn)}$ on the right hand sides of \eqref{eq:S2-estimate}, \eqref{eq:S3-estimate} and \eqref{eq:S4-estimate} are dominated by $\rho_{1,3/2}(R,S)$, as defined in \eqref{eq:def-rho}.
Further, examination of the argument shows that the implicit constants depend on $\bsym\phi$ through products such as $\bignorm{\riHLap^M \phi\one }_{\Leb^1(\Hn)} \bignorm{\VLap\phi\two }_{\Leb^\infty(\R)}$, which, because of the support restriction, may all be controlled by the product $\norm{\oper{R} \phi\one }_{(2M)} \norm{\phi\two }_{(2)}$ .
\end{proof}

Before the next corollary, we recall that $h$ and $p$ denote the heat and Poisson kernels.

\begin{corollary}
Suppose that $\phi\one$ and $\phi\two $ are either $\HLap h\one_1$ and $\VLap h\two _1$ or $\HLap p\one_1$ and $\VLap p\two _1$ on $\Hn$ and on $\R$, and that $\areaopphi(f)$ is as in \eqref{eq:def-Lusinpp}.
Then $\areaopphi(f) \in \Leb^1(\Hn)$ and
\[
\bignorm{ \areaopphi(f)} _{\Leb^1(\Hn)}
\lesssim \norm{f}_{\atomHardy(\Hn)}
\qquad\forall f \in \atomHardy(\Hn) .
\]
\end{corollary}

\begin{proof}
This result follows from the theorem above and the estimates for the heat and Poisson kernels in Section \ref{ssec:heat-Poisson}.
\end{proof}

\begin{corollary}\label{cor:max-area-fn}
Suppose that $\bsym\phi$ is a Poisson bounded pair of functions, both of which have mean $0$.
Then, if $\theta \in (0,1)$,
\[
\lpar \iint_{\R^+ \times \R^+}   \sup_{g \in T(o,\theta r, \theta s)} \abs|f \Hconv \phi_{r,s}(\cdot g)|^2 \,\frac{dr}{r} \,\frac{ds}{s} \rpar^{1/2} \in \Leb^1(\Hn)
\]
for all $f \in \atomHardy(\Hn)$,  and a corresponding norm inequality holds.
\end{corollary}

\begin{proof}
Define $\psi\one = \HLap^M \phi\one$ and $\psi\two = \VLap \phi\two$, where $M > \hdim/4$.
In light of Theorem \ref{thm:Atom-implies-area}, the area function $\areaopphi(f)$ is in $\Leb^1(\Hn)$, as are the area functions associated to the pairs $(\phi\one,\psi\two)$, $(\psi\one,\phi\two)$ and $(\psi\one,\psi\two)$.
Hence by the flag Sobolev inequalities of Lemma \ref{lem:L2-Linfty-Sobolev},
\[
\begin{aligned}
&\biggl(\iint_{\R^+ \times \R^+} \sup_{g' \in T(g,r,s)} \abs|f \Hconv \phi_{r,s}(g')|^2 \,\frac{dr}{r} \,\frac{ds}{s} \biggr)^{1/2} \\
&\qquad\lesssim \biggl(\iint_{\R^+ \times \R^+} \frac{1}{|T(o,r,s)|} \int_{T(g,r,s)}
   \abs|f \Hconv \phi\one_{r} \Vconv\phi\two_{s}(g')|^2 \\
&\qquad\qquad +  \abs|f \Hconv \phi\one_{r} \Vconv\psi\two_{s}(g')|^2  
 +  \abs|f \Hconv \psi\one_{r} \Vconv\phi\two_{s}(g')|^2 \\
&\qquad\qquad  +  \abs|f \Hconv \psi\one_{r} \Vconv\psi\two_{s}(g')|^2
       \,\frac{dr}{r} \,\frac{ds}{s} \biggr)^{1/2} ,
\end{aligned}
\]
and the right hand side function is in $\Leb^1(\Hn)$, as claimed.
\end{proof}

\subsection{The inclusion $\atomHardy(\Hn) \subseteq \ctssqfnHardy(\Hn)\cap \dissqfnHardy(\Hn)$}\label{ssec:atom-implies-sqfn}

Now we treat the square function Hardy spaces.
We begin with the definitions.

\begin{definition*}
Suppose that $\bsym\phi$ is a w-invertible, Poisson bounded pair of functions, both of which have mean $0$ and whose w-inverses have mean $0$.
For $f \in \Leb^1(\Hn)$, we define the \emph{continuous Littlewood--Paley square function $\ctssqfnopphi(f)$ associated to $\bsym\phi$} by
\begin{equation*}
\begin{aligned}
&\ctssqfnopphi (f)(g)
:= \biggl( \iint_{\R^{+} \times\R^{+}}
\bigl| f \Hconv \phi\one_r \Vconv \phi\two _s(g)  \bigr|^2
    \,\frac{dr}{r} \,\frac{ds}{s} \biggr)^{1/2} \\
\end{aligned}
\end{equation*}
for all $g \in \Hn$.
We define the continuous square function Hardy space $\ctssqfnHardyphi(\Hn)$, often written $\ctssqfnHardy(\Hn)$, to be the set of all $f\in \Leb^1(\Hn)$ for which $\norm{ \ctssqfnopphi (f) }_{\Leb^1(\Hn)}$ is finite, with  norm
\begin{equation*}
\begin{aligned}
\norm{ f }_{\ctssqfnHardyphi(\Hn)}&:=\norm{ \ctssqfnopphi (f) }_{\Leb^1(\Hn)} .
\end{aligned}
\end{equation*}
\end{definition*}

\begin{definition*}
Suppose that $\phi\one$ and $\phi\two $ are as above.
For $f \in \Leb^1(\Hn)$, we define the \emph{discrete Littlewood--Paley square function} $\dissqfnopphi(f)$ associated to $\phi\one$ and $\phi\two $ by
\begin{equation*}
\begin{aligned}
&\dissqfnopphi (f)(g)
:= \biggl( \sum_{(m,n) \in \Z \times \Z}
\bigl| f \Hconv \phi\one_{2^m} \Vconv \phi\two _{2^n}(g)  \bigr|^2
    \biggr)^{1/2}
\end{aligned}
\end{equation*}
for all $g \in \Hn$.
We define the \emph{discrete square function Hardy space} $\dissqfnHardyphi(\Hn)$, often written $\dissqfnHardy(\Hn)$, to be the set of all $f\in \Leb^1(\Hn)$ for which $\norm{ \dissqfnopphi (f) }_{\Leb^1(\Hn)}$ is finite, with  norm
\begin{equation*}
\begin{aligned}
\norm{ f }_{\dissqfnHardyphi(\Hn)}
&:=\norm{ \dissqfnopphi (f) }_{\Leb^1(\Hn)}.
\end{aligned}
\end{equation*}
\end{definition*}

\begin{theorem}
Suppose that the functions $\phi\one$ and $\phi\two $ on $\Hn$ and $\R$ are as above.
Then the operators $\ctssqfnopphi$ and $\dissqfnopphi$ are bounded from $\atomHardy(\Hn)$ to $\Leb^1(\Hn)$.
\end{theorem}

\begin{proof}
From Corollary \ref{cor:max-area-fn}, $\ctssqfnopphi(f) \in \Leb^1(\Hn)$ and $\norm{ \ctssqfnopphi(f) }_{\Leb^1(\Hn)} \lesssim \norm{ f } _{\atomHardy(\Hn)}$.
By Lemma \ref{lem:ctssqfn-dominates-dissqfn}, $\dissqfnopphi(f) \in \Leb^1(\Hn)$ and $\norm{ \dissqfnopphi(f) }_{\Leb^1(\Hn)} \lesssim \norm{ f } _{\atomHardy(\Hn)}$.
\end{proof}

We may also prove this theorem directly using simplified versions of the proof of Theorem \ref{thm:Atom-implies-area} and Corollary \ref{cor:Poisson decomposition} to ensure that $\phi\one$ and $\phi\two$ have sufficiently many vanishing moments.

\subsection{The inclusion $\atomHardy(\Hn) \subseteq \gmaxHardy(\Hn)$}\label{ssec:atom-implies-gmax}
We recall the key points of Definition \ref{def:Poisson-bounds}.
A family $\family$ of pairs  of functions $(\phi\one , \phi\two )$,
where $\phi\one :\Hn \to \C$ and $\phi\two :\R \to \C$, is said to be \emph{Poisson-bounded} if there are constants $C_m$ and $C_n$ for all $m, n \in \N$ such that
\begin{equation*}\label{eq:poisson-like-decay-1a}
\begin{aligned}
\abs| \oper{D} \phi\one(g) |
&\leq \frac{ C_m }{ (1 + \norm{g})^{m + \hdim + 1}}
\qquad\forall g \in \Hn
\end{aligned}
\end{equation*}
for all differential operators $\oper{D}$ that are products of $m$ vector fields, each chosen from $\oper{X}_1, \dots, \oper{X}_{2\cdim}$, and
\begin{equation*}\label{eq:poisson-like-decay-2a}
\begin{aligned}
\abs| \oper{T}^n \phi\two (t) |
&\leq  \frac{ C_n }{(1 + \norm{t})^{n+2}}
\qquad\forall t \in \R
\end{aligned}
\end{equation*}
for all pairs $(\phi\one ,\phi\two ) \in \family$.

\begin{definition}\label{def:gmax-Hardy}
The  \emph{grand maximal function} $\gmaxop(f)$ of $f \in \Leb^1(\Hn)$, for a Poisson bounded family $\family$, is defined by
\begin{align*}
\gmaxop(f)(g)
:= \sup_{\bsym\phi \in \family} \sup_{r,s \in \R^+}
\abs|f \Hconv \phi_{r,s} (g)|
\qquad\forall g \in \Hn.
\end{align*}
The space $\gmaxHardy(\Hn)$ is defined to be the linear space of all $f\in \Leb^1(\Hn)$ such that $\gmaxop (f) \in \Leb^1(\Hn)$, with norm
\begin{equation}\label{eq:gmax-Hardy}
\norm{ f }_{\gmaxHardy(\Hn)}:=\norm{ \gmaxop (f) }_{\Leb^1(\Hn)}.
\end{equation}
\end{definition}

\begin{theorem}\label{thm:Atom-implies-gmax}
Suppose that $\family$ is Poisson bounded.
Then $\atomHardy(\Hn) \subseteq \gmaxHardyfam(\Hn)$, and
\[
\norm{f}_{\gmaxHardyfam(\Hn)}
\lesssim_{\family} \norm{f}_{\atomHardy(\Hn)}
\qquad\forall f \in \atomHardy(\Hn).
\]
\end{theorem}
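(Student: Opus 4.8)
The natural strategy is to invoke Proposition \ref{prop:particle-bounded-function-bounded}, so the whole task reduces to verifying its hypotheses for the operator $\oper A = \gmaxop$. Two things must be checked: first, that $\gmaxop$ is of weak type $(2,2)$; and second, the decay estimate \eqref{eq:hypothesis}, namely that for every particle $a_R = \HLap^M\VLap^N b_R$ with $\supp b_R \subseteq R^*$, and every adapted rectangle $S \supseteq R$,
\[
\int_{(S^*)^c} \gmaxop(a_R)(g)\,dg
\lesssim \rho_{\delta_1,\delta_2}(R,S)\,|R|^{1/2}\,\norm{a_R}_{L^2(\Hn)}
\]
for suitable $\delta_1,\delta_2 \in \R^+$. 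The weak-type bound is already essentially available: by Lemma \ref{lem:poisson-and-flag-maximal-functions}, $\gmaxop(|f|) \eqsim \oper M_s(|f|)$, and $\oper M_s \eqsim \oper M_{it}$ by Lemma \ref{lem:flag-and-iterated-maximal-fns}, which is bounded (indeed strong type $(2,2)$) on $L^2(\Hn)$; so $\gmaxop$ is strong type $(2,2)$, hence certainly weak type $(2,2)$.

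\textbf{The decay estimate.} This is the main work. As in the proof of Proposition \ref{prop:Lusin-bounded-on-particles}, I would first use translation and dilation invariance to reduce to the case $R^* = T(o,1,h)$ with $h \geq 1$, so $|R| \eqsim h$, and $S^* = T(o,r^*,h^*)$ with $r^* \geq 1$, $h^* \geq h$. Then I would split the sup defining $\gmaxop(a_R)$ according to the size of the dilation parameters $(r,s)$ relative to $1$ and $h$, mirroring the five-region decomposition $T^1,\dots,T^5$ of $\R^+\times\R^+$ used there. For the region where both $r$ and $s$ are small the support of $a_R\Hconv\phi_{r,s}$ stays inside a fixed multiple of $S^*$, so no estimate is needed; the $L^1$ bound on that part follows from the $L^2$-boundedness of $\gmaxop$ and Cauchy--Schwarz. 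On the remaining regions one exploits the cancellation built into $a_R$: writing $\phi^{(1)} = \HLap\tilde\phi^{(1)}$, $\phi^{(2)} = \VLap\tilde\phi^{(2)}$ and pushing derivatives onto $\phi_{r,s}$, one obtains factors $r^{-2M}$ or $s^{-2N}$ (and, using the extra derivative from $\HLap$ and $\VLap$ hidden in the definition of $\phi^{(1)},\phi^{(2)}$, one more power each), together with the Poisson-type pointwise decay \eqref{eq:poisson-like-decay-1}--\eqref{eq:poisson-like-decay-2} of $\phi^{(1)},\phi^{(2)}$ in space. Combining Cauchy--Schwarz, Corollary \ref{cor:a-is-enough} ($\norm{b_R}_{L^2} \lesssim q^{2M}h^{2N}\norm{a_R}_{L^2}$), and the volume growth of the annuli $T(o,2^j,\cdot)$ or $T(o,\cdot,2^k h)$ that tile $(S^*)^c$, one sums a geometric series; the tail beyond the annulus that first exits $S^*$ produces precisely the gain $(\wid R/\wid S)^{2M-\hdim/2}$ or $(\heit R/\heit S)^{2N-1/2}$, i.e.\ $\rho_{2N-1/2,\,2M-\hdim/2}(R,S)$. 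The non-compact support of a general $\boldsymbol\phi \in \family(\Hn)$ is handled exactly as in Proposition \ref{prop:Lusin-bounded-general-phi}: decompose $\phi^{(1)} = \sum_j (\phi^{(1,j)})_{2^j}$, $\phi^{(2)} = \sum_k (\phi^{(2,k)})_{2^k}$ into compactly supported pieces whose $C^{2M+2}$- resp.\ $C^{2N+2}$-norms are summable after the dilation weight, and sum.

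\textbf{Main obstacle.} The principal difficulty, and the reason one cannot simply quote the area-function proof verbatim, is that $\gmaxop$ involves a supremum over $(r,s)$ and over the family $\family(\Hn)$ rather than an $L^2(dr/r\,ds/s)$ average, so the pointwise bound on each region must be robust enough that taking the supremum is harmless — concretely, one needs the region-by-region estimate on $|a_R\Hconv\phi_{r,s}(g)|$ to decay in $g$ uniformly in $(r,s)$ within that region (using $\norm{\HLap^M\VLap^N\phi_{r,s}}_{L^2}$ or $L^\infty$ bounds, as in \eqref{eq:S5-est1}--\eqref{eq:S5-est3}), before integrating over the annular shells. Getting the book-keeping of the two metrics right — the Korányi-type control in the $z$-variable giving $2^{\hdim j}$ or $2^{2\cdim j}$ volume factors and the Euclidean control in $t$ giving factors $2^k h$ — so that the exponents $2M-\hdim/4 > 0$ (hence $M>\hdim/4$) and $N \geq 1$ are exactly what is needed for convergence, is the delicate part; but it is entirely parallel to the argument already carried out for $\areaphibg$, and I would present it by indicating the modifications rather than repeating every annular estimate. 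Once \eqref{eq:hypothesis} holds, Proposition \ref{prop:particle-bounded-function-bounded}(a) gives $\norm{\gmaxop a}_{L^1(\Hn)} \leq C_0$ for every atom, and part (b) upgrades this to $\norm{f}_{\gmaxHardy(\Hn)} \lesssim_{M,N} \norm{f}_{\atomHardy(\Hn)}$, which is the assertion of Theorem \ref{thm:Atom-implies-gmax}.
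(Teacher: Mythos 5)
Your proposal is correct and follows essentially the same route as the paper: the paper proves Theorem \ref{thm:Atom-implies-gmax} by reducing to Proposition \ref{prop:particle-bounded-function-bounded}, establishing the particle decay estimate in Proposition \ref{prop:gmax-bounded-on-particles} via the same translation/dilation normalisation, the same five-region split of $\R^+\times\R^+$, the same derivative-transfer and annular Cauchy--Schwarz argument (with the supremum controlled uniformly by the flag maximal function of $\HLap^M b_R$ or $\VLap^N b_R$), and then removing the compact-support assumption exactly as in Proposition \ref{prop:Lusin-bounded-general-phi}. No substantive differences.
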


\begin{proof}
The proof resembles that of Theorem \ref{thm:Atom-implies-area}.
Let $M$ be the smallest integer such that $2M > \hdim/2$.

By Remark \ref{rem:compact-support-enough} (or the preceding result), it is enough to consider a Poisson bounded family $\familyc$ of functions  $\phi\one$ and $\phi\two $ that are supported in the unit balls of $\Hn$ and $\R$.
Next, by Proposition \ref{prop:particle-bounded-function-bounded}, it suffices to show that
\begin{equation}\label{eq:gmax-on-particles}
\begin{aligned}
&\int_{(S^{*})^c} \gmaxopc(a_R)(g) \wrt g \\
&\qquad\lesssim \sup_{\bsym\phi \in \familyc} \bignorm{\oper{R}\phi\one }_{(2M)} \bignorm{\phi\two }_{(2)}
\rho_{1,3/2}(R,S)\abs|R|^{1/2} \norm{a_R}_{\Leb^2(\Hn)}
\end{aligned}
\end{equation}
for all particles $a_R$ associated to a shard $R$ such that $\cent(R) = o$, $\wid(R) =1$ and $\heit(R) = h$ and all shards $S$ such that $\cent(S) = o$, $\wid(S) =r^* > 1$ and $\heit(S) = s^* > h$, where $\rho_{1,3/2}$ is as defined in \eqref{eq:def-rho}.

From the definition just before the statement of this theorem,
\[
\gmaxopc(a_R)
\eqsim \max_{j=1, \dots,4} \gmaxopc^{j}(a_R) ,
\]
where
\[
\gmaxopc^{j}(a_R)(g) =
\sup_{\bsym\phi \in \familyc} \sup_{(r,s) \in \Omega_j}
\abs|a_R \Hconv \phi_{r,s} (g)| ,
\]
and the regions $\Omega^j$, which partition $\R^+ \times \R^+$ into four, are defined by
\[
\begin{aligned}
\Omega_1 &= \{ (r,s) \in \R^+ \times \R^+ : r < 1, s < h \} \qquad&
\Omega_2 &= \{ (r,s) \in \R^+ \times \R^+ : r < 1, s > h \} \\
\Omega_3 &= \{ (r,s) \in \R^+ \times \R^+ : r > 1, s < h\} &
\Omega_4 &= \{ (r,s) \in \R^+ \times \R^+ : r > 1, s > h\}.
\end{aligned}
\]
We treat these four summands separately.
The key to our estimation is the fact that for all $\bsym\phi$ in $\familyc$,
\begin{equation}\label{eq:gmax-support}
\begin{aligned}
&\supp \bigl( a_R \Hconv \phi_{r,s}\bigr)
\subseteq
T(o,1, h)  T(o, r, s) = T(o, r+1, s+h ) .
\end{aligned}
\end{equation}

First, \eqref{eq:gmax-support} implies that
$\supp (\gmaxopc^{1}(a_R)) \cap (S^*)^c = \emptyset$ and so no estimation is needed.

Second, to treat $\gmaxopc^{2}(a_R)$ and $\gmaxopc^{3}(a_R)$, we first choose an exponentially increasing family of sets $E_j$ such that, from \eqref{eq:gmax-support},
\[
\supp(\gmaxopc^{k}(a_R))
\subseteq \bigcup_{(r,s) \in \Omega_k} T(o,1+r,h+s)
\subseteq \bigcup_{j\in\N} E_j.
\]
We then take $j^*$ to be the smallest $j$ such that $E_j \cap (S^*)^c \neq \emptyset$ and $\Omega_{k,j}$ to be a subset of $\Omega_{k}$ such that $(r,s) \notin \Omega_{k.j}$ implies that $T(o,1+r,h+s) \subseteq E_{j}$.
Then, by the definitions and basic results,
\begin{equation}\label{eq:basic-calculation-gm}
\begin{aligned}
&\int_{(S^*)^c} |\gmaxopc^{k}(a_R)(g)| \wrt g \\
&\qquad= \sum_{j \geq j^*} \int_{E_{j+1}\setminus E_j} |\gmaxopc^{k}(a_R))(g)| \wrt g \\
&\qquad\leq \sum_{j \geq j^*} \abs|E_{j+1}\setminus E_j|^{1/2}
\lpar \int_{(E_j)^c} |\gmaxopc^{k}(a_R))(g)|^2 \wrt g \rpar^{1/2} \\
&\qquad= \sum_{j \geq j^*} \abs|E_{j+1}\setminus E_j|^{1/2}
\lpar   \int_{(E_j)^c} \sup_{\bsym\phi \in \familyc} \sup_{(r,s) \in \Omega_k}
\abs|a_R \Hconv \phi_{r,s}(g)|^2
\wrt g  \rpar^{1/2} \\
&\qquad= \sum_{j \geq j^*} \abs|E_{j+1}\setminus E_j|^{1/2}
\lpar \int_{(E_j)^c} \sup_{\bsym\phi \in \familyc} \sup_{(r,s) \in \Omega_{k,j}}
\abs|a_R \Hconv \phi_{r,s}(g)|^2 \wrt g \rpar^{1/2} \\
&\qquad\leq \sum_{j \geq j^*} \abs|E_{j+1}|^{1/2}
\lpar \int_{\Hn} \sup_{\bsym\phi \in \familyc} \sup_{(r,s) \in \Omega_{k,j}}
\abs|a_R \Hconv \phi_{r,s}(g)|^2
\wrt g  \rpar^{1/2} .
\end{aligned}
\end{equation}

To treat $\gmaxopc^{2}(a_R)$, we define $E_j := T(o, 2, 2 \times 2^{j}h)$ and $\Omega_{k,j} := (0,1) \times (2^jh, \infty)$ when $j \in \N$; then $\abs| E_j | \eqsim 2^jh$ and $h^* \eqsim \log_2(s^*)$.
Moreover,
\[
\begin{aligned}
a_R \Hconv \phi_{r,s}
&= s^{-2} \HLap^M b_R \Hconv \phi\one_r \Vconv (\VLap \phi\two )_s ;
\end{aligned}
\]
see \eqref{eq:S2-function}.
We define the family ${\familyo}$ to be $\{(\phi\one, \VLap\phi\two) : (\phi\one, \phi\two) \in \familyc\}$.
Then
\begin{equation}\label{eq:gmax-2-est}
\begin{aligned}
&\sup_{\bsym\phi \in \familyc} \sup_{(r,s) \in \Omega_{k,j}}
\abs|a_R \Hconv \phi_{r,s}| \\
&\qquad\leq  (2^jh)^{-2}\sup_{\bsym\phi \in \familyc} \sup_{(r,s) \in \Omega_{k,j}}
\abs| \HLap^M b_R \Hconv \phi\one_r \Vconv (\VLap \phi\two )_s| \\
&\qquad\lesssim  (2^jh)^{-2}
\gmaxopo(\HLap^M b_R) .
\end{aligned}
\end{equation}
From \eqref{eq:basic-calculation-gm}, \eqref{eq:gmax-2-est},  and Lemmas \ref{lem:poisson-and-flag-maximal-functions}, \ref{lem:flag-and-iterated-maximal-fns}, and \ref{lem:a-is-enough},
\begin{equation}\label{eq:GM-2-est}
\begin{aligned}
&\int_{(S^*)^c} |\gmaxop^{k}(a_R)(g)| \wrt g \\
&\qquad\leq \sum_{j \geq j^*} \abs|E_{j+1}|^{1/2}
\lpar \int_{\Hn} \sup_{\bsym\phi \in \familyc} \sup_{(r,s) \in \Omega_{k,j}}
\abs|a_R \Hconv \phi_{r,s}(g)|^2
\wrt g  \rpar^{1/2} \\
&\qquad\lesssim \sum_{j \geq j^*} \abs|E_{j+1}|^{1/2} (2^jh)^{-2}
\lpar  \int_{\Hn} \abs| \gmaxopo(\HLap^M b_R)(g)|^2 \wrt g  \rpar^{1/2} \\
&\qquad\lesssim \sum_{j \geq j^*} (2^jh)^{-3/2}
\norm{  \oper{M}_{\flag}(\HLap^M b_R) }_{\Leb^2(\Hn)} \\
&\qquad\lesssim 2^{-3j^*/2} h^{-3/2}
\norm{  \HLap^M b_R }_{\Leb^2(\Hn)} \\
&\qquad\lesssim \Bigl(\frac{h}{s^*}\Bigr)^{3/2} \abs|R|^{1/2} \norm{a_R }_{\Leb^2(\Hn)}^2 .
\end{aligned}
\end{equation}

To deal with $\gmaxopc^{3}(a_R)$, we define $E_j := T(o,1+2r, 2h)$ and $\Omega_{3,j} := (2^j,\infty ) \times (0,h)$ when $j \in \N$; then $\abs| E_j| \lesssim 2^{\hdim j} + 2^{2\cdim j} h$ and $j^* \eqsim \log_2(r^*)$.
Further, from \eqref{eq:S3-function},
\[
\begin{aligned}
a_R \Hconv \phi_{r,s}
&= \frac{1}{r^{2M}} \VLap b_R \Hconv [\riHLap^M \phi\one]_r \Vconv \phi\two _s;
\end{aligned}
\]
we let $\familyt$ be the family  $\{ (\riHLap^M\phi\one, \phi\two) :   (\phi\one, \phi\two) \in \familyz \}$.
Much as for the proof of \eqref{eq:GM-2-est},
\begin{equation}\label{eq:GM-3-est}
\begin{aligned}
&\int_{(S^*)^c} |\gmaxopc^{3}(a_R)(g)| \wrt g \\
&\qquad\lesssim \sum_{j \geq j^*} \abs|E_{j+1}|^{1/2} 2^{-2Mj}
\lpar  \int_{\Hn} \abs| \gmaxopt(\VLap b_R)(g)|^2 \wrt g  \rpar^{1/2} \\
&\qquad\lesssim \sum_{j \geq j^*} (2^{\hdim j} + 2^{2\cdim j} h)^{1/2} 2^{-2Mj}
\norm{  \oper{M}_{\flag}(\VLap b_R) }_{\Leb^2(\Hn)} \\
&\qquad\lesssim \Bigl(\frac{1}{r^*}\Bigr)^{2M - \hdim/2}
\abs|R|^{1/2} \norm{a_R }_{\Leb^2(\Hn)}^2 .
\end{aligned}
\end{equation}

Finally, to treat $\gmaxop^{4}(a_R)$, we use the pointwise estimate
\begin{equation*}
 \abs|a_R \Hconv \phi_{r,s} |
\lesssim_{\bsym\phi} r^{-2M-2\cdim} s^{-3} h^{5/2} \norm{ a_R }_{\Leb^2(\Hn)}
\indifn_{T(o, r+1, s+1) } ,
\end{equation*}
which is proved in a very similar but easier way to \eqref{eq:S5-est3}.

It follows that
\begin{align*}
&\sup_{(r,s) \in \Omega_4} \bigl| a_R \Hconv \phi_{r,s}  \bigr| \\
&\qquad\leq  \sum_{i,j \in \N}\sup_{\alpha^i < r < \alpha^{i+1}}  \sup_{\alpha^{2j}h < s < \alpha^{2(j+1)}h}
\norm{ a_R \Hconv \phi_{r,s} }_{\Leb^\infty(\Hn)} \indifn_{T(o, 2r+1, 2s+h) }
 \\
&\qquad\leq h^{-1/2} \norm{ a_R }_{\Leb^2(\Hn)} \sum_{i,j \in \N}
\alpha^{-2(M+\cdim)i-6j}
\indifn_{T(o, 3 \alpha^{i+1}, 3 \alpha^{2(j+1)}h) }  .
\end{align*}
In the proof of Theorem \ref{thm:Atom-implies-area}, we showed that this last function is in $\Leb^1(\Hn)$, and estimated its norm.
From this it follows that
\begin{equation}\label{eq:GM4-estimate}
\begin{aligned}
&\int_{(S^{*})^c} \gmaxop^4(a_R)(g) \wrt g \\
&\qquad\lesssim
\lpar \frac{1}{r^*} \rpar^{2(M-1)} \abs|R|^{1/2} \norm{ a_R }_{\Leb^2(\Hn)}
 + \lpar \frac{h}{ s^* }\rpar^{2} \abs|R|^{1/2} \norm{ a_R }_{\Leb^2(\Hn)}.
\end{aligned}
\end{equation}

The quotients that precede $|R|^{1/2}$ on the right hand sides of our estimates are all dominated by multiples of $\rho_{1,3/2}(R,S)$, as defined in \eqref{eq:def-rho}.
A careful examination of the proof shows that $\bsym\phi$ comes into the constants in expressions involving norms such as
$\bignorm{\riHLap^M\phi\one }_{\Leb^\infty(\Hn)}\bignorm{\VLap\phi\two }_{\Leb^\infty(\R)}$ or $\bignorm{\riHLap^M\phi\one }_{\Leb^\infty(\Hn)}\bignorm{\phi\two }_{\Leb^\infty(\R)}$,
all of which are dominated by
$\bignorm{\oper{R} \phi\one }_{(2M)}\bignorm{ \phi\two }_{(2)}$
because of the support restriction.
\end{proof}

\subsection{The inclusion $\atomHardy(\Hn) \subseteq \siphiHardy(\Hn)$}\label{ssec:atom-implies-sing-ints}

In this section, we consider a homogeneous flag singular integral operator $\oper{K}$ acting on the atomic Hardy space.

\begin{theorem}\label{thm:singular-integrals-bounded-on-atoms}
Suppose that $\phi\one$ and $\phi\two$ are Poisson bounded on $\Hn$ and on $\R$ and have mean $0$.
Then $\atomHardy(\Hn) \subseteq \siphiHardy(\Hn)$, and
\[
\norm{\oper{K}_{\boldsymbol\phi} f}_{\Leb^1(\Hn)}
\lesssim \norm{f}_{\atomHardy(\Hn)}
\qquad\forall f \in \atomHardy(\Hn).
\]

\end{theorem}

\begin{proof}
From Remark \ref{rem:compact-support-enough}, we may suppose that $\supp(\phi\one) \subseteq B\one(o,1)$ and $\supp(\phi\two) \subseteq B\two(0,1)$.
The kernel $k$ of $\oper{K}_{\bsym\phi}$ is given by
\[
k = \iint_{\R^+ \times \R^+} \phi\one_s \Vconv \phi\two_t \,\frac{ds}{s} \,\frac{dt}{t} \,.
\]

By Proposition \ref{prop:particle-bounded-function-bounded}, it suffices to study $\oper{K}_{\boldsymbol\phi}(a_R)$, where $a_R$ is a particle, and to show that 
\[
\int_{(S^*)^c} \abs| a_r \Hconv k (g) | \,dg 
\lesssim_{\bsym\phi} \rho_{2,2}(R,S) \abs|R|^{1/2} \norm{a_R}_{\Leb^2(G)}
\]
for all shards $S$ that contain $R$;  here $\rho_{2,2}$ is as defined in \eqref{eq:def-rho}.

We suppose that $a_R = \HLap \VLap b_R$, where $b_R$ is supported in the shard $R$, and, by translation and dilation invariance, that $R$ is the shard with centre $o$, width $1$ and height $h  > 1$; we may also suppose that $\cent(S) = o$, $\wid(S) =w^* > 1$ and $\heit(S) = h^* > h$.

Again, we define four regions $\Omega^j$, which partition $\R^+ \times \R^+$,  by
\[
\begin{aligned}
\Omega_1 &= \{ (r,s) \in \R^+ \times \R^+ : r \leq 1, s \leq h \} \qquad&
\Omega_2 &= \{ (r,s) \in \R^+ \times \R^+ : r \leq 1, s > h \} \\
\Omega_3 &= \{ (r,s) \in \R^+ \times \R^+ : r > 1, s \leq h\} &
\Omega_4 &= \{ (r,s) \in \R^+ \times \R^+ : r > 1, s > h\},
\end{aligned}
\]
and we write $a_R \Hconv k = a_R \Hconv k^1 + a_R \Hconv k^2 + a_R \Hconv k^2 + a_R \Hconv k^4$, where 
\[
k^i := \iint_{\Omega^i} \phi\one_s \Vconv \phi\two_t \,\frac{ds}{s} \,\frac{dt}{t} \,.
\]
We treat the four summands $a_R \Hconv k^i$ separately.
The key to our estimation is the fact that
\begin{equation}\label{eq:sing-int--support}
\begin{aligned}
&\supp \bigl( a_R \Hconv \phi_{r,s}\bigr)
\subseteq
T(o,1, h)  T(o, r, s) = T(o, r+1, s+h ) .
\end{aligned}
\end{equation}

First, by definition, $\supp (a_R \Hconv k^1) \subseteq S$, and there is nothing to consider; in any case, $a_R \Hconv k^1 \in \Leb^1(G)$.

Second, to treat $a_R \Hconv k^2$, we observe that if $a_R \Hconv \phi\one_s \Vconv \phi\two_t(g)  \neq 0$, where $g \in (S)^c$ and $(s,t) \in \Omega^2$, then $h^* \leq  h + s + 1 \leq h+2$, and so
\[
\begin{aligned}
\int_{(S)^c} \abs| a_R \Hconv k^2 (g)| \wrt g 
&= \int_{(S)^c} \abs| \int_{h^*/2}^{\infty} \int_{0}^{1} a_R \Hconv \phi\one_s \Vconv \phi\two_t  \,\frac{ds}{s} \,\frac{dt}{t} | \wrt g .
\end{aligned}
\]

Recall that $a_R = \VLap \HLap b_R$, and write, for $g \in (S)^c$,
\[
\begin{aligned}
a_R \Hconv k^2(g) 
&=  \int_{h^*/2}^{\infty} \int_{0}^{1} (\VLap \HLap b_R) \Hconv \phi\one_s \Vconv \phi\two_t \,\frac{ds}{s} \,\frac{dt}{t} \\
&=  \int_{h^*/2}^{\infty} \lpar \int_{0}^{1} (\HLap b_R) \Hconv \phi\one_s \,\frac{ds}{s}  \rpar \Vconv (\VLap\phi\two)_t \,\frac{dt}{t^3} \\
&=  \int_{h^*/2}^{\infty} c^2_R \Vconv (\VLap\phi\two)_t \,\frac{dt}{t^3}, 
\end{aligned}
\]
say, where 
\[
c^2_R = \int_0^1 (\HLap b_R) \Hconv \phi\one_s \,\frac{ds}{s}.
\]
Now $b_R$ is supported in $R$, so $c^2_R$ is supported in $R^*$, say, where $\abs|R^*| \eqsim \abs|R|$.
Moreover,   by Hölder's inequality and Lemmas \ref{lem:homogeneous-integral} and  \ref{lem:flag-Sobolev},
\[
\norm{ c^2_R }_{\Leb^1(\Hn)} 
\lesssim |R|^{1/2}  \norm{ c^2_R }_{\Leb^2(\Hn)} 
\lesssim_{\phi\one} |R|^{1/2} \norm{ \HLap b_R }_{\Leb^2(\Hn)} 
\lesssim h^2 |R|^{1/2}  \norm{ a_R }_{\Leb^2(\Hn)} .
\]
We conclude that
\[
\begin{aligned}
\int_{(S)^c} \abs| a_R \Hconv k^2 (g)| \wrt g 
&= \int_{(S)^c} \abs| \int_{h^*/2}^{\infty}  c^2_R \Vconv (\VLap\phi\two)_t(g)  \,\frac{dt}{t^3} | \wrt g \\
&\leq \int_{h^*/2}^{\infty}  \int_{\Hn } \abs|  c^2_R \Vconv (\VLap\phi\two)_t (g)| \wrt g \,\frac{dt}{t^3}  \\
&\leq \int_{h^*/2}^{\infty}  \norm{ c^2_R }_{\Leb^1(\Hn)} \norm{ (\VLap\phi\two)_t  }_{\Leb^1(\R)} \,\frac{dt}{t^3}  \\
& \eqsim_{\phi\two} \frac{1}{(h^*)^2}  \norm{ c^2_R }_{\Leb^1(\Hn)}  \\
& \lesssim_{\phi\one} \frac{h^2}{(h^*)^2}  \abs|R|^{1/2} \norm{ a_R }_{\Leb^2(\Hn)}  .
\end{aligned}
\]

Third, to treat $a_R \Hconv k^3$, we argue similarly.
First, if $a_R \Hconv \phi\one_s \Vconv \phi\two_t(g)  \neq 0$, where $g \in (S)^c$ and $(s,t) \in \Omega^3$, then $w^* \leq s+1$, whence
\[
\begin{aligned}
\int_{(S)^c} \abs| a_R \Hconv k^3 (g)| \wrt g 
&= \int_{(S)^c} \abs| \int_{w^*/2}^{\infty} \int_{h}^{\infty} a_R \Hconv \phi\one_s \Vconv \phi\two_t   \,\frac{dt}{t} \,\frac{ds}{s} | \wrt g .
\end{aligned}
\]
The next step is to write, for $g \in (S)^c$,
\[
\begin{aligned}
a_R \Hconv k^3(g) 
&=  \int_{w^*/2}^{\infty} \int_{0}^{h} (\HLap \VLap b_R) \Vconv \phi\two_t \Hconv \phi\one_s(g) \,\frac{dt}{t} \,\frac{ds}{s} \\
&=  \int_{w^*/2}^{\infty} c^3_R \Hconv (\HLap\phi\one)_s(g) \,\frac{ds}{s^3}, 
\end{aligned}
\]
say, where 
\[
c^3_R = \int_0^h (\VLap b_R) \Vconv \phi\two_t \,\frac{dt}{t} \,.
\]
Finally, 
\[
\begin{aligned}
\int_{(S)^c} \abs| a_R \Hconv k^3 (g)| \wrt g 
&= \int_{(S)^c} \abs| \int_{w^*/2}^{\infty}  c^3_R \Hconv (\riHLap\phi\one)_s(g)  \,\frac{ds}{s^3} | \wrt g \\
&\leq \int_{w^*/2}^{\infty}  \int_{\Hn } \abs|  c^3_R \Hconv (\riHLap\phi\one)_s(g) | \wrt g \,\frac{ds}{s^3}  \\
& \eqsim_{\phi\one} \frac{1}{(w^*)^2}  \norm{ c^3_R }_{\Leb^1(\Hn)}  \\
& \lesssim_{\phi\two} \frac{1}{(w^*)^2}  \abs|R|^{1/2} \norm{ a_R }_{\Leb^2(\Hn)}  .
\end{aligned}
\]

Fourth, we treat $a_R \Hconv k^4$.
If $a_R \Hconv \phi\one_s \Vconv \phi\two_t(g)  \neq 0$, where $g \in (S)^c$ and $(s,t) \in \Omega^4$, then either $s+1 \geq  w^*$, or $s+1 < w^*$ and $t + s^2 + h \geq h^*$, whence
\[
\begin{aligned}
\int_{(S)^c} \abs| a_R \Hconv k^4 (g)| \wrt g 
&\leq \int_{(S)^c} \int_{w^*/2}^{\infty} \int_{h}^{\infty} \abs|  a_R \Hconv \phi\one_s \Vconv \phi\two_t(g) |  \,\frac{dt}{t} \,\frac{ds}{s} \wrt g \\
&\qquad+  \int_{(S)^c} \int_{1}^ {w^*} \int_{h^*}^{\infty} \abs| a_R \Hconv \phi\one_s \Vconv \phi\two_t(g) |   \,\frac{dt}{t} \,\frac{ds}{s} \wrt g .
\end{aligned}
\]
Next, 
\[
\begin{aligned}
a_R \Hconv \phi\one_s \Vconv \phi\two_t 
&=  \frac{1}{s^2t^2} b_R \Hconv  (\riHLap\phi\one)_s \Vconv (\VLap\phi\two)_t .
\end{aligned}
\]
Finally, estimating much as before, we see that
\begin{align*}
&\int_{(S)^c} \int_{w^*/2}^{\infty} \int_{h}^{\infty} \abs| a_R \Hconv \phi\one_s \Vconv \phi\two_t(g) |  \,\frac{dt}{t} \,\frac{ds}{s} \wrt g \\
&\qquad\leq \int_{w^*/2}^{\infty} \int_{h}^{\infty} \int_{\Hn} \abs| b_R \Hconv (\HLap\phi\one)_s \Vconv (\VLap\phi\two)_t (g) |  \wrt g \,\frac{dt}{t^3} \,\frac{ds}{s^3}   \\
&\qquad\leq \int_{w^*/2}^{\infty} \int_{h}^{\infty} \norm{ b_R }_{\Leb^1(\Hn)} \norm{ (\HLap\phi\one)_s }_{\Leb^1(\Hn)} \norm{ (\VLap\phi\two)_t }_{\Leb^1(\R)}  \,\frac{dt}{t^3} \,\frac{ds}{s^3}  \\
&\qquad \lesssim_{\bsym\phi} \frac{1}{h^2 (w^*)^2}  \norm{ b_R }_{\Leb^1(\Hn)}  \\
&\qquad \lesssim \frac{1}{(w^*)^2}  \abs|R|^{1/2} \norm{ a_R }_{\Leb^2(\Hn)}  ,
\end{align*}
while
\[
\begin{aligned}
&\int_{(S)^c} \int_{1}^{w^*} \int_{h^*/2}^{\infty}  \abs| a_R \Hconv \phi\one_s \Vconv \phi\two_t (g) | \,\frac{dt}{t} \,\frac{ds}{s} \wrt g \\
&\qquad\leq \int_{1}^{\infty} \int_{h^*/2}^{\infty} \int_{\Hn} \abs| b_R \Hconv (\HLap\phi\one)_s \Vconv (\VLap\phi\two)_t (g) |  \wrt g  \,\frac{dt}{t^3} \,\frac{ds}{s^3} \\
&\qquad\leq \int_{1}^{\infty} \int_{h^*/2}^{\infty} \norm{ b_R }_{\Leb^1(\Hn)} \norm{ (\HLap\phi\one)_s }_{\Leb^1(\Hn)} \norm{ (\VLap\phi\two)_t }_{\Leb^1(\R)}  \,\frac{dt}{t^3} \,\frac{ds}{s^3}  \\
&\qquad \lesssim_{\bsym\phi} \frac{1}{(h^*)^2}  \norm{ b_R }_{\Leb^1(\Hn)}  \\
&\qquad \lesssim \frac{h^2}{(h^*)^2}  \abs|R|^{1/2} \norm{ a_R }_{\Leb^2(\Hn)}  .
\end{aligned}
\]
In conclusion, 
\[
\int_{(S^*)^c} \abs| a_r \Hconv k (g) | \,dg 
\lesssim_{\bsym\phi} \rho_{2,2}(R,S) \abs|R|^{1/2} \norm{a_R}_{\Leb^2(G)},
\]
as required.
\end{proof}

We note that the argument of the proof above may be improved to show that $\oper{K}_{\boldsymbol\phi}$ sends particles to $\atomHardy(\Hn)$-functions.
However, it appears to be nontrivial to then deduce that these operators also send atoms to $\atomHardy(\Hn)$-functions.
Nevertheless, this conclusion follows from our later characterisation of the Hardy space using Riesz transforms.

We also observe that straightforward modifications of the proof will deal with convolutions with distributions of the form $\sum_{j,k\in\Z} \phi^{(1)}_{2^j} \Hconv \phi^{(2)}_{2^k}$, or more generally with convolutions with distributions of the form $\sum_{j,k\in\Z} \phi^{(1,j)}_{2^j} \Hconv \phi^{(2,k)}_{2^k}$, where the $\{ \phi^{(1,j)}: j \in \Z\}$ is a uniformly bounded family of functions with mean $0$ in $\fnspace{C}^\infty(B\one(o,1))$ and $\{ \phi^{(2,k)}: k \in \Z\}$ is a uniformly bounded family of functions with mean $0$  in $\fnspace{C}^\infty(B\two(0,1))$.

\subsection{Further remarks}

The results of Sections \ref{ssec:atom-implies-area} to \ref{ssec:atom-implies-sing-ints} may easily be formulated and proved for the moment atomic Hardy space of Section \ref{ssec:moment-atom-Hardy}.


\section{The Lusin--Littlewood--Paley area function}\label{sec:Lusin}

In this section, we consider the Hardy space  defined using area functions.
We first examine the definition and properties of the space, and then show that if $f \in \areaHardyphi(\Hn)$, and $\phi\one$ and $\phi\two $  satisfy appropriate invertibility conditions, then $f \in \atomHardy(\Hn)$.

\subsection{Properties of the area function Hardy space}\label{ssec:area-props}
We show that the space is independent of some of the parameters used in its definition, and prove that the area operator (when suitably normalised) is an isometry on $\Leb^2(\Hn)$.

We begin by introducing three cones that we shall discuss.

\begin{definition}\label{def:cone}
Suppose that $g \in \Hn$ and $\beta, \gamma \in\R^+$.
The cones $\Gamma_{\beta,\gamma}(g)$, $\Gamma\one_\beta(g)$, and $\Gamma\two _\beta(g)$ are defined as follows:
\[
\begin{aligned}
\Gamma_{\beta,\gamma}(g)
&:= \{ (g',r,s) \in \Hn \times \R^{+} \times \R^{+}:
     g' \in T(g ,\beta r,\gamma s) \} ,
\\
\Gamma\one_\beta(g)
&:= \{ (g',r) \in \Hn \times \R^{+} :
     g' \in g  B\one(o,\beta r) \} ,
\\
\Gamma\two _\gamma(g'')
&:= \{ (g''',s) \in  \R \times \R^{+}:
     g''' \in g''  B\two (0,\gamma s) \} .
\end{aligned}
\]
\end{definition}

The cone $\Gamma_\beta$ defined before in \eqref{Gamma cone} corresponds to the cone $\Gamma_{\beta,\beta^2}$ above.

Take w-invertible Poisson bounded functions $\phi\one$ on $\Hn$ and $\phi\two$ on $\R$, both with mean $0$ and with w-inverses with mean $0$.
As usual we write $\phi\one_r$ and $\phi\two_s$ for their normalised dilates, and $\phi_{r,s}$ for $\phi\one_r \Hconv \phi\two_s$.
The normalised characteristic functions $|B\one(o,r)|^{-1} \indifn_{B\one(o,r)}$ and $|B\two (0,s)|^{-1} \indifn_{B\two (0,s)}$ are denoted by $\chi\one_r$ and $\chi\two _s$, and $\chi_{r,s} = \chi\one_r \Hconv \chi\two_s$.

\begin{definition}\label{def:areaHardyphi}
Suppose that $\phi\one_r$, $\phi\two _s$, $\chi\one_r$ and $\chi\two _s$ are as described above, and that $\beta, \gamma \in \R^+$.
For $f \in \Leb^1(\Hn)$, we define the \emph{Lusin--Littlewood--Paley area function} $\areaop[,\bsym\phi,\beta,\gamma](f)$ associated to $\phi\one$ and $\phi\two $ by
\begin{equation}\label{def:Lusinppbg}
\begin{aligned}
&\areaop[,\bsym\phi,\beta,\gamma] (f)(g)
:= \biggl( \iint_{\R^{+} \times\R^{+}}
\bigl| f \Hconv \phi_{r,s}   \bigr|^2 \Hconv \chi_{\beta r, \gamma s} (g)  \,\frac{dr}{r} \,\frac{ds}{s} \biggr)^{1/2}
\end{aligned}
\end{equation}
for all $g \in \Hn$, and we define the Hardy space $\areaHardyphi[,\beta,\gamma](\Hn)$ to be the set of all $f\in \Leb^1(\Hn)$ for which $\norm{ \areaop[,\bsym\phi,\beta,\gamma] (f) }_{\Leb^1(\Hn)}<\infty$, with  seminorm
\begin{equation}\label{eq:areaHardyphi-norm}
\norm{ f }_{\areaHardyphi[,\beta,\gamma](\Hn)}:=\norm{ \areaop[,\bsym\phi,\beta,\gamma] (f) }_{\Leb^1(\Hn)}.
\end{equation}
\end{definition}

In Section \ref{ssec:cone-geometry}, for all continuous $F :\Hn \times \R^+ \times \R^+ \to [0, +\infty)$, we defined
\begin{align*}
\term{I}_{\beta,\gamma}(F) &:=
\int_{\Hn} \lpar \iint_{\R^+ \times \R^+}
    (F(\cdot, r, s) \Hconv
    \chi\one_{\beta r} \Vconv \chi\two _{\gamma s})(g)
    \,\frac{dr}{r} \,\frac{ds}{s} \rpar^{1/2} \wrt g; \\
\term{J}_{\beta,\gamma}(F) &:=
\int_{\Hn}  \lpar \iiint_{\Gamma_{\beta,\gamma}(g)} \frac{F(g', r, s)}{\abs| T(o,\beta r,\gamma s)|}
\wrt g' \,\frac{dr}{r} \,\frac{ds}{s} \rpar^{1/2} \wrt g.
\end{align*}
In Lemma \ref{lem:equivalent-area-fns}, we showed that $\term{I}_{\beta,\gamma}(F) \eqsim_{\beta,\gamma,\beta'\gamma'} \term{I}_{\beta',\gamma'}(F)$ and $\term{I}_{\beta,\gamma/2}(F) \lesssim_{\beta,\gamma} \term{J}_{\beta,\gamma}(F) \lesssim_{\beta,\gamma} \term{I}_{\beta,\gamma}(F)$.
This shows that the Hardy space $\areaHardyphi[,\beta,\gamma](\Hn)$ does not depend on the parameters $\beta$ and $\gamma$.
Usually we take these parameters to be $1$ unless explicitly stated otherwise, and write $\areaHardyphi(\Hn)$.
Further, from the results of Sections \ref{ssec:atom-implies-area} (above) and \ref{ssec:area-implies-atom} (below), it follows that the Hardy space does not depend on $\bsym\phi$ either (as long as $\bsym\phi$ is w-invertible), and so we may also write $\areaHardy(\Hn)$.

Lemma \ref{lem:equivalent-area-fns} also implies that there is an equivalent definition of the area integral.

\begin{corollary}\label{cor:alternative-area-integral}
The space $\areaHardyphi(\Hn)$ is the space of all functions $f \in \Leb^1(\Hn)$ for which $\areaopphiprime(f) \in \Leb^1(\Hn)$, where
\[
\areaopphiprime(f)(g)
=\biggl( \iiint_{\Gamma(g)} \frac{1}{\abs|T(o,r',s') |}
\bigl|f \Hconv \phi_{r' ,s'} (g') \bigr|^2
        \wrt g' \,\frac{dr'}{r'} \,\frac{ds'}{s'} \biggr)^{1/2},
\]
and $\norm{\areaopphiprime(f)}_{\Leb^1(\Hn)}$ is equivalent to the $\areaHardyphi(\Hn)$ seminorm.
\end{corollary}

If $f$ is a polynomial of low degree, then $f \Hconv \phi\one_r \Vconv \phi\two _s = 0$ for all $r, s \in \R^+$, and for such $f$, it is clear that $\norm{f}_{\areaHardy(\Hn)} = 0$.
Thus it is reasonable to ask whether $\norm{ \cdot }_{\areaHardy(\Hn)}$ really is a norm.
The answer is given by the next two results.

\begin{prop}\label{prop:Lusin-L2-bounded}
Suppose that $f \in \Leb^2(\Hn)$ and that $\bsym\phi$ is w-invertible.
Then 
\[
\norm{\areaopphi(f)}_{\Leb^2(\Hn)} \eqsim \norm{ f }_{\Leb^2(\Hn)} .
\]
\end{prop}

\begin{proof}
Since $\chi\one_r$ and $\chi\two _s$ are normalised characteristic functions,
\begin{align*}
\norm{\areaopphi(f)}_{\Leb^2(\Hn)} ^2
&= \int_{\Hn} \iint_{\R^{+} \times\R^{+}}
\bigl| f \Hconv \phi_{r,s}  \bigr|^2 \Hconv \chi_{r,s} (g)  \,\frac{dr}{r} \,\frac{ds}{s} \wrt g \\
&= \int_{\Hn} \iint_{\R^{+} \times\R^{+}}
\bigl| f \Hconv \phi_{r,s}   \bigr|^2 \Hconv \chi\one_{r} \Vconv \chi\two _{s} (g) \wrt g \,\frac{dr}{r} \,\frac{ds}{s}  \\
&= \iint_{\R^{+} \times\R^{+}} \int_{\Hn}
\bigl| f \Hconv \phi_{r,s} (g)  \bigr|^2  \wrt g \,\frac{dr}{r} \,\frac{ds}{s}  \\
&\eqsim \norm{f}_{\Leb^2(\Hn)}^2,
\end{align*}
by Corollary \ref{cor:L2-norm-of-sqfnop-bounded}.
\end{proof}

\begin{lemma}\label{lem:L2-is-dense-in-H1-area}
Left translations act isometrically and continuously on $\areaHardyphi(\Hn)$.
Further, $\norm{ \cdot }_{\areaHardyphi(\Hn)}$ is a norm on $\areaHardyphi(\Hn)$.
\end{lemma}

\begin{proof}
Recall that ${}_gf$ denotes the left translate of $f$ by $g$.
If $\norm{ f }_{\areaHardyphi(\Hn)}$ is finite, then
\begin{align*}
\areaop[,\bsym\phi]({}_gf) = {}_g\areaop[,\bsym\phi](f)
\qquad\forall g \in \Hn \quad\forall f \in \Leb^1(\Hn),
\end{align*}
hence the $\Leb^1(\Hn)$ norms of $\areaop[,\bsym\phi]({}_gf)$ and $\areaop[,\bsym\phi](f)$ coincide; further,
\[
\lim_{g \to o} \norm{{}_gf - f}_{ \areaHardyphi(\Hn)} =0
\]
by Lemma \ref{lem:area-fn-cts}.

Take an approximate identity for convolution $(\psi_n)$ of $\fnspace{C}^\infty_0(\Hn)$ functions with supports shrinking to $o$ in $\Hn$.
If $f \in \areaHardyphi(\Hn)$, then, by sublinearity, $\psi_n \Hconv f \in \areaHardyphi(\Hn)$,
\begin{equation}\label{eq:H1-norm-inequality}
\norm{\psi_n \Hconv f}_{ \areaHardyphi(\Hn)}
\leq \norm{\psi_n}_{ \Leb^1(\Hn)} \norm{f}_{ \areaHardyphi(\Hn)} .
\end{equation}

If $f \in \Leb^1(\Hn)$ and $\norm{f}_{ \areaHardyphi(\Hn)} = 0$, then $\psi_n \Hconv f \in \Leb^2(\Hn) \cap \areaHardyphi(\Hn)$, and $\norm{\psi_n \Hconv f}_{ \areaHardyphi(\Hn)} =0$ from \eqref{eq:H1-norm-inequality}, so $\psi_n \Hconv f =0$ in $\Leb^2(\Hn)$ from Proposition \ref{prop:Lusin-L2-bounded}.
Thus $\psi_n \Hconv f =0$ in $\Leb^1(\Hn)$.
Further, $\psi_n \Hconv f \to f$ in $\Leb^1(\Hn)$ as $n \to \infty$ so $f = 0$ in $\Leb^1(\Hn)$.
\end{proof}

\subsection{The inclusion $\areaHardy(\Hn) \subseteq \atomHardy(\Hn)$}
\label{ssec:area-implies-atom}

Here we show that every $f\in \areaHardyphi(\Hn)$ has an atomic decomposition, and control the decomposition appropriately, provided $\phi\one$ and $\phi\two $ satisfy appropriate conditions.
To be more specific, we will show the following result.

\begin{theorem}\label{thm:area-Hardy-implies-atom-Hardy}
Suppose that $M, N \in \N^+$, and that $\phi\one$ and $\phi\two $ are Poisson bounded, as in Definition \ref{def:Poisson-bounds}, and w-invertible as in Definition \ref{def:w-invertible}, with w-inverses $\psi\one$ and $\psi\two$ of the form $\HLap^M \ancestor{\psi}\one$ and $\VLap^N \ancestor{\psi}\two$.
Then there is a constant $C$, depending on $\cdim$, $\bsym\phi$ and $\bsym\psi$, such that for all $f\in \areaHardyphi(\Hn)$, there exist numbers $\lambda_j$ and $(1, 2, M, N ,3)$ atoms $a_j$, for all $j \in \N$, such that $f \sim \sum\lambda_j a_j$, and
\[
\norm{ f }_{\atomHardy(\Hn)}
\leq \sum_{j\in\N}  |\lambda_j|
\leq C\norm{ f }_{\areaHardyphi (\Hn)}.
\]
Hence $\areaHardyphi(\Hn) \subseteq \atomHardy(\Hn)$.
\end{theorem}

\begin{proof}
From Lemma \ref{lem:L2-is-dense-in-H1-area},  without loss of generality, we may restrict attention to $f$ in $\Leb^2(\Hn) \cap \areaHardyphi(\Hn)$.

We denote by $\rect$ the collection of all shards, as in Section \ref{ssec:tiling}.
We take the enlargement parameter $\kappa$ (defined at the start of Section \ref{sec:atomic}) to be $3$, and then \eqref{eq:size-of-rectangle} implies that
\[
\begin{aligned}
R  B\one(o,q)  B\two (0,h)
&\subset g  \bar B\one(o, q/2) \bar B\two (0, (q^2 + 4h)/8)
 B\one(o,q)  B\two (0,h)  \\
&= g  B\one(o, 3q/2) B\two (0, (q^2 + 12h)/8) \\
&\subseteq g  B\one(o, \kappa q/2)  B\two (0, \kappa^2(q^2 + 4h)/8) \\
&= R^{*},
\end{aligned}
\]
when $R \in \rect$, $\cent(R) = g$, $\wid(R) = q$ and $\heit(R) = h$.
By Lemma \ref{lem:where-Ms-is-big}, for all open sets $E$ and $R \in \rect(E)$,
\[
R^* \subseteq
\lset g \in \Hn : \oper{M}_{\flag} (\indifn_E)(g) > \frac{1}{2^{2\cdim}\kappa^\hdim (5\cdim+2)} \rset .
\]

For each tile $R$ in $\rect$, the tent $\Tau(R)$ over $R$ is defined by
\[
\Tau(R)
:= \lset (g,r,s) \in \Hn \times\R^+ \times \R^+ :
    g \in R, r\in ( q/(2\cdim+1), q], s \in ( 0, q^2 ] \rset ,
\]
where $q = \wid(R)$, and for each shard $R$ in $\rect$ that is not a tile, the tent $\Tau(R)$ over $R$ is defined by
\[
\Tau(R)
:= \lset (g,r,s) \in \Hn \times\R^+ \times \R^+ :
    g \in R, r\in ( q/(2\cdim+1), q], s \in ( h/(2\cdim+1)^2, h ] \rset ,
\]
where $q = \wid(R)$ and $h = \heit(R)$.
It is evident that $\Hn \times \R^+ \times \R^+$ decomposes as a disjoint union:
\[
\Hn \times \R^+ \times \R^+
= \bigsqcup_{\substack{R\in \rect}} \Tau(R);
\]
indeed, for $(g,r,s) \in \Hn \times \R^+ \times \R^+$, the coordinates $r$ and $s$ determine the dimensions and the coordinate $g$ determines the location of a shard $R$ such that $g \in R$.

Take $f\in \Leb^2(\Hn) \cap \areaHardy(\Hn)$.
For each $\ell\in\Z$, we define
\begin{align*}
E_\ell
&:= \lset g \in \Hn: \areaopphi (f)(g) > 2^\ell \rset, \\
\rect_\ell
&:= \lset R \in \rect :  |R^*\cap E_{\ell+1}|
\leq \frac{1}{3\kappa^\hdim} |R^*| < |R^* \cap E_\ell|\rset\\
\tilde{E}_\ell
&:= \lset g \in \Hn: \oper{M}_{\flag}(\indifn_{E_\ell})(g) > \frac{1}{2^{2\cdim}\kappa^\hdim (5\cdim+2)} \rset .
\end{align*}
The reader may check that if $R \in \rect_\ell$ and $(g,r,s) \in \Tau(R)$, then $R^* \subseteq \tilde{E}_\ell$ and
\begin{equation}\label{eq:B-ell-factor}
\frac{1}{3 \kappa^\hdim} |R^*| \leq \frac{1}{2} |T(g,r,s)|,
\end{equation}

The hypothesis on the w-invertibility of $\phi\one$ and $\phi\two $ implies that
\[
f =
\iint_{\R^+ \times \R^+} f \Hconv \phi_{r,s} \Hconv \psi_{r,s} \,\frac{ds}{s}\,\frac{dr}{r} \,.
\]
For the moment, we assume also that $\psi\one$ and $\psi\two$ are supported in $B\one(o,1)$ and $B\two(0,1)$.
At the end of the proof, we remove this support assumption.

As before, we write ${}_g\psi_{r,s}$ for the left translate $g' \mapsto \psi_{r,s}(g^{-1} g')$.
It follows  from the reproducing formula above that for all $f \in \Leb^2(\Hn) \cap \areaHardyphi(\Hn)$,
\begin{equation}\label{eq:start-of-area-implies-atom}
\begin{aligned}
f
&= \int_{\Hn} \iint_{\R^+\times\R^+}
   f \Hconv \phi_{r,s}(g) \fn{}_g\psi_{r,s} \,\frac{ds}{s}\,\frac{dr}{r}\wrt g \\
&= \sum_{\ell\in\Z} \sum_{R\in \oper{B}_\ell} \iiint_{\Tau(R)}
   f \Hconv \phi_{r,s}(g) \fn{}_g\psi_{r,s} \,\frac{dr}{r}\,\frac{ds}{s}\wrt g \\
&= \sum_{\ell \in \Z} \lambda_\ell \sum_{R \in \rect_\ell} a_{\ell,R} ,
\end{aligned}
\end{equation}
say, where
\begin{gather*}
a_{\ell,R}
:= \frac{1}{\lambda_\ell} \iiint_{\Tau(R)}
   f \Hconv \phi_{r,s}(g) \fn{}_g\psi_{r,s} \wrt g\,\frac{dr}{r} \,\frac{ds}{s} \\
\noalign{\noindent{and}}
\lambda_\ell
:=\bigg\|\biggl( \sum_{R\in \rect_\ell } \iint_{\R^+\times\R^+}
\abs| f \Hconv \phi_{r,s}(\cdot) |^2 \indifn_{\Tau(R)}(\cdot) \,\frac{dr}{r}\,\frac{ds}{s}\biggr)^{1/2}\bigg\|_{\Leb^2(\Hn)}  |\tilde{E}_\ell|^{1/2}.
\end{gather*}
(If $\lambda_\ell =0$, then all $a_{\ell,R}$ are taken to be $0$, as the integral involved in the definition of $a_{\ell,R}$ vanishes.
At the end of this proof, in \eqref{eq:sum-lambda-estimate} we show that $\sum_{\ell\in\Z} \lambda_\ell$ is finite, and so these expressions make sense.)

Fix $\ell \in \Z$; we claim that $\sum_{R \in \rect_\ell} a_{\ell,R}$ is a geometric multiple of a flag atom.
From Lemma \ref{lem:grouping-particles}, it suffices to prove that there exist functions $b_{\ell,R}$ in $\Dom(\HLap^{M}\VLap^{N})$, for all $R \in \rect_\ell$,  such that
\begin{enumerate}
\item[(A1)] $a_{\ell,R}= \HLap^M \VLap^N b_R$ and $\supp b_R \subseteq R^*$;  and
\item[(A2)] for all sign sequences $\sigma: \rect_\ell \to \{\pm1\}$, the sum $\sum_{R \in \rect_\ell} \sigma_{\ell,R} \fn a_{\ell,R}$ converges in $\Leb^2(\Hn)$, to $a_{\sigma}$ say, and
$\norm{ a_\sigma }_{\Leb^2(\Hn)} \lesssim \abs|E|^{-1/2} $.
\end{enumerate}

Suppose that $\wid(R) = q$ and $\heit(R) = h$.
Evidently, for each $\ell \in \Z$ and $R \in \rect_\ell$,
\[
a_{\ell, R}: = \HLap^M \VLap^N b_{\ell, R},
\]
where
\begin{equation}\label{supp bR}
\begin{aligned}
b_{\ell,R}
:= \frac{1}{\lambda_\ell} \iiint_{\Tau(R)}
   f \Hconv \phi_{r,s}(g) \fn{}_g\ancestor\psi_{r,s} \wrt g\,\frac{dr}{r} \,\frac{ds}{s} \,.
\end{aligned}
\end{equation}

By construction, when $(g,r,s) \in \Tau(R)$, $ \supp {}_g\ancestor\psi_{r,s}
\subseteq g  T(o,r,s)
\subseteq R  T(o,q,h)$, so
\begin{equation}\label{eq1005}
\begin{aligned}
\supp  b_{\ell, R}\subseteq R^{*}.
\end{aligned}
\end{equation}

Our next step is to  take a sign sequence $\sigma: \rect_\ell \to \{\pm1\}$ and estimate the $\Leb^2(\Hn)$ norm of $\sum_{R \in \rect_\ell} \sigma_{\ell,R} \fn a_{\ell,R}$.
For all $h \in \fnspace{C}_c^\infty(\Hn)$ such that $\norm{ h }_{\Leb^2(\Hn)} \leq 1$,
\begin{align*}
&\Bigl|\int_{\Hn}
\sum_{R \in \rect_\ell} \sigma_{\ell,R} \fn a_{\ell,R}(g') h(g')\wrt g' \Bigr| \\
&\qquad = \Biggl| \frac{1}{\lambda_{\ell}}
\sum_{R \in \rect_\ell} \sigma_{\ell,R}
\iiint_{\Tau(R)} \int_{\Hn} f \Hconv \phi_{r,s}(g)  {}_g\psi_{r,s}(g') h(g') \wrt g' \,\frac{ds}{s} \frac{dr}{r} \wrt g \Biggr|\\
&\qquad = \Biggl| \frac{1}{\lambda_{\ell}}
\sum_{R \in \rect_\ell} \sigma_{\ell,R}
\iiint_{\Tau(R)}  h \Hconv \oper{R}\psi_{r,s}(g)  f \Hconv \phi_{r,s}(g)
\,\frac{ds}{s} \frac{dr}{r} \wrt g \Biggr|\\
&\qquad = \Biggl| \frac{1}{\lambda_{\ell}}
\sum_{R \in \rect_\ell} \sigma_{\ell,R}
\iiint_{\Hn \times \R^+ \times \R^+}  h \Hconv \oper{R}\psi_{r,s}(g)
f \Hconv \phi_{r,s}(g) \\
&\qquad\qquad \times \indifn_{\Tau(R)} (g,r,s)
\,\frac{ds}{s} \frac{dr}{r} \wrt g \Biggr|\\
&\qquad\leq \frac{1}{\lambda_{\ell}}
\biggl( \sum_{R \in \rect_\ell} \iiint_{\Hn \times \R^+\times \R^+}
 |h \Hconv \oper{R}\psi_{r,s}(g)|^2
    \indifn_{\Tau(R)}(g,r,s) \,\frac{ds}{s} \frac{dr}{r} \wrt g  \biggr)^{1/2}\\
&\qquad\qquad \times \biggl( \sum_{R \in \rect_\ell}
\iiint_{\Hn \times \R^+\times \R^+} \abs|f \Hconv \phi_{r,s}(g)|^2
\indifn_{\Tau(R)}(g,r,s) \,\frac{ds}{s} \,\frac{dr}{r} \wrt g \biggr)^{1/2} \\
&\qquad\leq \frac{1}{\lambda_{\ell}}
\biggl( \iiint_{\Hn \times \R^+\times \R^+}
 |h \Hconv \oper{R}\psi_{r,s}(g)|^2
  \,\frac{ds}{s} \,\frac{dr}{r}\wrt g  \biggr)^{1/2}\\
&\qquad\qquad \times \biggl( \sum_{R \in \rect_\ell}
\iiint_{\Hn \times \R^+\times \R^+} \abs|f \Hconv \phi_{r,s}(g)|^2
\indifn_{\Tau(R)}(g,r,s) \,\frac{ds}{s} \,\frac{dr}{r} \wrt g \biggr)^{1/2} \\
&\qquad\lesssim  \frac{1}{ \lambda_{\ell}}  \norm{h}_{\Leb^2(\Hn)} \norm{ \biggl( \sum_{R \in \rect_\ell} \iint_{\R^+\times \R^+}
\abs|f \Hconv \phi_{r,s}(\cdot)|^2 \indifn_{\Tau(R)}(\cdot ,r,s)
\,\frac{ds}{s} \,\frac{dr}{r} \biggr)^{1/2} }_{\Leb^2(\Hn)}\\
&\qquad\leq  |\tilde{E}_\ell|^{-{1}/{2}},
\end{align*}
by the Cauchy--Schwarz inequality, the square function estimate of Corollary \ref{cor:L2-norm-of-sqfnop-bounded} and the definition of $\lambda_\ell$.
It follows that $\norm{ \sum_{R \in \rect_\ell} \sigma_{\ell,R} \fn a_{\ell,R} }_{\Leb^2(\Hn)} \lesssim |\tilde{E}_\ell|^{-1/2}$, whence $a$ is a multiple of an atom, and the multiple depends only on $M$, $N$, $\cdim$ and $\bsym\phi$.

Finally, we verify the convergence of the series $\sum_{\ell}|\lambda_\ell|$.
To do this, we first fix $\ell\in\Z$, and observe that if $R \in \rect$ and $(g', r,s) \in \Tau(R)$, then, from \eqref{eq:size-of-rectangle},
$T(g',r,s) \subseteq R^* \subseteq \tilde{E}_\ell$, whence
\[
\abs| \tilde{E}_\ell \cap T(g',r,s)|
= \abs| T(g',r,s) |
= \abs| T(o,r,s) |,
\]
while, by definition of $\rect_\ell$ and \eqref{eq:B-ell-factor},
\[
\abs| E_{\ell+1} \cap T(g',r,s) |
\leq \abs| R^* \cap E_{\ell+1} |
\leq \frac{1}{3 \kappa^{\hdim}} \abs| R^* |
\leq \frac{1}{2} \abs| T(o,r,s) | .
\]
It follows that
\[
\frac{\abs| (\tilde{E}_\ell \setminus E_{\ell+1}) \cap
    T(g', r,s) |}
        { \abs| T(g', r,s) |}
        \geq \frac{1}{2} \,.
\]
Now by the definitions of $\rect_\ell$, the Lusin--Littlewood--Paley area function, and $E_\ell$,
\begin{align*}
&\sum_{R\in \rect_\ell}
\iiint_{\Tau(R)}\abs| f \Hconv \phi_{r,s} (g')|^2
\,\frac{ds}{s} \,\frac{dr}{r} \wrt g' \\
&\qquad\leq 2 \sum_{R\in \rect_\ell}
\iiint_{\Tau(R)} \abs| f \Hconv \phi_{r,s} (g')|^2
\frac{\abs| (\tilde{E}_\ell \setminus E_{\ell+1}) \cap
    T(g' ,r,s) |}
        { \abs| T(g',r,s) |}
\,\frac{dr}{r} \,\frac{ds}{s} \wrt g'\\
&\qquad\leq 2
\iiint_{\Hn \times \R^+ \times\R^+} \abs| f \Hconv \phi_{r,s} (g')|^2
\frac{\abs| (\tilde{E}_\ell \setminus E_{\ell+1}) \cap
    T(g' ,r,s) |}
        { \abs| T(g' ,r,s) |}
\,\frac{dr}{r} \,\frac{ds}{s} \wrt g'\\
&\qquad = 2 \int_{\tilde{E}_\ell\backslash E_{\ell+1}} \iiint_{\Hn\times\R^{+}\times\R^{+}} \abs| f \Hconv \phi_{r,s} (g')|^2
\frac{ \indifn_{
     T(o,r,s)} ( g'  g^{-1}) }
        { \abs| T(o,r,s) |}
\,\frac{dr}{r} \,\frac{ds}{s} \wrt g'\wrt g\\
&\qquad = 2 \int_{\tilde{E}_\ell\backslash E_{\ell+1}} \iiint_{\Gamma(g)} \frac{1}{\abs|T(o,r,s) |} \abs| f \Hconv \phi_{r,s} (g')|^2
\frac{ 1 }
        { \abs| T(o,r,s) |}
\,\frac{dr}{r} \,\frac{ds}{s} \wrt g'\wrt g\\
&\qquad = 2\int_{\tilde{E}_\ell\backslash E_{\ell+1}}
\abs| \areaopphi (f)(g) |^2 \wrt g\\
&\qquad \leq  2^{2\ell +3} |\tilde{E}_\ell|.
\end{align*}

Hence
\begin{equation}\label{eq:sum-lambda-estimate}
\begin{aligned}
\sum_{\ell\in\Z}|\lambda_\ell| 
&\leq \sum_{\ell\in\Z} \norm{ \biggl( \sum_{R\in \rect_\ell}
\iint_{\R^{+}\times\R^{+}} \abs|f \Hconv \phi_{r,s}(\cdot) |^2 \indifn_{\Tau(R)}(\cdot,r,s)
\,\frac{dr}{r} \frac{ds}{s} \biggr)^{1/2} }_{\Leb^2(\Hn)} \\
&\qquad\times \abs|\tilde{E}_\ell|^{1/2}\\
&= \sum_{\ell\in\Z} \biggl( \sum_{R\in \rect_\ell}\iiint_{\Tau(R)}
\abs|f \Hconv \phi_{r,s}(g') |^2
\wrt g' \,\frac{dr}{r} \,\frac{ds}{s}\biggr)^{1/2}  |\tilde{E}_\ell|^{1/2}\\
&\leq \sum_{\ell\in\Z}2^{\ell+3/2}|\tilde{E}_\ell|\lesssim \norm{ \areaoppsi (f) }_{\Leb^1(\Hn)} \\
&=\norm{ f }_{\areaHardyphi(\Hn)}.
\end{aligned}
\end{equation}

It remains to remove an assumption that we made earlier in the proof.
If $\ancestor{\psi}\one$ is not supported in $B\one(o,1)$, we use Lemma \ref{lem:molecules-are-sums} to write $\ancestor{\psi}\one$ as a sum $\sum_{i\in \N} [\ancestor{\psi}^{(1,i)}]_{2^i}$
of dilates of functions $\ancestor{\psi}^{(1,i)}$ that are supported in $B\one(o,1)$ and such that $\norm{\ancestor{\psi}^{(1,i)}}_{(Q)} \lesssim 2^{-i} \norm{\ancestor{\psi}\one}_{(Q)}$; we treat $\ancestor\psi\two$ similarly.

Then we replace \eqref{eq:start-of-area-implies-atom} by
\begin{equation*}
\begin{aligned}
f
&= \int_{\Hn} \iint_{\R^+\times\R^+}
   f \Hconv \phi_{r,s}(g) \sum_{i,j \in \N} {}_g\psi^{(i,j)}_{2^i r,2^js} \,\frac{ds}{s}\,\frac{dr}{r}\wrt g \\
&= \sum_{i,j \in \N} \sum_{\ell\in\Z} \sum_{R\in \oper{B}_\ell} \iiint_{\Tau(R)}
   f \Hconv \phi_{2^{-i}r,2^{-j}s}(g) \fn{}_g\psi^{(i,j)}_{r,s} \,\frac{dr}{r}\,\frac{ds}{s}\wrt g \\
&= \sum_{i,j \in \N} \sum_{\ell \in \Z} \lambda^{(i,j)}_\ell
   \sum_{R \in \rect_\ell} a^{(i,j)}_{\ell,R} ,
\end{aligned}
\end{equation*}
and treat each
$\sum_{\ell \in \Z} \lambda^{(i,j)}_\ell  \sum_{R \in \rect_\ell} a^{(i,j)}_{\ell,R}$
much as we treated
$\sum_{\ell \in \Z} \lambda_\ell \sum_{R \in \rect_\ell} a_{\ell,R}$ before.

This completes the proof of Theorem \ref{thm:area-Hardy-implies-atom-Hardy}. 
\end{proof}

We remark that, when $f \in \Leb^2(\Hn)$, the atomic decomposition $\sum_{\ell\in\Z} \lambda_\ell a_\ell$ in the proof above also converges to $f$ in the $\Leb^2(\Hn)$ norm.
To prove this, we only need to show that $\|\sum_{|\ell|>L} \lambda_\ell a_\ell\|_{\Leb^2(\Hn)}\rightarrow 0$ as $L$ tends to infinity.
For this, first note that
\begin{align*}
&\Bigl| \big\langle \sum_{|\ell|>L} \lambda_\ell a_\ell, h\big\rangle \Bigr| \\
&\qquad\leq \biggl| \sum_{R\in \rect_\ell} \iiint_{\Tau(R)}
h \Hconv \phi_{r,s} (g')
\sum_{|\ell|>L} \lambda_{\ell} a_{\ell} \Hconv \phi_{r,s} (g')
\wrt g' \,\frac{dr}{r} \,\frac{ds}{s} \biggr|\\
&\qquad\leq \int_{\Hn}\biggl( \sum_{R\in \rect_\ell}
\iint_{\R^+\times\R^+} \abs|h \Hconv \phi_{r,s}(g')|^2
\indifn_{\Tau(R)}(g',r,s) \,\frac{dr}{r} \frac{ds}{s} \biggr)^{1/2} \\
&\qquad\qquad \times \biggl( \sum_{R\in \rect_\ell}
\iint_{\R^+\times \R^+} \biggl| \sum_{|\ell|>L} \lambda_{\ell}
    a_{\ell} \Hconv \phi_{r,s}(g') \biggr|^2 \indifn_{\Tau(R)}(g',r,s)
\,\frac{dr}{r} \,\frac{ds}{s} \biggr)^{1/2} \wrt g'\\
&\qquad\leq C\norm{ h }_{\Leb^2(\Hn)} \\
&\qquad\qquad \times \bigg\|\biggl( \sum_{R\in \rect_\ell}
\iint_{\R^+\times \R^+} \biggl| \sum_{|\ell|>L} \lambda_{\ell}
a_{\ell} \Hconv \phi_{r,s}(g') \biggr|^2 \indifn_{\Tau(R)}(g',r,s)
\,\frac{dr}{r} \,\frac{ds}{s} \biggr)^{1/2}\bigg\|_{\Leb^2(\Hn)}\\
&\qquad\to 0
\end{align*}
when $L$ tends to $\infty$, as the area integral operator is bounded on $\Leb^2(\Hn)$.
Since
\[
\biggl\| \sum_{|\ell|>L} \lambda_\ell a_\ell \biggr\|_{\Leb^2(\Hn)}
 =\sup_{h: \norm{ h }_{\Leb^2(\Hn)=1 }}
\Bigl| \big\langle \sum_{|\ell|>L} \lambda_\ell a_\ell, h\big\rangle \Bigr|,
\]
the sum $\sum_{\ell\in\Z} \lambda_\ell a_\ell$ converges to $f$ in the $\Leb^2(\Hn)$ norm.

Finally, we observe that the same method also proves the following result.

\begin{theorem}\label{thm:area-Hardy-implies-moment-atom-Hardy}
Suppose that $M, N \in \N^+$, and that $\phi\one$ and $\phi\two $ are Poisson bounded, as in Definition \ref{def:Poisson-bounds}, and w-invertible as in Definition \ref{def:w-invertible}, with w-inverses $\psi\one$ and $\psi\two$ of the form $\oper{D} \ancestor{\psi}\one$ and $\oper{T}^{2N}\ancestor{\psi}\two$, where $\oper{D}$ is the $2M$-fold product tensor $\Hnabla \otimes \dots \otimes \Hnabla$ and $\ancestor{\phi}\one$ is a dual tensor.
Then there is a constant $C$, depending on $\cdim$ and $\bsym\phi$, such that for all $f\in \areaHardyphi(\Hn)$, there exist numbers $\lambda_j$ and $(1, 2, M, N ,3)$ moments atoms $a_j$, for all $j \in \N$, such that $f \sim \sum\lambda_j a_j$, and
\[
\norm{ f }_{\matomHardy(\Hn)}
\leq \sum_{j\in\N}  |\lambda_j|
\leq C\norm{ f }_{\areaHardyphi (\Hn)}.
\]
Hence $\areaHardyphi(\Hn) \subseteq \matomHardy(\Hn)$.
\end{theorem}

The point of this is that we may be able to show that $\phi\one$ has a w-inverse of the form in this theorem, but not of the form in Theorem \ref{thm:area-Hardy-implies-atom-Hardy}.
However, if $f \in \matomHardy(\Hn)$, then $f \in \areaHardyphi(\Hn)$, where $\phi\one \in \fnspace{A}(\Hn)$, the space mentioned at the start of Section \ref{ssec:spectral-theory}, and for such $\bsym\phi$, it is possible to find a w-inverse of the form in \ref{thm:area-Hardy-implies-atom-Hardy}, and it follows that $f \in \atomHardy(\Hn)$.
Hence $\matomHardy(\Hn)$ and $\atomHardy(\Hn)$ coincide.

\subsection{Remarks on a ``discrete area function''}
\label{ssec:discreteareaspace}
We could replace the integrals over $r$ and $s$ in the definition of the area function $\areaopphi$ (Definition \ref{def:areaHardyphi}) by sums, taking $r$ and $s$ to be $2^j$ and $2^k$, or more generally $\alpha^{j}$ and $\beta^{k}$.

The proofs that $\areaopphi(f) \in \Leb^1(\Hn)$ if $f \in \atomHardy(\Hn)$ in Section \ref{ssec:atom-implies-area} and that $f \in \atomHardy(\Hn)$ if $\areaopphi(f) \in \Leb^1(\Hn)$ in Section \ref{ssec:area-implies-atom} go through with minor modifications, and show that the atomic Hardy space may also be characterised by a ``discrete area function''.
We spare the reader the details!


\section{The square function Hardy space}\label{sec:sqfn}
We recall the definitions of the continuous and discrete Littlewood--Paley square functions.
As usual, given functions $\phi\one$ on $\Hn$ and $\phi\two $ on $\R$, we write $\phi\one_r$ and $\phi\two _s$ for their normalised dilates, and $\phi_{r,s}$ for $\phi\one_r \Vconv \phi\two _s$.

\begin{definition}\label{def:sqfnHardyphi}
Suppose that $\phi\one$ and $\phi\two $ have mean $0$ and are Poisson-bounded, as in Definition \ref{def:Poisson-bounds}.
Suppose also that $\bsym\phi$ is w-invertible (continuously or discretely, according to the Hardy space that we are going to define).
For $f \in \Leb^1(\Hn)$, we define the \emph{continuous and discrete Littlewood--Paley square functions} $\ctssqfnopphi(f)$  and $\dissqfnopphi(f)$ associated to $\phi\one$ and $\phi\two $ by
\begin{equation*}
\begin{aligned}
&\ctssqfnopphi (f)(g)
:= \biggl( \iint_{\R^{+} \times\R^{+}}
\bigl| f \Hconv \phi\one_r \Vconv \phi\two _s(g)  \bigr|^2
    \,\frac{dr}{r} \,\frac{ds}{s} \biggr)^{1/2} \\
&\dissqfnopphi (f)(g)
:= \biggl( \sum_{(m,n) \in \Z \times \Z}
\bigl| f \Hconv \phi\one_{2^m} \Vconv \phi\two _{2^n}(g)  \bigr|^2
    \biggr)^{1/2}
\end{aligned}
\end{equation*}
for all $g \in \Hn$.
We  define the \emph{square function Hardy spaces} $\ctssqfnHardyphi(\Hn)$ and $\dissqfnHardyphi(\Hn)$, often abbreviated to $\ctssqfnHardy(\Hn)$ and $\dissqfnHardy(\Hn)$, to be the set of all $f\in \Leb^1(\Hn)$ for which $\norm{ \ctssqfnopphi (f) }_{\Leb^1(\Hn)}<\infty$ or $\norm{ \dissqfnopphi (f) }_{\Leb^1(\Hn)}<\infty$, with  norms
\[
\norm{ f }_{\ctssqfnHardyphi(\Hn)}:=\norm{ \ctssqfnopphi (f) }_{\Leb^1(\Hn)}
\]
and
\[
\norm{ f }_{\dissqfnHardyphi(\Hn)}:=\norm{ \dissqfnopphi (f) }_{\Leb^1(\Hn)}.
\]
\end{definition}

We remark that $\norm{ \cdot }_{\ctssqfnHardyphi(\Hn)}$ and $\norm{ \cdot }_{\dissqfnHardyphi(\Hn)}$ are indeed norms, for much the same reason as $\norm{ \cdot }_{\areaHardy(\Hn)}$ is a norm (see the beginning of Section \ref{sec:Lusin}).

Han, Lu and Sawyer studied the square function Hardy spaces (but using left convolutions rather than right convolutions), and showed that these do not depend on the choice of $\bsym\phi$.
However, it is not clear to us that their techniques cover all the pairs $\bsym\phi$ that we consider, as at least some of their arguments assume that $\phi\one$ lies in the space $\fnspace{A}(\Hn)$ mentioned in Section \ref{ssec:Calderon}.

We showed in Lemma \ref{lem:ctssqfn-dominates-dissqfn} that every continuous square function $\ctssqfnopphi(f)$ dominates a discrete square function $\ctssqfnoppsi(f)$, and it follows that $\ctssqfnHardyphi(\Hn) \subseteq \dissqfnHardypsi(\Hn)$.
In Section \ref{sec:atomic} we established that $\atomHardy(\Hn) \subseteq \ctssqfnHardy(\Hn)$ and $\atomHardy(\Hn) \subseteq \dissqfnHardy(\Hn)$.
We may modify the proof that all $f$ in the area function Hardy space admit atomic decompositions to show that if $\dissqfnoppsi(f)$ is integrable then $f$ has an atomic decomposition.
Hence $\ctssqfnHardyphi(\Hn) \subseteq \dissqfnHardypsi(\Hn) \subseteq \atomHardy(\Hn)$.


\section{The maximal function Hardy spaces}\label{sec:maximal-Hardy-spaces}

In this section, we characterise the flag Hardy space by maximal functions.
There are two aspects of this.
First, we show that we obtain the same Hardy space irrespective of whether we define it using a radial, nontangential or grand maximal function.
Then we show that the Hardy space defined using the nontangential maximal function associated to the Poisson kernel is a subspace of the area function Hardy space.
More precisely, by combining the results of Sections \ref{sec:Lusin} and \ref{sec:atomic} with some obvious inclusions, we know that
\[
\areaHardy(\Hn) \subseteq \atomHardy(\Hn) \subseteq \gmaxHardy(\Hn)
\subseteq \nontanHardy(\Hn) \subseteq \radialHardy(\Hn).
\]
We complete the identification of these five spaces by showing that 
\[
\radialHardy(\Hn) \subseteq \nontanHardy(\Hn)
\qquad\text{and}\qquad
\nontanHardy(\Hn) \subseteq \areaHardy(\Hn).
\]
All inclusions are continuous, with corresponding norm inequalities.

We begin by recalling some relevant definitions.
Take Poisson bounded functions $\phi\one$ on $\Hn$ and $\phi\two $ on $\R$ both of which have total integral $1$, and write $\phi_{r,s}$ for the convolution $\phi\one_r \Vconv \phi\two _s$ of their normalised dilates.

\begin{definition}\label{def:radial-max-op}
Define the \emph{radial maximal operator} associated to $\bsym\phi$ by
\begin{align*}
\radialmaxphi(f)(g)
:=\sup_{r,s \in \R^+} |f \Hconv \phi_{r,s} (g)|
\qquad\forall g \in \Hn \quad \forall f \in \Leb^1(\Hn),
\end{align*}
and let $\radialHardyphi(\Hn)$ be the set of all $f\in \Leb^1(\Hn)$ such that $\norm{ \radialmaxphi(f) }_{\Leb^1(\Hn)}$ is finite, with norm
\begin{equation}\label{eq:radial-Hardy}
\norm{ f }_{\radialHardy\phi(\Hn)} :=\norm{ \radialmaxphi(f) }_{\Leb^1(\Hn)}.
\end{equation}
\end{definition}

Recall that $\Gamma_{\beta,\gamma}(g):=\{ (g',r,s) \in \Hn \times \R^{+} \times \R^{+}: g' \in g  B\one(o, \beta r)  B\two (0, \gamma s) \}$ for all $\beta, \gamma \in \R^+$.

\begin{definition}\label{def:nontan-Hardy}
Define the \emph{nontangential maximal operator} associated to $\beta$, $\gamma$ and $\bsym\phi$ by
\begin{align*}
\nontanmaxopphi[,\beta,\gamma](f)(g)
= \sup \{
\abs| f \Hconv \phi_{r,s} (g') |  : (g',r,s) \in \Gamma_{\beta,\gamma}(g) \}
\end{align*}
for all $g \in \Hn$  and all $f \in \Leb^1(\Hn)$,
and let $\nontanHardyphi[,\beta,\gamma](\Hn)$ be the linear space of all
$f\in \Leb^1(\Hn)$ such that $\norm { \nontanmaxopphi[,\beta,\gamma](f) }_{\Leb^1(\Hn)}$ is finite, with norm
\begin{equation}\label{eq:nontanHardy}
\norm{f}_{\nontanHardyphi[,\beta,\gamma](\Hn)} := \norm{ \nontanmaxopphi[,\beta,\gamma](f) }_{\Leb^1(\Hn)}.
\end{equation}
\end{definition}

We are going to see shortly that this space is independent of $\beta$ and $\gamma$, so we usually take both these to be $1$ and omit the corresponding suffices.
Later we show that $\bsym\phi$ is also irrelevant.

\begin{definition}\label{def:gmax-Hardy-2}
Let $\family$ be a Poisson bounded family of pairs of functions, as in Definition \ref{def:Poisson-bounds}, which contains at least one pair of functions $(\phi\one, \phi\two)$ such that $\int_G \phi\one(g) \wrt g =  1$ and $\int_{\R} \phi\two(g_2) \wrt g_2 = 1$.
Define the \emph{grand maximal operator} associated to $\family$ by
\begin{align*}
\gmaxop (f) (g)
:= \sup_{\bsym\phi \in \family} \sup_{r,s \in \R^+}
\abs|f \Hconv \phi_{r,s} (g)|
\qquad\forall g \in \Hn \quad \forall f \in \Leb^1(\Hn),
\end{align*}
and $\gmaxHardy(\Hn)$ to be the space of all $f\in \Leb^1(\Hn)$ such that $\norm{ \gmaxop (f) }_{\Leb^1(\Hn)}$ is finite, with norm
\begin{equation}\label{eq:gmax-Hardy-2}
\norm{ f }_{\gmaxHardy(\Hn)}:=\norm{ \gmaxop (f) }_{\Leb^1(\Hn)}.
\end{equation}
\end{definition}

If we take a ``large enough'' family $\family$, the pairs of heat kernels $(h\one_1, h\two _1)$ and Poisson kernels $(p\one_1, p\two _1)$ and all their translates by elements of a bounded subsets of $\Hn$ and of $\R$ belong to $\family$.

It is clear that the grand maximal operator dominates the nontangential maximal operator, which in turn dominates the radial maximal operator.
Further, we may dominate $\gmaxop$ by the flag maximal operator $\oper{M}_{\flag}$ (see Section \ref{ssec:heat-Poisson}), so $\gmaxop$ is $\Leb^p$ bounded for all $p \in (1, \infty]$.

We check that we are dealing with norms.
It is obvious that the expressions defined in \eqref{eq:radial-Hardy}, \eqref{eq:nontanHardy} and \eqref{eq:gmax-Hardy-2} are seminorms.
If $f \in \Leb^1(\Hn)$ and $\radialmaxphi(f) = 0$, then $f \Hconv \phi_{r,s} = 0$ for all $r,s \in \R^+$.
Now  $f = \lim_{r,s \to 0} f \Hconv \phi_{r,s}$ in $\Leb^1(G)$, so $f = 0$ by a standard approximate identity argument.

\subsection{The tangential maximal function}
We begin our study of maximal functions by showing that the behaviour of the nontangential maximal function is to a degree independent of the apertures $\beta$ and $\gamma$ of the cone  in its definition.
We take a continuous function $F: \Hn \times \R^+ \times R^+ \to [0,\infty)$, and define
\begin{equation*}
F_{\alpha,\beta}(g) = \sup\{ F(g',r,s) : g' \in T(g,\alpha r, \beta s) \} .
\end{equation*}
Then Lemma \ref{lem:apertures-maxfn} shows that, for all $\alpha, \alpha', \beta, \beta',\epsilon \in \R^+$ such that $\epsilon$ is small and $\alpha < \alpha'$ and $\beta <  \beta'$,
\begin{equation}\label{eq:cone-control}
\begin{aligned}
\| F_{\alpha,\beta} \|_{\Leb^1(\Hn)}
\leq \|F_{\alpha',\beta'} \|_{\Leb^1(\Hn)}
\lesssim_{\epsilon} \max \left\{ \left(\frac{\alpha'}{\alpha}\right)^{\hdim} , \left(\frac{\alpha'}{\alpha}\right)^{2\cdim}\frac{\beta'}{ \beta} \right\}^{1+\epsilon}   \|F_{\alpha,\beta}\|_{\Leb^1(\Hn)}  .
\end{aligned}
\end{equation}
This shows that changing the apertures $\beta$ and $\gamma$ of the flag cone in the definition of the nontangential maximal function does not change the Hardy space that arises.

This lemma enables us to control the \emph{tangential maximal operator} $\tanmaxopphi$, defined by
\[
\begin{aligned}
\tanmaxopphi(f)(g) 
&= \sup\Bigl\{ \abs| f \Hconv \phi_{r,s} (gg_1g_2)) |
\Bigl(1 + \frac{\norm{g_1}}{r}\Bigr)^{-\hdim-\epsilon}
\Bigl(1+ \frac{1+\norm{g_2}}{s}\Bigr)^{-1-\epsilon} : \\
&\qquad
r,s \in \R^+ , g_1 \in \Hn, g_2 \in \R \Bigr\};
\end{aligned}
\]
here $\epsilon $ is small and positive.
Note that, given $g_3 \in \Hn$, there are infinitely many $g_1 \in \Hn$ and $g_2 \in \R$ such that $g_3 = g_1 g_2$.
Of these, $(1 + \sfrac{\norm{g_1}}{r})^{-\hdim-\epsilon}
(1+ \sfrac{\norm{g_2}}{s})^{-1-\epsilon}$ is maximal when $\norm{g_1}$ is minimal.
As noted earlier, in our discussion of the geometry of tubes, if $g_3$ is written as $(z,t)$, then this happens when $\norm{g_1} = |z|_\infty$ and $|g_2| = 0$ if $|t| \leq |z|_\infty^2$ and $|g_2| = |t| - |z|_\infty^2$ otherwise.

\begin{corollary}\label{cor:tan-max-fun}
The following inequality holds:
\[
\norm{\tanmaxopphi(f)}_{\Leb^1(\Hn)}
\lesssim \norm{\nontanmaxopphi(f)}_{\Leb^1(\Hn)} .
\]
\end{corollary}
\begin{proof}
Observe that $\Hn = \bigcup_{i \in \N} A^{(1,i)}$ and $\R = \bigcup_{j \in \N} A^{(2,j)}$, where
\begin{align*}
A^{(1,0)} = B\one(o,1)
&\qquad\text{and}\qquad
A^{(1,i)} = B\one(o,2^i) \setminus B\one(o,2^{i-1})
\qquad\forall i \in \N^+\\
\noalign{\noindent{\text{and}}}
A^{(2,0)} = B\two (0,1)
&\qquad\text{and}\qquad
A^{(2,j)} = B\two (0,2^j) \setminus B\two (0,2^{j-1})
\qquad\forall j \in \N^+.
\end{align*}
Then
\begin{align*}
\tanmaxopphi(f)(g)
&\leq \sup\Bigl\{ \abs| f \Hconv \phi_{r,s} (gg_1g_2)) |
\Bigl(1 + \frac{\norm{g_1}}{r}\Bigr)^{-\hdim-\epsilon}
\Bigl(1+ \frac{\norm{g_2}}{s}\Bigr)^{-1-\epsilon}: \\
&\qquad  g_1 \in A^{(1,i)}(r), g_2 \in A^{(2,j)}(s), i,j \in \N ,r,s \in \R^+  \Bigr\} \\
&\leq \sup\Bigl\{ \nontanmaxopphi[,2^i,2^j] (g)
\bigl(1 + 2^{i-1} \bigr)^{-\hdim-\epsilon}
\bigl(1+ 2^{j-1} \bigr)^{-1-\epsilon}: i,j \in \N ,r,s \in \R^+  \Bigr\} \\
&\lesssim \sum_{i,j \in \N} 2^{-i(\hdim+\epsilon) -j(1+\epsilon)} \nontanmaxopphi[,2^i,2^j](f)(g)
\qquad\forall g \in \Hn,
\end{align*}
so, from \eqref{eq:cone-control}, with $F(g,r,s)$ taken to be $f \Hconv \phi_{r,s}(g)$, and a suitable choice of $\epsilon$,
\[
\norm{\tanmaxopphi(f)}_{\Leb^1(\Hn)}
\lesssim \sum_{i,j\in \N} 2^{-i(\hdim+\epsilon) -j(1+\epsilon)}  \norm{\nontanmaxopphi[,2^{i},2^j](f)}_{\Leb^1(\Hn)}
\lesssim_\epsilon \norm{\nontanmaxopphi(f)}_{\Leb^1(\Hn)} ,
\]
as required.
\end{proof}

To state the next lemma, we need a little more notation.
We write $\familycz\one$ for a family of functions in $\fnspace{C}^\infty(\Hn)$ that are supported in $B\one(o,1)$ and have mean zero, and $\familycz\two $ for a family of functions in $\fnspace{C}^\infty(\R)$ that are supported in $B\two (0,1)$ and have mean zero.

\begin{lemma}\label{lem:grad bd by non-tangential}
Suppose that $\phi\one : \Hn \to \C$ and $\phi\two : \R \to \C$ are Poisson bounded and w-invertible, with w-inverses of the form $\oper{D}\tilde\phi\one$ and $\oper{T}\tilde\phi\two$, where $\tilde{\phi}\one$ and $\tilde{\phi}\two$ are Poisson bounded and $\oper{D}$ is a differential operator of homogeneous degree $2\hdim+2$.
Suppose also that $\family$ is one of $\familycz\one \times \familycz\two $, $\familycz\one \times \{\phi\two \}$ and $ \{ \phi\one \} \times \familycz\two $.
Then
\begin{align}\label{claim grad bd by non-tangential-1}
 \gmaxop (f)(g)
\lesssim  \tanmaxopphi(f)(g)
\qquad\forall g \in \Hn
\end{align}
for all $f \in \Leb^2(\Hn)$.
\end{lemma}

\begin{proof}
The key to this lemma is the following calculation, inspired by \cite{CFefSte72}.
Take $\bsym\psi$ in $\familycz$.
If we can write
\[
\psi\one
= \int_{\R^+} \phi\one_{r} \Hconv \omega^{(1,r)} \,\frac{dr}{r}
\qquad\text{and}\qquad
\psi\two
= \int_{\R^+} \phi\two _{s} \Vconv \omega^{(2,s)} \,\frac{ds}{s},
\]
for some $\omega^{(1,r)} \in \fnspace{P}(\Hn)$, $\omega^{(2,s)} \in \fnspace{P}(\R)$ and $r, s \in \R^+$, where $C(\bsym\psi)$, given by
\begin{equation}\label{eq:C-psi}
\begin{aligned}
C(\bsym\psi) 
&:=
\lpar \int_{\R^+}\int_{\Hn} \bigabs|\omega^{(1,r)}(g_1)| \Bigl(1+\frac{\norm{g_1}}{r}\Bigr)^{\hdim+\epsilon} \wrt g_1 \,\frac{dr}{r} \rpar \\
&\qquad \times \lpar \int_{\R^+} \int_{\R} \bigabs|\omega^{(2,s)}(g_2)| \Bigl(1+\frac{\norm{g_2}}{s}\Bigr)^{1+\epsilon} \wrt g_2 \,\frac{ds}{s} \rpar,
\end{aligned}
\end{equation}
is finite, then it will follow that
\begin{align*}
&\sup_{r',s'\in\R^+} \abs| f \Hconv \psi_{r'}\one \Vconv \phi_{s'}\two (g) | \\
&\qquad= \sup_{r',s'\in\R^+} \biggl| \iiiint_{\R^+\times\R^+\times\Hn\times\R}
 (f \Hconv \phi_{r r'}\one \Vconv \phi_{s s'}\two )(g g_2^{-1} g_1^{-1})   \\
&\qquad\qquad \times \omega_{r'}^{(1,r)} (g_1) \fn \omega_{s'}^{(2,s)}(g_2)
\wrt g_2 \wrt g_1 \,\frac{dr}{r} \,\frac{ds}{s} \biggr| \\
&\qquad\leq \sup_{r',s'\in\R^+} \iiiint_{\R^+\times\R^+\times\Hn\times\R}
\abs| (f \Hconv \phi_{r r'}\one \Vconv \phi_{s s'}\two )(g g_2^{-1} g_1^{-1}) |   \\
&\qquad\qquad \times \abs|\omega_{r'}^{(1,r)} (g_1) \fn \omega_{s'}^{(2,s)}(g_2)|
\wrt g_2 \wrt g_1 \,\frac{dr}{r} \,\frac{ds}{s}  \\
&\qquad\leq
\sup_{r',s'\in\R^+} \iiiint_{\R^+\times\R^+\times\Hn\times\R}
\tanmaxopphi(f) (g) \Bigl(1 + \frac{\norm{g_1}}{rr'}\Bigr)^{\hdim+\epsilon} \Bigl(1+ \frac{\norm{g_2}}{ss'}\Bigr)^{1+\epsilon}  \\
&\qquad\qquad \times \abs|\omega_{r'}^{(1,r)} (g_1) \fn \omega_{s'}^{(2,s)}(g_2)|
\wrt g_2 \wrt g_1 \,\frac{dr}{r} \,\frac{ds}{s}  \\
&\qquad=
C(\bsym\psi) \tanmaxopphi(f) (g) ,
\end{align*}
where $C(\bsym\psi)$ is given by \eqref{eq:C-psi}.
Loosely speaking, for $C(\bsym\psi)$ to be finite, we need $\omega^{(1,r)}(g)$ and $\omega^{(2,s)}(t)$ to decay faster than $\norm{g}^{-2\hdim}$ and $|t|^{-2}$ at infinity.
We seek suitable representations of $\psi\one$ and $\psi\two $.

By hypothesis,
\[
\int_{\R^+} \phi\one_r  \Hconv \oper{D}\tilde{\phi}\one_r \,\frac{dr}{r} = \delta ,
\]
where $\oper{D}$ is a left-invariant differential operator of homogeneous degree $2\hdim+2$.
We take $\omega^{(1,r)}$ to be $\oper{D}\tilde{\phi}\one_r \Hconv \psi $, and then
\[
\psi
= \int_{\R^+} \phi\one_r  \Hconv \omega^{(1,r)} \,\frac{dr}{r} \,.
\]
Note that
\[
\begin{aligned}
&\int_{\R^+} \int_{\Hn} \bigabs|\omega^{(1,r)}(g_1)| \Bigl(1+\frac{\norm{g_1}}{r}\Bigr)^{\hdim+\epsilon} \wrt g_1 \,\frac{dr}{r} \\
&\qquad= \int_{\R^+} \int_{\Hn} \bigabs|\oper{D}\tilde{\phi}\one_r \Hconv \psi (g_1)| \Bigl(1+\frac{\norm{g_1}}{r}\Bigr)^{\hdim+\epsilon} \wrt g_1 \,\frac{dr}{r} \\
&\qquad=  \int_{\R^+} \int_{\Hn} \bigabs|\oper{D}\tilde{\phi}\one \Hconv \psi_{1/r} (g_1)| (1+\norm{g_1})^{\hdim+\epsilon} \wrt g_1 \,\frac{dr}{r} \\
&\qquad= \int_{\R^+} \int_{\Hn} \bigabs|\oper{D}\tilde{\phi}\one \Hconv \psi_{r} (g_1)| (1+\norm{g_1})^{\hdim+\epsilon} \wrt g_1 \,\frac{dr}{r} \,.
\end{aligned}
\]

On the one hand, when $r$ is small, we use Proposition \ref{prop:moments} to write $\psi = \rivf{\oper{E}} \ancestor\psi$, where $\oper{E}$ is a left invariant differential operator of degree $1$.
Then $\psi_r = r\rivf{\oper{E}} (\ancestor\psi_r)$, and
\begin{align*}
&\int_{0}^{1} \int_{\Hn} \bigabs|\oper{D}\tilde{\phi}\one \Hconv \psi_{r} (g_1)| (1+\norm{g_1})^{\hdim+\epsilon} \wrt g_1 \,\frac{dr}{r}  \\
&\qquad= \int_{0}^{1} \int_{\Hn} \bigabs|\oper{D}\tilde{\phi}\one \Hconv \rivf{\oper{E}} (\ancestor\psi_r) (g_1)| (1+\norm{g_1})^{\hdim+\epsilon} \wrt g_1 \wrt r  \\
&\qquad= \int_{0}^{1} \int_{\Hn} \abs|
\oper{E}\oper{D}\tilde{\phi}\one \Hconv \ancestor\psi_r (g_1)| (1+\norm{g_1})^{\hdim+\epsilon} \wrt g_1 \wrt r \\
&\qquad= \int_{0}^{1} \int_{\Hn} \abs|\int_{\Hn} \oper{E}\oper{D}\tilde{\phi}\one (g_1 g_0^{-1}) \fn\ancestor\psi_r (g_0) \wrt g_0| (1+\norm{g_1})^{\hdim+\epsilon} \wrt g_1 \wrt r \\
&\qquad\leq \int_{0}^{1} \int_{\Hn} \int_{\Hn} \bigabs| \oper{E}\oper{D}\tilde{\phi}\one (g_1 g_0^{-1})| \bigabs|\ancestor\psi_r (g_0) | (1+\norm{g_1})^{\hdim+\epsilon} \wrt g_0\wrt g_1 \wrt r \\
&\qquad= \int_{0}^{1} \int_{\Hn} \int_{\Hn} \bigabs| \oper{E}\oper{D}\tilde{\phi}\one (g_1)| \bigabs|\ancestor\psi_r (g_0) | (1+\norm{g_1g_0})^{\hdim+\epsilon} \wrt g_0\wrt g_1 \wrt r \\
&\qquad\leq 2^{\hdim+\epsilon} \int_{0}^{1} \int_{\Hn} \int_{\Hn} \bigabs| \oper{E}\oper{D}\tilde{\phi}\one (g_1)| \bigabs|\ancestor\psi_r (g_0) |  (1+\norm{g_1})^{\hdim+\epsilon} \wrt g_0\wrt g_1 \wrt r \\
&\qquad= \int_{\Hn} \bigabs|\ancestor\psi (g_0)| \wrt g_0 \int_{\Hn} \bigabs|\oper{E}\oper{D}\tilde{\phi}\one (g_1)| (1+\norm{g_1})^{\hdim+\epsilon} \wrt g_1 \,,
\end{align*}
since $1+\norm{g_1g_0} \leq 1+\norm{g_1} + \norm{g_0} \leq 2(1 + \norm{g_1})$ when $\norm{g_0} \leq 1$.
Both integrals are finite; in particular, since $\tilde\phi\one$ is Poisson bounded, and $\oper{E}\oper{D}$ is a differential operator of degree $2\hdim+3$,
\[
\bigabs|\oper{E}\oper{D}\tilde{\phi}\one (g_1)|
\lesssim \max\{1, \norm{g_1} \}^{-3\hdim - 4}
\qquad\forall g_1 \in \Hn.
\]

On the other hand, when $r$ is large, we may write $\oper{D}\tilde{\phi}\one = \oper{D}' \oper{D}''\tilde{\phi}\one$, where $\oper{D}'$ and $\oper{D}''$ are both left invariant differential operators of degrees $\hdim+1$.
Further, $\rivf{\oper{D}}' (\psi_r) = r^{\hdim+1} (\rivf{\oper{D}}'\psi)_r$, so
\begin{align*}
&\int_{1}^{\infty} \int_{\Hn} \bigabs|\oper{D}\tilde{\phi}\one \Hconv \psi_{r} (g_1)| (1+\norm{g_1})^{\hdim+\epsilon} \wrt g_1 \,\frac{dr}{r}  \\
&\qquad= \int_{1}^{\infty} \int_{\Hn} \bigabs|\oper{D}''\tilde{\phi}\one \Hconv \rivf{\oper{D}}' (\psi_r) (g_1)| (1+\norm{g_1})^{\hdim+\epsilon} \wrt g_1 \,\frac{dr}{r}  \\
&\qquad= \int_{1}^\infty \int_{\Hn} \abs|
\oper{D}''\tilde{\phi}\one \Hconv  (\rivf{\oper{D}}' \psi)_r (g_1)| (1+\norm{g_1})^{\hdim+\epsilon} \wrt g_1 \,\frac{dr}{r^{\hdim+2}} \\
&\qquad= \int_{1}^\infty \int_{\Hn} \abs|\int_{\Hn} \oper{D}''\tilde{\phi}\one (g_1 g_0^{-1}) \fn(\rivf{\oper{D}}' \psi)_r (g_0) \wrt g_0| (1+\norm{g_1})^{\hdim+\epsilon} \wrt g_1 \,\frac{dr}{r^{\hdim+2}} \\
&\qquad\leq \int_{1}^\infty \int_{\Hn} \int_{\Hn} \bigabs| \oper{D}''\tilde{\phi}\one (g_1 g_0^{-1})| \bigabs| (\rivf{\oper{D}}' \psi)_r (g_0) | (1+\norm{g_1})^{\hdim+\epsilon} \wrt g_0\wrt g_1 \,\frac{dr}{r^{\hdim+2}} \\
&\qquad= \int_{1}^\infty \int_{\Hn} \int_{\Hn} \bigabs| \oper{D}''\tilde{\phi}\one (g_1)| \bigabs|(\rivf{\oper{D}}' \psi)_r(g_0) | (1+\norm{g_1g_0})^{\hdim+\epsilon} \wrt g_0\wrt g_1 \,\frac{dr}{r^{\hdim+2}} \\
&\qquad= \int_{1}^{\infty }\int_{\Hn} \bigabs|(\rivf{\oper{D}}' \psi)_r(g_0)| (1+\norm{g_0})^{\hdim+\epsilon} \wrt g_0 \,\frac{dr}{r^{\hdim+2}} \\
&\qquad\qquad \times
\int_{\Hn} \bigabs|\oper{D}''\tilde{\phi}\one (g_1)| (1+\norm{g_1})^{\hdim+\epsilon} \wrt g_1
\,,
\end{align*}
since $1+\norm{g_1g_0} \leq 1+\norm{g_1} + \norm{g_0} \leq (1 + \norm{g_1})(1 + \norm{g_0})$.
Both factors are finite; indeed,
\[
\bigabs|\oper{D}''\tilde{\phi}\one (g_1)|
\lesssim \max\{1, \norm{g_1} \}^{-2\hdim - 2}
\qquad\forall g_1 \in \Hn
\]
since $\phi\one$ is Poisson bounded, and
\[
\int_{\Hn} \bigabs|(\rivf{\oper{D}}' \psi)_r(g_0)| (1+\norm{g_0})^{\hdim+\epsilon} \wrt g_0
\lesssim \int_{\Hn}  \chi\one_r (1+\norm{g_0})^{\hdim+\epsilon} \wrt g_0
\leq (1+r)^{\hdim+\epsilon}.
\]

Similar (but easier to prove) formulae hold for the $\R$ variable, and it follows that we can control $C(\bsym\psi)$, uniformly in $\bsym\psi$, thereby proving the lemma for the case where $\family = \familycz\one \times \familycz\two $.

The other two cases are similar but easier.
\end{proof}

\begin{corollary}\label{cor:grad bd by non-tangential}
Suppose that $\phi\one : \Hn \to \C$ and $\phi\two : \R \to \C$ are Poisson bounded and that $\{\phi_r\}$ is an approximate identity for convolution.
Suppose also that $\family$ is a Poisson bounded family of pairs $(\psi\one , \psi\two )$.
Then
\begin{align}\label{claim grad bd by non-tangential}
 \gmaxop (f)(g)
\lesssim  \tanmaxopphi(f)(g)
\qquad\forall g \in \Hn
\end{align}
for all $f \in \Leb^2(\Hn)$.
\end{corollary}

\begin{proof}
We take a general Poisson bounded $\psi\one$ on $\Hn$, with mean $c$.
Then $\psi\one - c \phi\one$ has mean $0$, whence from Corollary \ref{cor:Poisson decomposition} we may write $\psi\one$ as a sum:
\[
\psi\one
= c \phi\one + \sum_{i\in \N}   [{\psi}^{(1,i)}]_{2^i}
\]
where the ${\psi}^{(1,i)}$ have mean equal to $0$.
We do the same with $\phi\two $.
Then
\[
\begin{aligned}
&\abs| f \Hconv \psi\one_r \Vconv \psi\two _s(g)| \\
&\qquad\leq
\abs| f \Hconv \phi\one_r \Vconv \phi\two _s(g)| +
\sum_{j \in \N}
\abs| f \Hconv \phi\one_r \Vconv \psi^{(2,j)}_s(g)| \\
&\qquad\qquad + \sum_{i \in \N}
\abs| f \Hconv \psi^{(1,i)}_r \Vconv \phi\two _s(g)|
+ \sum_{i,j \in \N}
\abs| f \Hconv \psi^{(1,i)}_r \Vconv \psi^{(2,j)}_s(g)| .
\end{aligned}
\]
We can control the suprema of each of these four terms over $r$ and $s$, using the hypothesis for the first and Lemma \ref{lem:grad bd by non-tangential} for the others.
\end{proof}

\subsection{The inclusion $\radialHardy(\Hn) \subseteq \nontanHardy(\Hn)$}
\label{ssec:radial-implies-nontan}

We now conclude our study of the equivalence of the  maximal function Hardy spaces.
\begin{theorem}\label{thm:radial-controls-nontan}
Suppose that $\bsym\phi$ is a Poisson bounded pair.
Then
\begin{align}\label{non-tangel to radial Schwartz}
\| \nontanmaxopphi(f) \|_{\Leb^1(\Hn)}\leq C\|\radmaxopphi(f)\|_{\Leb^1(\Hn)}
\end{align}
\end{theorem}

\begin{proof}
We use a ``good-$\lambda$ inequality''.
Fix $\lambda \in \R^+$,  to be determined a little later, and define
\[
F_\lambda
= \{g\in \Hn:  \gmaxop (f) (g)\leq\lambda \nontanmaxopphi(f)(g) \},
\]
where $\gmaxop (f) (g)$ is the grand maximal function as in Definition \ref{def:grand-max-fn}.
We shall prove two inequalities: first,
\begin{align}\label{ee Flambda compliment bd by radial}
\int _{\Hn } \nontanmaxopphi(f)(g) \wrt g
\leq 2 \int _{F_\lambda } \nontanmaxopphi(f)(g)  \wrt g,
\end{align}
and second that
\begin{align}\label{non-tangel to radial Schwartz p}
\nontanmaxopphi(f)(g)
\leq C \biggl( \oper{M}_{\flag}\Bigl( \radmaxopphi(f)^\theta  \Bigr)(g) \biggr)^{1/\theta }
\qquad \forall g\in F_\lambda.
\end{align}
when $0<\theta <1$.
Once we have proved these two inequalities, the $\Leb^{1/\theta}$ boundedness of $\oper{M}_{\flag}$ implies that
\begin{align*}
\int _{\Hn } \nontanmaxopphi(f)(g) \wrt g
&\leq 2 \int _{F_\lambda } \nontanmaxopphi(f)(g)  \wrt g \\
&\leq 2C\int _{F_\lambda } \biggl( \oper{M}_{\flag}\Bigl( \radmaxopphi(f)^\theta  \Bigr)(g) \biggr)^{1/\theta } \wrt g\\
&\leq 2C\int _{\Hn} \biggl( \oper{M}_{\flag}\Bigl( \radmaxopphi(f)^\theta  \Bigr)(g) \biggr)^{1/\theta } \wrt g\\
&\leq 2C\int _{\Hn }  \radmaxopphi(f)(g) \wrt g,
\end{align*}
as required.

To prove \eqref{ee Flambda compliment bd by radial}, we observe that the definition of $F_\lambda^c$ implies that
\begin{align*}
\int _{F_\lambda^c } \radmaxopphi(f)(g) \wrt g
\leq \int _{F_\lambda^c } \frac{1}{\lambda} \gmaxop (f) (g)\wrt g
\leq \frac{1}{\lambda} \int _{\Hn} \nontanmaxopphi(f)(g) \wrt g.
\end{align*}
Further, from \eqref{claim grad bd by non-tangential},
\begin{align}\label{eq:claim grad bd by non-tangential}
\int _{\Hn } \gmaxop (f) (g)\wrt g
\leq C_1\int_{\Hn} \nontanmaxopphi(f)(g) \wrt g.
\end{align}
We take $\lambda$ to be $2C_3$ and combine these inequalities; then
\begin{align*}
\int _{F_\lambda^c } \radmaxopphi(f)(g) \wrt g
\leq \frac{C_1}{\lambda} \int_{\Hn} \nontanmaxopphi(f)(g)\wrt g
=  \frac{1}{2}\int_{\Hn} \nontanmaxopphi(f)(g)\wrt g,
\end{align*}
as desired.

To prove \eqref{non-tangel to radial Schwartz p}, take $g\in F_\lambda$.
By definition of $\nontanmaxopphi(f)(g)$, there exists $(g_1,r,s) \in \Gamma_\beta(g)$ such that
\[
|f\Hconv \phi_{r,s}(g_1)|> \frac{1}{2} \nontanmaxopphi(f)(g).
\]
We will write $g$ and $g_1$ as $(z,t)$ and $(z_1,t_1)$.

Take small $\alpha_1 \in \R^+$.
Then for all $g'=(z',t')\in B_c\one(g_1,\alpha_1r) $,
\begin{align*}
&| f\Hconv \phi_{r,s}(g_1) -f\Hconv \phi_{r,s}(g') | \\
&\qquad\leq \alpha_1 r \sup_{g'' \in B\one_c(g_1,\alpha_1r) }
|\Hnabla (f\Hconv \phi_{r,s})(g'') |\\
&\qquad\leq C_1 \alpha_1  \sup_{g'' \in B\one_c(g_1,\alpha_1r) }
| f\Hconv  r \Hnabla \phi\one_{r} \Vconv \phi\two _{s}(g'') |\\
&\qquad\leq C_1\alpha_1 \gmaxop (f) (g),
\end{align*}
assuming that the family $\family\one$ is big enough to include translates of components of the horizontal gradient of $\phi\one$ by elements of $B\one_c(g_1,\alpha_1)$.
Similarly, take small $\alpha_2 \in \R^+$.
Then for all $g'' =(0,t'')\in B\two (t',\alpha_2s) $,
\begin{align*}
&| f\Hconv \phi_{r,s}(g') -f\Hconv \phi_{r,s}(g' g'') | \\
&\qquad\leq \alpha_2 s \sup_{t''' \in B\two(t', \alpha_2s)}
|\Vnabla (f\Hconv \phi_{r,s})(z',t'') |\\
&\qquad\leq C_2 \alpha_2  \sup_{t''' \in B\two(t', \alpha_2s)}
| f\Hconv \phi\one_{r} \Vconv s\Vnabla \phi\two _{s}(z',t'') |\\
&\qquad\leq C_2 \alpha_2\gmaxop (f) (g) ,
\end{align*}
assuming that the family $\family\two $ is big enough to include certain translates of $\oper{T} \phi\two $.
Consequently,
\begin{align*}
&\nontanmaxopphi(f)(g) \\
&\qquad\leq 2  |f\Hconv \phi_{r,s}(g_1)|\\
&\qquad\leq 2  |f\Hconv \phi_{r,s}(g_1)-f\Hconv \phi_{r,s}(g')|+ 2| f\Hconv \phi_{r,s}(g') -f\Hconv \phi_{r,s}(g'' g') | \\
&\qquad\qquad + 2 \abs|f\Hconv \phi_{r,s}(g'' g') |\\
&\qquad\leq 2C_1\alpha_1\gmaxop(f) (g) + 2C_2\alpha_2\gmaxop(f) (g) + 2|f\Hconv \phi_{r,s}(g'' g') |\\
&\qquad\leq 2C_1\alpha_1\lambda \nontanmaxopphi(f)(g) + 2C_2\alpha_2\lambda \nontanmaxopphi(f)(g) + 2|f\Hconv \phi_{r,s}(g'' g') |.
\end{align*}
By choosing $\alpha_1$ and $\alpha_2$ so small that $2C_1\alpha_1\lambda<{1/4}$ and $2C_2\alpha_2\lambda<{1/4}$, we see that
\begin{align*}
\nontanmaxopphi(f)(g)
&< \frac{1}{4} \nontanmaxopphi(f)(g)
+ \frac{1}{4} \nontanmaxopphi(f)(g)
+|f\Hconv \phi_{r,s}(g'' g') |,
\end{align*}
which implies that
\begin{align*}
\nontanmaxopphi(f)(g)
&< 2|f\Hconv \phi_{r,s}(g'' g') |
\end{align*}
for all $g'' =(0,t'')\in B\two (t',\alpha_2s)$ and $g'=(z',t')\in B\one(g_1,\alpha_1r) $.

Consequently, if $\theta < 1$, then
\begin{align*}
\nontanmaxopphi(f)(g)^\theta
&< 2^\theta \frac{1}{ |B\one(g_1,\alpha_1r)||B\two (t',\alpha_2s)|} \\
&\qquad \times \int_{B\one(g_1,\alpha_1r)\cdot B\two (t',\alpha_2s)} |f\Hconv \phi_{r,s}(g'' g') |^\theta dg''dg'\\
&\leq C \oper{M}_{\flag} \bigl( \radmaxopphi(f)^\theta \bigr)(g),
\end{align*}
and so
\begin{align*}
\nontanmaxopphi(f)(g)
&\leq C\biggl( \oper{M}_{\flag}\bigl( \radmaxopphi(f)^\theta \bigr)(g)\biggr)^{1/ \theta}.
\end{align*}
This completes the proof of \eqref{eq:claim grad bd by non-tangential} and hence of Theorem \ref{thm:radial-controls-nontan}.
\end{proof}

\subsection{The inclusion $\nontanHardy(\Hn) \subseteq \areaHardy(\Hn)$}\label{ssec:nontan-implies-area}

We use the nontangential maximal function associated to the Poisson kernel, as we will use properties of harmonic functions.
In view of the results of the previous sections, this is not restrictive.

Recall that $p\one_{r}$ and $p\two _{s}$ are the Poisson kernels on $\Hn$ and on $\R$  (here $r, s \in \R^+$), and, for $f \in \Leb^1(\Hn)$, the \emph{Poisson integral} $u$ of $f$ is given by
\[
u(g,r,s):= f \Hconv p\one_{r} \Vconv p\two _{s} (g)
\qquad\forall g \in \Hn.
\]
The \emph{nontangential maximal function} of $f \in \Leb^1(\Hn)$ is defined by
\begin{align*}
u^{*}(g)
:= \sup_{(g',r,s) \in\Gamma_\beta(g)} | u(g', r, s) |,
\end{align*}
and the \emph{radial maximal function} of $f \in \Leb^1(\Hn)$ is defined by
\begin{align*}
u^{+}(g)
=\sup_{r,s \in \R^+} |u(g',r,s)|.
\end{align*}

It is easy to see that, if $f\in \Leb^p(\Hn)$, then $v$, given by $v(r,g) := f \Hconv p_r(g)$ for all $(r,g) \in \R^+ \times \Hn$, is harmonic, in the sense that $\fullHLap v = 0$, where $\fullHLap = \HLap - \partial_{r}^2$ (recall that the operator $\HLap$ was normalised to be positive).
Moreover, if $v$ is a function on $\R^+ \times \Hn$ and $\fullHLap v = 0$, then
\begin{equation}\label{green}
\begin{aligned}
\fullHLap |v|^2 (r,g) = 2 \abs|\fullHnabla v(r,g)|^2
\end{aligned}
\end{equation}
in $\R^+ \times \Hn$, where
\begin{equation}\label{gradient}
\begin{aligned}
\fullHnabla
= (\Hnabla, \partial_{r})
= (\oper{X}_1,\ldots,\oper{X}_\cdim ,\oper{Y}_1,\ldots, \oper{Y}_\cdim ,\partial_{r}).
\end{aligned}
\end{equation}
We use coordinates $(t,s)$ on $\R \times \R^+$, and write $\fullVnabla$ and $\fullVLap$ for the full gradient  $( \partial_t, \partial_{s})$ and full Laplacian $- \partial_{t}^2 - \partial_{s}^2$ on $\R \times \R+$.
As usual, we write $p_{r,s}$ for $p\one_{r} \Vconv p\two _{s}$ and $\chi_{r,s} =\chi\one_r\Vconv \chi\two _s$ where $\chi\one_r$ and $\chi\two _s$ are the normalised characteristic functions of the unit balls in $\Hn$ and $\R$. 
We also denote by $\snabla p_{r,s}$ the tensor-valued function $(\fullHnabla p\one )_r \Vconv (\fullVnabla p\two )_s$.

\begin{definition}\label{def-of-S-function-Sf L-5}
For $f\in \Leb^1(\Hn)$, the \emph{Lusin--Littlewood--Paley area integral} $\areaPoisson(f)$ of $f$ associated to the Poisson kernel is defined by
\begin{equation*}
\begin{aligned}
&\areaPoisson(f)(g) 
:=
\biggl( \iint_{\R^{+} \times\R^{+}}
\abs| f \Hconv (\snabla p)_{r,s} |^2 \Hconv \chi_{r,s}
(g)  \,\frac{dr}{r} \,\frac{ds}{s} \biggr)^{1/2} .
\end{aligned}
\end{equation*}
\end{definition}

\begin{theorem}\label{thm:nontan-implies-area}
Suppose that $u^* \in \Leb^1(\Hn)$.
Then $\areaPoisson(f) \in \Leb^1(\Hn)$ and
\begin{align}\label{nontan-implies-area}
\|\areaPoisson(f)\|_{\Leb^1(\Hn)}\lesssim \|u^* \|_{ \Leb^1(\Hn)}.
\end{align}
\end{theorem}

\begin{proof}
Take $\alpha \in \R^+$ and $f\in \Leb^1(\Hn)$ such that $\norm{u^*}_{\Leb^1(\Hn)}<\infty$, and define
\begin{align*}
E(\alpha)
:= \{ g \in \Hn : u^*(g) \leq \alpha \}
\qquad\text{and}\qquad
A(\alpha)
:= \lset g\in \Hn: \oper{M}_{\flag}  (\indifn_{E(\alpha)^c} )(g)<\frac{1}{200}\rset.
\end{align*}
By definition and the $\Leb^2$ boundedness of the flag maximal function $\oper{M}_{\flag}$,
\[
E(\alpha)^c \subseteq A(\alpha)^c
\qquad\text{and}\qquad
|A(\alpha)^c | \lesssim | E(\alpha)^c |.
\]
Recall that $\Gamma_\beta(g):=\{ (g',r,s) \in \Hn \times \R^{+} \times \R^{+}:
     g' \in g  T(o,\beta r,\beta^2 s) \}$ and $\Gamma(g) = \Gamma_1(g)$.
Define
\begin{align*}
\Omega
:= \bigcup_{g \in A(\alpha)} \Gamma(g) \qquad
\text{and}\qquad
\tilde \Omega
:=\bigcup\limits_{g\in E(\alpha) } \Gamma_\beta(g),
\end{align*}
for some large $\beta$, which we shall determine after \eqref{step2-2} below.

In order to prove \eqref{nontan-implies-area}, it suffices to prove that
\begin{equation}\label{nontan-implies-area aim-1}
\begin{aligned}
\abs| \{ g \in \Hn : \areaPoisson(f)(g) > \alpha \}|
\lesssim
\abs | E(\alpha)^c|
+ \frac{1}{\alpha^2}
\int_{E(\alpha) } \abs| u^*(g)|^2 \wrt g.
\end{aligned}
\end{equation}
For then, as in \cite[p.~163]{CFefSte72},
\[
\begin{aligned}
\|\areaPoisson(f)\|_{\Leb^1(\Hn)}
&= \int_{\R^+} \abs| \{ g \in \Hn : \areaPoisson(f)(g) > \alpha \}| \wrt \alpha \\
&\lesssim \int_{\R^+} \abs | E(\alpha)^c| \wrt \alpha
+ \int_{\R^+} \frac{1}{\alpha^2}
\int_{E(\alpha) } \abs| u^*(g)|^2 \wrt g \wrt \alpha \\
&\lesssim \|u^*\|_{\Leb^1(\Hn)} .
\end{aligned}
\]
The left-hand side of \eqref{nontan-implies-area aim-1} is equal to
\begin{align*}
&\abs|\{ g \in A(\alpha)^c : \areaPoisson(f)(g) >\alpha \}|
+ \abs|\{ g \in A(\alpha) : \areaPoisson(f)(g) >\alpha \}|.
\end{align*}
It follows straight from the definitions that
\[
\abs|\{ g \in A(\alpha)^c : \areaPoisson(f)(g) >\alpha \}|
\leq |A(\alpha)^c|
\lesssim | E(\alpha)^c |
= |\{g \in \Hn : u^*(g)>\alpha \}|,
\]
and hence it suffices to estimate the other summand.
By Chebyshev's inequality,
\begin{align*}
\abs|\{ g \in A(\alpha) : \areaPoisson(f)(g) >\alpha \}|
\leq \frac{1}{\alpha^2} \int_{A(\alpha)} \areaPoisson(f)(g)^2 \wrt g.
\end{align*}
Thus it will suffice to show that
\begin{equation}\label{nontan-implies-area aim}
\int_{A(\alpha)} \areaPoisson(f)(g)^2 \wrt g
\lesssim
\alpha^2 \abs | E(\alpha)^c|
+ \int_{E(\alpha) } \abs| u^*(g)|^2 \wrt g.
\end{equation}
Since $\chi_{r,s}$ is even, 
\begin{equation}\label{eq:key-expression}
\begin{aligned}
&\int_{A(\alpha)} \areaPoisson(f)(g')^2 \wrt g' \\
&\qquad= \int_{A(\alpha)} \int_{\Hn} \iint_{\R^+ \times \R^+}
\abs| f \Hconv (\snabla p)_{r,s}(g'g) |^2 \chi_{r,s}(g)
\,\frac{dr}{r} \,\frac{ds}{s} \wrt g \wrt g' \\
&\qquad= \int_{\Hn} \int_{\Hn} \iint_{\R^+ \times \R^+}
{J}(g'g,r,s) \abs| f \Hconv (\snabla p)_{r,s}(g'g) |^2 \chi_{r,s}(g) \\
&\qquad\qquad\,\frac{dr}{r} \,\frac{ds}{s} \wrt g' \wrt g  \\
&\qquad= \int_{\Hn} \iint_{\R^+ \times \R^+}
{J}(g',r,s) \abs| f \Hconv (\snabla p)_{r,s}(g') |^2
\,\frac{dr}{r} \,\frac{ds}{s} \wrt g' ,
\end{aligned}
\end{equation}
where ${J}: \Hn \times \R^+ \times \R^+ \to [0,\infty)$ is chosen such that ${J}(g'g,r,s)= 1$ if $g' \in A(\alpha)$ and $g \in \supp( \chi_{r,s})$.

Define
\[
V(g,r,s) := \indifn_{E(\alpha)}\Hconv p_{r,s} (g)
\qquad\forall (g,r,s) \in \Hn \times \R^+ \times \R^+ .
\]
Recall from Lemma \ref{lem:poisson-and-flag-maximal-functions} that
$p_{r,s} \gtrsim \abs|T(o,r,s)|^{-1} \indifn_{T(o,r,s)}$; it follows that
\begin{align*}
V(g',r,s)
&=  \int_{E(\alpha)}  p_{r,s}((g'')^{-1} g')  \wrt g''
= \int_{E(\alpha)}  p_{r,s}((g')^{-1} g'')  \wrt g'' \\
&\gtrsim \frac{ |E(\alpha) \cap T(g',r,s)| }{ |T(g',r,s)|}.
\end{align*}
When $(g',r,s) \in \bigcup_{g \in A(\alpha)} \Gamma(g)$, there exists $g \in A(\alpha)$ such that $g \in T(g',r,s)$, {and so}
\begin{align*}
 \frac{ |E(\alpha)^c \cap T(g',r,s)| }{ |T(g',r,s)|}
 \leq \oper{M}_{\flag}  (\indifn_{E(\alpha)^c} )(g)
\leq \frac{1}{200} \,,
\end{align*}
which implies that
\begin{align*}
\frac{ |E(\alpha) \cap T(g',r,s)| }{ |T(g',r,s)|}\geq \frac{199}{200}.
\end{align*}
Hence there is a geometric constant $C_1\in(0,1)$ such that
\begin{equation}\label{step2}
V(g',r,s) \geq C_1
\qquad\forall (g',r,s)\in \Omega.
\end{equation}

Further, we claim that there is a geometric constant $C_0$ in $(C_1.\infty)$ such that
\begin{equation}\label{step2-2}
V(g',r,s) \leq C_0
\qquad\forall (g',r,s)\in \tilde \Omega.
\end{equation}
Since $\int_{\Hn} p_{r,s}(g) \wrt g = 1$, this follows similarly if the cone aperture $\beta$ is large enough.

Now we choose a smooth function $\Eta:\R \to [0,1]$ such that
$\Eta(t)= 0$ when $t\leq C_1$ and $\Eta(t)= 1$ when $t\geq C_0$.
From \eqref{step2}, $\Eta( V(g',r,s)) = 1$ when $(g',r,s) \in \Omega$, and so, from \eqref{eq:key-expression},
\begin{equation}\label{RHS square max}
\begin{aligned}
&\int_{A(\alpha)} \areaPoisson^2(f)(g) \wrt g \\
&\qquad\leq \int_{\Hn}\iint_{\R^{+}\times \R^{+}}
\abs| f\Hconv  (\snabla p)_{r,s}(g') \fn \Eta(V(g',r,s)) |^2
\,\frac{dr}{r} \,\frac{ds}{s} \wrt g' \\
&\qquad= \int_{\Hn}\iint_{\R^{+}\times \R^{+}}
\abs| \fullHnabla U (g',r,s) \fn \Eta(V(g',r,s)) |^2
\,r\wrt r \,s\wrt s\wrt g' ,
\end{aligned}
\end{equation}
where $U(g',r,s) =  f \Vconv \fullVnabla p\two _{s}  \Hconv p\one_{r}(g') $.
We assume that $f$ is real-valued; if not, one treats its real and imaginary parts separately.
As in \eqref{green},
\begin{equation}\label{eq:derivative-formula}
\begin{aligned}
&\labs \fullHnabla U(g',r,s) \fn \Eta(V(g',r,s)) \rabs^2 \\
&\qquad=\frac{1}{2}  \fullHLap \lpar \lpar U(g',r,s)  \fn\Eta(V(g',r,s)) \rpar^2 \rpar \\
&\qquad\qquad - 4 U(g',r,s) \fn \fullHnabla U(g',r,s) \fn\Eta(V(g',r,s))
\fn\fullHnabla  \Eta(V(g',r,s)) \\
&\qquad\qquad - \labs U(g',r,s) \fn \Eta'(V(g',r,s)) \fn \fullHnabla V(g',r,s) \rabs^2\\
&\qquad\qquad - \labs U(g',r,s) \fn\Eta(V(g',r,s)) \fn\Eta''(V(g',r,s)) \fn\fullHnabla V(g',r,s)\rabs^2 ,
\end{aligned}
\end{equation}
and we write $\term{i}_1(g',r,s)$, $\term{i}_2(g',r,s)$, $\term{i}_3(g',r,s)$ and $\term{i}_4(g',r,s)$ for these four terms.

The right-hand side of \eqref{RHS square max} is bounded by $\sum_{j=1}^4 \term{I}_j$, where, for each $j$,
\[
\term{I}_j
=\labs \int_{\Hn}\iint_{\R^{+}\times \R^{+}} \term{i}_j(g',r,s)
\,r\wrt r \,s\wrt s \wrt g' \rabs.
\]

We treat $\term{I}_1$ using integration and the decay at infinity of the functions involved:
\begin{align*}
\term{I}_1
&= \biggl| \frac{1}{2} \iint_{\R^{+}\times\R^{+}} \int_{\Hn}
\sum_{m=1}^{2n} \oper{X}_j^2 \lpar\lpar U(g',r,s) \fn \Eta (V(g',r,s))  \rpar^2 \rpar \wrt g' \fn r\wrt r \fn s \wrt s \\
&\qquad + \frac{1}{2} \int_{\R^{+}} \int_{\Hn} \int_{\R^{+}} \partial_{r}^2  \lpar\lpar U(g',r,s) \fn \Eta (V(g',r,s))  \rpar^2 \rpar \fn r  \wrt r \wrt g' s\wrt s \biggr| \\
&= \frac{1}{2}  \int_{\R^{+}}   \int_{\Hn} \lpar\lpar U(g',0,s) \fn \Eta (V(g',0,s))  \rpar^2 \rpar \wrt g' \,s\wrt s ,
\end{align*}
where $U(g',0,s)$ and $V(g',0,s)$ are interpreted as the obvious limits.

For the term $ \term{I}_2,$ we use Hölder's inequality; then
\begin{align*}
\term{I}_2
&\leq 4 \biggl(\iint_{\R^{+}\times\R^{+}} \int_{\Hn}
 \labs \fullHnabla U(g',r,s) \fn \Eta( V(g',r,s)) \rabs^2
\wrt g' \,r\wrt r \,s\wrt s \biggr)^{1/2}\\
&\qquad \times  \biggl(\iint_{\R^{+}\times \R^{+}} \int_{\Hn}
\labs U(g',r,s) \fn \fullHnabla \Eta(V(g',r,s)) \rabs^2
\wrt g' \,r\wrt r \,s\wrt s \biggr)^{1/2}\\
&\leq \frac{1}{2C} \iint_{\R^{+}\times \R^{+}} \int_{\Hn}
\bigl|   \fullHnabla U(g',r,s) \fn \Eta(V(g',r,s)) \bigr|^2
\wrt g' \,r\wrt r \,s\wrt s \\
&\qquad + \frac{C}{2} \iint_{\R^{+}\times \R^{+}} \int_{\Hn}
\labs U(g',r,s) \fn \Eta'(V(g',r,s)) \fn \fullHnabla V(g',r,s) \rabs^2
\wrt g' \,r\wrt r \,s\wrt s\\
&=:\term{I}_{21}+\term{I}_{22},
\end{align*}
say, where $C$ is large.
The term $\term{I}_{21}$ is a small multiple of the right-hand side of \eqref{RHS square max}, and may be absorbed there.

It is easy to see that $\term{I}_{3}$ is dominated by $\term{I}_{22}$, and $\term{I}_{4}$ is not dissimilar to $\term{I}_{3}$.
More precisely, we take the smooth function $\Zeta$ on $\R$ such that $\Zeta(0) = 0$ and
\[
\Zeta'(t) = \bigl( (\Eta'(t))^4 + (\Eta(t)\Eta'(t))^2 \bigr)^{1/4};
\]
clearly $\Zeta'$ is supported in $[C_0,C_1]$.

The  right-hand side of \eqref{RHS square max} is bounded by a geometric multiple of
\begin{align*}
&\int_{\R^{+}}   \int_{\Hn} \lpar U(g',0,s) \fn \Eta (V(g',0,s))  \rpar^2 \wrt g' \,s\wrt s  \\
&\qquad+ \iint_{\R^{+}\times \R^{+}} \int_{\Hn}
\labs U(g',r,s) \fn \Zeta'(V(g',r,s)) \fn \fullHnabla V(g',r,s) \rabs^2
\wrt g' \,r\wrt r \,s\wrt s\\
&\qquad= \term{J}_1 + \term{J}_2,
\end{align*}
say.
We treat $\term{J}_1$ using the argument of \eqref{eq:derivative-formula} in the central variable, and see that
 \begin{align*}
\term{J}_1
&=\int_{\Hn} \int_{\R^{+}}  \bigl| \fullVnabla f\Vconv p\two _{s} (g')
\fn\Eta( V(g',0,s)) \bigr|^2 \,s\wrt s\wrt g' \\
&\leq \int_{\Hn} \bigl| f(g') \Eta(V(g',0,0))\bigr|^2 \wrt g'\\
&\qquad+  \int_{\Hn}  \int_{\R^{+}}
\bigl| f\Vconv p\two _{s}(g') \fn \Eta'( V(g',0,s) ) \fn \fullVnabla V(g',0,s) \bigr|^2
\,s\wrt s \wrt g'\\
&\qquad+  \int_{\Hn}  \int_{\R^{+}} \bigl| {f\Vconv p\two _{s}} (g') \bigr|^2
\Bigl| \Eta(V(g',0,s))  \fn \Eta''(V(g',0,s))  \Bigr| \Bigl| \fullVnabla V(g',0,s) \Bigr|^2 \\
&\qquad\qquad\,s\wrt s \wrt g'.
\end{align*}
Now $|f|\leq u^*$ and $| f\Vconv p\two _{s} |\leq  u^*(z',t')$ by definition of $u^*$.
The definitions of $E(\alpha)$ and $\Eta$ imply that $u^*(g') \leq \alpha$ for all $g'$ and $s$ such that $\Eta\bigl( ({\indifn_{E(\alpha)}\Vconv p\two _{s} }(g') \bigr)\neq 0$ and $\Eta'\bigl( ({\indifn_{E(\alpha)}\Vconv p\two _{s}}(g') \bigr) \neq 0$.
Hence
 \begin{align*}
\term{J}_1
&\leq \int_{ E(\alpha)} \bigl| u^*(g') \bigr|^2 \wrt g'
+ \alpha^2 \int_{\Hn} \int_{\R^{+}}
\bigl| s\fullVnabla(\indifn_{E(\alpha)}\Vconv p\two _{s}) (g') \bigr|^2
\,\frac{ds}{s} \wrt g' .
\end{align*}
Observe that $s\fullVnabla (1\Vconv p\two _{s}) =0$, and so
 \begin{align*}
 &\int_{\R} \int_{\R^{+}} \Bigl| s\fullVnabla (\indifn_{E(\alpha)}\Vconv p\two _{s})(z',t') \Bigr|^2  \,\frac{ds}{s} \wrt t' \\
&\qquad=    \int_{\R}  \int_{\R^{+}}  \Bigl|   {s\fullVnabla (1-\indifn_{E(\alpha)})\Vconv p\two _{s}} (z',t') \Bigr|^2\frac{ds}{s}\ dt' \\
&\qquad\lesssim \|1-\indifn_{E(\alpha)}(z',\cdot)\|_{\Leb^2(\R)}^2
\end{align*}
by Littlewood--Paley theory.
Consequently,
 \begin{align*}
\term{J}_1
&\lesssim \int_{ E(\alpha)} \bigl| u^*(g') \bigr|^2 dg'
+  \alpha^2\bigl| E(\alpha)^c \bigr|.
\end{align*}

We turn to the term $\term{J}_2$, which, up to a constant, is equal to
 \begin{align*}
&\iint_{\R^{+}\times \R^{+}} \int_{\Hn}
\labs U(g',r,s) \fn \fullHnabla \Zeta(V(g',r,s)) \rabs^2
\wrt g' \,r\wrt r \,s\wrt s  ,
\end{align*}
where $U(g',r,s) =  f \Vconv \fullVnabla p\two _{s}  \Hconv p\one_{r}(g') $.
Now $| U(g',r,s) \fn  \fullHnabla \Zeta( V(g',r,s)) |^2$ is equal to
\begin{align*}
&\frac{1}{2}  \fullVLap \biggl(u(g',r,s)^2  \fullHnabla \Zeta( V(g',r,s))^2 \biggr)\\
&\quad - 4 u(g',r,s) \fullVnabla u(g',r,s)\fullHnabla \Zeta( V(g',r,s)) \fullVnabla \fullHnabla \Zeta( V(g',r,s)) \\
&\quad- U(g',r,s)^2  \Bigl| \fullVnabla \fullHnabla \Zeta( V(g',r,s)) \Bigr|^2\\
&\quad- U(g',r,s)^2 \fullHnabla \Zeta( V(g',r,s)) \fullVnabla \fullVnabla \fullHnabla \Zeta( V(g',r,s));
\end{align*}
we write these expressions as $\term{j}_{21}(g',r,s)$, $\term{j}_{22}(g',r,s)$, $\term{j}_{23}(g',r,s)$ and $\term{j}_{24}(g',r,s)$.
It follows straight away that $\term{J}_2\leq \term{J}_{21}+\term{J}_{22}+\term{J}_{23}+\term{J}_{24}$, where
\[
\term{J}_{2i}
=\biggl|\int_{\Hn} \int_{\R^{+} \times \R^{+}} \term{j}_{2i}(g',r,s)
\,s\wrt s \,r\wrt r \wrt g'  \biggr|
\]
when $i=1,2,3,4$.

By definition,  $\Zeta'$ is bounded, and $|f\Hconv p\one_{r}(g')| \leq u^*(g') \leq \alpha$.
Further,
\[
\fullHnabla \Zeta(V(g',r,0))
= \Zeta'(V(g',r,0)) \fn \fullHnabla V(g',r,0) .
\]
We estimate the term $ \term{J}_{21}$ using integration by parts in $s$:
\begin{align*}
 \term{J}_{21}
 &\leq \frac{1}{2} \int_{\Hn}  \int_{\R^{+}}
 \bigl| u(g',r,0) \fn \fullHnabla \Zeta( V(g',r,0) )\bigr|^2
 \,r \wrt r  \wrt g'\\
 &\lesssim_{\Zeta} \alpha^2\int_{\Hn}  \int_{\R^{+}}
 \Bigl| r\fullHnabla V(g',r,0) \Bigr|^2
\,\frac{dr}{r} \wrt g'   \\
&\leq \alpha^2\int_{\Hn}  \int_{\R^{+}} \Bigl| {(1-\indifn_{E(\alpha)})\Hconv r\fullHnabla p\one_{r}}(g') \Bigr|^2
\,\frac{dr}{r} \wrt g'  \\
&\lesssim \alpha^2  \| 1-\indifn_{E(\alpha)} \|_{\Leb^2(\Hn)}^2\\
&= \alpha^2\bigl| E(\alpha)^c \bigr| .
\end{align*}

Hölder's inequality and then the argument used for $\term{I}_{2}$ show that 
\begin{align*}
\term{J}_{22}&\leq \frac{1}{2C}
\int_{\R^{+}\times\R^{+}} \int_{\Hn}
\bigl|  \fullVnabla u(g',r,s)  \fn  \fullHnabla \Zeta( V(g',r,s))  \bigr|^2
\wrt g'\,s\wrt s \,r\wrt r\\
 &\qquad\qquad +\frac{C}{2}
 \int_{\R^{+} \times\R^{+}} \int_{\Hn}
 \bigl| u(g',r,s)  \fn \fullVnabla \fullHnabla \Zeta( V(g',r,s))
 \bigr|^2 \wrt g'  \,s\wrt s \,  r \wrt r \\
&\qquad=:  \term{J}_{221}+ \term{J}_{222},
\end{align*}
say; here $C$ is large.
We see immediately that $\term{J}_{221}$ may be absorbed by $\term{J}_{2}$.
To estimate $\term{J}_{222}$, we note that $\fullVnabla \fullHnabla \Zeta( V(g',r,s))$ is equal to
\begin{align*}
 \Zeta''( V(g',r,s))  \fullHnabla V(g',r,s)\fullVnabla V(g',r,s)
+\Zeta'( V(g',r,s) ) \fullVnabla \fullHnabla V(g',r,s) .
\end{align*}
Hence, $ \term{J}_{222}$ is bounded by $\term{J}_{2221}+ \term{J}_{2222}$, where the integrands of these terms are the two summands above.
By using the support condition on $\Zeta''$ we see that 
\begin{align*}
\term{J}_{2221}&\lesssim\alpha^2\int_{\R^{+} \times \R^{+}}\int_{\Hn}
\Bigl| \fullVnabla \indifn_{E(\alpha)}\Hconv  p\two _{s} \Vconv p\one_{r}(g') \Bigr|^2 \\
&\qquad\qquad
\times \Bigl| { \fullHnabla \indifn_{E(\alpha)}\Vconv  p\one_{r}\Hconv p\two _{s}}(g') \Bigr|^2
\wrt g' \, s\wrt s \, r \wrt r\\
&\qquad\lesssim \alpha^2 \int_{\Hn} \int_{\R^{+} \times \R^{+}}
\Bigl|\oper{M}_{\Hn} \bigl( \indifn_{E(\alpha)}\Vconv s\fullVnabla p\two _{s} \bigr) (g') \Bigr|^2 \\
&\qquad\qquad
\times \Bigl| \oper{M}_{\R} \bigl(\indifn_{E(\alpha)}\Hconv  r\fullHnabla p\one_{r} \bigr)(g') \Bigr|^2
\,\frac{ds}{s} \,\frac{dr}{r} \wrt g' ,
\end{align*}
where $\oper{M}_{\Hn}$ and $\oper{M}_{\R}$ are the Hardy--Littlewood maximal functions on $\Hn$ and $\R$.
By Hölder's inequality,
\begin{align*}
 \term{J}_{2221}
&\lesssim \alpha^2\int_{\Hn}  \int_{\R^{+}}      \Bigl|  \oper{M}_{\Hn}\bigl(  s\fullVnabla (\indifn_{E(\alpha)}\Vconv p\two _{s}) \bigr) (g') \Bigr|^2 \frac{ds}{s}  \\
&\qquad \times \int_{\R^{+}} \Bigl|  \oper{M}_{\R}\bigl( r\fullHnabla (\indifn_{E(\alpha)}\Hconv  p\one_{r}) \bigr)(g') \Bigr|^2 \frac{dr}{r}  \wrt g' \\
&\leq \alpha^2\Biggl(\int_{\Hn} \biggl( \int_{\R^{+}}      \Bigl|
\oper{M}_{\Hn}\bigl(  {s\fullVnabla (\indifn_{E(\alpha)}\Vconv p\two _{s})} \bigr) (g') \Bigr|^2 \frac{ds}{s} \biggr)^2 \wrt g' \Biggr)^{1/2}\\
&\qquad \times \Biggl(\int_{\Hn} \biggl(  \int_{\R^{+}} \Bigl|  \oper{M}_{\R}\bigl( {r\fullHnabla (\indifn_{E(\alpha)}\Hconv  p\one_{r})} \bigr)(g') \Bigr|^2 \frac{dr}{r}  \biggr)^2 \wrt g' \Biggr)^{1/2}\\
&\leq \alpha^2\Biggl( \int_{\Hn} \biggl( \int_{\R^{+}}      \Bigl| {  s\fullVnabla (1-\indifn_{E(\alpha)})\Vconv p\two _{s} } (g') \Bigr|^2 \frac{ds}{s} \biggr)^2 \wrt g' \Biggr)^{1/2}\\
&\qquad \times \Biggl(\int_{\Hn} \biggl(  \int_{\R^{+}} \Bigl| {(1-\indifn_{E(\alpha)})\Hconv r\fullHnabla p\one_{r}} (g') \Bigr|^2 \frac{dr}{r}\   \biggr)^2 \wrt g' \Biggr)^{1/2}\\
&\leq \alpha^2 \|1-\indifn_{E(\alpha)}\|_{\Leb^4(\Hn)}^2 \times  \|1-\indifn_{E(\alpha)}\|_{\Leb^4(\Hn)}^2\\
&= \alpha^2 \bigl| E(\alpha)^c \bigr|,
\end{align*}
where the third inequality follows from the Fefferman--Stein vector-valued inequality \cite{CFefSte71} for the Hardy--Littlewood maximal function and then replacement of $\indifn_{E(\alpha)}$ by $1-\indifn_{E(\alpha)}$ (which is possible since both $s\fullVnabla p\two _{s} $ and $r\fullHnabla p\one_{r}$ have mean zero); the last inequality follows from Littlewood--Paley theory.
By the support condition on $\Zeta'$ and the cancellation properties of $s\fullVnabla p\two _{s} $ and $r\fullHnabla p\one_{r}$, we see that
\begin{align*}
\term{J}_{2222}
&\lesssim \alpha^2 \int_{\Cn} \int_{\R^{+} \times \R^{+}}
\Bigl| { \indifn_{E(\alpha)} \Hconv \fullHnabla p\one_{r} \Vconv \fullVnabla p\two _{s}}(g')\Bigr|^2  \,s  \wrt s \,r \wrt r \wrt g' \\
&= \alpha^2 \int_{\Cn} \int_{\R^{+} \times \R^{+}}
\Bigl|{ (1-\indifn_{E(\alpha)})\Hconv r\fullHnabla p\one_{r} \Vconv s\fullVnabla p\two _{s}}  (g')\Bigr|^2 \,s  \wrt s \,r \wrt r \wrt g'  \\
&\leq \alpha^2\|1-\indifn_{E(\alpha)}\|_{\Leb^2(\Hn)}^2\\
&=  \alpha^2\bigl| E(\alpha)^c \bigr| ;
\end{align*}
the second inequality follows from Littlewood--Paley theory, as in Corollary  \ref{cor:L2-norm-of-sqfnop-bounded}.

The term $\term{J}_{23}$ may be handled in the same way as $\term{J}_{2222}$.

Finally, we turn to $\term{J}_{24}$.
Note that
\begin{align*}
&\fullHnabla \Zeta(V(g',r,s)) \fn\fullVnabla \fullVnabla \fullHnabla \Zeta(V(g',r,s)) \\
&\qquad= \Zeta'(V(g',r,s)) \fn \Zeta'''(V(g',r,s))
|\fullHnabla V(g',r,s)|^2 | \fullVnabla V(g',r,s)|^2 \\
&\qquad\qquad + 2 \Zeta'(V(g',r,s)) \fn \Zeta''(V(g',r,s)) \fn \fullHnabla V(g',r,s) \\
&\qquad\qquad\fn \fullVnabla V(g',r,s) \fn \fullHnabla \fullVnabla V(g',r,s).
\end{align*}
Hence $\term{J}_{24}$ is bounded by $\term{J}_{241}+\term{J}_{242}$, where these terms involve the two summands above in the integrands.
The term $\term{J}_{241}$ may be handled in the same way as $\term{J}_{2221}$.
Hölder's inequality implies that $\term{J}_{242}$ may be dominated by a multiple of
\begin{align*}
&\alpha^2 \int_{\Hn} \int_{\R^{+} \times \R^{+}}
\bigl| \fullVnabla V(g',r,s) \bigr|^2
\bigl|\fullHnabla V(g',r,s) \bigr|^2  \,s\wrt s \,r\wrt r \wrt g' \\
&\qquad+ \alpha^2 \int_{\Hn} \int_{\R^{+} \times \R^{+}}
\Bigl| \fullHnabla \fullVnabla V(g',r,s) \Bigr|^2  \,s\wrt s \,r\wrt r \wrt g' ,
\end{align*}
which is bounded by $C \alpha^2\bigl| E(\alpha)^c \bigr|$.

By combining all the estimates above, we prove \eqref{nontan-implies-area aim}, and so the theorem.
\end{proof}


\section{Singular integrals and the proof of Theorem A}\label{sec:Riesz-transform}
In this section, we complete the proof of Theorem A by first showing that flag Riesz transforms also characterise the flag Hardy space, and then putting everything together.

\subsection{The inclusion $\RieszHardy(\Hn) \subseteq \dissqfnHardy(\Hn)$}\label{ssec:Riesz-implies-dissqfn}

In this section, we show that the Hardy space defined by singular integrals is a subspace of the square function Hardy space.
We remind the reader of a definition.

\begin{definition}
The (tensor-valued) \emph{flag Riesz transformations} are given by
\[
\oper{R}_{(1)} = \Hnabla \HLap^{-1/2} ,
\qquad
\oper{R}_{(2)} =  \Vnabla \VLap^{-1/2}
\qquad\text{and}\qquad
\oper{R}_{\flag} = \oper{R}_{(1)} \otimes \oper{R}_{(2)}.
\]
The space $\RieszHardy(\Hn)$ is the set of all $f\in \Leb^1(\Hn)$ such that $\oper{R}_{(1)}(f)$, $\oper{R}_{(2)}(f)$ and  $\oper{R}_{\flag}(f)$ lie in $\Leb^1(\Hn)$, with norm $\norm{ f }_{\RieszHardy(\Hn)}$ defined to be
\begin{equation}\label{eq:Riesz-Hardy}
\norm{\oper{R}_{\flag}(f) }_{\Leb^1(\Hn)}
+\norm{ \oper{R}_{(1)}(f) }_{\Leb^1(\Hn)}
+\norm{ \oper{R}_{(2)} }_{\Leb^1(\Hn)}
+ \norm{ f }_{\Leb^1(\Hn)} .
\end{equation}
\end{definition}

We are going to prove a slightly more general result, for which we need some more notation.
Let $\{ \oper{R}_j : j = 0, \dots, J\}$ be a collection of simple singular integral operators on $\Hn$ that characterise the Folland--Stein--Christ--Geller Hardy space $\FSCGHardy(\Hn)$; that is, $f \in \FSCGHardy(\Hn)$ if and only if $\oper{R}_j f \in \Leb^1(\Hn)$ for each $j$.
In particular, the identity operator together with the Riesz transformations would do.
Then
\[
\norm{f}_{\FSCGHardy(\Hn)}
\eqsim \sum_{j=0}^J \norm{\oper{R}_j f}_{\Leb^1(\Hn)},
\]
by the closed graph theorem.
For more information, see \cite{ChrGel}.
We write $\oper{H}$ for the Hilbert transformation in the central variable on $\Hn$, that is,
\[
\oper{H}f(z,t) = \int_{\R} f(z, t-s) \frac{ds}{s} \,.
\]

\begin{definition}
The space $\singHardy(\Hn)$ is the set of all functions $f \in \Leb^1(\Hn)$ for which $\oper{R}_j f \in \Leb^1(\Hn)$ and $\oper{H} \oper{R}_j f \in \Leb^1(\Hn)$ when $j = 0, \dots, J$, with norm
\begin{equation}\label{eq:sing-Hardy}
\norm{ f }_{\singHardy(\Hn)}
:= \sum_{j=0}^{J} \norm{\oper{R}_j f}_{\Leb^1(\Hn)} + \norm{H \oper{R}_j f}_{\Leb^1(\Hn)}.
\end{equation}
\end{definition}

\begin{theorem}\label{thm:Riesz-Hardy-implies-sqfn-Hardy}
Let $\oper{R}_j$ and $\oper{H}$ be as above.
If $f \in \Leb^1(\Hn)$ and $\oper{R}_j f \in \Leb^1(\Hn)$ and $\oper{H} \oper{R}_j f\in \Leb^1(\Hn)$, where $j = 0, \dots, J$, then $f \in \dissqfnHardy(\Hn)$ for some suitable $\bsym\phi$, and $\norm{f}_{\dissqfnHardy(\Hn)} \lesssim \norm{ f }_{\singHardy(\Hn)}$.
\end{theorem}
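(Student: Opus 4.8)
The plan is to bound the discrete square function $\dissqfnphi(f)$ pointwise, or rather its $L^1$ norm, by a constant times $\norm{f}_{\singHardy(\Hn)}$, exploiting the factorisation $\phi^{(1)} = \HLap^M\tilde\phi^{(1)}$, $\phi^{(2)} = \VLap^N\tilde\phi^{(2)}$ together with the Riesz/Hilbert characterisations already available. The key observation is that $f \Hconv \phi^{(1)}_{2^m} \Vconv \phi^{(2)}_{2^n}$ can be rewritten so that each of the two Laplacian powers is paired with a half-integer power of the corresponding Laplacian coming from $\HLap^{-1/2}$ and $\VLap^{-1/2}$, leaving the Riesz transforms $\Hnabla\HLap^{-1/2}$ and $\Vnabla\VLap^{-1/2}$ (equivalently, the Hilbert transform in the central variable) acting on $f$. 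Concretely, I would write
\[
f \Hconv \phi^{(1)}_{2^m}\Vconv\phi^{(2)}_{2^n}
= \bigl(\Hnabla\HLap^{-1/2} f\bigr) \Hconv \theta^{(1)}_{2^m}\Vconv\bigl(\Vnabla\VLap^{-1/2}(\cdot)\bigr)\text{-type kernels},
\]
where $\theta^{(1)}$ is the convolution kernel of $\HLap^{M+1/2}\tilde\phi^{(1)}(\sqrt\HLap)$ acting suitably (and similarly in the central variable with one half-power of $\VLap$ absorbed into the Hilbert transform). Since $\HLap^{M}\tilde\phi^{(1)}$ has the Poisson-type decay and cancellation of Definition~\ref{def:Poisson-bounded} and Lemma~\ref{cancel tLetL}, the residual kernels $\theta^{(1)}$, $\theta^{(2)}$ are again Poisson-bounded with cancellation of sufficiently high order, so the corresponding continuous/discrete square function operators are bounded on $L^2(\Hn)$ by Corollary~\ref{cor:spectral-Plancherel}.

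The second step is to pass from $L^2$ boundedness of these auxiliary square functions to $\atomHardy(\Hn)$-to-$L^1(\Hn)$ boundedness via Proposition~\ref{prop:particle-bounded-function-bounded}: the kernels $\theta^{(1)}$, $\theta^{(2)}$ satisfy the same support-and-decay hypotheses used in Proposition~\ref{prop:Lusin-bounded-on-particles}, so we obtain the decay estimate \eqref{eq:hypothesis} with exponents $\rho_{\delta_1,\delta_2}$. But here the logic runs the other way: we are given $f \in \singHardy(\Hn)$, i.e.\ $\oper{R}_j f, \oper{H}\oper{R}_j f \in L^1(\Hn)$. By Christ--Geller \cite{CG}, $\oper{R}_j f \in L^1$ for all $j$ is exactly the statement that $f \in \FSCGHardy(\Hn)$; and the extra hypothesis that $\oper{H}\oper{R}_j f \in L^1(\Hn)$ is precisely what is needed so that the central-variable square function built from $\theta^{(2)}$ can be applied. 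The plan is therefore: for each fixed dyadic scale $m$ in the $\Hn$-variable, the function $\sum_n |f \Hconv \phi^{(1)}_{2^m}\Vconv\phi^{(2)}_{2^n}|^2$ is handled by a one-parameter Littlewood--Paley argument in the central variable applied to $\oper{H}(f\Hconv\phi^{(1)}_{2^m})$, which is controlled in $L^1$ because $\oper{H}\oper{R}_j f \in L^1$ and the Christ--Geller square function characterisation of $\FSCGHardy(\Hn)$ controls $\bigl(\sum_m |f\Hconv\phi^{(1)}_{2^m}|^2\bigr)^{1/2}$ in $L^1$. Interleaving the two one-parameter estimates (using a vector-valued Littlewood--Paley inequality in the central variable, with values in the $\ell^2_m$ space of the $\Hn$-scales) yields $\norm{\dissqfnphi(f)}_{L^1(\Hn)} \lesssim \norm{f}_{\singHardy(\Hn)}$.

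More precisely, I would proceed as follows. (i) Reduce to showing $\bigl\|\bigl(\sum_{m,n}|f\Hconv\phi^{(1)}_{2^m}\Vconv\phi^{(2)}_{2^n}|^2\bigr)^{1/2}\bigr\|_{L^1}\lesssim \sum_j(\norm{\oper{R}_j f}_{L^1}+\norm{\oper{H}\oper{R}_jf}_{L^1})$. (ii) Fix the central scale: write $F_n := f\Vconv\phi^{(2)}_{2^n}$ and show $\bigl\|\bigl(\sum_n S_{\HLap}(F_n)^2\bigr)^{1/2}\bigr\|_{L^1}\lesssim\bigl\|\bigl(\sum_n |\oper{R}_j F_n|^2\bigr)^{1/2}\bigr\|_{L^1}$, which is the vector-valued form of the Christ--Geller characterisation of $\FSCGHardy(\Hn)$; this is a standard extension, since the $\FSCGHardy$-characterisation by Riesz transforms is itself proved by square-function techniques that tensorise with an auxiliary Hilbert space. (iii) For the right-hand side of (ii), use that $\oper{R}_j F_n = \oper{R}_j f \Vconv \phi^{(2)}_{2^n}$ and $\phi^{(2)} = \VLap^N\tilde\phi^{(2)}$, so that $\oper{R}_j F_n$ carries cancellation in the central variable and $\bigl(\sum_n|\oper{R}_jF_n|^2\bigr)^{1/2}$ is a classical one-parameter Littlewood--Paley square function in the $t$-variable applied to $\oper{R}_j f$; its $L^1$ norm is controlled by $\norm{\oper{R}_j f}_{L^1}+\norm{\oper{H}\oper{R}_j f}_{L^1}$ by the one-parameter Hardy space theory in the central line (the Hilbert transform being the Riesz transform there), uniformly in the transverse $\C^\cdim$ variable and hence after integrating. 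Summing over $j$ finishes the estimate.

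The main obstacle I anticipate is Step (ii): the passage from the \emph{scalar} Christ--Geller Riesz-transform characterisation of $\FSCGHardy(\Hn)$ to its \emph{vector-valued} ($\ell^2$-valued) analogue, and doing so with constants independent of the central-variable parameter $n$. While morally routine — square-function proofs tensorise — one must check that the almost-orthogonality and good-$\lambda$ (or $H^1$--$L^1$ via the grand maximal function) arguments underlying the Christ--Geller result genuinely go through with Hilbert-space-valued functions, and that the reproducing/Littlewood--Paley identities in the central variable commute appropriately with the operators on $\Hn$ (which they do, since the central Laplacian $\VLap$ and the sub-Laplacian $\HLap$ commute and all relevant operators are convolutions). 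A secondary technical point is keeping track of how many derivatives/how much cancellation the residual kernels $\theta^{(1)},\theta^{(2)}$ retain after absorbing the half-powers $\HLap^{1/2}$, $\VLap^{1/2}$; this is controlled because $M > \cdim/2$ and $N \geq 1$, exactly the hypotheses in force, and because Lemma~\ref{cancel tLetL} supplies cancellation of order $2M > \cdim$ (resp.\ $2N \geq 2$) as needed to invoke the Poisson-bounded machinery of Sections~\ref{sec:Lusin} and~\ref{sec:sqfn}.
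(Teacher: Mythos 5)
Your overall architecture --- iterating one-parameter estimates, with the central variable controlled through $\oper{H}\oper{R}_jf\in L^1(\Hn)$ and the group variable through the Christ--Geller characterisation of $\FSCGHardy(\Hn)$ --- is the same as the paper's. The gap is your Step (ii), which you yourself flag as the main obstacle: you assert an $\ell^2$-valued Christ--Geller characterisation (and, implicitly, $\ell^2$-valued central-variable Littlewood--Paley bounds) with constants uniform in the auxiliary index, and then dismiss the verification on the grounds that ``square-function proofs tensorise.'' That is exactly the non-routine point. The implication from $\oper{R}_jF\in L^1$ to square-function control is a deep scalar theorem, and no Hilbert-space-valued version of it is available off the shelf; asserting it without proof leaves the crucial step of the argument unestablished. (Your Step (i) factorisation of $\phi^{(1)}_{2^m}\Vconv\phi^{(2)}_{2^n}$ through residual kernels $\theta^{(1)},\theta^{(2)}$ is also unnecessary for the route you then take, and adds fiddly bookkeeping about how much cancellation survives the half-powers of $\HLap$ and $\VLap$.)

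The paper supplies precisely the device that makes your Step (ii) rigorous \emph{without} redoing Christ--Geller in a vector-valued setting: randomisation via Khinchin's inequality. One introduces independent Rademacher sequences $s_n(\omega_2)$ and $r_m(\omega_1)$ and works with the \emph{scalar} functions $\sum_n s_n(\omega_2)\, f\Vconv\phi^{(2,n)}$, where $\phi^{(2,n)}$ is the kernel of a spectrally localised piece $\eta(2^n\VLap)$. The scalar Mikhlin--H\"ormander multiplier theorem on $H^1(\R)$, applied to $\oper{R}_jf(z,\cdot)$ for a.e.\ $z$ and integrated over $\C^\cdim$, bounds $\norm{\oper{R}_j\sum_n s_n f\Vconv\phi^{(2,n)}}_{L^1(\Hn)}$ by $\norm{\oper{R}_jf}_{L^1}+\norm{\oper{H}\oper{R}_jf}_{L^1}$ uniformly in $\omega_2$; the scalar Christ--Geller characterisation then places $\sum_n s_n f\Vconv\phi^{(2,n)}$ in $\FSCGHardy(\Hn)$, and a second scalar multiplier bound in the $\HLap$-variable controls the $L^1$ norm of the doubly randomised sum $\sum_{m,n}r_m(\omega_1)s_n(\omega_2)\,f\Vconv\phi^{(2,n)}\Hconv\phi^{(1,m)}$ uniformly in $(\omega_1,\omega_2)$. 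Khinchin's inequality applied twice, with Minkowski's inequality in between, then recovers the full $\ell^2_{m,n}$ sum, i.e.\ $\dissqfnphi(f)$, from the expectation over signs. If you replace your appeal to a ``standard'' vector-valued extension by this randomisation argument, your proof closes; as written, it does not.
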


\begin{proof}
We combine the characterisation of Christ and Geller with a randomisation argument.
We remind the reader that the operators $\oper{R}_j$ and $\oper{H}$ commute.

We recall from the classical theory of Hardy spaces that $f \in \fnspace{H}^1(\R)$ if and only if both $f \in \Leb^1(\R)$ and $\oper{H} f \in \Leb^1(\R)$, and that the Hörmander--Mihlin multiplier theorem on $\R$ shows that if $\Mu$ is a bounded function on $\R$, differentiable except perhaps at $0$, such that $y \mapsto y \Mu'(y)$ is also bounded, then the associated Fourier multiplier operator $f \mapsto \oper{F}^{-1} (\Mu \hat f)$ is bounded from $\fnspace{H}^1(\R)$ to $\Leb^1(\R)$ (here $\oper{F}$ indicates the Fourier transformation $f \mapsto \hat f$ on $\R$).
For the details, see, for instance, \cite{CoiWei}.
A similar result holds for the one-parameter Hardy space on $\Hn$, except that more differentiability of $\Mu$ is required; it suffices that the functions $y \mapsto y^k \Mu^{(k)}(y)$ are bounded for $k$ between $0$ and $\cdim+2$, but more derivatives may be used without any changes other than in the value of some unimportant constants.

Take a $\fnspace{C}^\infty(\R^+)$ function $\Eta$, supported in $[1/2,2]$, such that $\sum_{n \in \Z} \Eta(2^n y) = 1$ for all $y \in \R^+$.
Then by spectral theory, there are smooth functions $\phi^{(1,m)}$ on $\Hn$ and $\phi^{(2,n)}$ on $\R$ such that $f\Hconv\phi^{(1,m)} = \Eta(2^m\HLap)f$ and $f\Vconv\phi^{(2,n)} = \Eta(2^n\VLap)f$ for all $f \in \Schwartz(\Hn)$.

In this paragraph, we consider functions on $\R$.
Let $s_n:\probspace_2 \to \C$ be a collection of independent Rademacher random variables (that is, each takes the values $\pm1$ with probability one half); then for each $\omega_2\in\probspace_2$, the operator $\oper{A}_{\omega_2}$ from $\fnspace{H}^1(\R)$ to $\Leb^1(\R)$, defined (at least formally) by
\[
\oper{A}_{\omega_2} f := \sum_{n \in \Z} s_n(\omega_2) f \Vconv \phi^{(2,n)},
\]
is bounded, with norm bounded independently of $\omega_2$, by the Mikhlin--Hörmander multiplier theorem.
That is,
\[
\norm{\sum_{n \in \Z} s_n(\omega_2) f \Vconv \phi^{(2,n)} }_{\Leb^1(\R)}
\lesssim  \norm{f}_{\Leb^1(\R)} + \norm{\oper{H} f}_{\Leb^1(\R)}
\]
for all $f \in \fnspace{H}^1(\R)$, or equivalently, for all $f \in \Leb^1(\R)$ such that $\oper{H}f \in \Leb^1(\R)$.

We take a function $f$ on $\Hn$ such that $\oper{R}_j f$ and $\oper{H} \oper{R}_j f$ lie in $\Leb^1(\Hn)$, and apply the preceding observation to the functions $\oper{R}_j f(z,\cdot)$ and $\oper{H} \oper{R}_j f(z,\cdot)$.
We then integrate over $\Cn$ and deduce that
\[
\norm{\oper{R}_j \sum_{n \in \Z} s_n(\omega_2) f \Vconv \phi^{(2,n)} }_{\Leb^1(\Hn)}
\lesssim  \norm{\oper{R}_j f}_{\Leb^1(\Hn)}
        + \norm{\oper{H} \oper{R}_j f}_{\Leb^1(\Hn)} .
\]
Now $\sum_{n \in \Z} s_n(\omega_2) f \Vconv \phi^{(2,n)} \in \FSCGHardy(\Hn)$ from the Folland--Stein--Christ--Geller characterisation \cite[Theorem A]{ChrGel},  and so, similarly, if $r_m:\probspace_1 \to \C$ is another family of independent Rademacher random variables, independent of the first family, then
\[
\begin{aligned}
&\sum_{m\in \Z} \sum_{n \in \Z} r_m(\omega_1)  s_n(\omega_2) f \Vconv \phi^{(2,n)} \Hconv \phi^{(1,m)}. \\
&\qquad\qquad = \sum_{m\in \Z} r_m(\omega_1) \sum_{n \in \Z} s_n(\omega_2) f \Vconv \phi^{(2,n)} \Hconv \phi^{(1,m)},
\end{aligned}
\]
and this function lies in $\Leb^1(\Hn)$ and
\begin{align*}
&\int_{\Hn} \left| \sum_{m\in \Z} r_m(\omega_1) \sum_{n \in \Z}
    s_n(\omega_2) f \Vconv \phi^{(2,n)} \Hconv \phi^{(1,m)} (g) \right| \wrt g \\
&\qquad\qquad \lesssim \sum_{j=1}^J \left( \norm{ \oper{R}_j f}_{\Leb^1(\Hn)}
    + \norm{\oper{H} \oper{R}_j f}_{\Leb^1(\Hn)}\right).
\end{align*}
It now follows from the orthonormality of the functions $r_m$ and $s_n$, Khinchin's inequality, Minkowski's inequality, and Khinchin's inequality that
\begin{align*}
& \left( \sum_{m\in \Z} \sum_{n \in \Z}
\left| f \Vconv \phi^{(2,n)} \Hconv \phi^{(1,m)} (g) \right|^2 \right)^{1/2}  \\
&\qquad=\left( \int_{\probspace_1} \int_{\probspace_2}
\left| \sum_{m\in \Z} \sum_{n \in \Z} r_m(\omega_1) s_n(\omega_2)
    f \Vconv  \phi^{(2,n)} \Hconv \phi^{(1,m)} (g) \right|^2 \wrt \omega_2\wrt \omega_1 \right)^{1/2}  \\
&\qquad\lesssim \left( \int_{\probspace_1} \left| \int_{\probspace_2}
    \left| \sum_{m\in \Z} \sum_{n \in \Z} r_m(\omega_1) s_n(\omega_2)
        f \Vconv \phi^{(2,n)} \Hconv \phi^{(1,m)} (g) \right| \wrt \omega_2 \right|^2
        \wrt \omega_1 \right)^{1/2} \\
&\qquad\leq \int_{\probspace_2} \left( \int_{\probspace_1}
    \left| \sum_{m\in \Z} \sum_{n \in \Z} r_m(\omega_1) s_n(\omega_2)
    f \Vconv \phi^{(2,n)} \Hconv \phi^{(1,m)} (g) \right|^2
    \wrt \omega_1  \right)^{1/2} \wrt \omega_2 \\
&\qquad\lesssim \int_{\probspace_2} \int_{\probspace_1}
    \left| \sum_{m\in \Z} \sum_{n \in \Z} r_m(\omega_1) s_n(\omega_2)
    f \Vconv \phi^{(2,n)} \Hconv \phi^{(1,m)} (g) \right| \wrt \omega_1 \wrt \omega_2  .
\end{align*}
By integrating over $\Hn$ and using Fubini's theorem and the estimates above, we conclude that
\begin{align*}
& \int_{\Hn} \left( \sum_{m\in \Z} \sum_{n \in \Z}
    \left| f \Vconv \phi^{(2,n)} \Hconv \phi^{(1,m)} (g) \right|^2 \right)^{1/2} \wrt g \\
&\qquad \lesssim \iint_{\probspace_1\times\probspace_2}\int_{\Hn}
    \left| \sum_{m\in \Z} r_m(\omega_1) \sum_{n \in \Z} s_n(\omega_2)
    f \Vconv \phi^{(2,n)} \Hconv \phi^{(1,m)} (g) \right| \wrt g \wrt \omega_1 \wrt \omega_2 \\
&\qquad\leq \iint_{\probspace_1\times\probspace_2} \sum_{j=0}^J
    \left( \norm{\oper{R}_j f}_{\Leb^1(\Hn)} +
    \norm{\oper{H} \oper{R}_j f}_{\Leb^1}\right) \wrt \omega_1 \wrt \omega_1 \\
&\qquad=  \norm{f}_{\RieszHardy(\Hn)} ,
\end{align*}
as required.
\end{proof}

The reader who is uncomfortable with our formal calculations may take sums over finite subsets of $\Z$ in the arguments above, and then allow these subsets to become arbitrarily large.
It may also be worth observing that
$f \Vconv \phi^{(2,n)} \Hconv \phi^{(1,m)} = f \Hconv \phi^{(1,m)} \Vconv \phi^{(2,n)}$ in the proof above.

\subsection{Conclusion of the proof of Theorem A and remarks}\label{ssec:conclusion-Thm-1}
We have already shown in Section \ref{sec:atomic} that various singular integral operators, including the Riesz transformations, map $\atomHardy(\Hn)$ into $\Leb^1(\Hn)$, and in Section \ref{sec:sqfn} that $\dissqfnHardy(\Hn) = \atomHardy(\Hn)$ (and corresponding norm inequalities hold).
Theorem \ref{thm:Riesz-Hardy-implies-sqfn-Hardy} therefore completes our characterisation of the Hardy space by singular integrals, and completes our proof of Theorem A.

We are now able to sharpen the results about certain singular integral operators.

\begin{corollary}\label{cor:simple-singular-integrals-bounded}
The Hilbert transformation $\oper{H}$ and simple singular integral operators, as defined in Section 2.6, are bounded on $\flagHardy(\Hn)$.
\end{corollary}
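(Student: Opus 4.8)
The plan is to use the equality $\flagHardy(\Hn)=\RieszHardy(\Hn)$ from Theorem~\ref{thm:main1}, so that it suffices to prove that $\oper{H}$ and every simple singular integral operator map $\RieszHardy(\Hn)$ boundedly into itself. Throughout I write $\oper{R}^{(1)}=\nabla^{(1)}\HLap^{-1/2}$ and $\oper{R}^{(2)}=\nabla^{(2)}\VLap^{-1/2}$, so that $\oper{R}_F=\oper{R}^{(1)}\oper{R}^{(2)}$; recall from Section~\ref{ssec:singular-integrals} that $\oper{R}^{(2)}$ equals, up to a nonzero constant, the Hilbert transform $\oper{H}$, and that $(\oper{R}^{(2)})^2=-\mathrm{Id}$ (as one sees from the central-frequency multiplier $i\,\mathrm{sgn}(\lambda)$).

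I would first dispose of $\oper{H}$, equivalently $\oper{R}^{(2)}$. If $f\in\RieszHardy(\Hn)$, then $f$, $\oper{R}^{(1)}f$, $\oper{R}^{(2)}f$ and $\oper{R}^{(1)}\oper{R}^{(2)}f$ all lie in $L^1(\Hn)$; consequently $\oper{R}^{(2)}f\in L^1(\Hn)$, $\oper{R}^{(1)}(\oper{R}^{(2)}f)=\oper{R}_F f\in L^1(\Hn)$, $\oper{R}^{(2)}(\oper{R}^{(2)}f)=-f\in L^1(\Hn)$, and $\oper{R}_F(\oper{R}^{(2)}f)=\oper{R}^{(1)}(\oper{R}^{(2)})^2f=-\oper{R}^{(1)}f\in L^1(\Hn)$. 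Hence $\oper{R}^{(2)}f\in\RieszHardy(\Hn)$ with $\norm{\oper{R}^{(2)}f}_{\RieszHardy(\Hn)}\le\norm{f}_{\RieszHardy(\Hn)}$, and the same holds for $\oper{H}$ since it differs from $\oper{R}^{(2)}$ by a nonzero scalar. This step is entirely formal: it uses only $(\oper{R}^{(2)})^2=-\mathrm{Id}$ and the definition of $\RieszHardy(\Hn)$.

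Next I would treat a simple singular integral operator $\oper{K}$, which by Definition~\ref{def:singular-integrals} is left convolution by a kernel that is smooth off the origin, homogeneous of degree $-\hdim$ and of mean zero. The algebraic ingredient is that $\oper{K}$ commutes with $\oper{R}^{(1)}$, with $\oper{R}^{(2)}$, and hence with $\oper{R}_F$: $\oper{R}^{(1)}$ is a right $\Hconv$ convolution and $\oper{R}^{(2)}$ is a convolution in the central variable, so these commutations follow from associativity of $\Hconv$ and from the centre of $\Hn$ being central, exactly as in the identities recorded in Section~\ref{ssec:notation}. Now take $f\in\flagHardy(\Hn)=\RieszHardy(\Hn)$; by Proposition~\ref{prop Hflag proper subspace of H}, $f\in\FSCGHardy(\Hn)$, and simple singular integral operators are bounded on $\FSCGHardy(\Hn)$ (Section~\ref{ssec:singular-integrals}). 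Therefore $\oper{K}f\in\FSCGHardy(\Hn)\subseteq L^1(\Hn)$; also $\oper{R}^{(1)}f\in\FSCGHardy(\Hn)$, whence $\oper{R}^{(1)}(\oper{K}f)=\oper{K}(\oper{R}^{(1)}f)\in\FSCGHardy(\Hn)\subseteq L^1(\Hn)$; by the previous step $\oper{R}^{(2)}f\in\flagHardy(\Hn)\subseteq\FSCGHardy(\Hn)$, so $\oper{R}^{(2)}(\oper{K}f)=\oper{K}(\oper{R}^{(2)}f)\in\FSCGHardy(\Hn)\subseteq L^1(\Hn)$; and $\oper{R}_F(\oper{K}f)=\oper{K}\bigl(\oper{R}^{(1)}(\oper{R}^{(2)}f)\bigr)$, where $\oper{R}^{(2)}f\in\FSCGHardy(\Hn)$ gives $\oper{R}^{(1)}(\oper{R}^{(2)}f)\in\FSCGHardy(\Hn)$ and hence $\oper{K}$ of it lies in $\FSCGHardy(\Hn)\subseteq L^1(\Hn)$. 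Thus $\oper{K}f\in\RieszHardy(\Hn)$, and chaining the boundedness estimates used at each stage, together with the norm equivalences of Theorem~\ref{thm:main1}, yields $\norm{\oper{K}f}_{\flagHardy(\Hn)}\lesssim\norm{f}_{\flagHardy(\Hn)}$.

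The only genuine point requiring care is the bookkeeping of the commutation relations in the presence of left versus right convolutions — that left convolution by the kernel of $\oper{K}$ commutes with right convolution by the (non-radial) kernel of $\oper{R}^{(1)}$ and with the central convolution defining $\oper{R}^{(2)}$; this is elementary from associativity but should be stated cleanly. Everything else is assembly of results already in hand: the Riesz characterisation of $\flagHardy(\Hn)$, the inclusion $\flagHardy(\Hn)\subseteq\FSCGHardy(\Hn)$, and the $\FSCGHardy$-boundedness of simple singular integrals. As an alternative to invoking $\FSCGHardy(\Hn)$, one could instead use the endpoint bounds $\oper{K}',\,\oper{K}'\oper{H}\colon\atomHardy(\Hn)\to L^1(\Hn)$ proved in Section~\ref{sec:atomic} for every simple singular integral operator $\oper{K}'$, applied to $\oper{K}'=\oper{K}$ and to the operator $\oper{K}\oper{R}^{(1)}$, which is a simple singular integral operator modulo a multiple of the identity by the algebra property of such operators.
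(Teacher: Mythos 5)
Your treatment of $\oper{H}$ is correct and in fact cleaner than the paper's: you work directly with the four $L^1$ conditions defining $\RieszHardy(\Hn)$ and the identity $(\nabla^{(2)}\VLap^{-1/2})^2=-\mathrm{Id}$, whereas the paper runs the same algebra through the characterisation by a family $\{\oper{R}_j\}$ of Christ--Geller singular integrals together with $\oper{H}\oper{R}_j$. That part stands.

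The step for a general simple singular integral operator $\oper{K}$, however, contains a genuine gap: the claimed commutation $\oper{R}^{(1)}(\oper{K}f)=\oper{K}(\oper{R}^{(1)}f)$ does not follow from associativity. In this paper both $\oper{K}$ and $\nabla^{(1)}\HLap^{-1/2}$ are left-invariant operators, i.e.\ convolutions with the kernel on the \emph{right} (see Section 3.7, where $\oper{K}f=f\Hconv k$, and the identification $\oper{X}_jf=f\Hconv\Upsilon_j$ in Section 2.1). Two such operators compose as $f\Hconv(k\Hconv\rho)$ versus $f\Hconv(\rho\Hconv k)$, and since $\Hconv$ is noncommutative on $\Hn$ and neither the Riesz kernels nor a general simple singular kernel is radial, these differ in general. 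Associativity only gives commutation of a left convolution with a right convolution, or of convolutions on the centre with everything (which is why your $\oper{R}^{(2)}$ commutations are fine). The correct replacement --- and what the paper actually uses --- is the algebra property of Section 2.6: $\oper{R}^{(1)}\oper{K}$ (and likewise $\oper{R}_F\oper{K}$, after commuting only the central factor) is a multiple of the identity plus another simple singular integral operator, which maps $\FSCGHardy(\Hn)\supseteq\flagHardy(\Hn)$ into $L^1(\Hn)$. Your closing ``alternative'' paragraph essentially contains this fix, but it is not optional: it is the only route that works, so it should be promoted from an aside to the argument itself, and the commutation claim for $\oper{R}^{(1)}$ deleted.
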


\begin{proof}
Take a family of simple homogeneous singular integral operators $\oper{R}_j$ that characterise the Folland--Stein--Christ--Geller space $\FSCGHardy(\Hn)$, and $f$ in $\flagHardy(\Hn)$.
Then $\oper{R}_j f$ and $\oper{R}_j \oper{H} f$ all lie in $\FSCGHardy(\Hn)$.
As $\oper{H}^2$ is the identity, $\oper{R}_j (\oper{H} f)$ and $\oper{R}_j \oper{H} (\oper{H} f)$ all lie in $\FSCGHardy(\Hn)$, so that $\oper{H}f \in \flagHardy(\Hn)$.

Similarly, if $\oper{K}$ is a simple homogeneous singular integral operator, and $f \in \flagHardy(\Hn)$, then $\oper{R}_j \oper{K} f \in \Leb^1(\Hn)$ since the composition $\oper{R}_j \oper{K}$ is a linear combination of the identity and another simple homogeneous singular integral operator.
Further, commutativity and the same argument shows that $\oper{R}_j \oper{H} \oper{K}f = \oper{R}_j \oper{K} \oper{H} f \in \Leb^1(\Hn)$ since $\oper{H}f \in \flagHardy(\Hn)$.
We therefore conclude that $\oper{K}f \in \flagHardy(\Hn)$.
\end{proof}

\begin{remark}\label{rem:independence-of-parameters}

On the one hand, the main theorem tells us that $\flagHardy(\Hn)$ may be characterised by homogeneous flag singular integrals.
On the other hand, the definition of an atom involves cancellations, in the sense that we impose conditions of the form $a = \HLap^M \VLap^N b$ for some function $b$, and these imply that $\int_{\Hn} a(g) p(g) \wrt g = 0$ for certain polynomials of low degree.

It should now be apparent that, provided that $M \geq 1$ and $N \geq 1$, we end up with the same atomic Hardy space.
Indeed, if $f$ belongs to the flag Hardy space $\flagHardy(\Hn)$ defined with $M$ and $N$ equal to $1$, then $\oper{K}f \in \Leb^1(\Hn)$ for all flag singular integral operators, by Theorem \ref{thm:singular-integrals-bounded-on-atoms}.
Then $f$ lies in the Hardy space defined using discrete square functions, and hence has an atomic decomposition into atoms where the particles $a_R$ are of the form $\HLap^M \VLap^N b_R$, for arbitrarily large $M$ and $N$.

In a future paper, we propose to show that even less cancellation is needed.

Similarly, the other versions of the Hardy space involve auxiliary functions (usually written $\phi\one$ and $\phi\two $).
It is now also clear that the Hardy space is largely independent of these functions, as long as they satisfy the relevant decay, cancellation and invertibility conditions.
\end{remark}


\section{Applications of the main theorem}\label{sec:applications}

The main results in this section are an isomorphism of our Hardy space with the flag Hardy space of Han, Lu and Sawyer \cite{HanLuSaw}, and the consequent identifications of the dual space of $\flagHardy(\Hn)$ and of the interpolation space between $\flagHardy(\Hn)$ and $\Leb^2(\Hn)$, and proofs of  Theorems D and E.

\subsection{Characterisation of $\flagHardy(\Hn)$ by more general heat semigroups}\label{ssec:semigroups}

In the proofs of Theorems \ref{thm:Atom-implies-area} and \ref{thm:area-Hardy-implies-atom-Hardy}, we did not need to use much about the functions $\phi\one$ and $\phi\two $.
However, we did use translation and dilation arguments to reduce estimates involving particles to estimates involving particles centred at the origin and of a certain size.
This was for convenience and to simplify the geometry.

We assert that our methods also work in settings that are not translation-invariant.
Hence, if we have self-adjoint operators $\oper{L}_1$ and $\oper{L}_2$ whose associated heat semigroups have kernels with Gaussian upper bounds, analogous estimates for derivatives, and the conservation property $\expe^{-s\oper{L}_1} 1 = 1$ and $\expe^{-t\oper{L}_2}1 = 1$, then we may define the flag area function associated to $\oper{L}_1$ and $\oper{L}_2$ by the formula
\begin{equation*}
\begin{aligned}
&\oper{S}_{\flag,\oper{L}_1,\oper{L}_2} (f)(g)
:= \biggl( \iiint_{\Gamma(g)}
\bigl| (r^2\oper{L}_1 \expe^{-r^2\oper{L}_1}s^2 \oper{L}_2 \expe^{-s^2\oper{L}_2} )f(g') \bigr|^2
\wrt g' \,\frac{\wrt r}{r} \,\frac{ds}{s} \biggr)^{1/2},
\end{aligned}
\end{equation*}
and define the flag  Hardy space $\fnspace{H}^1_{\flag,\oper{L}_1,\oper{L}_2}(\Hn)$ associated to $\oper{L}_1$ and $\oper{L}_2$ to be the completion in the norm $\|\oper{S}_{\flag,\oper{L}_1,\oper{L}_2} (f)\|_{\Leb^1(\Hn)}$ of the space $\{ f\in \Leb^2(\Hn) : \oper{S}_{\flag,\oper{L}_1,\oper{L}_2} (f) \in \Leb^1(\Hn)\}$.
Our arguments (appropriately extended) show that the Hardy space $\fnspace{H}^1_{\flag,\oper{L}_1,\oper{L}_2}(\Hn)$ coincides with our $\flagHardy(\Hn)$.

This develops the theory of singular integrals with nonsmooth kernels and function spaces associated to operators, due to Duong and McIntosh \cite{DuoMcI}, Duong and Yan \cite{DuoYan1, DuoYan2}, Hofmann and Mayboroda \cite{HofMay}, and others.
We expect that our Hardy space may be applied to the study of various types of differential equations on $\Hn$.

\subsection{Comparison with the Han--Lu--Sawyer Hardy space $\HLSHardy(\Hn)$}\label{ssec:HLS}

By definition and Theorem A, the reflection map $\oper{R}: f \mapsto f((\cdot)^{-1})$ is a linear bijection of our flag Hardy space $\flagHardy(\Hn)$ and the Han--Lu--Sawyer \cite{HanLuSaw} space $\HLSHardy(\Hn)$.
This correspondence has several interesting consequences.

\begin{prop}
The complex interpolation spaces between our flag Hardy space $\flagHardy(\Hn)$ and the Lebesgue space $\Leb^2(\Hn)$ are the Lebesgue spaces $\Leb^p(\Hn)$ where $1 < p < 2$.
\end{prop}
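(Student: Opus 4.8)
The plan is to transfer the known interpolation result of Han, Lu and Sawyer for $\HLSHardy(\Hn)$ to our space $\flagHardy(\Hn)$ via the reflection map $\oper{R}$. The first step is to observe that $\oper{R}\colon f \mapsto f((\cdot)^{-1})$ is an isometric linear bijection of $L^2(\Hn)$ onto itself (since the Haar measure is invariant under inversion, as noted in Section \ref{ssec:notation}), and also, by the discussion at the end of Section \ref{ssec:notation}, that reflection exchanges left and right convolutions. Hence $\oper{R}$ carries the radial and nontangential maximal functions defined by \emph{right} convolution with the flag Poisson kernel (our convention) into those defined by \emph{left} convolution (the Han--Lu--Sawyer convention), and therefore $\oper{R}$ maps $\flagHardy(\Hn)$ isometrically onto $\HLSHardy(\Hn)$. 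This is exactly the content already invoked for Proposition \ref{prop app1}; one records that $\oper{R}$ is simultaneously an isometry of $L^2(\Hn)$ and of the two Hardy spaces.

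The second step is to quote the interpolation theorem of \cite{HLS}, which identifies the complex interpolation spaces $[\HLSHardy(\Hn), L^2(\Hn)]_\theta$ with $L^{p_\theta}(\Hn)$, where $1/p_\theta = (1-\theta) + \theta/2$, so that $p_\theta$ ranges over $(1,2)$ as $\theta$ ranges over $(0,1)$. The third step is the general principle that a linear map which is simultaneously an isomorphism (here even an isometry) between the two endpoint spaces of one interpolation couple and the two endpoint spaces of another induces an isomorphism of the corresponding complex interpolation spaces; this is standard (see, e.g., Bergh--L\"ofstr\"om). Applying this with $\oper{R}$, which is an isometry of both $L^2(\Hn)$ with itself and of $\flagHardy(\Hn)$ with $\HLSHardy(\Hn)$, we get
\[
[\flagHardy(\Hn), L^2(\Hn)]_\theta
= \oper{R}^{-1}\bigl( [\HLSHardy(\Hn), L^2(\Hn)]_\theta \bigr)
= \oper{R}^{-1}\bigl( L^{p_\theta}(\Hn) \bigr)
= L^{p_\theta}(\Hn),
\]
the last equality because $\oper{R}$ is an isometry of every $L^p(\Hn)$. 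Letting $\theta$ run over $(0,1)$ gives all $p \in (1,2)$, which is the assertion.

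I do not expect a genuine obstacle here: the only points requiring care are bookkeeping ones. First, one must make sure the couple used by \cite{HLS} is literally $(\HLSHardy(\Hn), L^2(\Hn))$ and that their Hardy space agrees with the reflected version of ours on the common dense subspace $\flagHardy(\Hn)\cap L^2(\Hn)$ — this is guaranteed by Theorem \ref{thm:main1} together with the maximal-function characterisations and the left/right convolution remark. Second, one should note that $\flagHardy(\Hn)\cap L^2(\Hn)$ is dense in $\flagHardy(\Hn)$ (which follows from the atomic description and Lemma \ref{lem:right-conv-atom}), so the interpolation couple is well-behaved and the transference argument via $\oper{R}$ applies without subtlety. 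With these remarks in place the proof is essentially a one-line consequence of Proposition \ref{prop app1} and the interpolation theorem of \cite{HLS}.
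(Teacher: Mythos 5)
Your argument is correct and is essentially the same as the paper's: the proof in Section 8.2 also transfers the Han--Lu--Sawyer interpolation theorem via the reflection map $\oper{R}$, which is an isometry of every $L^p(\Hn)$ and an isomorphism between $\flagHardy(\Hn)$ and $\HLSHardy(\Hn)$. The only small caution is that $\oper{R}$ need only be an isomorphism (with equivalent norms) rather than an isometry between the two Hardy spaces, but as you note this is all the standard interpolation transference requires.
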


\begin{proof}
The map $\oper{R}$ from our flag Hardy space $\flagHardy(\Hn)$ to the Han--Lu--Sawyer \cite{HanLuSaw} space $\HLSHardy(\Hn)$ is an isometry of all the Lebesgue spaces $\Leb^p(\Hn)$.
Our interpolation theorem is now an immediate corollary of theirs.
\end{proof}

We define the space $\flagBMO(\Hn)$ to be the reflected version of the space of \cite{HanLuSaw}.

\begin{definition}\label{def:flag-BMO}
Let $\ancestor\phi\one$ on $\Hn$ and $\ancestor\phi\two $ on $\R$ be  Schwartz functions,, and define $\phi\one := \HLap^M \ancestor\phi\one$ and $\phi\two := \VLap^N \ancestor\phi\two $, where $M$ and $N$ are sufficiently large.
The flag BMO space $\flagBMO(\Hn)$ is the set of (equivalence classes of) measurable functions $b$ on $\Hn$ for which there exists a constant $C$ such that
\[
\lpar \frac{1}{|E|} \sum_{R \in \maxrect(E)}
\int_{0}^{\heit(R)} \int_{0}^{\wid(R)} \int_{R}
\abs| b \Hconv \phi\one_r \Vconv \phi\two _s (g)|
\wrt g \,\frac{dr}{r} \,\frac{ds}{s} \rpar^{1/2}
\leq C
\]
for all open subsets $E$ of finite measure in $\Hn$.
The ``norm'' $\norm{b}_{\flagBMO}$ is the least possible value of $C$ in the expression above.
\end{definition}

Note that there is a finite dimensional space of polynomials whose ``norm'' is $0$.
Note also that our conditions on $\phi\one$ and $\phi\two$ are somewhat imprecise and probably much stronger than is actually needed.
To appeal to the results of \cite{HanLuSaw}, we need to match their conditions.

\begin{prop}
The dual space of the flag Hardy space $\flagHardy(\Hn)$ is the space $\flagBMO(\Hn)$.
\end{prop}

\begin{proof}
This follows from the identification of our flag Hardy space with that of \cite{HanLuSaw}, and their proof of duality.
\end{proof}

\begin{corollary}\label{cor S Lp 2}
Suppose that $1<p<\infty$.
Then
\[
 \norm{ \areaopphi (f) }_{\Leb^p(\Hn)} \lesssim_p  \norm{ f }_{\Leb^p(\Hn)}
 \qquad\forall f\in \Leb^p(\Hn).
\]
\end{corollary}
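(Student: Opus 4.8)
The plan is to treat the ranges $1<p\le 2$ and $2\le p<\infty$ separately: the first by interpolation against the flag Hardy space, the second by duality and vector‑valued Calder\'on--Zygmund theory. The first step common to both is to \emph{linearise} the area operator. Using Lemma \ref{lem:equivalent-area-fns} and left‑invariance, one writes $\areaphi(f)(g)$, up to equivalent constants, as $\norm{\oper{U}f(g)}_{\oper{K}}$, where $\oper{K}$ is the fixed Hilbert space $L^2(\Hn\times\R^+\times\R^+;\,dh\,\tfrac{dr}{r}\,\tfrac{ds}{s})$ and $\oper{U}\colon L^2(\Hn)\to L^2(\Hn;\oper{K})$ is the linear map $\oper{U}f(g)(h,r,s):=\abs|T(o,r,s)|^{-1/2}\,(f\Hconv\phi^{(1)}_r\Vconv\phi^{(2)}_s)(gh)\,\indifn_{T(o,\beta r,\gamma s)}(h)$. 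By Proposition \ref{prop:Lusin-L2-bounded}, $\oper{U}$ is bounded from $L^2(\Hn)$ to $L^2(\Hn;\oper{K})$; by Theorem \ref{thm:Atom-implies-area} together with Theorem \ref{thm:main1} (which identifies $\atomHardy(\Hn)$ with $\flagHardy(\Hn)$), $\oper{U}$ is bounded from $\flagHardy(\Hn)$ to $L^1(\Hn;\oper{K})$.

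For $1<p\le 2$ I would apply complex interpolation to the single linear operator $\oper{U}$, using $[L^1(\Hn;\oper{K}),L^2(\Hn;\oper{K})]_\theta=L^p(\Hn;\oper{K})$ and the Proposition preceding this corollary, which gives $[\flagHardy(\Hn),L^2(\Hn)]_\theta=L^p(\Hn)$ for the same $\theta$ (with $1/p=1-\theta/2$). This yields $\oper{U}\colon L^p(\Hn)\to L^p(\Hn;\oper{K})$, that is, $\norm{\areaphi(f)}_{L^p(\Hn)}\lesssim_p\norm{f}_{L^p(\Hn)}$ for $1<p\le2$, where one uses that $\flagHardy(\Hn)\cap L^2(\Hn)$ is dense in both endpoint spaces so that $\oper{U}$ is defined consistently.

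For $2\le p<\infty$ the route is duality: $\oper{U}\colon L^p(\Hn)\to L^p(\Hn;\oper{K})$ is bounded iff the adjoint $\oper{U}^{*}\colon L^{p'}(\Hn;\oper{K})\to L^{p'}(\Hn)$ is bounded, with $1<p'\le 2$. Now $\oper{U}^{*}$ is an $\oper{K}$‑to‑scalar vector‑valued singular integral operator whose kernel is built from the functions $\phi^{(1)}_r\Vconv\phi^{(2)}_s$; since $\phi^{(1)}=\HLap^M\tilde\phi^{(1)}$ and $\phi^{(2)}=\VLap^N\tilde\phi^{(2)}$ with $\tilde\phi^{(1)},\tilde\phi^{(2)}$ Poisson‑bounded in the sense of Definition \ref{def:Poisson-bounded}, this kernel has exactly the decay and regularity needed for H\"ormander's integral condition on $\Hn$ (these are the estimates already exploited in Section \ref{sec:Lusin}, in particular in Proposition \ref{prop:Lusin-bounded-general-phi}). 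Combined with the $L^2(\Hn;\oper{K})\to L^2(\Hn)$ bound for $\oper{U}^{*}$, the Calder\'on--Zygmund machinery on the space of homogeneous type $\Hn$ produces a weak‑type $(1,1)$ bound for $\oper{U}^{*}$, and Marcinkiewicz interpolation then gives $\oper{U}^{*}\colon L^{p'}(\Hn;\oper{K})\to L^{p'}(\Hn)$ for all $1<p'\le2$, hence $\oper{U}\colon L^p(\Hn)\to L^p(\Hn;\oper{K})$ for $2\le p<\infty$. An equivalent alternative for this range, avoiding the adjoint, is the weight duality $\norm{\areaphi(f)}_{L^p}^2=\sup\bigl\{\int_{\Hn}\areaphi(f)^2k:\ k\ge0,\ \norm{k}_{L^{(p/2)'}}\le1\bigr\}$ together with $\int_{\Hn}\areaphi(f)^2k\lesssim\int_{\Hn}\abs|f|^2\,\oper{M}_{it}(k)$ and a weighted $L^2$ square‑function inequality; this again rests on Calder\'on--Zygmund‑type estimates for the flag square function.

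The hard part will be the range $p>2$. The interpolation step uses only the $L^2$ bound and the $\flagHardy\to L^1$ bound, which between them cover $p\le2$; the upper range genuinely requires the vector‑valued Calder\'on--Zygmund input above. The only technical point in that input is the verification of the H\"ormander condition for the $\oper{K}$‑valued kernel, which must accommodate simultaneously the Kor\'anyi geometry of $\Hn$ and the Euclidean geometry of the central variable; this is entirely parallel to the annular decompositions already carried out in Section \ref{sec:Lusin}, so no new idea is needed, only bookkeeping.
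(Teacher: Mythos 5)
Your treatment of the range $1<p\le 2$ is correct and is exactly the paper's argument: the paper deduces this case by interpolating the bounds $\flagHardy(\Hn)\to L^1(\Hn)$ and $L^2(\Hn)\to L^2(\Hn)$ against the interpolation identity $[\flagHardy(\Hn),L^2(\Hn)]_\theta=L^p(\Hn)$ from the preceding proposition; your linearisation of $\areaphi$ as a Hilbert-space-valued operator $\oper{U}$ is the standard way to make that interpolation (and the subsequent duality) rigorous.

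The range $2<p<\infty$ is where your primary argument has a genuine gap. The $\oper{K}$-valued convolution kernel of $\oper{U}$ (equivalently of $\oper{U}^{*}$) is a \emph{flag} kernel: its $\oper{K}$-norm is singular along the entire centre of $\Hn$, not just at the group identity. Already in the model case $\R\times\R$ one computes $\bigl(\iint|\phi_r(x)\psi_s(y)|^2\,\tfrac{dr}{r}\tfrac{ds}{s}\bigr)^{1/2}\eqsim |x|^{-1}|y|^{-1}$, which violates the one-parameter size bound $\lesssim(|x|+|y|)^{-2}$, and the same phenomenon occurs here in the regime $s\gg r^2$. Consequently the kernel does not satisfy H\"ormander's integral condition with respect to the gauge metric on $\Hn$, and neither $\oper{U}$ nor $\oper{U}^{*}$ is of weak type $(1,1)$ --- the failure of the weak $(1,1)$ endpoint is precisely why the flag Hardy space is needed in the first place. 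The estimates of Section \ref{sec:Lusin} that you invoke are not a one-parameter H\"ormander condition: they measure the integral of $\areaphi(a_R)$ outside an enlarged \emph{tube} $S^{*}$ and gain through the two ratios $\wid(R)/\wid(S)$ and $\heit(R)/\heit(S)$ separately, using the two cancellations $\HLap^M$ and $\VLap^N$ independently; they cannot be repackaged as regularity of the kernel away from a point singularity. Your parenthetical alternative is the right direction: from $\int\areaphi(f)^2k\le\int\ctssqfnphi(f)^2\,\oper{M}_{it}(k)$ (Fubini plus Lemma \ref{lem:flag-and-iterated-maximal-fns}) and H\"older one reduces to a weighted $L^2$ bound for the flag square function with weight $\oper{M}_{it}(k)$, but that weighted bound must itself be obtained by iterating one-parameter weighted Littlewood--Paley estimates in the central variable and on $\Hn$ separately (or by a genuinely two-parameter good-$\lambda$ argument); it is not a single application of Calder\'on--Zygmund theory on the space of homogeneous type $\Hn$.
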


\begin{proof}
From Theorem A and Proposition \ref{prop:Lusin-L2-bounded}, $\areaopphi$ is bounded from $\areaHardy(\Hn)$ to $\Leb^1(\Hn)$ and from $\Leb^2(\Hn)$ to $\Leb^2(\Hn)$, and hence by interpolation $\areaopphi$ is bounded on $\Leb^p(\Hn)$ when $1<p<2$.
A standard duality argument shows that $\areaopphi$ is also bounded on $\Leb^p(\Hn)$ when $2<p<\infty$.
\end{proof}

\begin{corollary}
Suppose that $(\phi\one,\phi\two)$ is a Poisson bounded pair, as in Definition \ref{def:sqfnHardyphi}.
Then the sublinear maps $\ctssqfnopphi$ and $\dissqfnopphi$ are bounded on $\Leb^p(\Hn)$ when $1<p<\infty$.
\end{corollary}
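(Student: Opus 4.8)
The plan is to realise both square functions as Hilbert-space-valued convolution operators and then combine the endpoint estimates already proved with complex interpolation and a duality argument. Put $\Hil := L^2\bigl((0,\infty)^2, \tfrac{dr}{r}\,\tfrac{ds}{s}\bigr)$ and define the \emph{linear} operator $T$ by $Tf(g) := \bigl( f\Hconv\phi^{(1)}_r\Vconv\phi^{(2)}_s(g)\bigr)_{(r,s)}$, so that $\ctssqfnphi(f)(g) = \norm{Tf(g)}_{\Hil}$ and hence $\norm{\ctssqfnphi(f)}_{L^p(\Hn)} = \norm{Tf}_{L^p(\Hn;\Hil)}$; the operator $T$ is given by (right) convolution with the $\Hil$-valued kernel $g\mapsto\bigl(\phi^{(1)}_r\Vconv\phi^{(2)}_s(g)\bigr)_{(r,s)}$. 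For $\dissqfnphi$ one takes instead $\Hil = \ell^2(\Z\times\Z)$ and $Tf(g) = \bigl(f\Hconv\phi^{(1)}_{2^m}\Vconv\phi^{(2)}_{2^n}(g)\bigr)_{(m,n)}$; the argument below is identical in that case, so I only discuss $\ctssqfnphi$.

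Next I would record the two endpoint estimates for $T$. On the one hand, $T$ is bounded from $\flagHardy(\Hn)$ to $L^1(\Hn;\Hil)$: this is exactly the estimate $\norm{\ctssqfnphi(f)}_{L^1(\Hn)}\lesssim\norm{f}_{\atomHardy(\Hn)}$ proved in Section \ref{sec:atomic}, together with $\flagHardy(\Hn)=\atomHardy(\Hn)$ from Theorem \ref{thm:main1}. On the other hand, $T$ is bounded from $L^2(\Hn)$ to $L^2(\Hn;\Hil)$: when $\phi^{(1)}$ and $\phi^{(2)}$ are spectrally defined this is Corollary \ref{cor:spectral-Plancherel}, and equivalently it follows from Proposition \ref{prop:Lusin-L2-bounded} together with the identity $\norm{\ctssqfnphi(f)}_{L^2(\Hn)}=\norm{\areaphi(f)}_{L^2(\Hn)}$, which holds because convolution with the $L^1$-normalised nonnegative functions $\chi^{(1)}_r$, $\chi^{(2)}_s$ preserves integrals of nonnegative functions; for a general Poisson-bounded pair, decomposing $\phi^{(1)}=\sum_j(\phi^{(1,j)})_{2^j}$ and $\phi^{(2)}=\sum_k(\phi^{(2,k)})_{2^k}$ into compactly supported pieces as in the proof of Proposition \ref{prop:Lusin-bounded-general-phi} and summing a Cotlar--Stein estimate reduces the $L^2$ bound to the decay estimates \eqref{eq:poisson-like-decay-1}--\eqref{eq:poisson-like-decay-2} and the cancellation carried by the factors $\HLap$, $\VLap$.

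Since $L^p(\Hn)$ is the complex interpolation space between $\flagHardy(\Hn)$ and $L^2(\Hn)$ (Proposition \ref{prop app1} and \cite{HLS}, as used in Corollary \ref{cor S Lp 2}) and $L^p(\Hn;\Hil)$ is the corresponding interpolation space between $L^1(\Hn;\Hil)$ and $L^2(\Hn;\Hil)$, the two endpoint bounds give $T:L^p(\Hn)\to L^p(\Hn;\Hil)$, i.e.\ $\norm{\ctssqfnphi(f)}_{L^p(\Hn)}\lesssim_p\norm{f}_{L^p(\Hn)}$, for $1<p\leq2$. For $2<p<\infty$ I would argue by duality: $T:L^p(\Hn)\to L^p(\Hn;\Hil)$ is bounded if and only if its adjoint $T^{*}:L^{p'}(\Hn;\Hil)\to L^{p'}(\Hn)$ is bounded, where $1<p'<2$ and $T^{*}$ is the Hilbert-space-to-scalar (flag) singular integral obtained by convolving each slice $G(\cdot,r,s)$ with reflected dilates of $\phi^{(1)}_r$ and $\phi^{(2)}_s$ and integrating in $(r,s)$. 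Now $T^{*}:L^2(\Hn;\Hil)\to L^2(\Hn)$ is bounded, being the adjoint of the $L^2$-bounded map above, and the $\Hil$-valued kernel of $T^{*}$ satisfies the flag H\"ormander and differential conditions — once more a consequence of \eqref{eq:poisson-like-decay-1}--\eqref{eq:poisson-like-decay-2} and the cancellation in $\phi^{(1)}=\HLap\tilde\phi^{(1)}$, $\phi^{(2)}=\VLap\tilde\phi^{(2)}$ — so a Calder\'on--Zygmund decomposition in the flag setting (the product-of-homogeneous-type version, which applies verbatim to Hilbert-space-valued kernels) yields the weak-type $(1,1)$ bound $T^{*}:L^1(\Hn;\Hil)\to L^{1,\infty}(\Hn)$; interpolating with the $L^2$ bound completes the case $2<p<\infty$, and hence the proof.

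Everything except the case $p>2$ is essentially already available; the genuinely new bookkeeping is the verification of the flag H\"ormander conditions for the $\Hil$-valued adjoint kernel and the invocation of the Hilbert-space-valued flag Calder\'on--Zygmund machinery, so that is the step I expect to be the main obstacle. It is routine — a cosmetic vectorialisation of the scalar particle estimates of Section \ref{sec:atomic} — but it requires some care because a general Poisson-bounded $\tilde\phi$ is only smooth of finite order with polynomial decay rather than Schwartz. (Alternatively, the case $p>2$ could be deduced from Corollary \ref{cor S Lp 2} by upgrading the flag Plancherel--P\'olya inequality of Section \ref{Plancherel-Polya} to $L^p$ and using it to dominate $\ctssqfnphi(f)$ by a dilated area function.)
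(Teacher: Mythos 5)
Your skeleton — the $\atomHardy(\Hn)\to L^1$ and $L^2\to L^2$ endpoint bounds for the linearised operator $T$, complex interpolation via Proposition \ref{prop app1} for $1<p\leq2$, and duality for $2<p<\infty$ — is exactly the paper's (one-line) proof, and everything through the case $p\leq2$ is sound. The gap is in your execution of the duality step. You propose to bound the adjoint $T^{*}:L^{p'}(\Hn;\Hil)\to L^{p'}(\Hn)$ by proving a weak-type $(1,1)$ estimate through ``a Calder\'on--Zygmund decomposition in the flag setting (the product-of-homogeneous-type version)'' and interpolating with $L^2$. No such weak-type $(1,1)$ estimate is available, and the product-type Calder\'on--Zygmund decomposition does not produce one — this is precisely the failure that makes product and flag Hardy space theory necessary in the first place. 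Concretely, the $\Hil$-valued kernel $g\mapsto\bigl(\phi^{(1)}_r\Vconv\phi^{(2)}_s(g)\bigr)_{(r,s)}$ is a flag kernel, singular along the whole centre of $\Hn$ rather than only at the identity: the scales with $s\gg r^2$ contribute pieces concentrated on long thin tubes $T(o,r,s)$, and the same computation that shows the kernel $1/(xy)$ of the double Hilbert transform violates the one-parameter H\"ormander condition shows that $\int_{\norm{g}>2\norm{h}}\norm{K(h^{-1}g)-K(g)}_{\Hil}\,dg$ diverges. The model for your claim, the product square function on $\R\times\R$, is not of weak type $(1,1)$ — compare the paper's own remark that the iterated maximal operator $\oper{M}_{it}$ is ``certainly unbounded on $L^1(\Hn)$''.

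The repair stays inside the paper's toolkit: instead of weak $(1,1)$, run on $T^{*}$ the same two-endpoint interpolation you ran on $T$. The kernel entering each slice of $T^{*}$ is the reflected $\phi_{r,s}$, which is Poisson-bounded with the same cancellation, so the particle estimates of Proposition \ref{prop:Lusin-bounded-on-particles} (with $\beta=\gamma=0$) vectorialise to give a bound from an $\Hil$-valued flag atomic space into $L^1(\Hn)$; interpolating that with $T^{*}:L^2(\Hn;\Hil)\to L^2(\Hn)$ yields $L^{p'}(\Hn;\Hil)\to L^{p'}(\Hn)$ for $1<p'<2$ and hence $T:L^p(\Hn)\to L^p(\Hn;\Hil)$ for $2<p<\infty$. (Alternatively, Lemma \ref{lem:equivsqfns} together with the $L^p$ theory of the discrete square function already established in \cite{HLS} covers the range $2<p<\infty$ directly.) Note that your parenthetical alternative — Plancherel--P\'olya plus Corollary \ref{cor S Lp 2} — does not escape the problem, since the $p>2$ case of Corollary \ref{cor S Lp 2} rests on the very same duality step.
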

\begin{proof}
This also follows from Theorem A, interpolation and duality.
\end{proof}

\subsection{$\flagHardy(\Hn)$ is a proper subspace of the Hardy space $\FSCGHardy(\Hn)$}\label{ssec:flag-subset-FSCG}

The singular integral characterisation of our Hardy space $\flagHardy(\Hn)$ shows immediately that it is the subspace of the Folland--Stein--Christ--Geller Hardy space $\FSCGHardy(\Hn)$ of functions $f$ such that both $f$ and its Hilbert transform $\oper{H}f$ lie in $\FSCGHardy(\Hn)$.

To see that it is a proper subspace, we consider the function $a$, defined by
\[
a(z,t) = \psi(z) \fn \phi(t)
\qquad\forall (z,t)\in \Hn,
\]
where $\phi$ in $\fnspace{C}^{\infty}(\R)$ is supported in $(-1,1)$ and is nonnegative, while $\psi$ in $\fnspace{C}^{\infty}(\Cn)$ is supported in the unit ball of $\Cn$; further
\[
\int_{\Cn} \psi(z) \wrt z =0
\qquad\text{and}\qquad
\int_{\Cn} |\psi(z)| \wrt z = 1
\qquad\text{while}\qquad
\int_{\R} \phi(t) \wrt t=1.
\]
Then $a$ is a multiple of an $\FSCGHardy(\Hn)$ atom, but $\norm{a^{*}}_{\Leb^1(\Hn)} = \infty$, where $a^*$ is the radial maximal function of $a$, and hence $a \notin \flagHardy(\Hn)$.

This proves Theorem C.

\subsection{Sharp endpoint boundedness of certain flag singular integrals}
\label{ssec:flag-sing-ints}

Nagel, Ricci, Stein and Wainger \cite{NagRicSteWai1, NagRicSteWai2}  examined flag singular integral operators in considerable detail. 

\begin{definition}
A general flag singular integral kernel is a distribution of the form 
\[
\sum_{j,k=\in \Z} \phi^{(1,j)}_{2^j} \Hconv \phi^{(2,k)}_{2^k}
\qquad\text{or equivalently}\qquad
\lpar \sum_{j=\in \Z} \phi^{(1,j)}_{2^j} \rpar \Hconv \lpar \sum_{k=\in \Z}\phi^{(2,k)}_{2^k} \rpar,
\]
where the $\phi^{(1,j)}$, where $j \in \Z$, form a uniformly bounded family of functions with mean $0$ in $\fnspace{C}^\infty(\Hn)$, and the $\phi^{(2,k)}$, where $k \in \Z$, form a uniformly bounded family of functions with mean $0$ in $\fnspace{C}^\infty(\R)$.

A general flag singular integral operator is a convolution with such a distribution.
\end{definition}

It is possible to show that the collection of all flag singular integral kernels is an algebra under convolution, by showing the collection of distributions of the form $\sum_{j=\in \Z} \phi^{(1,j)}_{2^j} $ is an algebra, that the collection of distributions of the form $\sum_{k=\in \Z} \phi^{(2,k)}_{2^k} $ is also an algebra, and observing that the distributions from the first collection commute with those from the second collection.

Both the Phong--Stein and Cauchy--Szegõ operators are convolutions with general flag singular integral kernels.
Hence their boundedness on $\flagHardy(\Hn)$ is an immediate consequence of the following result.

\begin{theoremD*}
General flag singular integral operators are bounded on $\flagHardy(\Hn)$.
\end{theoremD*}

\begin{proof}
A slight modification of the proof of Theorem \ref{thm:singular-integrals-bounded-on-atoms} (see the remark following the proof of this theorem) shows that general flag singular operators map $\flagHardy(\Hn)$ to $\Leb^1(\Hn)$. 
The fact that flag singular integral operators form an algebra then implies that general flag singular operators map $\flagHardy(\Hn)$ to itself, by an argument like that of the proof of Corollary \ref{cor:simple-singular-integrals-bounded}, and proves Theorem D.
\end{proof}

\subsection{Sharp endpoint boundedness of Marcinkiewicz multipliers}\label{ssec:proof-Marcinkiewicz}

In this section, we prove Theorem~E; we begin with a lemma.

\begin{lemma}\label{lemma m 444.2}
Let $d_{(1)}$ be the gauge distance on $\Hn$ and $d_{(2)}$ be the Euclidean distance on $\R$.
Suppose that $\Phi \in \fnspace{C}^{\infty}(\R)$ and $\supp \Phi \subseteq [1/2,2]$, that $E$ and $E'$ are closed subsets of $\Hn$, and that $E_\R$ and $E'_\R$ are closed subsets of $\R$.
Then for any $s\in \R^+$ and $p\in [1,\infty]$,
\begin{align}\label{off-5}
\|\Phi(2^{-j}\HLap)\|_{\Leb^p(E)\to \Leb^p(E')}
&\lesssim_s (1+2^{j/2}d_{(1)}(E,E'))^{-s},\\
\label{off-6}
\|\Phi( 2^{-j} i\oper{T})\|_{\Leb^p(E_\R)\to \Leb^p(E'_\R)}
&\lesssim_s (1+2^{j}d_{(2)}(E_\R,E'_\R))^{-s}.
\end{align}
\end{lemma}

\begin{proof}
The convolution kernel $k_{\Phi(\HLap)}$ of $\Phi(\HLap)$ is a Schwartz function.
Hence
\[
\abs|k_{\Phi(2^{-j}\HLap)}(g)|
\lesssim_s 2^{\hdim j/2} (1+2^{j/2} \norm{g})^{-2s}
\]
for all $g \in \Hn$ and all $s\in\R^+$.
We assume that $s > \hdim$ and that $p \in (1,\infty)$.
If $f\in \Leb^p(\Hn)$ and $\supp f\subseteq E$ while $f'\in \Leb^{p'}(\Hn)$ and $\supp f' \subseteq E'$, then
\begin{align*}
&(1+2^{j/2}d_{(1)}(E,E'))^{s}\abs| \int_{\Hn}\Phi(2^{-j}\HLap)(f)(g') f'(g')\wrt g' | \\
&\qquad= (1+2^{j/2}d_{(1)}(E,E'))^{s}\abs|\iint_{\Hn\times \Hn} k_{\Phi(2^{-j}\HLap)}(g^{-1}g') f(g) f'(g) \wrt g\wrt g' | \\
&\qquad\lesssim_s  \iint_{E \times E'} \frac{2^{\hdim j/2} (1+2^{j/2}d_{(1)}(E,E'))^{s}} {(1+2^{j/2}d_{(1)}(g,g'))^{2s}} \abs|f(g)| \abs|f'(g)| \wrt g' \wrt g\\
&\qquad\leq  \iint_{E \times E'} \frac{2^{\hdim j/2}} {(1+2^{j/2}d_{(1)}(g,g'))^{s}} \abs|f(g)| \abs|f'(g)| \wrt g' \wrt g\\
&\qquad\leq  \left(\iint_{E \times E'} 2^{\hdim j/2}(1+2^{j/2}d_{(1)}(g,g'))^{-s} |f(g)|^p\wrt g'\wrt g\right)^{1/p}\\
&\qquad\qquad\times\left(\iint_{E \times E'} 2^{\hdim j/2} (1+2^{j/2}d_{(1)}(g,g'))^{-s} |f'(g)|^{p'}\wrt g'\wrt g\right)^{1/p'}\\
&\qquad\leq \left(\int_{\Hn} \int_{\Hn} 2^{\hdim j/2}(1+2^{j/2}d_{(1)}(g,g'))^{-s}\wrt g' \abs|f(g)|^p\wrt g\right)^{1/p}\\
&\qquad\qquad\times\left(\int_{\Hn} \int_{\Hn} 2^{\hdim j/2} (1+2^{j/2}d_{(1)}(g,g'))^{-s} \wrt g' \abs|f'(g)|^{p'}\wrt g\right)^{1/p'}\\
&\qquad=  \int_{\Hn} (1+d_{(1)}(e,g'))^{-s}\wrt g'
\norm{f}_{\Leb^p(\Hn)} \norm{f'}_{\Leb^{p'}(\Hn)} ,
\end{align*}
as required.
If $p=1$ or $p=\infty$ the modifications are easy.
The proof of \eqref{off-6} is similar.
\end{proof}

We restate Theorem E for convenience.
First we define the Marcinkiewicz norm of a function $\Mu: \R^+ \times \R \setminus \{0\} \to \C$.
Fix $\epsilon\in\R^+$, and set $\alpha = \cdim + \epsilon$ and $\beta = (1 + \epsilon)/2$, and set
\[
\norm{\Mu}_{\mathrm{Mar}}
:= \sup\{ \norm{ \Theta \fn\Mu(r\cdot,s\cdot) }_{L_{\alpha,\beta}^2(\R^2)} : r \in \R^+ , s \in \R\setminus \{0\} \}  .
\]

\begin{theorem}\label{thm:multiplier}
Let $\Theta$ be a nonzero compactly supported function in $\R^+ \times \R^+$.
Suppose that $\Mu:\R^+ \times \R\setminus\{0\} \to \C$ and $\norm{\Mu}_{\mathrm{Mar}}<\infty$.
Then both the operators  $\Mu(\HLap/\abs|\oper{T}|, i\oper{T})$ and $\Mu(\HLap, i\oper{T})$ are bounded on $\flagHardy(\Hn)$.
\end{theorem}

\begin{proof}
As remarked by Müller, Ricci and Stein \cite{MulRicSte2}, a partition of unity argument shows that the choice of $\Theta$ is immaterial; further, if $\Mu(\HLap/\abs|\oper{T}|, i\oper{T})$ is bounded, then so is $\Mu(\HLap, i\oper{T})$.

It is clear that if $\Mu$ satisfies the condition of the theorem, so do the functions $\Mu \indifn_{\R^+ \times \R^+}$ and $\Mu \indifn_{\R^+ \times \R^-}$.
It therefore suffices to treat functions $\Mu$ supported in the first or the second quadrant.
We treat functions $\Mu$ supported in the first quadrant; the modifications to treat the second case are very simple.

Take a smooth, $[0,1]$-valued function $\Eta$ on $\R^+$, whose support is contained in $(1/2,2)$, such that $\sum_{j \in \Z} \abs|\Eta(2^{-j} \cdot)| = 1$ in $\R^+$, and set
\[
\Mu_{j,\ell}(\lambda/\mu, \mu)
:= \Eta( 2^{-j} \lambda/\mu) \fn\Eta( 2^{-\ell}\mu) \fn\Mu(\lambda/\mu, \mu)
\qquad\forall \lambda, \mu \in \R^+ .
\]
Thus
\begin{equation}\label{eq:def-M-j-ell}
\Mu_{j,\ell}(\HLap/|\oper{T}|, \euli\oper{T})
:= \Eta( 2^{-j}\HLap/|\oper{T}|) \fn\Eta( 2^{-\ell}\euli\oper{T}) \fn\Mu(\HLap/|\oper{T}|,\euli\oper{T}) .
\end{equation}
and
\begin{equation}\label{eq:def-M-j-ell-b}
\begin{aligned}
\Mu(\HLap/|\oper{T}|, \euli\oper{T})
&= \sum_{j,\ell} \Mu_{j,\ell}(\HLap/|\oper{T}|, \euli\oper{T}) .
\end{aligned}
\end{equation}

The joint spectrum of $\HLap$ and $\oper{T}$ is the Heisenberg fan (see Section \ref{ssec:spectral-theory}), and therefore $\Eta( 2^{-j}\HLap/|\oper{T}|) = 0$ if $j$ is negative.
We assume that $j \in \N$ and $\ell \in \Z$ in what follows.

Suppose that $m ,n \in \Z$.
We define the smooth compactly supported function $\Eta^{(m)}$ on $\R^+$ by $\Eta^{(m)}(\mu) := \mu^{m} \Eta(\mu)$.
Then $\Eta^{(m)}$ and $\Eta$ satisfy similar Sobolev estimates and their supports coincide, and
\[
\lambda^m \fn\mu^n \fn\Eta( 2^{-j} \lambda/\mu) \fn\Eta( 2^{-\ell}\mu)
= 2^{m(j+\ell) + n \ell} \fn\Eta^{(m)}( 2^{-j} \lambda/\mu) \fn\Eta^{(m+n)}( 2^{-\ell}\mu) ,
\]
so by functional calculus,
\[
\HLap^{m} (i\oper{T})^{n} \Mu_{j,\ell}(\HLap/|\oper{T}|,\euli\oper{T})
=: 2^{m(j+\ell) + n \ell} \Mu_{j,\ell}^{m,n}(\HLap/|\oper{T}|,\euli\oper{T}) ,\euli\oper{T}),
\]
where 
\begin{equation}\label{eq:def-Mu-j-l-m-n}
\Mu_{j,\ell}^{m,n}(\lambda/\mu,\mu) 
= \Eta^{(m)}(2^{-j} \lambda/\mu)  \Eta^{(m+n)}(2^{-\ell} \mu) \Mu(\HLap/|\oper{T}|, \euli\oper{T}).
\end{equation}

Further, if $a_R$ is a particle associated to the shard $R$, of width $w$ and height $h$, then there is a function $c_R$ supported in $R^*$ such that $a_R = \HLap^{M}\VLap^{N} c_R$, and for all $m,n \in \N$, 
\[
\norm{ \HLap^m (\euli\oper{T})^n a _R }_{\Leb^2(\Hn)}
\lesssim_{m,n} w^{-2m} h^{-n} \norm{ a_R }_{\Leb^2(\Hn)} .
\]
It is consistent to define $\HLap^{-m} (\euli\oper{T})^{-n} a _R$ to be $\HLap^{M-m} (\euli\oper{T})^{2N-n} c _R$ when $0 \leq m \leq M$ and $0 \leq n \leq 2N$.

Hence, as the algebra of operators generated by $\HLap$ and $\euli \oper{T}$ is commutative, for all $m$ and $n$ such that $|m| \leq M$ and $|n| \leq 2N$, 
\begin{equation}\label{eq:Marcin-0a}
\begin{aligned}
&\Mu_{j,\ell}(\HLap,i\oper{T})a_R \\
&\qquad= \HLap^{m} \VLap^{n} \fn \Mu_{j,\ell} (\HLap/|\oper{T}| , \euli\oper{T}) \fn \HLap^{-m}\VLap^{-n} a_R   \\
&\qquad= \HLap^{m} \Eta( 2^{-j}\HLap/|\oper{T}|)
\fn\VLap^{n} \Eta( 2^{-\ell}i\oper{T}) \fn \Mu(\HLap/|\oper{T}| , \euli\oper{T}) \fn \HLap^{-m}\VLap^{-n} a_R   \\
&\qquad=2^{m(j+\ell) + n\ell}
\Mu_{j,\ell}^{m,n}(\HLap/|\oper{T}| , \euli\oper{T}) \HLap^{-m}\VLap^{-n} a_R    \\
&\qquad= 2^{m(j+\ell) + n\ell} (\HLap^{-m}\VLap^{-n} a_R ) \Hconv k^{m,n}_{j,\ell}    ,
\end{aligned}
\end{equation}
where $k^{m,n}_{j,\ell}$ is the kernel of the operator $\Mu_{j,\ell}^{m,n}(\HLap/|\oper{T}|, \euli\oper{T})$.
In order to exploit the factor $2^{m(j+\ell) + n\ell}$, we choose $m$ and $n$ to be positive when $j+\ell$ and $\ell$ are negative, and $0$ otherwise.

Observe that, if $a_R$ is a particle associated to a shard $R$, and $R \subset S$, then 
\begin{equation}\label{eq:weighted-kernel-a}
\begin{aligned}
&\int_{(S^*)^c} \abs| a_R \Hconv k_{j,\ell}( g) |  \wrt g \\
&\qquad= 2^{m(j+\ell) + n\ell} 
		\int_{(S^*)^c} \abs| (\HLap^{-m}\VLap^{-n} a_R ) \Hconv k^{m,n}_{j,\ell}( g) |  \wrt g  \\
&\qquad \leq 2^{m(j+\ell) + n\ell} \norm{ \HLap^{-m}\VLap^{-n} a_R }_{\Leb^1(\Hn)} 
	\sup\Bigl\{ \int_{(S^*)^c} \abs| k^{m,n}_{j,\ell}(g_1^{-1} g) |  \wrt g : g_1 \in R^*\Bigr\} ,
\end{aligned}
\end{equation}
and 
\begin{equation}\label{eq:weighted-kernel-b}
\begin{aligned}
  \norm{ \HLap^{-m}\VLap^{-n} a_R }_{\Leb^1(\Hn)} 
&\leq \abs|R|^{1/2} \norm{ \HLap^{-m}\VLap^{-n} a_R  }_{\Leb^2(\Hn)}   \\
&\lesssim \wid^{2m}(R) \heit^{2n}(R) \abs|R|^{1/2} \norm{ a_R  }_{\Leb^2(\Hn)} . 
\end{aligned}
\end{equation}

We propose to estimate the supremum in \eqref{eq:weighted-kernel-a} so as to be able to sum over $j$ and $\ell$ and apply Proposition \ref{prop:particle-bounded-function-bounded}.
Some simplifications are possible: first, by dilation and translation invariance, it suffices to suppose that the shard $R$ has centre $o$, width $1$, and height $h$, where $h > 1$.
By Proposition \ref{prop:particle-bounded-function-bounded}, it suffices to take shards $S$ with centre $o$ whose width and height are much greater than $1$ and $h$. 
By doing so, and then reducing the width of $S$ a little if necessary, we may also assume that $\heit(S) \geq 4h + 4\cdim\wid^2(S) + 4\cdim$.
Finally, we may assume that the enlargement parameter $\kappa$ is equal to $2$.

We are going to use weighted $\Leb^2$ estimates, which go back to work of Müller, Ricci and Stein \cite{MulRicSte2}, and we now introduce the relevant weight functions and a mild variation of a result of Müller, Ricci and Stein \cite{MulRicSte2}.

\begin{definition}\label{def:weights}
For $\epsilon \in \R^+$, we define the \emph{weight function} $w_{j,\ell}^{\epsilon}: \Hn \to \R^+$ by
\[
w_{j,\ell}^{\epsilon}(z,t)
:= 2^{-\cdim(j+\ell)} (1 + 2^{j+\ell}|z|^2)^{\cdim(1+\epsilon)}
    2^{-\ell} (1+2^{\ell}|t|)^{1+\epsilon}
\qquad\forall (z,t) \in \Hn.
\]
\end{definition}

\begin{lemma}\label{lemma muller stein}
Fix $\epsilon\in\R^+$, and set $\alpha = \cdim + \epsilon$ and $\beta = (1 + \epsilon)/2$. 
Let $k^{m,n}_{j,\ell}$ be the convolution kernel of the operator $\Mu^{m,n}_{j,\ell} ( \HLap/|\oper{T}|, i\oper{T})$ given by \eqref{eq:def-Mu-j-l-m-n}.
Then
\begin{equation}\label{eq:weighted-kernel-c}
\begin{aligned}
\lpar \int_{\Hn}
    \abs| k^{m,n} _{j,\ell}(g)|^2 w_{j,\ell}^{\epsilon}(g)\wrt g \rpar^{1/2}
\lesssim_{c,\epsilon,m,n} \norm{ \Mu }_{\mathrm{Mar}} .
\end{aligned}
\end{equation}
\end{lemma}

\begin{proof}
See Proposition 5.3 and Lemma 2.5 of \cite{MulRicSte2}.
\end{proof}

By Proposition \ref{prop:particle-bounded-function-bounded} and Theorem A, it suffices to find $\epsilon_1$ and $\epsilon_2$ in $\R^+$ such that
\begin{equation*}
\begin{aligned}
\int_{(S^{*})^c}
\biggl( \sum_{j,\ell} \Bigl| \Mu_{j,\ell}(\HLap/|\oper{T}|, \euli\oper{T}) a_R (g) \Bigr|^2 \biggr)^{1/2} \wrt g
\lesssim_{\Mu, \epsilon_1,\epsilon_2} \rho_{\boldsymbol\epsilon}(R,S) \abs|R|^{1/2} \norm{a_R}_{\Leb^2(\Hn)}
\end{aligned}
\end{equation*}
for all particles $a_R$ of the form $\HLap^M \VLap^N b_R$ associated to $R \in \rect$, and all $S \in \rect$ that contain $R$, where $M > \cdim/2$ and $N \geq 1$.
We may and shall assume that $R \subseteq  T(o,1/2, h/2)$ and $R^{*} = T(o,1, h)$, so $|R| \eqsim |R^*| 
\eqsim h \geq 1$, and that $S^{*} = T(o,r^*,h^*)$, where $r^* \geq 2\cdim+1$ and $h^* \geq (2\cdim+1)^2h$.
In light of the discussion leading to \eqref{eq:Marcin-0a}, it will suffice to prove that
\begin{equation}\label{eq:Marcin-0}
\begin{aligned}
\int_{(S^{*})^c}
\biggl( \sum_{j,\ell} \Bigl| c_R \Hconv k^{m,n}_{j, \ell}(g) \Bigr|^2 \biggr)^{1/2} \wrt g
\lesssim_{\Mu, \epsilon_1,\epsilon_2} \rho_{\boldsymbol\epsilon}(R,S) \abs|R|^{1/2} \norm{a_R}_{\Leb^2(\Hn)}
\end{aligned}
\end{equation}

To prove \eqref{eq:Marcin-0}, we define four regions in $\Hn$ using the Euclidean metric:
\begin{align*}
E_1 &:=  \{(z,t)\in \Hn: |z| \geq r^*, |t| \geq 8\cdim(h+|z|)\} \\
E_2 &:=  \{(z,t)\in \Hn: 2 \leq |z| \leq r^*, |t| \geq h^* \}   \\
E_3 &:=  \{(z,t)\in \Hn: |z| \geq r^*, |t|\leq 8\cdim(h+|z|)\} \\
E_4 &:=  \{(z,t)\in \Hn: |z|\leq 2, |t| \geq h^*\}.
\end{align*}
Clearly $E_1\cup E_2\cup E_3\cup E_4\cup S^*=\Hn$.
The bulk of the proof is the estimation of the integral on the left-hand side of \eqref{eq:Marcin-0} on the four regions $E_1$, $E_2$, $E_3$ and $E_4$.

We begin by estimating over $E_1$.
From the inclusion $\ell^1 \subseteq \ell^2$, we see that
\begin{equation}\label{eq:multi-1-1}
\begin{aligned}
\int_{E_1}  \biggr( \sum_{j,\ell} \Bigl|  c_R \Hconv k^{m,n}_{j, \ell}(g)  \Bigr|^2 \biggr)^{1/2} \wrt g
&\leq \sum_{j,\ell} \int_{E_1}
\Bigl| c_R \Hconv k^{m,n}_{j,\ell}(g') \Bigr|  \wrt g' .
\end{aligned}
\end{equation}

From the Cauchy--Schwarz inequality, the definition of $k^{m,n}_{j,\ell}$ after \eqref{eq:Marcin-0a},  and Lemma~\ref{lemma muller stein},
\begin{equation}\label{eq:multi-1-2}
\begin{aligned}
&  \int_{E_1}
\Bigl| c_R \Hconv k^{m,n}_{j,\ell}(g') \Bigr|  \wrt g' \\
&\qquad\leq  \int_{R^*} \int_{E_{1}}
\bigl| k^{m,n}_{j,\ell}(g^{-1}g') \bigr|
\abs| c_{R}(g)|  \wrt g' \wrt g \\
&\qquad\leq  \int_{R^*} \biggl(\int_{E_{1}}
\frac{1}{w^\epsilon_{j,\ell}(g^{-1}g')}  \wrt g' \biggr)^{1/2} \\
&\qquad\qquad\times \lpar\int_{E_{1}} w^\epsilon_{j,\ell}(g^{-1}g'). 
\abs| k^{m,n}_{j,\ell}(g^{-1}g')|^2   \wrt g' \rpar^{1/2}
\abs| c_{R}(g)| \wrt g \\
&\qquad\lesssim_{\Mu}  2^{m(j+\ell) + n\ell} \int_{R^*} \biggl(\int_{E_{1}}
 \frac{1}{w^\epsilon_{j,\ell}(g^{-1}g')}
 \wrt g' \biggr)^{1/2}\abs| c_{R}(g)| \wrt g .
\end{aligned}
\end{equation}
We will use several different versions of this calculation later.

Recall that $(z,t)^{-1} \Hprod (z',t') = (z'-z, t' - t - S(z,z'))$.
If $(z,t) \in R^* = T(0,1,h)$ and $(z',t') \in E_1$, then
\begin{equation}\label{eq:multi-1-3}
|z+z'| \geq |z'| - |z| \geq r^*-1
\qquad\text{and}\qquad
|t' - t - S(z,z')| \geq |t'| - |t| - |S(z,z')| \geq h,
\end{equation}
and so if $g \in R^*$ and $g' \in E_1$, then
\[
g^{-1}g' \in E_1^*
:= \{(z'',t'') \in\Hn : |z''| \geq r^*-1, |t''| \geq h \},
\]
whence
\begin{equation}\label{eq:multi-1-4}
\begin{aligned}
&\int_{E_{1}}
 \frac{1}{w^\epsilon_{j,\ell}(g^{-1}g')} 
 \wrt g' \\
&\qquad\leq
\int_{E_{1}^*}
 \frac{1}{w^\epsilon_{j,\ell}(g'')}  \wrt g'' \\
&\qquad= \int_{|z|>r^* - 1} 2^{\cdim(j+\ell)} (1 + 2^{j+\ell}|z|^2)^{-\cdim(1+\epsilon)} \wrt z
   \int_{|t|>h} 2^{\ell} (1+2^{\ell}|t|)^{-1-\epsilon}  \wrt t \\
&\qquad= \int_{|z|>2^{(j+\ell)/2}(r^*-1)}  (1 + |z|^2)^{-\cdim(1+\epsilon)} \wrt z
   \int_{|t|>2^\ell h} (1+|t|)^{-1-\epsilon}  \wrt t \\
&\qquad\lesssim_{\epsilon} \bigl(1 + 2^{(j+\ell)/2}r^*\bigr)^{-\cdim\epsilon}
    \bigl(1 + 2^\ell h\bigr)^{-\epsilon/2} .
\end{aligned}
\end{equation}

From inequalities \eqref{eq:multi-1-2} to \eqref{eq:multi-1-4}, we conclude that
\[
\begin{aligned}
&\int_{E_1 }  \Bigl| c_R \Hconv [\indifn_{E_1^*}k^{m,n}_{j,\ell}](g') \bigr|  \wrt g'. \\
&\qquad\lesssim_{\Mu} 2^{m(j+\ell)} \bigl(1 + 2^{(j+\ell)/2}r^*\bigr)^{-\cdim\epsilon}
    \bigl(1 + 2^\ell h \bigr)^{-\epsilon/2}  2^{n\ell} h^{n+ 1/2}
    \bignorm{ a_R }_{\Leb^2(\Hn)} ;
\end{aligned}
\]
We sum these inequalities as indicated by \eqref{eq:multi-1-1}, taking $m$ to be $M$ if $j+\ell \leq 0$ and $0$ otherwise, so that $2^{m(j+\ell)} = \min\{1, 2^{M(j+\ell)} \}$, and taking $n$ to be $N$ if $2^\ell h  \leq 1$ and $0$ otherwise, so that $2^{n\ell} = \min\{1, 2^{N\ell} \}$.  
Then
\begin{equation}\label{eq:multi-1-5}
\begin{aligned}
&\int_{E_1}  \biggr(
\sum_{j,\ell} \Bigl|  c_R \Hconv k^{m,n}_{j,\ell} (g) \Bigr|^2 
\biggr)^{1/2} \wrt g   \\
&\qquad\leq \sum_{j,\ell} \int_{E_1}
\Bigl| c_R \Hconv k^{m,n}_{j,\ell}(g') \Bigr|  \wrt g' \\
&\qquad\lesssim_{\epsilon,M, N} \Bigl(\frac{1}{r^*}\Bigr)^{2M\cdim\epsilon/(2M+\cdim\epsilon)}
\abs|R|^{1/2} \norm{a_R}_{\Leb^2(\Hn)} .
\end{aligned}
\end{equation}
this is an estimate of the required form.
We estimated the sum in the above expression as follows: define
\[
\begin{gathered}
J_1 := \{ j : 2^{j + \ell} < 1/r^*\}, 
\qquad J_2 := \{ j : 1/r^* \leq 2^{j + \ell} < 1\}
\qquad
J_3 := \{ j : 2^{j +\ell} \geq 1\},
\\
L_1 := \{ \ell : 2^\ell h < 1 \}
\qquad
L_2 := \{ \ell : 2^\ell h \geq 1 \};
\end{gathered}
\]
then 
\begin{equation}\label{eq:multi-1-5-a}
\begin{aligned}
&\sum_{j} \min\{ 1, 2^{M(j+\ell)}\}
(1 + 2^{(j+\ell)/2}r^*)^{-\cdim\epsilon} \} \\
&\qquad\eqsim \lpar
\sum_{j \in J_1} 2^{M(j+\ell)}
+ \sum_{j \in J_2} 2^{M(j+\ell)} (2^{(j+\ell)/2}r^*)^{-\cdim\epsilon}
+ \sum_{j \in J_3} (2^{(j+\ell)/2}r^*)^{-\cdim\epsilon} \rpar \\
&\qquad\eqsim_{M,\epsilon} \Biglpar
(r^*)^{-2M} + (r^*)^{-\cdim\epsilon} + (r^*)^{-\cdim\epsilon} \Bigrpar ,
\end{aligned}
\end{equation}
whence 
\begin{equation}\label{eq:multi-1-5-b}
\begin{aligned}
&\sum_{j,\ell} \min\{ 1, 2^{M(j+\ell)}\}
(1 + 2^{(j+\ell)/2}r^*)^{-\cdim\epsilon} 
( 1 + 2^\ell h)^{-\epsilon/2} \min\{1, (2^{\ell}h)^N \}
\\
&\qquad\eqsim_{M,\epsilon}
 (r^*)^{-\cdim\epsilon}
\lpar \sum_{\ell \in L_1} (2^{\ell}h)^N
+ \sum_{\ell \in L_2} (2^\ell h)^{-\epsilon/2}  \rpar\\
&\qquad\eqsim_{N,\epsilon} \Bigl(\frac{1}{r^*}\Bigr)^{\cdim\epsilon},
\end{aligned}
\end{equation}

The next step of the proof is to estimate the integral over $E_2$.
The argument is very similar to that used to treat the region $E_1$.
Indeed, apart from replacing $E_1$ by $E_2$, we replace the argument for \eqref{eq:multi-1-3} and \eqref{eq:multi-1-4} by the observation that if $(z,t) \in R^* = T(0,1,h)$ and $(z',t') \in E_2$, then
\begin{equation*}
|z' - z| \geq |z'| - |z| \geq 1
\qquad\text{and}\qquad
|t'-t-S(z,z')| \geq |t'| - |t| - |S(z,z')| \geq h^*/4,
\end{equation*}
so if $g \in R^*$ and $g' \in E_2$, then
\[
g^{-1}g' \in E_2^*
:= \{(z'',t'') \in\Hn : |z''| \geq 1, |t''| \geq h^*/4\},
\]
whence
\begin{equation*}
\begin{aligned}
\int_{E_{2}}  \frac{1}{w^\epsilon_{j,\ell}(g^{-1}g')}  \wrt g'
&\leq
\int_{E_{2}^*}  \frac{1}{w^\epsilon_{j,\ell}(g'')}  \wrt g'' \\
&= \int_{|z|>1} 2^{\cdim(j+\ell)} (1 + 2^{j+\ell}|z|^2)^{-\cdim(1+\epsilon)} \wrt z
   \int_{|t|>h^*/4} 2^{\ell} (1+2^{\ell}|t|)^{-1-\epsilon}  \wrt t \\
&\lesssim_\epsilon \bigl(1 + 2^{(j+\ell)/2}\bigr)^{-2\cdim\epsilon}
    \bigl(1 + 2^\ell h^*\bigr)^{-\epsilon} .
\end{aligned}
\end{equation*}
By making these modifications and summing much as before, we conclude that
\begin{equation*}
\int_{E_2} \biggl(
\sum_{j,\ell} \Bigl|  c_R \Hconv k^{m,n}_{j,\ell} (g') \Bigr|^2
\biggr)^{1/2}  \wrt g'
\lesssim \Bigl(\frac{h}{h^*}\Bigr)^{\epsilon} \abs|R|^{1/2} \norm{a_R}_{\Leb^2(\Hn)},
\end{equation*}
which is again of the required form.

Now we estimate the integral over $E_3$.
We decompose this region into dyadic pieces $E_{3,k}$, given by
\begin{align*}
E_{3,k} &:=\{(z,u): z \in A_k  , |u|\leq 8\cdim(h+|z|)\}.\\
\noalign{\noindent{where}}
A_k &:= \{z \in \Cn : 2^{k}r^*<|z|\leq 2^{k+1} r^* \}.
\end{align*}
For later purposes, we define $A_k^* := \{z \in \Cn : 2^{k}r^*-1<|z|\leq 2^{k+1} r^* + 1 \}$.

We take a smooth compactly supported real-valued function $\Phi$ on $\R\setminus\{0\}$ such that $\Phi \Eta = \Eta$, so that $\Eta(2^{ \ell}i\oper{T})  = \Phi(2^{ \ell}i\oper{T}) \Eta(2^{ \ell}i\oper{T})$, and define
\begin{equation}\label{eq:cRell-def}
c_{R,\ell} := \Phi(2^{ \ell}i\oper{T}) c_R ;
\end{equation}
we do not make explicit the dependence of $c_{R,\ell}$ on choices of $m$ and $n$.
From spectral theory,
\begin{equation}\label{eq:cRell-LP}
\biggl( \sum_{\ell} \norm{c_{R,\ell}}_{\Leb^2(\Hn)}^2  \biggr)^{1/2}
\lesssim_{\Phi} \norm{c_R}_{\Leb^2(\Hn)}.
\end{equation}
The operator $\Phi(2^{ \ell}i\oper{T})$ is a convolution in the central variable with a Schwartz function $\phi_\ell$, say, so $c_{R,\ell} = c_R \Vconv \phi_{\ell}$, and so $c_{R, \ell}$ is supported in $\{(z,t): |z| \leq 1\}$.
Now, from \eqref{eq:Marcin-0a},
\begin{align*}
&\int_{E_{3,k}} \biggl( \sum_{j,\ell} \abs|  c_R \Hconv k^{m,n}_{j,\ell} (g') |^2 \biggr)^{1/2} \wrt g'. \\
&\qquad=\int_{E_{3,k}} \biggl( \sum_{j,\ell} \abs|  c_{R,\ell} \Hconv k^{m,n}_{j,\ell} (g') |^2 \biggr)^{1/2} \wrt g' \\
&\qquad=\int_{E_{3,k}} \biggl( \sum_{j,\ell} \abs|  [\indifn_{G}c_{R,\ell}] \Hconv k^{m,n}_{j,\ell} (g') |^2 \biggr)^{1/2} \wrt g'
\\&\qquad\qquad+
\int_{E_{3,k}} \biggl( \sum_{j,\ell} \abs|  [\indifn_{C(g')}c_{R,\ell}] \Hconv k^{m,n}_{j,\ell} (g') |^2 \biggr)^{1/2} \wrt g' \\
&\qquad\qquad+
\int_{E_{3,k}} \biggl( \sum_{j,\ell} \abs|  [\indifn_{D(g')}c_{R,\ell}] \Hconv k^{m,n}_{j,\ell} (g') |^2 \biggr)^{1/2} \wrt g' \\
&\qquad =:  \term{I}^{G}_{k} +  \term{I}^{C}_{k} + \term{I}^{D}_{k},
\end{align*}
say, where we have split the region $\{(z,t) : |z| \leq 1, t\in \R\}$ in which $c_{R,\ell}$ is supported into two subsets, $G$ and $B$, and then split $B$ into two subsets which depend on $g'$, as follows:
\begin{equation} \label{eq:defGBCD}
\begin{aligned}
G&:= \{(z,t) : |z| \leq 1, |t| \geq 2h\}  \\
B&:= \{(z,t) : |z| \leq 1, |t| < 2h\}  \\
C(g')&:=\{g \in B: P_{\R}(g^{-1}g') \notin [-2h,2h] \} \\
D(g')&:=\{g \in B: P_{\R}(g^{-1}g') \in [-2h,2h]\}.
\end{aligned}
\end{equation}
Here $P_\R$ denotes the (noncanonical) projection of $\Hn$ onto $\R$ given by $(z,t) \mapsto t$.

First we consider the terms $\term{I}^{G}_{k}$.
From the inclusion $\ell^1 \subseteq \ell^2$, and a calculation like \eqref{eq:multi-1-2},
\begin{equation}\label{eq:multi186}
\begin{aligned}
\term{I}^{G}_{k}
&\leq\sum_{j,\ell} \int_{E_{3,k}} \abs|  [\indifn_{G}c_{R,\ell}] \Hconv k^{m,n}_{j,\ell} (g') | \wrt g' \\
&\leq \sum_{j,\ell}  \int_{\Hn} \int_{E_{3,k}}
\bigl| k^{m,n}_{j,\ell}(g^{-1}g') \bigr|
\abs| \indifn_{G} c_{R,\ell}(g)|  \wrt g' \wrt g \\
&\lesssim_{\Mu} \sum_{j,\ell}  2^{m(j+\ell) + n\ell} \int_{\Hn} \biggl(\int_{E_{3,k}}
 \frac{1}{w^\epsilon_{j,\ell}(g^{-1}g')}
 \wrt g' \biggr)^{1/2}\abs| \indifn_{G} c_{R,\ell}(g)| \wrt g .
\end{aligned}
\end{equation}
When $g \in G$ and $g' \in E_{3,k}$, $g^{-1}g' \in P^{-1}A^*_k$, so, much as argued from \eqref{eq:multi-1-3} to \eqref{eq:multi-1-4},
\begin{equation*}
\begin{aligned}
\biggl(\int_{E_{3,k}}
 \frac{1}{w^\epsilon_{j,\ell}(g^{-1}g')}
 \wrt g' \biggr)^{1/2}
&\leq
\biggl(\int_{P^{-1}\region{A}_{k}^*}
 \frac{1}{w^\epsilon_{j,\ell}(g')}
 \wrt g' \biggr)^{1/2}. \\
&\lesssim \bigl(1 + 2^{(j+\ell+2k)/2}r^*\bigr)^{-\cdim\epsilon}
\qquad\forall g \in G.
\end{aligned}
\end{equation*}
Since $\phi$ is a Schwartz function, much as shown in Lemma~\ref{lemma m 444.2}, for all $S \in \N$,
\[
\begin{aligned}
\int_{G} \abs| c_{R,\ell}(g)| \wrt g
&\leq \int_{\R\setminus [-2h,2h]} \int_{\Cn} \int_{[-h,h]} \abs| c_R(z,t') |
\abs| \phi_\ell(t - t')| \wrt t' \wrt z \wrt t \\
&\leq \int_{\R\setminus [-h,h]} \int_{\Cn}\int_{[-h,h]} \abs| c_R(z,t') | \abs|\phi_\ell(t'')| \wrt t'\wrt z \wrt t'' \\
&= \int_{\R\setminus [-2^\ell h,2^\ell h]}  \abs| \phi(t'')| \wrt t'' \int_{\Cn}\int_{[-h,h]} \abs| c_R(z,t')| \wrt t' \wrt z \\
&\lesssim_{\phi} (1+ 2^{\ell}h)^{-S} \norm{c_R}_{\Leb^1(\Hn)} \\
&\leq (1+ 2^{\ell}h)^{-S} h^N h^{1/2} \norm{a_R}_{\Leb^2(\Hn)}
\end{aligned}
\]
by Lemma \ref{lem:a-is-enough}.
To conclude, by taking $m$ to be $M$ if $j+\ell \leq 0$ and $0$ otherwise and  $n$ to be $N$ if $2^\ell h \leq 1$ and $0$ otherwise, and summing as in \eqref{eq:multi-1-5-a} and \eqref{eq:multi-1-5-b}, we deduce that
\begin{equation*}
\begin{aligned}
\sum_{k\in\N}\term{I}^{G}_{k}
&\lesssim \sum_{k\in\N}(2^kr^*)^{-\cdim\epsilon} h^{1/2} \norm{a_R}_{\Leb^2(\Hn)}
\lesssim_\epsilon \Bigl(\frac{1}{r^*}\Bigr)^{\cdim\epsilon} |R|^{1/2}\norm{a_R}_{\Leb^2(\Hn)} .
\end{aligned}
\end{equation*}

Next we estimate the terms $\term{I}^{C}_{k}$, given by
\[
\term{I}^{C}_{k} =
\int_{E_{3,k}} \biggl( \sum_{j,\ell} \abs|  [\indifn_{C(g')}c_{R,\ell}] \Hconv k^{m,n}_{j,\ell} (g') |^2 \biggr)^{1/2} \wrt g'.
\]
The argument used to prove \eqref{eq:multi186} shows that
\begin{equation*}
\begin{aligned}
\term{I}^{C}_{k}
&\lesssim_{\Mu} \sum_{j,\ell}  2^{m(j+\ell) + n\ell} \int_{G} \biggl(\int_{E_{3,k}}
 \frac{1}{w^\epsilon_{j,\ell}(g^{-1}g')}
 \wrt g' \biggr)^{1/2}\abs| \indifn_{C(g')} c_{R,\ell}(g)| \wrt g .
\end{aligned}
\end{equation*}
Let $E_{3,k}^*$ be the set $\{(z,t) \in \Hn : z \in A_k^*, |t| \geq 2h \}$.
By the definition of $C(g')$ (see \eqref{eq:defGBCD}), if $g \in G$ and $g' \in E_{3,k}$, then $g^{-1}g' \in E^*_{3,k}$, so
\[
\begin{aligned}
\biggl(\int_{E_{3,k}}
 \frac{1}{w^\epsilon_{j,\ell}(g^{-1}g')}
 \wrt g' \biggr)^{1/2} 
&\leq
\biggl(\int_{E^*_{3,k}}
 \frac{1}{w^\epsilon_{j,\ell}(g')}
 \wrt g' \biggr)^{1/2} \\
&\lesssim (1 + 2^{(j+\ell)/2}2^kr^*)^{-\cdim\epsilon} (1 + 2^{\ell}h)^{-\epsilon/2}
\end{aligned}
\]
for all $g \in G$.
Once again, we choose $m$ to be $M$ when $j+\ell$ is small and $0$ otherwise, and $n$ to be $N$ when $\ell$ is small and $0$ otherwise, and sum.
This leads to the conclusion that
\begin{equation*}
\begin{aligned}
\term{I}^{C}_{k}
&\lesssim \Bigl(\frac{1}{2^kr^*}\Bigr)^{\cdim\epsilon}|R|^{1/2}\norm{a_R}_{\Leb^2(\Hn)},
\end{aligned}
\end{equation*}
which in turn gives the desired estimate
\[
\int_{E_{3}} \biggl( \sum_{j,\ell} \abs|  [\indifn_{C(g')}c_{R,\ell}] \Hconv k^{m,n}_{j,\ell} (g') |^2 \biggr)^{1/2} \wrt g'
\lesssim \Bigl(\frac{1}{r^*}\Bigr)^{\cdim\epsilon}
\abs|R|^{1/2}\norm{a_R}_{\Leb^2(\Hn)}.
\]

Finally, we deal with the terms $\term{I}^{D}_{k}$, given by
\[
\term{I}^{D}_{k} =
\int_{E_{3,k}} \biggl( \sum_{j,\ell} \abs|  [\indifn_{D(g')}c_{R,\ell}] \Hconv k^{m,n}_{j,\ell} (g') |^2 \biggr)^{1/2} \wrt g'.
\]
We take $n = 0$ throughout this estimate, and assume for the moment that $r^* \geq h$.

The projection $PD(g')$ of $D(g')$ is a subset of $\Cn$, and we claim that
\begin{equation}\label{eq:multi144}
\begin{aligned}
|PD(g')| \lesssim \frac{h}{2^kr^*}.
\end{aligned}
\end{equation}
To see this, we fix $(z', t') \in E_{3,k}$, and take $(z,t) \in D(g')$.
Then $|z|_\infty \leq 1$ and $|t| \leq 2h$ since $(z,t) \in B$, and further $|t'-t-S(z',z)|\leq 2h$, that is,
$-4h \leq t -2h \leq S(z',z) - t' \leq t + 2h \leq 4h $.
Now $S(z',z)$ is a euclidean inner product $Jz' \cdot z$, where the euclidean norm of $Jz'$ is $8\cdim$ multiplied by the euclidean norm of $z'$, which is bounded below by a multiple of $2^{k}r^*$.
So these inequalities describe a slice of $\Cn$ of thickness of the order of $h/2^{k}r^*$, and the measure of the intersection of this slice with $\{ z \in \Cn: |z|_\infty \leq 1\}$ is as claimed.

By the Cauchy--Schwarz inequality,
\begin{equation}\label{eq:multi-3-1}
\begin{aligned}
\term{I}^{D}_{k} \leq \abs|E_{3,k}|^{1/2}
\biggl( \sum_{j,\ell} \int_{E_{3,k}}  \abs|  [\indifn_{D(g')}c_{R,\ell}] \Hconv k^{m,0}_{j,\ell} (g') |^2  \wrt g' \biggr)^{1/2}.
\end{aligned}
\end{equation}
From the Cauchy--Schwarz inequality, \eqref{eq:multi144}, and changes of variables and order of integration, 
\begin{align*}
&\int_{E_{3,k}} \Bigl| [\indifn_{D(g')}
 c_{R,\ell}] \Hconv k^{m,0}_{j,\ell}(g') \Bigr|^2  \wrt g' \\
&\qquad\leq\int_{E_{3,k}}
\biggl( \int_{\Hn} \abs| [\indifn_{D(g')}
 c_{R,\ell}] (g)| \fn \abs| k^{m,0}_{j,\ell}(g^{-1} g')| \wrt g \biggr)^2   \wrt g'\\
&\qquad= \iint_{E_{3,k}} \biggl( \int_{\Hn} \int_{\R}
\abs| [\indifn_{D(z',t')} c_{R,\ell}] (z,t) |
\fn \abs| k^{m,0}_{j,\ell} ((-z,-t)(z',t'))| \wrt t \wrt z \biggr)^2   \wrt t'  \wrt z'\\
&\qquad\leq \iint_{E_{3,k}} \biggl( \int_{PD(z',t')} \Bigl(\int_{\R}
\frac{\abs| [\indifn_{D(z',t')} c_{R,\ell}] (z,t) |^2}{w_{j,\ell}^{\epsilon}((-z,-t)(z',t'))}  \wrt t\Bigr)^{1/2} \\
&\qquad\qquad\times
\Bigl(\int_{\R} w_{j,\ell}^{\epsilon}((-z,-t)(z',t'))) \abs| k^{m,0}_{j,\ell}((-z,-t)(z',t'))|^2 \wrt t \Bigr)^{1/2}
\wrt z \biggr)^2 \wrt t'  \wrt z' \\
&\qquad\leq \int_{A_k} \int_{\R} \abs|PD(z',t')| \biggl( \int_{PD(z',t')} \Bigl(\int_{\R}
\frac{\abs| [\indifn_{D(z',t')} c_{R,\ell}] (z,t) |^2}{w_{j,\ell}^{\epsilon}((-z,-t)(z',t'))}  \wrt t\Bigr) \\
&\qquad\qquad\times
\Bigl(\int_{\R} w_{j,\ell}^{\epsilon}((-z,-t)(z',t')) \abs| k^{m,0}_{j,\ell}((-z,-t)(z',t'))|^2 \wrt t \Bigr)
\wrt z \biggr)  \wrt t'  \wrt z' \\
&\qquad\lesssim \frac{h}{2^kr^*} \int_{A_k} \int_{\R}  \int_{\Cn} \Bigl(\int_{\R}
\frac{\abs| [\indifn_{B} c_{R,\ell}] (z,t) |^2}{w_{j,\ell}^{\epsilon}((-z,-t)(z',t'))}  \wrt t\Bigr) \\
&\qquad\qquad\times
\Bigl(\int_{\R} w_{j,\ell}^{\epsilon}((-z,-t)(z',t')) \abs| k^{m,0}_{j,\ell}((-z,-t)(z',t'))|^2 \wrt t \Bigr)
 \wrt z  \wrt t'  \wrt z' \\
&\qquad= \frac{h}{2^kr^*} \int_{A_k} \int_{\R}  \int_{\Cn} \Bigl(\int_{\R}
\frac{\abs| [\indifn_{B} c_{R,\ell}] (z,t) |^2 }{w_{j,\ell}^{\epsilon}((-z,-t)(z',t'))} \wrt t\Bigr) \\
&\qquad\qquad\times
\Bigl(\int_{\R} w_{j,\ell}^{\epsilon}(z'-z,t) \abs| k^{m,0}_{j,\ell}(z'-z,t)|^2 \wrt t \Bigr)
\wrt z  \wrt t'  \wrt z' \\
&\qquad= \frac{h}{2^kr^*} \int_{A_k}  \int_{\Cn} \int_{\R}  \Bigl(\int_{\R}
\frac{\abs| [\indifn_{B} c_{R,\ell}] (z,t) |^2  }{w_{j,\ell}^{\epsilon}(z'-z,t'-t-S(z,z'))}
\wrt t\Bigr) \wrt t' \\
&\qquad\qquad\times
\Bigl(\int_{\R} w_{j,\ell}^{\epsilon}(z'-z,t) \abs| k^{m,0}_{j,\ell}(z'-z,t)|^2 \wrt t \Bigr)
\wrt z \wrt z' \\
&\qquad= \frac{h}{2^kr^*} \int_{A_k}  \int_{\Cn} \Bigl(\int_{\R} \int_{\R}
\frac{\abs| [\indifn_{B} c_{R,\ell}] (z,t) |^2}{w_{j,\ell}^{\epsilon}(z'-z,t')} \wrt t'  \wrt t\Bigr) \\
&\qquad\qquad\times
\Bigl(\int_{\R} w_{j,\ell}^{\epsilon}(z'-z,t) \abs| k^{m,0}_{j,\ell}(z'-z,t)|^2 \wrt t \Bigr) \wrt z   \wrt z' .
\end{align*}
Now when $(z,t) \in B$ and $(z',t') \in E_{3,k}$, $\abs|z-z'| \geq \abs|z'| - \abs|z| \gtrsim 2^k r^*$, and so
\[
\begin{aligned}
\int_{\R}  \frac{1}{w_{j,\ell}^{\epsilon}(z'-z,t')} \wrt t'
&= \int_{\R} 2^{\cdim(j+\ell)} (1 + 2^{j+\ell}|z'-z|^2)^{-\cdim(1+\epsilon)}
    2^{\ell} (1+2^{\ell}|t'|)^{-1-\epsilon} \wrt t' \\
&\lesssim  2^{\cdim(j+\ell)} (1 + 2^{k+(j+\ell)/2}r^*)^{-2\cdim(1+\epsilon)}. \\
\end{aligned}
\]
Hence by further changes of order of integration and of variables, and Lemma \ref{lemma muller stein},
\begin{align*}
&\int_{E_{3,k}} \Bigl| [\indifn_{D(g')}
 c_{R,\ell}] \Hconv k^{m,0}_{j,\ell}(g') \Bigr|^2  \wrt g'\\
&\qquad\lesssim \frac{h}{2^kr^*} \int_{\Cn} \int_{A_k}  \Bigl(
2^{\cdim(j+\ell)} (1 + 2^{k+(j+\ell)/2}r^*)^{-2\cdim(1+\epsilon)}
\int_{\R} \abs|c_{R,\ell}(z,t)|^2 \wrt t \Bigr)  \\
&\qquad\qquad\times  \Bigl( \int_{\R} w_{j,\ell}^{\epsilon}(z'-z,  t)
\fn \abs|k^{m,0}_{j,\ell}(z'-z, t)|^2 \wrt t \Bigr) \wrt z' \wrt z \\
&\qquad= \frac{h}{2^kr^*} 2^{\cdim(j+\ell)}
(1 + 2^{k+(j+\ell)/2}r^*)^{-2\cdim(1+\epsilon)}
\int_{\Cn}
\int_{\R} \abs|c_{R,\ell}(z,t)|^2 \wrt t \wrt z  \\
&\qquad\qquad\times  \int_{A_k} \int_{\R} w_{j,\ell}^{\epsilon}(z',  t)
\fn \abs|k^{m,0}_{j,\ell}(z', t)|^2 \wrt t  \wrt z' \\
&\qquad\lesssim_{\Mu} \frac{h}{2^kr^*} 2^{\cdim(j+\ell)}
(1 + 2^{k+(j+\ell)/2}r^*)^{-2\cdim(1+\epsilon)} 2^{2m(j+\ell)}
\norm{ c_{R,\ell}}_{\Leb^2(\Hn)}^2    .
\end{align*}
We sum this estimate over $j$ and $\ell$, much as in \eqref{eq:multi-1-5-a} and \eqref{eq:multi-1-5-b}, using \eqref{eq:cRell-def} and \eqref{eq:cRell-LP} and Lemma \ref{lem:a-is-enough}, and  taking $m$ to be $M$ if $j+\ell \leq 0$ and $0$ otherwise and $n$ to be $0$.
We recall that $c_{R,\ell}$ also depends on $m$, and hence on $j$ and $\ell$, and here write $c^m_{R,\ell}$ instead of $c_{R,\ell}$.
Then, assuming that $2M > \epsilon$, we see that
\begin{equation*}
\begin{aligned}
&\sum_{j,\ell}\int_{E_{3,k}} \Bigl| [\indifn_{D(g')}
 c^m_{R,\ell}] \Hconv k^{m,0}_{j,\ell}(g') \Bigr|^2  \wrt g'\\
&\qquad\lesssim_{\Mu} \sum_{j,\ell}\frac{h}{2^kr^*} 2^{\cdim(j+\ell)}
(1 + 2^{k+(j+\ell)/2}r^*)^{-2\cdim(1+\epsilon)}
\min\{1, 2^{2M(j+\ell)} \} 
\norm{ c^m_{R,\ell}}_{\Leb^2(\Hn)}^2   \\
&\qquad\eqsim_{M,\epsilon} \frac{h}{2^kr^*} \sum_{\ell}
\lpar  \frac{1}{ (2^{k}r^*)^{4M+2\cdim} }
+ \frac{1}{ (2^{k}r^*)^{2\cdim(1+\epsilon)}}
+  \frac{1}{ (2^{k}r^*)^{2\cdim(1+\epsilon)}} \rpar 
 \norm{ c^m_{R,\ell}}_{\Leb^2(\Hn)}^2   \\
&\qquad\lesssim_{M,\epsilon} \frac{h}{2^{2k\cdim(1+\epsilon)+k} (r^*)^{2\cdim(1+\epsilon)+1}}
\sum_{\ell} \lpar \norm{ c^M_{R,\ell}}_{\Leb^2(\Hn)}^2  +
 \norm{ c^0_{R,\ell}}_{\Leb^2(\Hn)}^2  \rpar \\
&\qquad\lesssim_{\Phi,M} \frac{h}{2^{2k\cdim(1+\epsilon)+k} (r^*)^{2\cdim(1+\epsilon)+1}} \norm{ a_{R}}_{\Leb^2(\Hn)}^2  .
\end{aligned}
\end{equation*}
By combining this last inequality with \eqref{eq:multi-3-1} and the estimate $\abs|E_{3,k}| \lesssim (2^k r^*)^{2\cdim + 1}$ (which holds since $h \leq r^*$), we see that
\begin{equation}\label{eq:multi-3-3}
\begin{aligned}
\sum_{k\in\N} \term{I}^{D}_{k}
&\lesssim \sum_{k\in\N} \abs|E_{3,k}|^{1/2}
\biggl( \sum_{j,\ell} \int_{E_{3,k}}  \abs|  [\indifn_{D(g')}c_{R,\ell}] \Hconv k^{m,0}_{j,\ell} (g') |^2  \wrt g' \biggr)^{1/2} \\
&\lesssim_{\Mu,\Phi,\epsilon} \sum_{k\in\N} (2^k r^*)^{\cdim + 1/2}
\frac{h^{1/2}}{2^{k\cdim(1+\epsilon)+k/2} (r^*)^{\cdim +\cdim\epsilon+1/2}} 
\norm{ a_{R}}_{\Leb^2(\Hn)} \\
&\eqsim_{\epsilon} \Bigl(\frac{1}{r^*}\Bigr)^{\cdim\epsilon/2}
\abs|R|^{1/2} \norm{ a_{R}}_{\Leb^2(\Hn)}.
\end{aligned}
\end{equation}
Thus the required estimate for the integral over $E_3$ is proved when $r^* \geq h$.

If $r^* \geq h$, we modify the argument above slightly.
We use the inclusion $D(g') \subseteq B$ and replace $PD(g')$ by $PB$ in the first stage of our calculations.
The expression $(h \abs|E_{3,k}| / 2^kr^*)^{1/2}$ that appears in \eqref{eq:multi-3-3} is then replaced by $\abs|E_{3,k}|^{1/2}$, and with our alternative assumption, $\abs|E_{3,k}|^{1/2} \lesssim (2^kr^*)^{2\cdim}h$, which leads us to the same conclusion.

Finally, we estimate the integral over $E_{4}$.
If $\Eta(\lambda/\mu) \neq 0$ and $\Eta(\mu) \neq 0$, then $\mu, \lambda/\mu \in (1/2,2)$ and so $\lambda \in (-1/4,4)$.
Hence there is a smooth, compactly supported function $\Phi$ on $\R^+$ such that
\[
\Phi(\lambda)
= \Phi(\lambda)
\fn\Eta(  \lambda/\mu) \fn\Eta( \mu)
\qquad\forall \lambda, \mu \in \R^+ ,
\]
whence
\[
\Phi(2^{-j-\ell}\lambda)
= \Phi(2^{-j-\ell}\lambda)
\fn\Eta( 2^{-j} \lambda/\mu) \fn\Eta( 2^{-\ell}\mu)
\qquad\forall \lambda, \mu \in \R^+ .
\]
Taking $\mu$ to be $1$ shows that
\[
\Phi(2^{-j}\lambda)
= \Phi(2^{-j}\lambda)
\fn\Eta( 2^{-j} \lambda)
\qquad\forall \lambda \in \R^+ .
\]
Then $\Mu_{j,\ell}(\HLap/|\oper{T}| , \euli\oper{T}) \Phi(2^{-j-\ell}\HLap) = \Mu_{j,\ell}(\HLap/|\oper{T}| , \euli\oper{T})$.
We define
\begin{equation*}
c_{R,j+\ell} := \Phi(2^{-j-\ell}\HLap) c_R ,
\end{equation*}
keeping implicit the dependence of $c_{R,j+\ell}$ on $m$ and $n$.
From spectral theory,
\begin{equation*}
\biggl( \sum_{j,\ell} \norm{c_{R,j+\ell}}_{\Leb^2(\Hn)}^2  \biggr)^{1/2}
\lesssim_{\Phi} \norm{c_R}_{\Leb^2(\Hn)}.
\end{equation*}

We divide the region $E_{4}$ into dyadic pieces:
\[
E_{4}=\bigcup_{k\in\N} E_{4,k}
:=\bigcup_{k\in\N}\{(z,t): |z|\leq 2, 2^{k}h^*<|t|\leq 2^{k+1} h^*\},
\]
and define sets
\begin{gather*}
A_{{k}}
:=\{g \in \Hn : d_{(1)}(g, R) \geq 2^k h^* \}
\qquad\text{and}\qquad
B_{{k}}:= (A_{{k}})^c \setminus R^*.
\end{gather*}
Then $\Hn = A_{{k}} \cup B_{{k}} \cup R^*$.
Much as in the estimate over $E_3$, we see that
\begin{equation*}
\begin{aligned}
&\int_{E_{4}}  \biggl( \sum_{j,\ell} \Bigl|  c_R \Hconv k_{j,\ell}^{m,n}(g')(g') \Bigr|^2 \biggr)^{1/2} \wrt g' \\
&\qquad\leq \sum_{k\in\N}\int_{E_{4,k}}\biggl( \sum_{j,\ell} \Bigl|  [\indifn_{A_{{k}}}c_{R,j+\ell}] \Hconv k_{j,\ell}^{m,n} (g') \Bigr|^2 \biggr)^{1/2} \wrt g'\\
&\qquad\qquad+ \sum_{k\in\N}\int_{E_{4,k}}\biggl( \sum_{j,\ell} \Bigl|  [\indifn_{B_{{k}}}c_{R,j+\ell}] \Hconv k_{j,\ell}^{m,n} (g') \Bigr|^2 \biggr)^{1/2} \wrt g'\\
&\qquad\qquad+ \sum_{k\in\N}\int_{E_{4,k}}\biggl( \sum_{j,\ell} \Bigl|  [\indifn_{R^*}c_{R,j+\ell}] \Hconv k_{j,\ell}^{m,n} (g') \Bigr|^2 \biggr)^{1/2} \wrt g'\\
&\qquad=: \sum_{k\in\N}\term{I}_{4,k}^A + \sum_{k\in\N}\term{I}_{4,k}^B + \sum_{k\in\N}\term{I}_{4,k}^R,
\end{aligned}
\end{equation*}
say.

To treat the term $\term{I}_{4,k}^A$, we observe that convolution with $k_{j,\ell}^{m,n}$ is bounded on $\Leb^2(\Hn)$ independently of $j$, $\ell$, $m$ and $n$ by spectral theory, so that taking $m$ to be $M$ if $j+\ell \leq 0$ and $0$ otherwise and $n$ to be $N$ if $2^\ell h\leq 1$ and $0$ otherwise shows that
\begin{equation*}
\begin{aligned}
\term{I}_{4,k}^A
&\leq \abs|E_{4,k}|^{1/2}
\biggl(\sum_{j,\ell} \int_{E_{4,k}}\Bigl|
\indifn_{A_{{k}}} c_{R,j+\ell} \Hconv k_{j,\ell}^{m,n}(g') \Bigr|^2  \wrt g' \biggr)^{1/2} \\
&\lesssim_{\Mu,M,N} \abs|E_{4,k}|^{1/2}
\biggl(\sum_{j,\ell} \min\{1,2^{j+\ell}\}^M\min\{1,2^\ell h\}^N \int_{A_{{k}}}\abs| c_{R,j+\ell} (g)|^2  \wrt g\biggr)^{1/2} .
\end{aligned}
\end{equation*}
Since $d_{(1)}(A_{{k}},R)\geq (2^{k}h^*)^{1/2}$, it follows from Lemma~\ref{lemma m 444.2} that for all $S \in (\hdim,\infty)$,
\begin{equation*}
\begin{aligned}
\int_{A_{{k}}}\abs| c_{R,j+\ell} (g)|^2  \wrt g
&= \int_{A_{{k}}}\abs| \Phi(2^{-j-\ell}\HLap) a_R (g) |^2 \wrt g \\
&\lesssim_{\Phi,S} (1 + 2^{j+\ell+k}h^*)^{-S}\}
\abs|R| \norm{a_R}_{\Leb^2(\Hn)}^2.
\end{aligned}
\end{equation*}
We recall that $j$ is nonnegative, combine the last two inequalities, and sum over $k$, much as above.
Assuming that $S > M > N \geq 1$, we see that $\sum_{k} \term{I}_{4,k}^A$ is dominated by a multiple of 
\begin{equation*}
\begin{aligned}
&\sum_{k} \abs|E_{4,k}|^{1/2} \biggl(\sum_{j,\ell}
\frac{ \min\{1,2^{j+\ell}\}^M \min\{1,2^\ell h\}^N  }{ ( 1 + 2^{j+\ell+k}h^*)^{S} }
\biggr)^{1/2}  \abs|R|^{1/2} \norm{a_R}_{\Leb^2(\Hn)} \\
&\qquad\lesssim_{\epsilon} \Bigl( \frac {h}{h^*} \Bigr)^{(M-1)/2} |R|^{1/2}\norm{a_R}_{\Leb^2(\Hn)}  .
\end{aligned}
\end{equation*}

To treat $\term{I}_{4,k}^B$, we note that, if $g'=(z',t')\in E_{4,k}$ and $g=(z,t)\in (A_{{k}})^c$, then
\[
|t'-t-S(z',z)|
\geq 2^kh^* - (2^{k-2}h^*)^{1/2} - 16\cdim(2^{k-2}h^*)^{1/2}
\geq 2^{k-2}h^* ,
\]
whence
\[
g^{-1}g'\in E_{4,k}^* := \{(z'',t'') \in \Hn: |t''|>2^{k-2}h^*\}.
\]
Now
\begin{equation*}
\begin{aligned}
\int_{E^*_{4,k}} \frac{1}{w_{j,\ell}^{\epsilon}(g')} \wrt g' 
&\leq \int_{\Cn} \int_{|t''|>2^{k-2}h^*}
\frac{2^{(j+\ell)\cdim}2^{\ell}}{(1+2^{j+\ell}|z''|^2)^{\cdim(1+\epsilon)} (1+2^\ell|t''|)^{1+\epsilon} } \wrt t'' \wrt z'' \\
&\lesssim_{\epsilon} (1+2^k2^\ell h^*)^{-\epsilon}.
\end{aligned}
\end{equation*}
From this inequality,  Lemma~\ref{lemma muller stein}, the Cauchy--Schwarz inequality and Lemma \ref{lemma m 444.2},
\begin{equation*}
\begin{aligned}
\term{I}_{4,k}^B
&\leq \sum_{j,\ell} \int_{E_{4,k}}\Bigl|  \indifn_{B_{{k}}} c_{R,j+\ell} \Hconv k_{j,\ell}^{m,n}(g') \Bigr|  \wrt g'\\
&\leq \sum_{j,\ell} \int_{B_{{k}}} \abs| c_{R,j+\ell}(g) | \int_{E_{4,k}}\Bigl| k_{j,\ell}^{m,n}(g^{-1}g')\Bigr| \wrt g' \wrt g\\
&\leq \sum_{j,\ell} \int_{B_{{k}}} \abs| c_{R,j+\ell}(g) | \int_{E_{4,k}^*}\Bigl| k_{j,\ell}^{m,n}(g')\Bigr| \wrt g' \wrt g\\
&\lesssim_{\Mu} \sum_{j,\ell} 2^{m(j+\ell) + n\ell} \int_{B_{{k}}} \bigl| c_{R,j+\ell}(g) \bigr|\wrt g \Biglpar\int _{E^*_{4,k}}\frac{1}{w_{j,\ell}^{\epsilon}(g')} \wrt g'\Bigrpar^{1/2} \\
&\lesssim_{\epsilon} \sum_{j,\ell} 2^{m(j+\ell) + n\ell}
(1+ 2^k2^\ell h^*)^{-\epsilon/2}
\int_{B_{{k}}} \bigl| c_{R,j+\ell}(g) \bigr|\wrt g \\
&\lesssim_{\Phi} \sum_{j,\ell} 2^{m(j+\ell) + n\ell}
(1+2^k2^\ell h^*)^{-\epsilon/2} (1+ 2^{j+\ell})^{-\epsilon}|R|^{1/2}
\norm{a_R}_{\Leb^2(\Hn)}.
\end{aligned}
\end{equation*}
We take $m$ to be $M$ if $j+\ell \leq 0$ and $0$ otherwise and $n$ to be $N$ if $2^\ell h\leq 1$ and $0$ otherwise, and sum much as in \eqref{eq:multi-1-5-a}  and \eqref{eq:multi-1-5-b}.
Then
\begin{equation*}
\begin{aligned}
\term{I}_{4,k}^B
&\lesssim  2^{-k\epsilon/2} \lpar \frac {h}{h^*} \rpar^{\epsilon/2}|R|^{1/2}\norm{a_R}_{\Leb^2(\Hn)}.
\end{aligned}
\end{equation*}

To treat the term $\term{I}_{4,k}^R$, we first observe that if $g\in R^*$ and $g'\in E_{4,k}$, then
\[
g^{-1}g' \in E_{4,k}^* := \{(z'',u'') \in \Hn : |z''| \leq 3, (2^k-16\cdim)h^* < |t| < (2^{k+1} + 16\cdim)h^*\}.
\]
By the Cauchy--Schwarz inequality,
\begin{equation*}
\begin{aligned}
&\int_{E_{4,k}} \bigglpar \sum_{j,\ell}
\labs [\indifn_{R^*} c_{R,j+\ell}] \Hconv k_{j,\ell}^{m,n}(g')\rabs^2\biggrpar^{1/2}
\wrt g' \\
&\qquad=
 |E_{4,k}|^{1/2} \bigglpar \sum_{j,\ell} \norm{
[[\indifn_{R^*} c_{R,j+\ell}] \Hconv k_{j,\ell}^{m,n}] \indifn_{E_{4,k}}}_{L^2(\Hn)} ^2 \biggrpar^{1/2}.
\end{aligned}
\end{equation*}

We set $ w_{1,j,\ell}^{\epsilon}(g)
:=2^{-\cdim(j+\ell)} (1+2^{j+\ell}|z|^2)^{\cdim(1+\epsilon)}
$ and $
w_{2,j,\ell}^{\epsilon}(g):=2^{-\ell} (1+2^{\ell}|t|)^{1+\epsilon}$.
For all $f\in L^2(\Hn)$,
\begin{align*}
&\labs \int_{\Hn} [[[\indifn_{R^*} c_{R,j+\ell}] \Hconv k_{j,\ell}^{m,n}] \indifn_{E_{4,k}}](g') \fn f(g') \wrt g' \rabs\\
&\qquad \leq
\int_{R^*} \int_{E_{4,k}} \labs c_{R,j+\ell}(g)\rabs
\biglabs k_{j,\ell}^{m,n}(g^{-1}g') \bigrabs \labs f(g') \rabs \wrt g' \wrt g \\
&\qquad \leq
\lpar \int_{R^*} \labs c_{R,j+\ell}(g)\rabs^2 \wrt g \rpar^{1/2}
\biggl(\int_{R^*}\biggl( \int_{E_{4,k}} \biglabs k_{j,\ell}^{m,n}(g^{-1}g') \bigrabs
\labs f(g') \rabs \wrt g' \biggr)^{2} \wrt g \biggr)^{1/2} .
\end{align*}
Furthermore,
\begin{align*}
&\biggl(\int_{R^*}\biggl( \int_{E_{4,k}} \biglabs k_{j,\ell}^{m,n}(g^{-1}g') \bigrabs
\labs f(g') \rabs \wrt g' \biggr)^{2} \wrt g \biggr)^{1/2}  \\
&\qquad \leq
\biggl(\int_{R^*} \biggl( \int_{E_{4,k}} \biglabs k_{j,\ell}^{m,n}(g^{-1}g') \bigrabs^2 \frac{w^\epsilon_{j,\ell}(g^{-1}g')}{w^\epsilon_{2,j,\ell}(g^{-1}g')}
 \wrt g'
\int_{E_{4,k}}
\frac{\labs f(g') \rabs^2}{w^\epsilon_{1,j,\ell}(g^{-1}g')} \wrt g' \biggr)  \wrt g \biggr)^{1/2}  \\
&\qquad \lesssim
\biggl(\int_{R^*}\biggl( \int_{E_{4,k}}
\frac{2^\ell}{(1+2^\ell 2^k h^*)^{1+\epsilon}}
\biglabs k_{j,\ell}^{m,n}(g^{-1}g') \bigrabs^2 w^\epsilon_{j,\ell}(g^{-1}g')
 \wrt g' \\
&\qquad\qquad\qquad \times
\int_{E_{4,k}}
\frac{\labs f(g') \rabs^2}{w^\epsilon_{1,j,\ell}(g^{-1}g')} \wrt g' \biggr) \wrt g \biggr)^{1/2} \\
&\qquad \lesssim
 \biggl(\int_{R^*}
\frac{2^{2m(j+\ell) + (2n+1)\ell}}{(1+2^\ell 2^k h^*)^{1+\epsilon}} \biggl(
\int_{E_{4,k}}
\frac{\labs f(g') \rabs^2}{w^\epsilon_{1,j,\ell}(g^{-1}g')} \wrt g' \biggr) \wrt g \biggr)^{1/2}  \\
&\qquad =
\frac{2^{m(j+\ell) + (n+1/2)\ell}}{(1+2^\ell 2^k h^*)^{1+\epsilon}}
\biggl( \int_{E_{4,k}} \int_{R^*}
\frac{\labs f(g') \rabs^2}{w^\epsilon_{1,j,\ell}(g^{-1}g')} \wrt g \wrt g' \biggr)^{1/2}\\
&\qquad \lesssim_{\epsilon}
\frac{2^{m(j+\ell) + (n+1/2)\ell}}{(1+2^\ell 2^k h^*)^{1+\epsilon}} h^{1/2}
\biggl( \int_{E_{4,k}}
\labs f(g') \rabs^2 \wrt g' \biggr)^{1/2}.
\end{align*}
By combining the last two inequalities, we deduce that
\begin{align*}
&\labs \int_{\Hn} [[[\indifn_{R^*} c_{R,j+\ell}] \Hconv k_{j,\ell}^{m,n}] \indifn_{E_{4,k}}](g') \fn f(g') \wrt g' \rabs\\
&\qquad \lesssim_{\epsilon}
\frac{2^{m(j+\ell) + (n+1/2)\ell}}{(1+2^\ell 2^k h^*)^{1+\epsilon}} h^{1/2}
\lpar \int_{R^*} \labs c_{R,j+\ell}(g)\rabs^2 \wrt g \rpar^{1/2}
\biggl( \int_{E_{4,k}}
\labs f(g') \rabs^2 \wrt g' \biggr)^{1/2}
\end{align*}
for all $f\in L^2(\Hn)$. 
By the converse of the Cauchy--Schwarz inequality,
\[
\norm{
[[\indifn_{R^*} c_{R,j+\ell}] \Hconv k_{j,\ell}^{m,n}] \indifn_{E_{4,k}}}_{L^2(\Hn)}
\lesssim
\frac{2^{m(j+\ell) + (n+1/2)\ell}}{(1+2^\ell 2^k h^*)^{1+\epsilon}}
h^{1/2}
\norm{ [\indifn_{R^*} c_{R,j+\ell}] }_{L^2(\Hn)}  .
\]

We take $m$ to be $M$ if $j+\ell <0$ and $0$ otherwise, and $n$ to be $1$ if $\ell < 0$ and $0$ otherwise.
Summing as before, it is now straightforward to see that
\[
\begin{aligned}
&\sum_{k\in\N} |E_{4,k}|^{1/2} \lpar \sum_{j,\ell} \norm{
[[\indifn_{R^*} c_{R,j+\ell}] \Hconv k_{j,\ell}^{m,n}] \indifn_{E_{4,k}}}_{L^2(\Hn)} ^2 \rpar^{1/2} \\
&\qquad\lesssim  \lpar \frac {h}{h^*} \rpar^{\epsilon/2}|R|^{1/2}\norm{a_R}_{\Leb^2(\Hn)}
\end{aligned}
\]
as required, and the proof of Theorem~E is complete.
\end{proof}


\end{document}